\definecolor{webgreen}{rgb}{0,.5,0}
\definecolor{webbrown}{rgb}{.8,0,0}
\definecolor{emphcolor}{rgb}{0.5,0.95,0.95}
\ifpdf \hypersetup{pdftex,
            pdfstartview=FitH, 
            bookmarksopen=true,
            bookmarksnumbered=true
} \else \hypersetup{dvips} \fi
\newcommand {\ud}{{\rm d}}
\numberwithin{equation}{section}
\newtheorem{theorem}{Theorem}[section]
\newtheorem{proposition}{Proposition}[section]
\newtheorem{corollary}{Corollary}[section]
\newtheorem{remark}{Remark}[section]
\newtheorem{lemma}{Lemma}[section]
\newtheorem{assump}{Assumption}[section]
\numberwithin{remark}{section} \numberwithin{proposition}{section}
\numberwithin{corollary}{section}
\newcommand {\R}{\mathbb{R}}
\newcommand {\p}{\mathbb{P}}
\newcommand {\pp}{\mathbf{P}}
\newcommand {\E}{\mathbb{E}}
\newcommand{\diff}{{\rm d}}
\newcommand{\lev}{L\'{e}vy }
\newcommand{\green}{\textcolor[rgb]{0.00,0.50,0.50}}
\newcommand{\e}{\mathbb{E}}
\newcommand{\eee}{\mathbf{E}}
\newcommand{\exit}{{\mbox{\, \vspace{3mm}}}
\hfill\mbox{$\square$}}
\begin{document}

\title[Double continuation regions for American options under Poisson exercise opportunities]{Double continuation regions for American options under Poisson exercise opportunities}

\author[Z. Palmowski]{Zbigniew Palmowski$^{A}$}\thanks{$^A$Faculty of Pure and Applied Mathematics,
Wroc\l aw University of Science and Technology,
Wyb. Wyspia\'nskiego 27, 50-370 Wroc\l aw, Poland,
email: zbigniew.palmowski@pwr.edu.pl}

\author[J.L. P\'erez]{Jos\'e Luis P\'erez$^B$}\thanks{$^B$Department of Probability and Statistics, Centro de Investigaci\'on en Matem\'aticas, A.C. Calle Jalisco S/N
C.P. 36240, Guanajuato, Mexico,
email: jluis.garmendia@cimat.mx}

\author[K. Yamazaki]{Kazutoshi Yamazaki$^C$}\thanks{$^C$Department of Mathematics,
Faculty of Engineering Science, Kansai University, 3-3-35 Yamate-cho, Suita-shi, Osaka 564-8680, Japan,
email: kyamazak@kansai-u.ac.jp}
\thanks{Z. Palmowski is partially supported by Polish National Science Centre Grant
No. 2016/23/B/HS4/00566 (2017-2020). K. Yamazaki is partially supported by MEXT KAKENHI grant no. 17K05377 and  19H01791.}

\maketitle

\begin{abstract}
We consider the \lev model of the perpetual American call and put options with a negative discount rate under Poisson observations.
Similar to  the continuous observation case as in De Donno et al.\ \cite{DPT}, the stopping region that characterizes the optimal stopping time is either a half-line or an interval. The objective of this paper is to obtain explicit expressions of the stopping and continuation regions and the value function, focusing on spectrally positive and negative cases.  To this end, we compute the identities related to the first Poisson arrival time to an interval via the scale function and then apply those identities to the computation of the optimal strategies. 
We also discuss the convergence of the optimal solutions to those in the continuous observation case as the rate of observation increases to infinity.  Numerical experiments are also provided.

\end{abstract}
{\noindent \small{\textbf{Keywords:}\,  American options; optimal stopping; L\'evy processes; Poisson observations; double continuation regions; put-call symmetry }\\
\noindent \small{\textbf{Mathematics Subject Classification (2010):}\, 60G40, 60J75, 91G80}}

\section{Introduction}

%

Research on American options is one of the most actively studied fields at the intersection of finance and optimal stopping. The objective is to derive the optimal exercise strategy that maximizes the expected payoff upon exercise. With the application of the classical optimal stopping theory, the optimal strategy can be characterized as the first entry time of the underlying process to a certain region, often called the \emph{stopping region}, or equivalently the first time it leaves the so-called \emph{continuation region}. Considerable research has focused on the analysis of the stopping and continuation regions. Typically, in the perpetual case driven by a one-dimensional process, the stopping and continuation regions can be shown to be half-lines; hence, the optimal strategy is a barrier-type one, reducing the problem to obtaining the (single) optimal boundary that separates the continuation and stopping regions.


%
%
%

In this paper, we challenge two of the most commonly imposed assumptions in perpetual vanilla American options: (1) \emph{the positivity of the discount rate} and (2) \emph{continuous observations} (where one can exercise the option at any time). Although these assumptions significantly simplify the problem and often guarantee the optimality of a barrier strategy, they are often unrealistic.  In particular, it is of substantial interest to analyze, when these are relaxed, if a barrier strategy remains optimal or instead the forms of the stopping and continuation regions change. 

%
%



\subsection{Optimal stopping with a negative discount rate}

Whereas most of the existing results  assume a positive discount rate, several important results exist for American options with a negative discount rate. 

One of the most well-known examples of when the negative effective discount rate arises is the  \emph{stock loan}, as considered by Xia and Zhou \cite{Xia}. When the loan interest rate is higher than the risk-free rate, the problem reduces to the valuation of an American call option with a negative discount rate.  Other examples include real option problems (see, e.g., Dixit and Pindyck \cite{Dixit}), where the effective discount rate becomes negative when the cost of investment increases at a higher rate than the firm's discount rate.  In addition, the real interest rate can  become negative during low-yield regimes (see Black \cite{Black} for further discussion).  The importance of these models has been rapidly developing in the current low-interest environments. We refer the reader to \cite{Battauz1, Battauz2,DPT} for a detailed literature review on the American option problem with a negative discount rate.

Most research on American (and real) options assumes either geometric Brownian motion or an exponential \lev process for the underlying asset-price process. In these cases, it is easy to demonstrate that the value function is a convex function majoring a linear payoff; hence, the stopping region (where the value function coincides with the payoff function) becomes either a half line or an interval. For the case in which the discount rate is positive, it becomes a half line (except for exotic cases, such as \cite{Broadie_Detemple, Detemple}). However, when the discount rate is negative, the same result may not hold, and the stopping region may become an interval. Thus, the continuation region consists of two separate regions that we call the double continuation regions. 


In this context, many researchers have focused on pursuing the optimality of a barrier strategy by imposing additional constraints on the discount rate and underlying process.  For example, Xia and Zhou \cite{Xia}  considered the stock loan problem in which the asset price is a geometric Brownian motion, and they show the optimality of a barrier strategy  under some assumptions on the parameters of the process. This work has been extended by various researchers and, among others, Leung et al.\ \cite{LYZ} generalized the results to the \lev case and applied them to study the swing options  with multiple exercise opportunities.


The analysis of an interval strategy corresponding to the double continuation region is, on the other hand, rather new and involves more intricate computations. In this context, Battauz et al.\ \cite{Battauz1, Battauz2}  considered the Brownian motion case for the analysis of the double continuation region. Recently, De Donno et al.\ \cite{DPT}  extended the results to the spectrally one-sided  \lev  case and multiple stopping (swing option) cases.

\subsection{Poissonian observation}

In financial mathematics, it is standard to use the continuous-time model, where one can take the best advantage of stochastic analysis, particularly It\^o calculus.  This is a significant advantage over discrete-time models (with deterministic decision times) where essentially only numerical approaches are available. In reality, however,  the decision maker can
observe the asset price and make exercise decisions only at intervals; therefore,  it is important to study the effect on the optimal strategy when the continuous observation assumption is relaxed.


Recently, the analysis of  \lev processes observed at Poisson arrival times has received substantial attention (see, e.g., \cite{Albrecher}), and some researchers have started to apply these results in insurance and financial mathematics. 
To the best of  our knowledge, this is the only example of discrete-time observation models in which analytical approaches are still possible. Due to the memorylessness property of the exponential random variable, the problem remains one-dimensional, without the need to keep track of how much time has passed since the last exercise opportunity.

Regarding the optimal stopping problem under Poisson observations, it has been studied by Dupuis and Wang \cite{Dupuis} and  by P\'erez and Yamazaki \cite{PY_American} for the Brownian motion and the \lev cases, respectively.   They show that, when the discount rate is positive, the optimal strategy is still of barrier-type and that
stopping at the first exercise opportunity at which the asset price is below or above a certain barrier  is optimal. 
Several related stochastic control problems have been analyzed under the same Poisson observation settings. See \cite{Avanzi_Cheung_Wong_Woo, Avanzi_Tu_Wong, Noba_Perez_Yamazaki_Yano} for the optimal dividend problem and \cite{PPSY} for determining the endogenous bankruptcy level.
 
Various motivations exist for considering the Poisson observation model. By restricting the exercise opportunities to Poisson epochs, we can model the scenarios in which
investors can access the information on the option only at random times; for example, in the cases in which one can only observe a jump of an exogenous
stock price or when some investments are available. As noted by \cite{Dupuis}, this time restriction can
be particularly useful in daily financial practice.
Similar considerations are found in the field
of stochastic control  (see \cite{12,13}). 

Similar to other important applications, Poisson observation models can potentially be used for approximating optimal strategies in the deterministic discrete-time models (see Section 1 of \cite{PPSY} for the accuracy of approximations). As discussed in Section \ref{subsection_remark}, these models can also be used to approximate the continuous observation case \cite{DPT}.





\subsection{This paper} \label{subsection_this_paper}

 \begin{figure}[htbp]
\begin{center}
\begin{minipage}{1.0\textwidth}
\centering
\begin{tabular}{c}
 \includegraphics[scale=0.5]{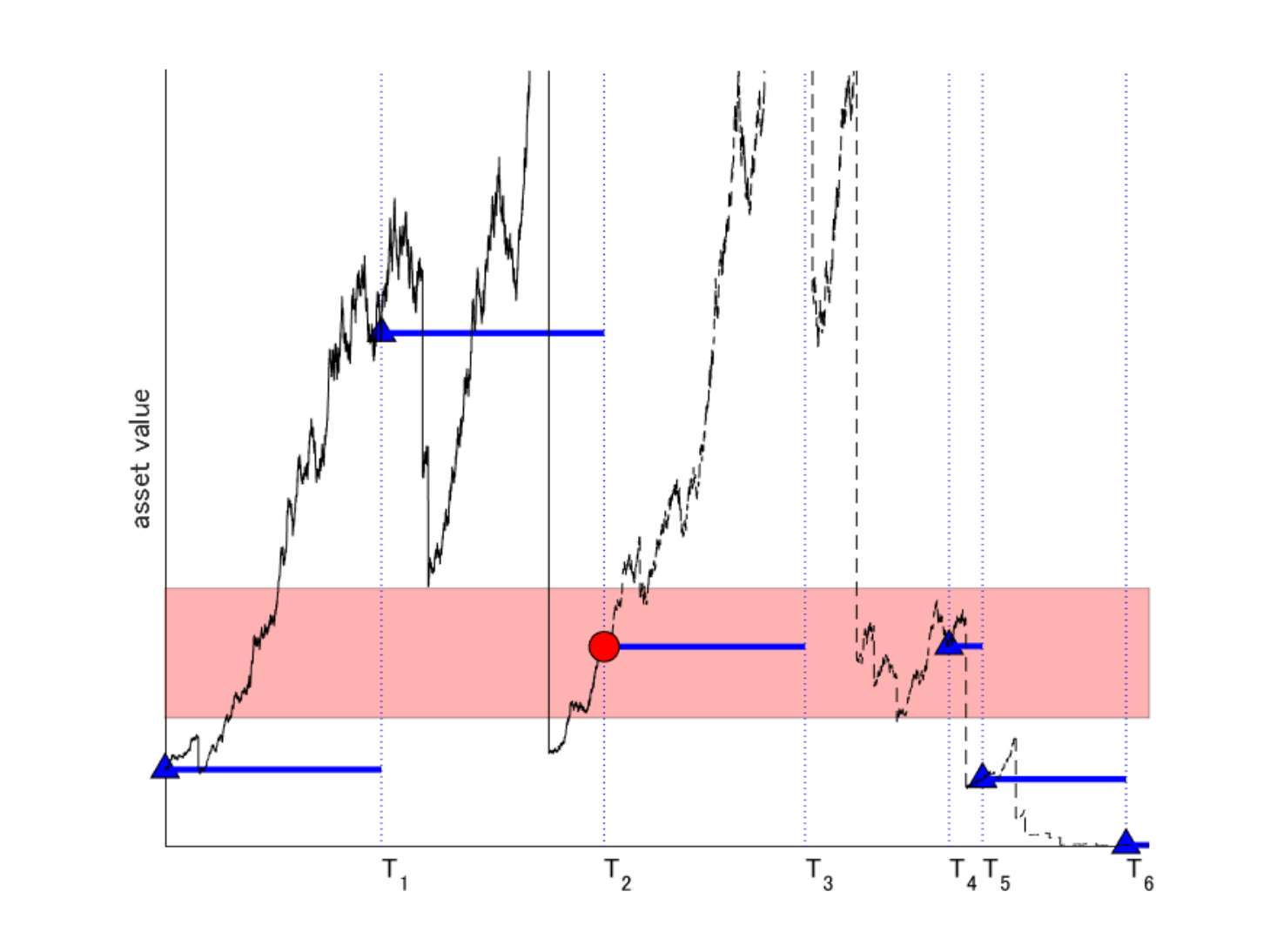}
\end{tabular}
\caption{Sample paths of the asset price $S$ (black lines) and the observed price $S^\lambda$ (horizontal blue lines) along with the Poisson arrival times $\mathcal{T}^\lambda$ (indicated by dotted vertical lines). The stopping region $[L, U]$ is given by the rectangle colored in red.
The asset price at the exercise time and other observation times are indicated by the red circle and blue triangles, respectively.  Here, the exercise time corresponds to $T_2^\lambda$, but the asset value has crossed $[L,U]$ before.
}  \label{plot_simulated}
\end{minipage}
\end{center}
\end{figure}

In this paper, we consider perpetual American put and call options under Poisson observations with a negative discount rate.   Given an asset-price process $S = (S_t: t \geq 0)$, we consider the scenario in which exercise opportunities are given as epochs
 $\mathcal{T}^\lambda := (T_n^\lambda: n \geq 1)$, modeled by the jump times of an independent Poisson process $(N^\lambda_t: t \geq 0)$ with a fixed rate $\lambda$. 
We are particularly interested in when the optimal strategy becomes the following form:
\begin{align}
\inf \{ t \in \mathcal{T}^\lambda: S_t \in [L,U] \}, \label{our_default_continuous}
\end{align}
for some $L < U$.
Notice that this can also be written as the following classical entry time
\begin{align*}
\inf \{ t > 0: S^\lambda_{t} \in [L,U] \}, 
\end{align*}
of the asset price $S^\lambda = (S^\lambda_t: t \geq 0)$ if it is only updated at $\mathcal{T}^\lambda$:
\begin{align}
S^\lambda_t := S_{T^\lambda_{N^\lambda_t}}, \quad t \geq 0. \label{S_lambda}
\end{align}
Here $T^\lambda_{N^\lambda_t}$ is the \emph{most recent exercise opportunity} before $t$.
In Figure \ref{plot_simulated}, we plot the sample paths of $S$,  $S^\lambda$, $\mathcal{T}^\lambda$ and the corresponding exercise time \eqref{our_default_continuous}.

Our analysis begins with the general \lev case in which we show that the stopping region is necessarily a connected region, which takes the form of a half-line or a finite interval.  Furthermore, we obtain sufficient conditions for the optimal strategy to take the form \eqref{our_default_continuous}.



To present a more explicit solution to the problem,
we then focus on the spectrally one-sided (asymmetric) \lev process or, equivalently, the \lev process with only negative jumps or only positive jumps. 
Our first task is to obtain the joint Laplace transform of the first Poisson observation time at which the process is in an interval and the position of the process at that instance. We express this Laplace transform in terms of the \emph{scale function} of a spectrally negative \lev process, and, as a direct corollary, the expected payoff under the interval strategy \eqref{our_default_continuous}. With the spectrally one-sided assumption, semi-explicit expressions are elicited, without focusing on a particular set of jump measures. 

Using these expressions in terms of the scale function, we conduct both analytical and computational analyses on American put and call options when the \lev process is spectrally one-sided. We first consider the put option and analyze the first-order conditions that the optimal upper and lower boundaries must satisfy. For the call option, we verify the put-call symmetry formula (see
e.g. \cite{CarrChesney,EberleinPapantaleon, FM}) and reduce the call option problem to a put option problem. 

%
%
%
%
%

These results are confirmed numerically using the examples of a \lev process with exponential downward or upward jumps.  We demonstrate that using the obtained analytical results, the optimal strategy and the optimal value function can be computed instantaneously, enabling us to conduct a series of numerical experiments.  We demonstrate that the stopping region becomes an interval and confirm the optimality by comparing it with the expected payoffs under different strategies. We also study the influence of the choice of the rate of observation $\lambda$ the optimal solutions.


\subsection{Other remarks} \label{subsection_remark}

One of our main motivations of this study is to derive an efficient numerical approach for the computation of optimal solutions in the continuous observation case \cite{DPT} that involves the integration of the resolvent measure with respect to the \lev measure; this is required due to the fact that the process can jump to an interval or jump over it.
In our case, on the other hand, the obtained expression is simpler and works for a general spectrally one-sided \lev process, without the need of integration with respect to the \lev measure.
To determine whether our results can be used as an approximation of the results by \cite{DPT}, we confirm both analytically and numerically that the optimal strategies and value function converge to those by \cite{DPT} as the rate of observation $\lambda$ goes to infinity.

As noted in Section \ref{subsection_this_paper}, our problem can be considered as a classical optimal stopping problem (with continuous observation) driven by the process $S^\lambda$, as  in \eqref{S_lambda}, which contains both positive and negative jumps even when $S$ itself is spectrally one-sided (again see Figure \ref{plot_simulated}).
Existing results featuring asset-price processes with two-sided jumps are rather limited in the study of American options.
However, we provide a new analytically tractable case for $S^\lambda$, containing two-sided jumps.  By appropriately selecting the driving process $S$ and $\lambda$, one can construct a wide range of stochastic processes with two-sided jumps.

\subsection{Relevant literature}

In this paper, we adopt the \lev model in which
 the dynamics of asset prices are described with more accuracy with the addition of the  possibility of jumps.
Indeed, several empirical studies have concluded that the log-prices of stocks and other assets have a  heavier left tail than the normal distribution on which the seminal Black-Scholes model was founded. L\'evy processes have a long tradition of modeling financial markets (see e.g.
\cite{B10,Erik1, B24, Cont, B42,B80,Merton, Schoutens}). 
For a more general study of financial models using \lev processes, the reader should   refer to \cite{Cont}. 

%

Regarding the vanilla American options driven by \lev processes, as demonstrated by Mordecki \cite{Mordecki}, the optimality of a barrier strategy generally holds if the discount rate is positive.
Many have  succeeded in showing the optimality of a barrier strategy in related optimal stopping problems \cite{alili, Asmussen_2004, BoyLev, Chan, Darling, Gapaev1, Gapaev2}. 
However, compared to the abundance of established results on  perpetual American options, research on the case of a negative discount rate is significantly limited.  

In this paper, we take advantage of the scale function, which is known to exist for one-dimensional diffusions and spectrally one-sided \lev processes. Using this, one can solve the problem for a wide class of stochastic processes without focusing on a particular type.  Regarding the application of the scale function in optimal stopping, we refer to, among others, \cite{Dayanik, DeAngelis}  for the diffusion case and \cite{DPT, Ott, Rodosthenous} for the \lev case.




%

The remainder of the paper is organized as follows. Section \ref{section_general_case} models the problem and obtains the main result for the general \lev case, together with the asymptotic analysis as the rate of observation goes to infinity.
 Section \ref{section_preliminaries} reviews the spectrally negative \lev process and its fluctuation theory.  Next, Section  \ref{section_hitting_time} identifies the quantity related to the first entry time to an interval under Poisson observation times.  Section \ref{section_put_SN} considers American put options for both spectrally negative and positive cases. These results are then extended to the American call options via the put-call symmetry in Section  \ref{section_call} . Finally, Section \ref{section_numerics} is devoted to numerical experiments. 
Throughout the paper, we will follow the convention that $\inf \varnothing = \infty$ and $\sup \varnothing =0$. 

\section{General \lev case} \label{section_general_case}





Throughout this paper, we let $X = (X_t: t \geq 0)$ be a \lev process defined on a probability space $(\Omega, \mathcal{F}, \pp)$
 and  $S_t = \exp (X_t)$ be the price of a stock at time $t \geq 0$.  For each $s>0$, 
we denote by $\pp_s$ the law of $S$ when it starts at $S_0= s$ (i.e. $X_0 = \log s$) and write for convenience  $\pp$ in place of $\pp_1$. In addition, we shall write $\eee_s$ and $\eee$ for the associated expectation operators. We define 
\begin{align}
\mathcal{T}^\lambda := ( T_1^\lambda, T_2^\lambda, \ldots ) \label{T_lambda}
\end{align}
 as the jump times of an {\it independent Poisson process} $N^\lambda$ with rate $\lambda > 0$.  Let $\mathbb{F} = (\mathcal{F}_t)_{t \geq 0}$ be the filtration generated by the processes $(X,N^\lambda)$ and $\mathbb{T}$ the set of  $\mathbb{F}$-stopping times.  The set of strategies is  given by $\mathcal{T}^{\lambda} \cup \{\infty\}$-valued stopping times:
\begin{align*}
\mathcal{A} := \{ \tau \in \mathbb{T}: \tau \in \mathcal{T}^{\lambda}\cup \{\infty\} \; {\rm a.s.} \}.
\end{align*}


We consider perpetual American-type put/call options:
\begin{align}
V_i(s) = \sup_{\tau \in \mathcal{A}} \eee_s [e^{-r \tau} G_i(S_\tau) 1_{\{ \tau < \infty \}}
], \quad i = p, c, \label{value_function} \end{align}
for the payoff functions
\begin{align*}
G_p(s) := (K-s)^+ \quad \textrm{and} \quad G_c(s) := (s-K)^+, \quad s > 0,
\end{align*}
where $K > 0$ is the strike price. We are particularly interested in the case discount rate
\begin{equation} \label{assump_negative_r}
r<0
\end{equation}
since the positive case was already analyzed in \cite{PY_American}.

\subsection{Assumptions}
Throughout this paper, in addition to the assumption \eqref{assump_negative_r}, in order to focus on the case the value function is finite, we assume the following three assumptions.
\begin{assump} \label{assump_lambda_r}
We assume $\lambda +r > 0$.
\end{assump}
Notice that this is a natural assumption and it holds if and only if $\E [e^{-r T_1^\lambda}] < \infty$; if this is violated, the expected net present value of the wealth of a unit value at the next observation time becomes infinity.

For the call case, we additionally assume the following.
\begin{assump} \label{assump_lambda_r_alpha} For the call option ($i = c$), we assume $\eee S_1 < \infty$ and  $\lambda + r - \log \eee S_1 > 0$ so that 
\begin{align} \label{call_n_finite}
 \eee \big[e^{-{r} T_n^\lambda} S_{T_n^\lambda} \big] = \eee \big[\eee [e^{-{r} T_n^\lambda} S_{T_n^\lambda} |T_n^\lambda] \big]  =  \eee [ e^{-({r}-\log \eee S_1) T_n^\lambda}] = \Big( \frac \lambda {\lambda + r - \log \eee S_1}\Big)^n < \infty, \quad n \geq 1.
\end{align}
\end{assump}
Finally, we assume the following.
\begin{assump} \label{assump_tail_value_function}
For $i = p,c$, we assume 
$\sup_{\tau \in\mathcal{A}} \eee_s [e^{-{r} \tau} G_i(S_\tau)
1_{\{ T_N^\lambda<\tau <\infty  \}} ]\xrightarrow{N \uparrow \infty} 0$ for $s > 0$.
\end{assump}
Following \cite{DPT}, we obtain a sufficient condition for Assumption \ref{assump_tail_value_function} for the put case as follows; a sufficient condition for the call case is given in Lemma \ref{lemma_assumption_finiteness_call}.
\begin{lemma}\label{cond_last_time}
Assumption \ref{assump_tail_value_function} for the put case ($i = p$) is satisfied if  $\eee_s [e^{-{r}T_{\text{last}}(K)}
]<\infty$ for $s > 0$ where
\[
T_{\text{last}}(K) := \sup\{t\geq0: S_t\leq K \}. \label{tau_last}
\]
Note that this guarantees $T_{\text{last}}(K) < \infty$ a.s. 
\end{lemma}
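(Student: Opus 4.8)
The plan is to exploit two facts: the discount rate satisfies $r<0$, so $t \mapsto e^{-rt}$ is nondecreasing; and the put payoff $G_p(S_t) = (K-S_t)^+$ is strictly positive only when $S_t < K$, i.e.\ only while $t \le T_{\text{last}}(K)$. Together these should produce a bound on the truncated expectation that is \emph{uniform} in the stopping time and then vanishes as $N \uparrow \infty$ by dominated convergence.

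First I would record the pointwise domination. Fix $s>0$ and $\tau \in \mathcal{A}$, and set $E_N := \{T_N^\lambda < \tau < \infty\} \cap \{G_p(S_\tau) > 0\}$. On $E_N$ we have $S_\tau < K$, hence $\tau \le T_{\text{last}}(K)$ — so $e^{-r\tau} \le e^{-r T_{\text{last}}(K)}$ because $r<0$ — and also $G_p(S_\tau) = K - S_\tau \le K$ and $T_N^\lambda < T_{\text{last}}(K)$; on $\Omega \setminus E_N$ the product $e^{-r\tau} G_p(S_\tau)\, 1_{\{T_N^\lambda < \tau < \infty\}}$ is zero. Therefore, $\pp_s$-almost surely,
\[
e^{-r\tau} G_p(S_\tau)\, 1_{\{T_N^\lambda < \tau < \infty\}} \;\le\; K\, e^{-r T_{\text{last}}(K)}\, 1_{\{T_N^\lambda < T_{\text{last}}(K)\}}.
\]
The right-hand side does not involve $\tau$, so applying $\eee_s[\,\cdot\,]$ and then taking the supremum over $\tau \in \mathcal{A}$ gives
\[
\sup_{\tau\in\mathcal{A}} \eee_s\!\left[ e^{-r\tau} G_p(S_\tau)\, 1_{\{T_N^\lambda < \tau < \infty\}} \right] \;\le\; K\, \eee_s\!\left[ e^{-r T_{\text{last}}(K)}\, 1_{\{T_N^\lambda < T_{\text{last}}(K)\}} \right].
\]

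The remaining step is to send $N \uparrow \infty$ on the right-hand side. The hypothesis $\eee_s[e^{-r T_{\text{last}}(K)}] < \infty$ forces $T_{\text{last}}(K) < \infty$ $\pp_s$-a.s.\ (since $r<0$, the integrand equals $+\infty$ on $\{T_{\text{last}}(K)=\infty\}$, so this event has probability zero), which is the asserted by-product and also provides the integrable majorant $K e^{-r T_{\text{last}}(K)}$. Since the Poisson jump times satisfy $T_N^\lambda \uparrow \infty$ a.s., we have $1_{\{T_N^\lambda < T_{\text{last}}(K)\}} \downarrow 0$ a.s., and dominated convergence shows the right-hand side tends to $0$; hence Assumption \ref{assump_tail_value_function} holds for $i=p$. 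The only point requiring care is making the domination uniform in $\tau\in\mathcal{A}$ before integrating — which is precisely what the last-passage bound $\tau \le T_{\text{last}}(K)$ on $\{G_p(S_\tau)>0\}$ delivers — so I do not anticipate a genuine obstacle. I note also that independence of $N^\lambda$ and $X$ plays no role; only $T_N^\lambda \uparrow \infty$ a.s.\ is needed.
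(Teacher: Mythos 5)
Your proof is correct and is essentially the paper's argument: both rely on the observation that on the event where the payoff is positive, $S_\tau < K$ forces $\tau \le T_{\text{last}}(K)$, so (using $r<0$) the integrand is dominated uniformly in $\tau$ by $K e^{-r T_{\text{last}}(K)} 1_{\{T_N^\lambda < T_{\text{last}}(K)\}}$, and dominated convergence with $T_N^\lambda \uparrow \infty$ finishes the job. You simply spell out the pointwise domination and the a.s.\ finiteness of $T_{\text{last}}(K)$ more explicitly than the paper's one-line display.
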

\begin{proof}
By this assumption, \eqref{assump_negative_r}, and dominated convergence, we have
$0 \leq \sup_{\tau \in\mathcal{A}} \eee_s [e^{-{r} \tau} (K-S_\tau)^+
1_{\{ T_N^\lambda<\tau <\infty  \}} )\leq K \eee_s [e^{-{r}T_{\text{last}}(K)}
1_{\{ T_{\text{last}}(K)  > T_N^\lambda \}}
]\xrightarrow{N \uparrow \infty} 0$.
\end{proof}
Concise sufficient conditions for this result are given in Lemmas  \ref{lemma_tail_SN}  and \ref{lemma_tail_SP} for spectrally negative and positive L\'evy processes, respectively.


\subsection{Optimal strategies for a general \lev model} \label{section_optimality_barrier}


In this section, we show that the optimal stopping times for the problem \eqref{value_function} for both call and put cases are of the form
\begin{align} \label{barrier_strategies}
\tau_{[L,U]} := \inf \{\tau \in \mathcal{T}^{\lambda}: S_{\tau } \in [L, U] \}
\end{align}
for suitably chosen barriers $L \in [0, \infty)$ and $U\in(0, \infty]$, or otherwise the stopping region is empty. With abuse of notation, it is understood that $\tau_{[0,U]} := \inf \{\tau \in \mathcal{T}^{\lambda}: S_{\tau } \leq U \}$ and  $\tau_{[L,\infty]} := \inf \{\tau \in \mathcal{T}^{\lambda}: S_{\tau } \geq L \}$.

To show this, we consider the value function of an auxiliary problem where \emph{immediate stopping is also allowed}:
\begin{align} \label{extended_problem}
\bar{V}_i(s) := \sup_{\tau \in \bar{\mathcal{A}}} \eee_s [e^{-{r} \tau} G_i(S_\tau) 1_{\{ \tau < \infty \}}], \quad i = p, c, \quad s > 0, \end{align}
where $\bar{\mathcal{A}} := \{ \tau \in \mathbb{T}: \tau \in \bar{\mathcal{T}}^\lambda \cup \{\infty\} \; {\rm a.s.} \}$
with $\bar{\mathcal{T}}^\lambda := \mathcal{T}^\lambda \cup \{0\}$.

To see why we consider this version, note that by the strong Markov property,
\begin{align*}
V_i(s) = \eee_s [e^{- {r} T_1^\lambda} \bar{V}_i(S_{T_1^\lambda})], \quad i = p, c, \quad s > 0.
\end{align*}
If \eqref{extended_problem} is solved by  a stopping time
\begin{align*}
\bar{\tau}_{[L,U]} := \inf \{ \tau \in \bar{\mathcal{T}}^\lambda : S_{\tau } \in [L, U] \},
\end{align*}
it is clear that \eqref{value_function}  is solved by \eqref{barrier_strategies} for the same values of $L$ and $U$. Hence, we shall analyze $\bar{V}_i$ below.

Similarly to the proof of Proposition 3.1 of  \cite{PY_American}, we first show the following crucial fact.
\begin{proposition} \label{proposition_convexity}The mappings $s \mapsto \bar{V}_p(s)$ and $s \mapsto \bar{V}_c(s)$ are finite and  convex on $(0, \infty)$.
\end{proposition}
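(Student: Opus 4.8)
The plan is to mimic the strategy of Proposition 3.1 of \cite{PY_American}, establishing convexity of $\bar V_p$ and $\bar V_c$ by exhibiting them as suprema of convex functions of $s$, and establishing finiteness from the integrability assumptions. First I would fix $i \in \{p,c\}$ and note the key structural fact: because $S_t = \exp(X_t)$ with $X$ a L\'evy process, for any $\mathcal{A}$-valued stopping time $\tau$ (which is a.s.\ equal to some $T_n^\lambda$ or $\infty$), the map $s \mapsto \eee_s[e^{-r\tau} G_i(S_\tau) 1_{\{\tau < \infty\}}]$ can be rewritten, using the spatial homogeneity of $X$, as $\eee_1[e^{-r\tau}G_i(s\, S_\tau) 1_{\{\tau<\infty\}}]$ — that is, starting from $s$ multiplies the terminal value of $S$ by $s$ relative to starting from $1$, provided the stopping rule is expressed in terms of the \emph{increments} of $X$ after time $0$, which is legitimate here since immediate stopping is allowed and the remaining exercise times $T_1^\lambda, T_2^\lambda,\ldots$ together with the driving L\'evy increments have a law that does not depend on $s$. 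Since $G_p(s\cdot y) = (K - sy)^+$ and $G_c(s\cdot y) = (sy - K)^+$ are both convex in $s$ for each fixed $y \geq 0$, and $e^{-r\tau}1_{\{\tau<\infty\}} \geq 0$, each integrand is a nonnegative random convex function of $s$; taking expectations preserves convexity, and then taking the supremum over $\tau \in \bar{\mathcal{A}}$ preserves convexity as well (a pointwise supremum of convex functions is convex). This gives convexity of $\bar V_i$ on $(0,\infty)$, modulo justifying that the supremum is finite so that the function is real-valued.

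For finiteness I would bound $\bar V_i(s)$ above. In the put case, $G_p(S_\tau) \leq K$, so $\bar V_p(s) \leq K \sup_{\tau \in \bar{\mathcal{A}}} \eee_s[e^{-r\tau}1_{\{\tau<\infty\}}] \leq K \sup_{n \geq 0} \eee[e^{-r T_n^\lambda}] = K \sup_{n\geq 0} (\lambda/(\lambda+r))^n$, which is finite precisely because Assumption \ref{assump_lambda_r} gives $\lambda + r > 0$, hence $\lambda/(\lambda+r) > 0$; if $\lambda/(\lambda+r) \le 1$ the sup is $K$, and if it exceeds $1$ one instead uses Assumption \ref{assump_tail_value_function} together with a telescoping/truncation argument to control the tail over large $n$ — indeed $\eee_s[e^{-r\tau}G_p(S_\tau)1_{\{\tau<\infty\}}] = \eee_s[e^{-r\tau}G_p(S_\tau)1_{\{\tau \le T_N^\lambda\}}] + \eee_s[e^{-r\tau}G_p(S_\tau)1_{\{T_N^\lambda < \tau < \infty\}}]$, the first term is bounded by $K\sum_{n=0}^N (\lambda/(\lambda+r))^n < \infty$ and the second tends to $0$ uniformly in $\tau$ by Assumption \ref{assump_tail_value_function}, so the supremum is finite. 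In the call case, $G_c(S_\tau) \leq S_\tau$, and the same truncation argument applies with $K\sum_{n=0}^N(\lambda/(\lambda+r))^n$ replaced by $\sum_{n=0}^N \eee[e^{-rT_n^\lambda}S_{T_n^\lambda}] = \sum_{n=0}^N (\lambda/(\lambda + r - \log\eee S_1))^n < \infty$ by Assumption \ref{assump_lambda_r_alpha}, and the tail again handled by Assumption \ref{assump_tail_value_function}.

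The main technical point to get right — and what I expect to be the principal obstacle — is the scaling identity $\eee_s[e^{-r\tau}G_i(S_\tau)1_{\{\tau<\infty\}}] = \eee_1[e^{-r\tau}G_i(sS_\tau)1_{\{\tau<\infty\}}]$ at the level of stopping times, not just fixed times. One must argue that the family $\bar{\mathcal{A}}$ of admissible strategies is ``the same'' under $\pp_s$ and $\pp_1$ in the appropriate sense: since every $\tau \in \bar{\mathcal{A}}$ is a stopping time in the filtration generated by $(X, N^\lambda)$ and takes values in $\bar{\mathcal{T}}^\lambda \cup \{\infty\}$, and since under $\pp_s$ the process is $(\log s + X_t: t\ge 0)$ with the \emph{same} increment law and the \emph{same} independent Poisson clock, the correspondence $\tau \mapsto \tau$ identifies admissible strategies and, for each such $\tau$, $S_\tau$ under $\pp_s$ has the law of $s\cdot S_\tau$ under $\pp_1$ jointly with $(e^{-r\tau}, 1_{\{\tau<\infty\}})$. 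Making this rigorous is essentially a change-of-variables bookkeeping exercise using the L\'evy property, exactly as in the cited Proposition 3.1 of \cite{PY_American}; once it is in place, convexity and finiteness follow as above. \qed
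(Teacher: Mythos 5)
Your proof is essentially correct but takes a genuinely different route from the paper for the convexity part. The paper defines the finite-horizon value functions $\bar{V}_{i,0}^N$, proves they are convex by backward induction (quoting Proposition 3.1 of \cite{PY_American}), and then passes to the pointwise limit as $N\to\infty$ via Assumption \ref{assump_tail_value_function}; the scaling property of $S$ is exploited only at the single fixed Poisson epoch $T_1^\lambda$ inside the Bellman recursion, where $\E_s[e^{-rT_1^\lambda} f(S_{T_1^\lambda})] = \E[e^{-rT_1^\lambda} f(sS_{T_1^\lambda})]$ is straightforward. You instead work directly with the infinite-horizon representation as a supremum and invoke scaling at arbitrary stopping times. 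This is valid, but only after the strategy class is reparametrized in terms of the increments of $X$: as stated, for a \emph{fixed} $\tau\in\bar{\mathcal{A}}$ adapted to the paper's filtration (generated by the absolute path $X$), the map $s\mapsto\eee_s[e^{-r\tau}G_i(S_\tau)1_{\{\tau<\infty\}}]$ need not be convex — take, e.g., $\tau = T_1^\lambda 1_{\{X_0>0\}} + T_2^\lambda 1_{\{X_0\le 0\}}$, which produces a jump at $s=1$. You anticipate this by writing ``provided the stopping rule is expressed in terms of the increments of $X$ after time $0$,'' and the second paragraph of your proposal is precisely the needed reduction (both strategy classes give the same supremum for each fixed $s$, and in the increment-adapted parametrization each integrand is indeed convex); but this is the load-bearing step, not a mere bookkeeping remark, and should be spelled out rather than deferred. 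Your finiteness argument — truncating at $T_N^\lambda$, bounding the head by $\sum_{n\le N}\E_s[e^{-rT_n^\lambda}G_i(S_{T_n^\lambda})]$ via Assumptions \ref{assump_lambda_r}--\ref{assump_lambda_r_alpha}, and killing the tail with Assumption \ref{assump_tail_value_function} — coincides with the paper's. What each approach buys: the paper's backward induction keeps the scaling argument local and uncontroversial at the cost of introducing the finite-horizon family and an extra limiting step; your approach is shorter once the reparametrization is in place, but shifts the subtlety into the identification of strategy classes under $\pp_s$ and $\pp_1$.
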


\begin{proof} 

	Define the value function of a finite-maturity case with maturity  $0 \leq N < \infty$:
\begin{align*}
\bar{V}_{i,n}^N(s) &:= \sup_{\tau \in \mathcal{A}_{n,N}} \eee \big[e^{-{r} (\tau- T_n^\lambda)} G_i(S_\tau)
| S_{T_n^\lambda} = s
\big], \quad 0 \leq n \leq N, \quad i = p,c, \quad s > 0,
\end{align*} 	
where 
\begin{align*}
\mathcal{A}_{n,N} &:= \{ \tau \in \bar{\mathcal{A}}: T_n^\lambda \leq \tau \leq T_N^\lambda  \; \, {\rm a.s.} \}, \quad 0 \leq n \leq N,
\end{align*}
with $T_0^\lambda := 0$.
In other words, this is the expected value on condition that $S_{T_n^\lambda} =s $ and the controller has not stopped before $T_n^\lambda$ and optimally stops afterwards.

Similarly, we define its infinite-horizon case:
\begin{align*}
\bar{V}_{i,n}(s) &:= \sup_{\tau \in \mathcal{A}_n} \eee \big[e^{-{r} (\tau- T_n^\lambda)} G_i(S_\tau)
1_{\{\tau < \infty \}}
| S_{T_n^\lambda} = s \big],  \quad i = p,c, \quad s > 0,
\end{align*}
where
\begin{align*}
	\mathcal{A}_n &:= \{ \tau \in \bar{\mathcal{A}}: \tau \geq T_n^\lambda \, {\rm a.s.} \}, \quad n \geq 0.
\end{align*}
It is clear that, for all $n \geq 0$, $s > 0$, and $i = p,c$,
\begin{align*}
\bar{V}_{i,n}^N(s) = \bar{V}_{i,0}^{N-n}(s),  \; N \geq n, \quad \textrm{and} \quad \bar{V}_{i,n}(s) = \bar{V}_i(s). 
\end{align*}

For $1 \leq  n \leq N$, thanks to the positivity of the payoff,  we have $\bar{V}_{i,n}^N(s) = \bar{V}_{i,0}^{N-n} (s) \leq  \sum_{j = 0}^{N-n} \eee_s \big[e^{-{r} T_j^\lambda} G_i(S_{T_j^\lambda})
\big]$,  which is finite by Assumption \ref{assump_lambda_r} for the put case and by Assumption \ref{assump_lambda_r_alpha}  for the call case. By this and Assumption $\ref{assump_tail_value_function}$, $\bar{V}_{i,n}(s)$ is finite as well.

By backward induction (similarly to the case of discrete-time optimal stopping problems),
and following the proof of
Proposition 3.1 of \cite{PY_American},  it can be shown that $\bar{V}_{i,0}^N(s)$ is convex for each $N$. See also \cite{Shiryaev}. Hence, in order to see if the convexity holds also for $\bar{V}_{i}$, it suffices to show that $\bar{V}_{i,0}^N(s) \xrightarrow{N \uparrow \infty} \bar{V}_{i}(s)$
for each $s > 0$ and $i = p,c$. 
This indeed holds because
\begin{align*}
0 \leq \bar{V}_{i}(s) - \bar{V}_{i,0}^N(s)
&= \sup_{\tau \in \bar{\mathcal{A}}} \eee_s [e^{-{r} \tau} G_i(S_\tau) 1_{\{\tau < \infty \}}
 ] - \sup_{\tau \in \bar{\mathcal{A}}} \eee_s [e^{-{r} \tau} G_i(S_\tau)
1_{\{ \tau \leq T_N^\lambda \}} ]  \leq\sup_{\tau \in \bar{\mathcal{A}}} \eee_s [e^{-{r} \tau} G_i(S_\tau)
1_{\{ \tau > T_N^\lambda \}}],
\end{align*} 
which vanishes as $N \rightarrow \infty$ by Assumption \ref{assump_tail_value_function}.

\end{proof}
	The proof of the following is deferred to Appendix \ref{appen_lemma_varepsilon_stopping}.
\begin{lemma}\label{lemma_varepsilon_stopping}
We have $\inf_{0 < s \leq K} (\bar{V}_p(s) - G_p(s)) = 0$ and $\inf_{s \geq K} (\bar{V}_{c}(s) - G_{c}(s)) = 0$. 
\end{lemma}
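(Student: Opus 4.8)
One inequality is free: since $0\in\bar{\mathcal{T}}^\lambda$, stopping immediately gives $\bar{V}_p(s)\ge G_p(s)$ and $\bar{V}_c(s)\ge G_c(s)$ for every $s>0$, so both infima are $\ge 0$. The content is the reverse inequality, which I would prove by contradiction, combining the dynamic programming equation with Assumption \ref{assump_tail_value_function}. First I would record the Bellman equation: conditioning on $\mathcal{F}_{T_1^\lambda}$ and using the strong Markov property (together with the finiteness of $\bar{V}_i$ from Proposition \ref{proposition_convexity}) yields
\[
\bar{V}_i(s)=\max\Big\{\,G_i(s),\ \eee_s\big[e^{-rT_1^\lambda}\,\bar{V}_i(S_{T_1^\lambda})\big]\Big\},\qquad s>0,\quad i=p,c.
\]

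Now suppose $\delta:=\inf_{0<s\le K}\big(\bar{V}_p(s)-G_p(s)\big)>0$. Then $\bar{V}_p(s)>G_p(s)$ on $(0,K]$, so there the maximum above is realised by the continuation term; and on $(K,\infty)$ one has $G_p(s)=0\le\eee_s[e^{-rT_1^\lambda}\bar{V}_p(S_{T_1^\lambda})]$ (the integrand is nonnegative, since $\bar{V}_p\ge G_p\ge 0$), so again the continuation term wins. Hence $\bar{V}_p(s)=\eee_s[e^{-rT_1^\lambda}\bar{V}_p(S_{T_1^\lambda})]$ for \emph{all} $s>0$. Iterating this identity along the Markov chain $(S_{T_n^\lambda})_{n\ge 0}$ gives $\bar{V}_p(s)=\eee_s[e^{-rT_n^\lambda}\bar{V}_p(S_{T_n^\lambda})]$ for every $n\ge 1$.

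It then remains to let $n\to\infty$. By the strong Markov property at $T_n^\lambda$ — this is exactly the identity $\bar{V}_{p,n}\equiv\bar{V}_p$ used in the proof of Proposition \ref{proposition_convexity} — one has $\eee_s[e^{-rT_n^\lambda}\bar{V}_p(S_{T_n^\lambda})]=\sup_{\tau\in\bar{\mathcal{A}},\,\tau\ge T_n^\lambda}\eee_s[e^{-r\tau}G_p(S_\tau)1_{\{\tau<\infty\}}]$, and since any such $\tau$ with $\tau<\infty$ satisfies $\tau\ge T_n^\lambda>T_{n-1}^\lambda$, this is at most $\sup_{\tau\in\bar{\mathcal{A}}}\eee_s[e^{-r\tau}G_p(S_\tau)1_{\{T_{n-1}^\lambda<\tau<\infty\}}]$, which tends to $0$ by Assumption \ref{assump_tail_value_function}. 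Thus $\bar{V}_p\equiv 0$ on $(0,\infty)$, contradicting $\bar{V}_p(s)\ge G_p(s)=K-s>0$ for $s\in(0,K)$; hence $\delta=0$. The call case is identical in structure: assuming $\inf_{s\ge K}(\bar{V}_c(s)-G_c(s))>0$ forces $\bar{V}_c(s)=\eee_s[e^{-rT_1^\lambda}\bar{V}_c(S_{T_1^\lambda})]$ both on $[K,\infty)$ (continuation strictly dominates) and on $(0,K)$ (there $G_c=0$), hence on all of $(0,\infty)$, and the same iteration gives $\bar{V}_c\equiv 0$, contradicting $\bar{V}_c(s)\ge s-K>0$ for $s>K$.

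The only step that is not routine bookkeeping is the limit $n\to\infty$: this is precisely the role of Assumption \ref{assump_tail_value_function}, whose concrete sufficient conditions are Lemma \ref{cond_last_time} in the put case and Lemma \ref{lemma_assumption_finiteness_call} in the call case. Everything else (the Bellman equation and the iteration) uses only the finiteness of $\bar{V}_p$ and $\bar{V}_c$ from Proposition \ref{proposition_convexity}; convexity is not needed here.
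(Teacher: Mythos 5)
Your proof is correct, but it follows a genuinely different route from the paper's. The paper argues at a fixed $s$: it picks $\varepsilon$-optimal strategies $\tau_n$ that stop only below $K$, uses the dynamic programming principle to show that $(e^{-rT_k^\lambda}\bar{V}_p(S_{T_k^\lambda}))_{k\ge0}$ is a supermartingale, and then combines optional sampling with Fatou's lemma to deduce $\eee_s[e^{-r\tau_n}1_{\{\tau_n<\infty\}}]\to0$ — which contradicts the elementary lower bound $\eee_s[e^{-r\tau_n}1_{\{\tau_n<\infty\}}]\ge K^{-1}\eee_s[e^{-r\tau_n}G_p(S_{\tau_n})1_{\{\tau_n<\infty\}}]\to K^{-1}\bar{V}_p(s)>0$. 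You instead observe that a uniformly positive gap forces the continuation branch of the Bellman equation to win everywhere (using $G_p=0$ above $K$), iterate the one-step operator, and invoke Assumption \ref{assump_tail_value_function} to conclude $\bar{V}_p\equiv0$. Your version makes the role of the tail assumption explicit and treats the call case directly and symmetrically, whereas the paper handles the call by reducing it to a put via the change of measure \eqref{change_measure}. The one step you should flag is the identity $\eee_s[e^{-rT_n^\lambda}\bar{V}_p(S_{T_n^\lambda})]=\sup_{\tau\in\mathcal{A}_n}\eee_s[e^{-r\tau}G_p(S_\tau)1_{\{\tau<\infty\}}]$: the inequality you actually need ($\le$) rests on a measurable selection of near-optimal strategies across starting points; this is standard dynamic programming and is exactly the identity $\bar{V}_{p,n}\equiv\bar{V}_p$ (and $V_i(s)=\eee_s[e^{-rT_1^\lambda}\bar{V}_i(S_{T_1^\lambda})]$) that the paper itself uses without further comment, so your argument is at the same level of rigor — the paper's supermartingale route only needs the easy direction of that comparison.
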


For $i = p,c$, let us define the stopping region 
\begin{align*}
\mathcal{D}_{i} := \{ s > 0: \bar{V}_{i}(s) = G_i(s) \}, 
\end{align*}
and similarly for the classical (continuous observation) case \cite{DPT} by $\mathcal{D}_{i, \infty} := \{ s > 0: V_{i,\infty}(s) = G_i(s) \}$ where $V_{i,\infty}$ is the value function in the classical case. The corresponding continuation regions are defined as their complements.  When $\mathcal{D}_{i, \infty} \neq \varnothing$, as in Lemma 2 in \cite{DPT}, there exist  $0 \leq  L_{p,\infty}^* \leq U_{p,\infty}^* < K$ and $K<  L_{c,\infty}^* \leq U_{c,\infty}^* \leq \infty$ such that $\textrm{cl} (\mathcal{D}_{i, \infty})  = [L_{i,\infty}^*, U_{i,\infty}^*]$, where $\textrm{cl}(\mathcal{D}_{i, \infty})$ denotes the closure of $\mathcal{D}_{i, \infty}$.

\begin{remark} \label{remark_V_G_vanish}
(1) From Lemma \ref{lemma_varepsilon_stopping}, the convexity of $\bar{V}_{p}$, and the fact that $\bar{V}_p(K) > 0$ and $G_p$ is linear on $(0,K)$, if $\mathcal{D}_p = \varnothing$, then we must have $\lim_{s \downarrow 0} (\bar{V}_{p} (s) - G_p(s)) = 0$. (2) Similarly, for the call case, if  $\mathcal{D}_c = \varnothing$, we have $\lim_{s \uparrow \infty} (\bar{V}_{c} (s) - G_c(s)) = 0$.
\end{remark}

We can in fact show that the stopping region for the put case is non-empty; a sufficient condition for the non-emptiness for the call case is given later in Lemma \ref{lemma_nonempty_call}.
\begin{lemma} \label{lemma_stopping_empty}  
	We have $\lim_{s \downarrow 0} (\bar{V}_p(s) - G_p(s)) \geq \lim_{s \downarrow 0} (V_p(s) - G_p(s)) > 0$ and $\mathcal{D}_p \neq \varnothing$.
\end{lemma}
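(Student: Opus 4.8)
The plan is to prove the quantitative lower bound $\lim_{s\downarrow 0}(\bar V_p(s)-G_p(s))\ge \lim_{s\downarrow 0}(V_p(s)-G_p(s))>0$; the non-emptiness $\mathcal{D}_p\ne\varnothing$ will then follow from Remark \ref{remark_V_G_vanish}(1), since if $\mathcal{D}_p=\varnothing$ that remark forces $\lim_{s\downarrow 0}(\bar V_p(s)-G_p(s))=0$, contradicting strict positivity. The first inequality $\bar V_p\ge V_p$ is immediate because $\bar{\mathcal A}\supseteq\mathcal A$, so the whole content is to show $\liminf_{s\downarrow 0}(V_p(s)-G_p(s))>0$.

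The key idea is to exhibit one admissible strategy whose payoff, as $s\downarrow 0$, beats $G_p(s)=(K-s)^+=K-s$ by a strictly positive margin. A natural candidate is the strategy ``stop at the first Poisson time $T_1^\lambda$'', which lies in $\mathcal A$. With this choice
\begin{align*}
V_p(s)\ge \eee_s\big[e^{-rT_1^\lambda}(K-S_{T_1^\lambda})^+\big]\ge \eee_s\big[e^{-rT_1^\lambda}(K-S_{T_1^\lambda})\big]=K\,\eee[e^{-rT_1^\lambda}]-s\,\eee[e^{-(r-\log\eee S_1)T_1^\lambda}],
\end{align*}
the last equality using independence of $X$ and $N^\lambda$, $S_{T_1^\lambda}=s\,S_{T_1^\lambda}/S_0$, and conditioning on $T_1^\lambda$ (the $s$-coefficient is finite: it equals $\lambda/(\lambda+r-\log\eee S_1)$ when $\eee S_1<\infty$; if $\eee S_1=\infty$ one instead dominates $(K-S_{T_1^\lambda})\le K$ on $\{S_{T_1^\lambda}\le K\}$ and $\le 0$ otherwise, so $V_p(s)\ge K\,\eee[e^{-rT_1^\lambda}\mathbf 1_{\{S_{T_1^\lambda}\le K\}}]-s\,\eee[e^{-rT_1^\lambda}]$, and both terms are controlled). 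Hence
\begin{align*}
\liminf_{s\downarrow 0}\big(V_p(s)-G_p(s)\big)\ge K\Big(\eee[e^{-rT_1^\lambda}]-1\Big),
\end{align*}
and since $r<0$ by \eqref{assump_negative_r} while $\eee[e^{-rT_1^\lambda}]=\lambda/(\lambda+r)$ by Assumption \ref{assump_lambda_r} is finite, we get $\eee[e^{-rT_1^\lambda}]=\lambda/(\lambda+r)>1$ precisely because $-r>0$, so the bound is strictly positive. (In the $\eee S_1=\infty$ sub-case one uses instead that $\eee[e^{-rT_1^\lambda}\mathbf 1_{\{S_{T_1^\lambda}\le K\}}]\to$ something $>1$ as... — more carefully, one should pick the strategy ``stop at $T_n^\lambda$ if $S_{T_n^\lambda}\le K$, else continue'' for a suitable fixed $n$, or pick the first Poisson time but restrict to the event of a large downward move; the point is only that there exists an admissible strategy with payoff exceeding $(K-s)$ by a uniform positive amount for all small $s$.)

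The main obstacle is the bookkeeping in the degenerate regime where $\eee S_1$ may be infinite or where the simple one-step strategy does not obviously leave a positive gap — one must choose the comparison strategy carefully so that the $s$-linear correction term stays bounded as $s\downarrow 0$ while the constant term strictly exceeds $K$. A clean way around this, mirroring the structure already used in the excerpt, is: fix any $\varepsilon\in(0,K)$, restrict attention to starting points $s<\varepsilon$, and use the strategy $\tau=\inf\{\tau\in\mathcal T^\lambda: S_\tau\le K\}$ (or even $\le K/2$), which belongs to $\mathcal A$; on the event $\{S_{T_1^\lambda}\le K\}$ one has $G_p(S_\tau)\ge 0$ with $G_p(S_{T_1^\lambda})\ge K-S_{T_1^\lambda}$, and as $s\downarrow 0$ the probability and discounted value of hitting the region below $K$ at the very first observation is bounded below by a constant times $\eee[e^{-rT_1^\lambda}]$, which is $>K\cdot 1=\lim G_p(s)$ after taking $s\downarrow 0$. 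I would present the argument with the simplest viable strategy and relegate the edge-case discussion to a sentence, since only the existence of a positive gap is needed, not its sharp value.
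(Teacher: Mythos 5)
Your overall strategy matches the paper's: lower-bound $V_p$ near $s=0$ by the value of a single admissible one-step rule, show the limit is $K\,\eee[e^{-rT_1^\lambda}]=K\lambda/(\lambda+r)>K$ (using $r<0$ and Assumption \ref{assump_lambda_r}), note $\bar V_p\ge V_p$ since $\bar{\mathcal A}\supseteq\mathcal A$, and conclude $\mathcal D_p\neq\varnothing$ from Remark \ref{remark_V_G_vanish}(1). That skeleton is exactly right.

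The gap is in your main displayed bound: the identity $\eee_s[e^{-rT_1^\lambda}(K-S_{T_1^\lambda})]=K\,\eee[e^{-rT_1^\lambda}]-s\,\eee[e^{-(r-\log\eee S_1)T_1^\lambda}]$ requires $\eee S_1<\infty$ and $\lambda+r-\log\eee S_1>0$, i.e.\ Assumption \ref{assump_lambda_r_alpha}, which the paper imposes only for the call; for the put no moment condition on $S_1$ is assumed, so the correction term may be $+\infty$ and the bound is vacuous. Your fallback is also not correct as stated: under $\pp_s$ one has $S_{T_1^\lambda}=sY$ with $Y=e^{X_{T_1^\lambda}-X_0}$, so $\eee_s[e^{-rT_1^\lambda}S_{T_1^\lambda}1_{\{S_{T_1^\lambda}\le K\}}]\le s\,\eee[e^{-rT_1^\lambda}Y]$, not $s\,\eee[e^{-rT_1^\lambda}]$, and $\eee[e^{-rT_1^\lambda}Y]$ is finite only under the very assumption you are trying to avoid; the closing sentences then trail off without completing the argument. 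The clean repair — and what the paper actually does — is to avoid any linear-in-$s$ expansion and apply Fatou's lemma to a pathwise limit: the paper uses $\tau_{[0,U]}$ with $U<K$ fixed, shows $\tau_{[0,U/s]}\to T_1^\lambda$ and $sS_{\tau_{[0,U/s]}}\to 0$ a.s.\ as $s\downarrow 0$, and gets $\liminf_{s\downarrow 0}V_p(s)\ge K\,\eee[e^{-rT_1^\lambda}]$ with no integrability hypotheses (only a lower bound is needed, and the integrand is nonnegative). The same Fatou argument works verbatim with your simpler strategy $\tau=T_1^\lambda$, since $(K-sY)^+\to K$ a.s.; had you written that instead of the expectation expansion, your proof would be complete.
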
 
\begin{proof}
Fix any  $U < K$. When $S_0 = 1$, because $\{ \tau_{[0,U / s ]}=  T_1^\lambda \} \supset \{ \sup_{0 \leq t \leq T_1^\lambda} S_t \leq  U / s \}$ for any $s>0$, and  $\sup_{0 \leq t \leq T_1^\lambda} S_t< \infty$ a.s., it follows $\pp$-a.s.\  that $\tau_{[0, U / s ]} \xrightarrow{s \downarrow 0} T_1^\lambda$.  
On the other hand, 
\[
s S_{\tau_{[0,\frac U s ]}} \leq s \sup_{0 \leq t \leq T_1^\lambda} S_t + U 1_{\{ \tau_{[0,\frac U s ]} > T_1^\lambda \}}\leq  s \sup_{0 \leq t \leq T_1^\lambda} S_t + U 1_{\{ \sup_{0 \leq t \leq T_1^\lambda} S_t > \frac U s \}} \xrightarrow{s \downarrow 0} 0 \quad \textrm{$\pp$-a.s.}
\]
Therefore using $\eee_s [e^{-{r} \tau_{[0,U]}} G_p(S_{\tau_{[0,U]}}) 1_{\{ \tau_{[0,U]} < \infty \}}] = \eee [e^{-{r} \tau_{[0, U / s]}} G_p(s S_{\tau_{[0, U / s ]}}) 1_{\{ \tau_{[0, U / s]} < \infty \}}]$,
we obtain, by Fatou's lemma and the fact that $\tau_{[0,U]} \in \mathcal{A}$,
\[ \lim_{s \downarrow 0} \bar{V}_p(s) \geq \lim_{s \downarrow 0} V_p(s)  \geq \liminf_{s \downarrow 0}\eee_s [e^{-{r} \tau_{[0,U]}} G_p(S_{\tau_{[0,U]}}) 1_{\{ \tau_{[0,U]} < \infty \}}] = K\eee \left[e^{-r T_1^\lambda}\right]>K = \lim_{s \downarrow 0} G_p(s).\]	
Now,  in view of Remark \ref{remark_V_G_vanish}, we must have $\mathcal{D}_p \neq \varnothing$.
\end{proof}


The main result of this section is given as follows.

\begin{theorem} \label{theorem_barrier_optimal}
(1) For the put option ($i = p$), there exist
\begin{align} \label{L_range_put}
0 < L_{p}^* \leq L_{p,\infty}^* \leq U_{p,\infty}^* \leq U_{p}^* < K
\end{align}
 such that $\tau_{[L_{p}^*, U_{p}^*]}$ solves \eqref{value_function}. 

(2) For the call option ($i = c$), one of the following holds true.

(i) There exist
\begin{align*} 
 K<L_{c}^* \leq L_{c,\infty}^* \leq U_{c,\infty}^* \leq U_{c}^* \leq \infty
\end{align*}
 such that  $\tau_{[L_{c}^*, U_{c}^*]}$ solves \eqref{value_function}.

(ii) We have  $\bar{V}_c(s) > G_c(s)$ for all $s > 0$.

\end{theorem}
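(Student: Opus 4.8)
The plan is to exploit the structural facts already in hand: $\bar V_i$ is finite and convex on $(0,\infty)$ (Proposition \ref{proposition_convexity}), it dominates the payoff $G_i$, $\inf_{0<s\le K}(\bar V_p-G_p)=0$ and $\inf_{s\ge K}(\bar V_c-G_c)=0$ (Lemma \ref{lemma_varepsilon_stopping}), and for the put case $\mathcal D_p\neq\varnothing$ with $\lim_{s\downarrow0}(\bar V_p(s)-G_p(s))>0$ (Lemma \ref{lemma_stopping_empty}). The first, and main, step is to show that $\mathcal D_i$ is a (possibly degenerate or empty) interval. For the put case, fix the strike $K$ and note that on $(0,K)$ we have $G_p(s)=K-s$, which is affine; on $[K,\infty)$, $G_p\equiv0<\bar V_p$ so $\mathcal D_p\subset(0,K)$. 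Since $\bar V_p$ is convex and $G_p$ is affine on $(0,K)$, the difference $\bar V_p-G_p$ is convex on $(0,K)$, hence its zero set $\mathcal D_p$ is an interval; combined with $\lim_{s\downarrow0}(\bar V_p-G_p)>0$ (so $0\notin\overline{\mathcal D_p}$) and $\mathcal D_p\subset(0,K)$, we get $\mathrm{cl}(\mathcal D_p)=[L_p^*,U_p^*]$ with $0<L_p^*\le U_p^*<K$. For the call, $G_c\equiv0<\bar V_c$ on $(0,K]$ so $\mathcal D_c\subset(K,\infty)$; on $(K,\infty)$, $G_c(s)=s-K$ is affine, so by the same convexity argument $\mathcal D_c$ is an interval, giving either $\mathrm{cl}(\mathcal D_c)=[L_c^*,U_c^*]$ with $K<L_c^*\le U_c^*\le\infty$, or $\mathcal D_c=\varnothing$, i.e.\ alternative (ii).

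The second step is to verify that the candidate interval strategy is genuinely optimal for the auxiliary problem $\bar V_i$, i.e.\ $\bar V_i=\eee_s[e^{-r\bar\tau_{[L_i^*,U_i^*]}}G_i(S_{\bar\tau_{[L_i^*,U_i^*]}})1_{\{\bar\tau<\infty\}}]$, and then transfer it to $V_i$ via $V_i(s)=\eee_s[e^{-rT_1^\lambda}\bar V_i(S_{T_1^\lambda})]$ as already noted in the excerpt, so that $\tau_{[L_i^*,U_i^*]}$ solves \eqref{value_function}. Optimality of the first entry to $\mathrm{cl}(\mathcal D_i)$ is the standard conclusion of optimal stopping theory once one knows $\bar V_i$ is the smallest $r$-excessive (for the Poisson-killed chain) majorant of $G_i$ and that the first entry time to the stopping region achieves it: concretely, one checks that the process $e^{-r(t\wedge T_n^\lambda)}\bar V_i(S_{t\wedge T_n^\lambda})$ evaluated along $\mathcal T^\lambda$ is a supermartingale that becomes a martingale up to $\bar\tau_{[L_i^*,U_i^*]}$, using the dynamic programming / Bellman equation $\bar V_i(s)=\max(G_i(s),\eee_s[e^{-rT_1^\lambda}\bar V_i(S_{T_1^\lambda})])$ satisfied by the value function of this discrete-stage problem, together with Assumption \ref{assump_tail_value_function} to rule out mass escaping to infinity in time. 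This is entirely parallel to Proposition 3.1 and its surrounding argument in \cite{PY_American}, so I would invoke that machinery rather than redo it.

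The third step is the sandwich inequalities relating the Poisson-observation boundaries to the continuous-observation ones: $L_p^*\le L_{p,\infty}^*\le U_{p,\infty}^*\le U_p^*$ and $L_c^*\le L_{c,\infty}^*\le U_{c,\infty}^*\le U_c^*$. The key observation is that the Poissonian controller has strictly fewer stopping opportunities than the continuous controller, so $\bar V_i\le V_{i,\infty}$ pointwise (every strategy in $\bar{\mathcal A}$ is an admissible stopping time in the continuous problem). Hence $\mathcal D_i=\{\bar V_i=G_i\}\supseteq\{V_{i,\infty}=G_i\}=\mathcal D_{i,\infty}$, which immediately yields $\mathrm{cl}(\mathcal D_{i,\infty})\subseteq\mathrm{cl}(\mathcal D_i)$, i.e.\ the claimed nesting of the intervals; in the put case, non-emptiness of $\mathcal D_p$ was already secured in Lemma \ref{lemma_stopping_empty}, and in the call case non-emptiness of $\mathcal D_{c,\infty}$ (when it holds) forces alternative (i).

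The main obstacle is the rigorous verification in the second step that the first-entry time to $\mathrm{cl}(\mathcal D_i)$ is optimal and not merely a candidate — in particular handling the closure (stopping on the boundary of $\mathcal D_i$ versus its interior) and confirming that $\bar V_i$ indeed coincides with the least excessive majorant; the finite-horizon approximation $\bar V_{i,0}^N\uparrow\bar V_i$ from Proposition \ref{proposition_convexity}, together with the backward-induction Bellman equation at each stage and Assumption \ref{assump_tail_value_function}, is the tool I would use to close this gap, again following \cite{PY_American} and the discrete-time optimal stopping theory in \cite{Shiryaev}.
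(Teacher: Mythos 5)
Your proposal is correct and follows essentially the same route as the paper's proof: convexity of $\bar V_i$ (Proposition \ref{proposition_convexity}) plus the affine payoff forces the stopping region to be a closed interval, the dynamic programming principle yields optimality of the first Poisson entry time, the domination $\bar V_i\le V_{i,\infty}$ gives the nesting of boundaries, and Lemma \ref{lemma_stopping_empty} rules out emptiness in the put case. The only (cosmetic) difference is that you take the zero set of the convex function $\bar V_p-G_p$ on $(0,K)$ directly, whereas the paper works with the contact set $\tilde{\mathcal D}=\{s:\bar V_p(s)=K-s\}$ of the affine minorant on all of $(0,\infty)$ and then identifies it with $\mathcal D_p$ via $\bar V_p>0$.
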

\begin{proof} 


(1) 
We consider the auxiliary problem  \eqref{extended_problem}. Because $\mathcal{A} \subset \mathbb{T}$ and immediate stopping gives $(K-s)^+$,
\begin{align}
K- s\leq (K-s)^+ \leq \bar{V}_p(s) \leq V_{p, \infty} (s), \quad s > 0.  \label{V_dom_K_s}
\end{align}
Define $\tilde{\mathcal{D}} := \{ s > 0: \bar{V}_{p}(s) = K-s\}$.
By  \eqref{V_dom_K_s} and the convexity of $\bar{V}_p$ as in Proposition \ref{proposition_convexity},  we have either  Case (A):  $\tilde{\mathcal{D}} = [L_{p}^*, U_{p}^* ]$ for some $0 \leq L_{p}^* \leq U_{p}^* \leq \infty$  or otherwise Case (B): $\tilde{\mathcal{D}} = \varnothing$.

(A) Suppose $\tilde{\mathcal{D}}  = [L_{p}^*, U_{p}^* ]$. We show $L_{p}^*$ and $U_{p}^*$ satisfy \eqref{L_range_put} and in addition
\begin{align}
\mathcal{D}_p = \tilde{\mathcal{D}}. \label{V_p_plus}
\end{align}

(a) For $s \in \mathcal{D}_{p, \infty}  
 \subset (0, K)$, we have $V_{p, \infty}(s) = (K-s)^+ =K-s$ and hence \eqref{V_dom_K_s} gives $\bar{V}_{p}(s) = (K-s)^+ = K-s$ as well and hence $s \in \tilde{\mathcal{D}}$. This shows $L_{p}^* \leq L_{p,\infty}^* \leq U_{p,\infty}^* \leq U_{p}^*$. Moreover, by Lemma \ref{lemma_stopping_empty}, we have $L_p^* > 0$.

(b) In order to show that $U^*_p <  K$ and \eqref{V_p_plus},
it suffices to show $\bar{V}_p(s) > 0$ for all $s > 0$.  Indeed, this holds because the payoff function is nonnegative and $S$ can reach any level below $K$ with a positive probability.

Now, by the dynamic programming principle (see, e.g., Theorem 1.11 of Peskir and Shiryaev \cite{PS}), we have
\begin{align*}
\bar{V}_{p}(s) = \max \big((K- s)^+, \eee_s [e^{-{r} T_1^\lambda} \bar{V}_{p}(S_{T_1^\lambda})] \big), \quad s > 0,
\end{align*}
and $\inf \{ \tau \in \bar{\mathcal{T}}^\lambda : \bar{V}_{p}(S_{\tau }) = (K-S_{\tau })^+ \} = \inf \{ \tau  \in \bar{\mathcal{T}}^\lambda : S_{\tau } \in [L_{p}^*, U_{p}^* ] \} = \bar{\tau}_{[L_{p}^*, U_{p}^*]}$ 
is the optimal strategy for the problem \eqref{extended_problem}.
Hence $\tau_{[L_{p}^*, U_{p}^*]}$ is optimal for \eqref{value_function}.

(B) Suppose $\tilde{\mathcal{D}} = \varnothing$. Again because $\bar{V}_{p} > 0$ and it dominates the payoff function $G_p$, we must have $\mathcal{D}_p = \varnothing$, but this does not happen by Lemma \ref{lemma_stopping_empty}. This completes the proof for the put case.

(2) For the call case, similar arguments show that either (i) holds or otherwise  $\{ s > 0: \bar{V}_{c}(s) = s-K\} = \varnothing$. In the latter case, because $\bar{V}_{c} > 0$ and it dominates the payoff function $G_c$, we must have (ii). 


\end{proof}



\begin{remark} \label{remark_emptyset} 
	For $i = p,c$,  if $\mathcal{D}_{i, \infty} \neq \varnothing$ then we must have  $\mathcal{D}_{i} \neq \varnothing$ (because $V_{i,\infty}$ dominates $\bar{V}_{i}$).
\end{remark}
The following is immediate by this remark and Lemma \ref{lemma_stopping_empty}.
\begin{corollary}
We have $\mathcal{D}_{p,\infty} \neq \varnothing$ and $\tau_{[L_{p,\infty}^*, U_{p, \infty}^*]}$ solves the classical case for some
$0 <  L_{p,\infty}^* \leq U_{p,\infty}^* < K$.
\end{corollary}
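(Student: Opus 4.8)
The plan is to read the statement off three ingredients: the near-origin behaviour of $\bar V_p$ recorded in Lemma~\ref{lemma_stopping_empty}, the conditional structural description of the classical stopping region from \cite{DPT}, and a short continuity argument to exclude $L_{p,\infty}^*=0$.

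First I would transfer the near-origin estimate to the classical value function. Lemma~\ref{lemma_stopping_empty} gives $\lim_{s\downarrow 0}\bigl(\bar V_p(s)-G_p(s)\bigr)>0$, and the classical problem admits more strategies than the Poissonian one, so $V_{p,\infty}\ge\bar V_p$ pointwise (this is the domination already invoked in Remark~\ref{remark_emptyset}). Consequently
\[
\liminf_{s\downarrow 0}\bigl(V_{p,\infty}(s)-G_p(s)\bigr)>0,
\]
a bound I will call $(\star)$.

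Next, I would invoke \cite{DPT}, from which $V_{p,\infty}$ is finite and convex on $(0,\infty)$, majorizes $G_p$, and satisfies $\inf_{0<s\le K}(V_{p,\infty}(s)-G_p(s))=0$; together with $V_{p,\infty}(K)>0$ and the linearity of $G_p$ on $(0,K)$, convexity then forces the classical counterpart of Remark~\ref{remark_V_G_vanish}(1): if $\mathcal{D}_{p,\infty}=\varnothing$ then $\lim_{s\downarrow 0}(V_{p,\infty}(s)-G_p(s))=0$. This contradicts $(\star)$, so $\mathcal{D}_{p,\infty}\neq\varnothing$ (note that Remark~\ref{remark_emptyset} by itself only yields $\mathcal{D}_{p,\infty}\neq\varnothing\Rightarrow\mathcal{D}_p\neq\varnothing$, so it is $(\star)$ that actually rules out the empty case). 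Granted non-emptiness, Lemma~2 of \cite{DPT} delivers $0\le L_{p,\infty}^*\le U_{p,\infty}^*<K$ with $\textrm{cl}(\mathcal{D}_{p,\infty})=[L_{p,\infty}^*,U_{p,\infty}^*]$ and the optimality of $\tau_{[L_{p,\infty}^*,U_{p,\infty}^*]}$ for the classical problem.

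Finally, I would upgrade $L_{p,\infty}^*\ge 0$ to $L_{p,\infty}^*>0$ by continuity: $V_{p,\infty}$ is convex, hence continuous, on $(0,\infty)$, so $V_{p,\infty}-G_p$ is continuous and $(\star)$ provides a $\delta>0$ with $V_{p,\infty}(s)>G_p(s)$ for every $s\in(0,\delta)$; thus $(0,\delta)\cap\mathcal{D}_{p,\infty}=\varnothing$, i.e.\ $L_{p,\infty}^*\ge\delta>0$. The only mildly delicate point I anticipate is identifying the exact statements in \cite{DPT} behind the dichotomy used above; the rest is bookkeeping — in particular the near-origin bound is exactly (and only) the content of Lemma~\ref{lemma_stopping_empty}, and no new fluctuation-theoretic estimate enters.
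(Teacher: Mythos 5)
Your proof is correct and matches the intended argument: the paper's one-line justification (``immediate by this remark and Lemma~\ref{lemma_stopping_empty}'') is exactly the combination of the near-origin bound from Lemma~\ref{lemma_stopping_empty}, the domination $V_{p,\infty}\ge \bar V_p$ extracted from Remark~\ref{remark_emptyset}, and the classical analogue of Remark~\ref{remark_V_G_vanish}(1), which is precisely what you spell out. You also correctly flag that Remark~\ref{remark_emptyset} as stated goes in the wrong direction and that it is the domination plus the strict positivity of the limit at the origin — not the remark's implication — that both rules out $\mathcal{D}_{p,\infty}=\varnothing$ and upgrades $L_{p,\infty}^*\ge 0$ to $L_{p,\infty}^*>0$.
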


While the stopping region for the call case can be empty, it is not empty as long as $r < \log \eee S_1$ as follows.
\begin{lemma} \label{lemma_nonempty_call}
	If $r < \log \eee S_1$, we have $\liminf_{s \rightarrow \infty}  ({V_c(s)} / {G_c(s)}) > 1$ and $\mathcal{D}_c \neq \varnothing$ with $U_c^* < \infty$.
\end{lemma}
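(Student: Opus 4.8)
The plan is to exhibit a single cheap admissible strategy for the call problem whose payoff already beats $G_c$ by a fixed multiplicative factor as $s\to\infty$, and then feed this into Theorem \ref{theorem_barrier_optimal}(2) and Remark \ref{remark_V_G_vanish}(2) to exclude the two degenerate alternatives $\mathcal{D}_c=\varnothing$ and $U_c^*=\infty$.

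Concretely, I would test the admissible strategy $\tau=T_1^\lambda\in\mathcal{A}$. Using spatial homogeneity (under $\pp_s$ the variable $S_{T_1^\lambda}$ has the law of $s\,S_{T_1^\lambda}$ under $\pp$) together with the elementary bound $(y)^+\ge y$, one obtains
\begin{align*}
V_c(s)\;\ge\;\eee\bigl[e^{-rT_1^\lambda}(s\,S_{T_1^\lambda}-K)^+\bigr]\;\ge\; s\,\eee\bigl[e^{-rT_1^\lambda}S_{T_1^\lambda}\bigr]-K\,\eee\bigl[e^{-rT_1^\lambda}\bigr]\;=\;\frac{\lambda s}{\lambda+r-\log\eee S_1}-\frac{\lambda K}{\lambda+r},
\end{align*}
where the first expectation is read off from \eqref{call_n_finite} with $n=1$ (finite by Assumption \ref{assump_lambda_r_alpha}) and the second from the exponential law of $T_1^\lambda$ (finite by Assumption \ref{assump_lambda_r}). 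Dividing by $G_c(s)=s-K$ for $s>K$ and sending $s\to\infty$ then yields $\liminf_{s\to\infty}V_c(s)/G_c(s)\ge\lambda/(\lambda+r-\log\eee S_1)$, and the hypothesis $r<\log\eee S_1$ makes this quantity strictly larger than $1$. This proves the first assertion.

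For the rest, since $V_c\le\bar V_c$ there are $\delta>0$ and $M>K$ with $\bar V_c(s)-G_c(s)\ge V_c(s)-G_c(s)\ge\delta\,G_c(s)$ for all $s>M$, so $\bar V_c(s)-G_c(s)\to\infty$. Hence $\mathcal{D}_c\ne\varnothing$, since otherwise Remark \ref{remark_V_G_vanish}(2) would force $\bar V_c(s)-G_c(s)\to0$; consequently alternative (ii) of Theorem \ref{theorem_barrier_optimal} is ruled out and (i) holds, giving $\textrm{cl}(\mathcal{D}_c)=[L_c^*,U_c^*]$ with $K<L_c^*$. Finally, if we had $U_c^*=\infty$, then $\mathcal{D}_c=[L_c^*,\infty)$ and hence $\bar V_c(s)-G_c(s)=0$ for all $s\ge L_c^*$, contradicting $\bar V_c(s)-G_c(s)\to\infty$; therefore $U_c^*<\infty$.

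I expect the whole thing to be essentially routine once the test strategy $T_1^\lambda$ is identified; the only mildly delicate points are the scaling bookkeeping between $\pp_s$ and $\pp$ and the finiteness of the two expectations, both of which are exactly what Assumptions \ref{assump_lambda_r} and \ref{assump_lambda_r_alpha} secure, so I do not anticipate a genuine obstacle.
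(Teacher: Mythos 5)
Your argument is correct, and it actually takes a cleaner route to the key estimate than the paper does. The paper tests the strategy $\tau_{[L,\infty]}$ for a fixed $L>K$, reduces to the starting point $s=1$ by scaling, shows $\tau_{[L/s,\infty]}\to T_1^\lambda$ and $S_{\tau_{[L/s,\infty]}}\to S_{T_1^\lambda}$ a.s.\ as $s\to\infty$, and then passes the limit inside the expectation via Fatou's lemma to obtain $\liminf_{s\to\infty}V_c(s)/s\ge\eee[e^{-rT_1^\lambda}S_{T_1^\lambda}]=\lambda/(\lambda+r-\log\eee S_1)>1$. You instead test $\tau=T_1^\lambda$ directly and replace Fatou by the elementary bound $(y)^+\ge y$, which makes $\eee[e^{-rT_1^\lambda}(sS_{T_1^\lambda}-K)^+]$ immediately computable from below using exactly the two finiteness facts secured by Assumptions \ref{assump_lambda_r} and \ref{assump_lambda_r_alpha}; no limiting argument on the stopping time and no dominated/Fatou machinery is needed. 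Both approaches reach the same liminf bound. Your second paragraph, deducing $\mathcal{D}_c\ne\varnothing$ from Remark \ref{remark_V_G_vanish}(2) and $U_c^*<\infty$ from the resulting contradiction with $\bar V_c-G_c\to\infty$ under alternative (i) of Theorem \ref{theorem_barrier_optimal}(2), spells out in full what the paper invokes in a single terse sentence; it is correct. In short, same conclusion, slightly more elementary and self-contained derivation of the crucial asymptotic inequality.
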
 
\begin{proof}
Consider the strategy $\tau_{[L, \infty]} \in \mathcal{A}$ 
for any $L > K$. Following the proof of Lemma \ref{lemma_stopping_empty}, we have $\tau_{[L/s, \infty]} \to T_1^\lambda$ and $S_{\tau_{[L/s, \infty]}} \to S_{T_1^\lambda}$ $\pp$-a.s.\ as $s \rightarrow \infty$. This, Fatou's lemma and \eqref{call_n_finite} give
\begin{align*}
\liminf_{s \rightarrow \infty} \frac {V_c(s)} s &\geq \liminf_{s \rightarrow \infty} \eee_s \Big[ e^{-r \tau_{[L, \infty]}} \frac {S_{\tau_{[L, \infty]}}- K} s 1_{\{ \tau_{[L, \infty]} < \infty \}} \Big] =  \liminf_{s \rightarrow \infty} \eee \Big[ e^{-r \tau_{[L/s, \infty]}}\frac {s S_{\tau_{[L/s, \infty]}}- K} s 1_{\{ \tau_{[L/s, \infty]} < \infty \}} \Big] \\
&\geq   \eee \Big[ e^{-r T_1^\lambda} \liminf_{s \rightarrow \infty} \Big(S_{\tau_{[L/s, \infty]}}- \frac K s \Big) \Big] = \eee \Big[ e^{-r T_1^\lambda} S_{T_1^\lambda} \Big] 
 =  \frac \lambda {\lambda + r - \log \eee S_1} > 1.
\end{align*}
Now because $\lim_{s \rightarrow \infty} (G_c(s) / s) = 1$ and  by Remark \ref{remark_V_G_vanish}, the proof is complete.
\end{proof}

\subsection{Convergence as $\lambda \rightarrow \infty$} 

Before concluding this section, we show the convergence to the classical case \cite{DPT} as the rate of observation $\lambda$ goes to infinity.
 Solely in this subsection, in order to spell out the dependence on the rate of observation,  for $i = p,c$, let $V_{i, \lambda}(s)$   be the value function, $L_{i,\lambda}^*$ and $U_{i,\lambda}^*$ the optimal barriers and $\mathcal{D}_{i, \lambda}$ the stopping region
 when the rate of observation is $\lambda > 0$. 
  Similarly, we  denote the value functions for the auxiliary case (see \eqref{extended_problem}) by $\bar{V}_{i, \lambda}(s)$.
 
Throughout this subsection, we assume the following.
\begin{assump} \label{assump_D_nonempty} For the call case,  we assume that $\mathcal{D}_{c, \infty} \neq \varnothing$. By Remark \ref{remark_emptyset}, this guarantees $\mathcal{D}_{c, \lambda} \neq \varnothing$ for each $\lambda > 0$.  
\end{assump}



\begin{lemma} \label{lemma_increasing_lambda}
Fix $i = p,c$. 
\begin{enumerate}
\item For each $s > 0$,  $\lambda \mapsto V_{i, \lambda}(s)$   is non-decreasing, and in particular $V_{i, \lambda}(s) \leq V_{i, \infty}(s)$.
\item We have $\lambda \mapsto L_{i,\lambda}^*$ is non-decreasing and $\lambda \mapsto  U_{i,\lambda}^*$ is non-increasing.
\end{enumerate}
\end{lemma}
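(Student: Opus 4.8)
The key structural fact we exploit is the identity $V_{i,\lambda}(s) = \eee_s[e^{-r T_1^\lambda} \bar V_{i,\lambda}(S_{T_1^\lambda})]$ together with the monotonicity of the extended value functions $\bar V_{i,\lambda}$, so it is natural to prove first that $\lambda \mapsto \bar V_{i,\lambda}(s)$ is non-decreasing and then transfer this to $V_{i,\lambda}$ and to the barriers.

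\medskip

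\noindent For part (1), I would argue via a coupling of Poisson observation times with different rates. Fix $0 < \lambda_1 < \lambda_2$. Construct on one probability space a Poisson process $N^{\lambda_2}$ of rate $\lambda_2$ and obtain $N^{\lambda_1}$ by independent thinning, so that $\mathcal{T}^{\lambda_1} \subset \mathcal{T}^{\lambda_2}$ almost surely. Then every admissible strategy for rate $\lambda_1$ (a $\mathcal{T}^{\lambda_1} \cup \{\infty\}$-valued stopping time) is also admissible for rate $\lambda_2$; in the extended problem \eqref{extended_problem}, the same inclusion holds for $\bar{\mathcal{T}}^{\lambda_1} \subset \bar{\mathcal{T}}^{\lambda_2}$. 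Taking suprema over the larger class gives $\bar V_{i,\lambda_1}(s) \leq \bar V_{i,\lambda_2}(s)$, and then $V_{i,\lambda}(s) = \eee_s[e^{-r T_1^\lambda}\bar V_{i,\lambda}(S_{T_1^\lambda})]$ is non-decreasing in $\lambda$ since both the integrand $\bar V_{i,\lambda}$ increases and $e^{-r T_1^\lambda}$ (with $r<0$) has a law that is stochastically increasing in the observation rate $\lambda$ — more carefully, one runs the same coupling argument directly on $V_{i,\lambda}$ by noting $\mathcal{A}_{\lambda_1} \subset \mathcal{A}_{\lambda_2}$. The bound $V_{i,\lambda}(s) \leq V_{i,\infty}(s)$ is immediate since every $\tau \in \mathcal{A}_\lambda$ is a bona fide $\mathbb{F}$-stopping time, hence admissible in the continuous-observation problem; this was already recorded in \eqref{V_dom_K_s} (put case) and in Remark \ref{remark_emptyset}.

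\medskip

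\noindent For part (2), I use the monotonicity of $\bar V_{i,\lambda}$ together with the characterization of the barriers from Theorem \ref{theorem_barrier_optimal}: in the put case $\mathcal{D}_{p,\lambda} = \{s : \bar V_{p,\lambda}(s) = (K-s)^+\} = [L_{p,\lambda}^*, U_{p,\lambda}^*]$, and analogously for the call case (under Assumption \ref{assump_D_nonempty}, which rules out the empty-stopping-region alternative (ii)). If $\lambda_1 < \lambda_2$ then $\bar V_{p,\lambda_1} \leq \bar V_{p,\lambda_2}$, and since both functions dominate $G_p$ pointwise, $\bar V_{p,\lambda_2}(s) = G_p(s)$ forces $\bar V_{p,\lambda_1}(s) = G_p(s)$; hence $\mathcal{D}_{p,\lambda_2} \subset \mathcal{D}_{p,\lambda_1}$, i.e. $[L_{p,\lambda_2}^*, U_{p,\lambda_2}^*] \subset [L_{p,\lambda_1}^*, U_{p,\lambda_1}^*]$, which gives $L_{p,\lambda_1}^* \leq L_{p,\lambda_2}^*$ and $U_{p,\lambda_2}^* \leq U_{p,\lambda_1}^*$. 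The call case is identical with $G_c$ in place of $G_p$.

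\medskip

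\noindent \textbf{Main obstacle.} The only delicate point is making the thinning coupling rigorous with respect to the filtration: admissibility requires the strategy to be an $\mathbb{F}$-stopping time, so one must check that the coupled construction produces a filtration in which all the relevant stopping times remain stopping times (equivalently, that the augmented filtration generated by $(X, N^{\lambda_2})$ contains the one generated by $(X, N^{\lambda_1})$ under the thinning). This is routine but should be stated carefully. A secondary subtlety — only for the call case — is ensuring the barriers are well-defined, i.e. that we are in case (i) of Theorem \ref{theorem_barrier_optimal} rather than case (ii); this is exactly why Assumption \ref{assump_D_nonempty} is imposed throughout the subsection, so there is nothing further to prove there.
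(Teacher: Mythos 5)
Your proof is correct and follows essentially the same route as the paper. The paper's proof couples the observation times via \emph{superposition}: starting from $\mathcal{T}^{\lambda_1}$ it adjoins the jump times of an independent Poisson process of rate $\lambda_2-\lambda_1$ to produce $\mathcal{T}^{\lambda_2}\supset\mathcal{T}^{\lambda_1}$. Your \emph{thinning} construction is the same coupling read in the opposite direction (given $N^{\lambda_2}$ together with the independent Bernoulli marks, the retained and discarded jumps form independent Poisson processes of rates $\lambda_1$ and $\lambda_2-\lambda_1$), so the underlying probability space and filtration issues are identical; in both cases one enlarges the filtration for the rate-$\lambda_2$ problem by the extra marks and appeals to the Markov structure to argue this does not alter $V_{i,\lambda_2}$. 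The argument for part (2) — that $\bar V_{i,\lambda_1}\leq \bar V_{i,\lambda_2}$ together with $\bar V_{i,\lambda_j}\geq G_i$ forces $\mathcal{D}_{i,\lambda_2}\subset\mathcal{D}_{i,\lambda_1}$ — is exactly what the paper does.

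One small correction: your parenthetical heuristic that ``$e^{-rT_1^\lambda}$ (with $r<0$) has a law that is stochastically increasing in $\lambda$'' is backwards. Since $r<0$, $e^{-rT_1^\lambda}$ is an increasing function of $T_1^\lambda$, and $T_1^\lambda\sim\mathrm{Exp}(\lambda)$ is stochastically \emph{decreasing} in $\lambda$, so $e^{-rT_1^\lambda}$ is stochastically decreasing. This does not hurt the proof, since — as you yourself note — one should run the coupling argument directly on $V_{i,\lambda}$ rather than try to compare the two factors in $\eee_s[e^{-rT_1^\lambda}\bar V_{i,\lambda}(S_{T_1^\lambda})]$ separately; but I would simply delete that heuristic sentence. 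Also, for the bound $V_{i,\lambda}\leq V_{i,\infty}$, the references \eqref{V_dom_K_s} and Remark \ref{remark_emptyset} bound $\bar V_i$ rather than $V_i$; the desired inequality then follows via $V_{i,\lambda}\leq\bar V_{i,\lambda}\leq V_{i,\infty}$, which you should spell out.
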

\begin{proof}
We only prove for the put case; the same arguments can be applied to the call case.

(1)
Fix $\lambda_2 > \lambda_1$. The proof holds by the fact that the case with $\lambda_2$ has more opportunities than the case with $\lambda_1$.

More precisely, given $\mathcal{T}^{\lambda_1}$ the set of jump times of a Poisson process $N^{\lambda_1}$, consider its superset $\tilde{\mathcal{T}} = \mathcal{T}^{\lambda_1} \cup \mathcal{T}^{\lambda_2-\lambda_1}$,
where $\mathcal{T}^{\lambda_2-\lambda_1}$ is the set of jump times of a Poisson process $N^{\lambda_2-\lambda_1}$, independent of $N^{\lambda_1}$.
Then, the value function $V_{p,\lambda_2}$  can
be obtained by considering the set $\tilde{\mathcal{T}}$.  Because $\tilde{\mathcal{T}} \supset \mathcal{T}^{\lambda_1}$, we must have $V_{p, \lambda_1}(s) \leq V_{p, \lambda_2}(s)$.

(2) Again, fix $\lambda_2 > \lambda_1$. 
By modifying the proof of (1), the same is true for the auxiliary case (with $\mathcal{A}$ replaced by $\bar{\mathcal{A}}$), and hence
\begin{equation}\label{aux_sp}
 K-s\leq(K-s)^+\leq\bar{V}_{p, \lambda_1}(s) \leq \bar{V}_{p, \lambda_2}(s), \quad s > 0.	
\end{equation}
For $s \in \mathcal{D}_{p, \lambda_2} = [L_{p, \lambda_2}^*, U_{p, \lambda_2}^*] \subset (0,K) $, 
we have $\bar{V}_{p, \lambda_2}(s) = (K-s)^+ =K-s$ and hence \eqref{aux_sp} gives $\bar{V}_{p, \lambda_1}(s) = (K-s)^+ = K-s$ as well (i.e.\ $s \in \mathcal{D}_{p, \lambda_1} =   [L_{p, \lambda_1}^*, U_{p, \lambda_1}^*]$). This shows the claim.

\end{proof}

By using Lemma \ref{lemma_increasing_lambda}, we now show the following convergence results.

\begin{theorem} \label{theorem_convergence}
Fix $i = p,c$. 
\begin{enumerate}
\item The function $V_{i, \lambda}(s) \xrightarrow{\lambda \uparrow \infty} V_{i, \infty}(s)$ uniformly in compact sets on $(0,\infty)$.
\item We have $L_{i,\lambda}^* \xrightarrow{\lambda \uparrow \infty}  L_{i,\infty}^*$ and  $U_{i,\lambda}^* \xrightarrow{\lambda \uparrow \infty}  U_{i,\infty}^*$.
\end{enumerate}
\end{theorem}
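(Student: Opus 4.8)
The plan is to derive both assertions from the monotonicity already established in Lemma~\ref{lemma_increasing_lambda}, together with a Fatou-type approximation of continuous-observation stopping rules by Poissonian ones. Fix $i = p,c$; for the call case, Assumption~\ref{assump_D_nonempty} and Remark~\ref{remark_emptyset} place us in case~(i) of Theorem~\ref{theorem_barrier_optimal} for every $\lambda$, so that $\textrm{cl}(\mathcal{D}_{i,\lambda}) = [L_{i,\lambda}^*, U_{i,\lambda}^*]$ and $\textrm{cl}(\mathcal{D}_{i,\infty}) = [L_{i,\infty}^*, U_{i,\infty}^*] \neq \varnothing$; the put case is handled by the corollary following Remark~\ref{remark_emptyset}. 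For Part~(1), Lemma~\ref{lemma_increasing_lambda}(1) shows that $\lambda \mapsto V_{i,\lambda}(s)$ is non-decreasing with $V_{i,\lambda}(s) \le V_{i,\infty}(s)$, so the finite pointwise limit $\widetilde V_i(s) := \lim_{\lambda \uparrow \infty} V_{i,\lambda}(s) \le V_{i,\infty}(s)$ exists. Once I prove the reverse inequality $\widetilde V_i \ge V_{i,\infty}$, uniform convergence on compacts follows by Dini's theorem: each $V_{i,\lambda}$ is convex on $(0,\infty)$ --- from $V_{i,\lambda}(s) = \eee\big[e^{-r T_1^\lambda} \bar V_{i,\lambda}(s S_{T_1^\lambda})\big]$, the convexity of $\bar V_{i,\lambda}$ in Proposition~\ref{proposition_convexity}, and the fact that $s \mapsto s S_{T_1^\lambda}(\omega)$ is linear --- hence continuous, $V_{i,\infty}$ is continuous, and the family is monotone.

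For the reverse inequality I would realize the Poisson clocks on one probability space with $\mathcal{T}^{1} \subseteq \mathcal{T}^{2} \subseteq \cdots$, for instance as partial superpositions $\mathcal{T}^{n} := \bigcup_{k=1}^{n} \xi^{k}$ of i.i.d.\ unit-rate Poisson processes independent of $X$, so that $\bigcup_n \mathcal{T}^n$ is a.s.\ dense in $[0,\infty)$ while the law of $(X, N^n)$ --- hence $V_{i,n}$ --- is unaffected. Given any stopping time $\sigma$ of the filtration generated by $X$, set $\sigma_n := \inf\{t \in \mathcal{T}^{n} : t \ge \sigma\}$. Then $\sigma_n$ is a stopping time of the filtration generated by $(X, \xi^1, \dots, \xi^n)$ taking values in $\mathcal{T}^n \cup \{\infty\}$, hence admissible for the rate-$n$ problem; moreover $\sigma_n \downarrow \sigma$ a.s.\ and $\{\sigma_n < \infty\} = \{\sigma < \infty\}$ a.s. By the right-continuity of $S$ and continuity of $G_i$,
\begin{align*}
e^{-r\sigma_n} G_i(S_{\sigma_n}) 1_{\{\sigma_n < \infty\}} \xrightarrow{n \uparrow \infty} e^{-r\sigma} G_i(S_\sigma) 1_{\{\sigma < \infty\}} \quad \textrm{a.s.},
\end{align*}
and since these integrands are non-negative and $V_{i,n}(s) \ge \eee_s\big[e^{-r\sigma_n} G_i(S_{\sigma_n}) 1_{\{\sigma_n < \infty\}}\big]$, Fatou's lemma gives $\widetilde V_i(s) = \lim_n V_{i,n}(s) \ge \eee_s\big[e^{-r\sigma} G_i(S_\sigma) 1_{\{\sigma < \infty\}}\big]$. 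Taking the supremum over all stopping times $\sigma$ of the filtration of $X$, which by definition of the classical value equals $V_{i,\infty}(s)$ (see \cite{DPT}), yields $\widetilde V_i(s) \ge V_{i,\infty}(s)$ and completes Part~(1).

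For Part~(2), Lemma~\ref{lemma_increasing_lambda}(2) gives that $L_i^\dagger := \lim_{\lambda \uparrow \infty} L_{i,\lambda}^* = \sup_\lambda L_{i,\lambda}^*$ and $U_i^\dagger := \lim_{\lambda \uparrow \infty} U_{i,\lambda}^* = \inf_\lambda U_{i,\lambda}^*$ exist, and the orderings in Theorem~\ref{theorem_barrier_optimal} give $L_{i,\lambda}^* \le L_{i,\infty}^* \le U_{i,\infty}^* \le U_{i,\lambda}^*$ for all $\lambda$, so $[L_{i,\infty}^*, U_{i,\infty}^*] \subseteq [L_i^\dagger, U_i^\dagger] \subseteq [L_{i,\lambda}^*, U_{i,\lambda}^*]$. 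For the reverse inclusion I would use the dynamic programming identity from the proof of Theorem~\ref{theorem_barrier_optimal}, namely $\bar V_{i,\lambda} = \max(G_i, V_{i,\lambda})$; since $V_{i,\lambda} \uparrow V_{i,\infty} \ge G_i$ by Part~(1), this gives $\bar V_{i,\lambda} \uparrow V_{i,\infty}$ pointwise. If $s$ lies in the interior of $[L_i^\dagger, U_i^\dagger]$, then $s \in [L_{i,\lambda}^*, U_{i,\lambda}^*] = \textrm{cl}(\mathcal{D}_{i,\lambda})$ for every $\lambda$, so by continuity of $\bar V_{i,\lambda} - G_i$ we get $\bar V_{i,\lambda}(s) = G_i(s)$ for all $\lambda$; letting $\lambda \uparrow \infty$ gives $V_{i,\infty}(s) = G_i(s)$, i.e.\ $s \in \mathcal{D}_{i,\infty} \subseteq [L_{i,\infty}^*, U_{i,\infty}^*]$. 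Hence $\textrm{int}([L_i^\dagger, U_i^\dagger]) \subseteq [L_{i,\infty}^*, U_{i,\infty}^*]$, which combined with the inclusion above forces $L_i^\dagger = L_{i,\infty}^*$ and $U_i^\dagger = U_{i,\infty}^*$; in the degenerate case $L_i^\dagger = U_i^\dagger$ the same conclusion follows because non-emptiness of $\mathcal{D}_{i,\infty}$ squeezes $[L_{i,\infty}^*, U_{i,\infty}^*]$ to that single point.

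The main obstacle is the lower bound in Part~(1): one must produce admissible Poissonian rules $\sigma_n$ that converge pathwise to a prescribed continuous-observation rule while staying in $\mathcal{A}$ --- which is precisely what the nested coupling $\mathcal{T}^{1} \subseteq \mathcal{T}^{2} \subseteq \cdots$ and the right-continuity of $S$ are for --- and then pass the non-negative integrands to the limit via Fatou. The remaining steps --- convexity/continuity and Dini in Part~(1), and the order-theoretic pinning-down of the interval endpoints in Part~(2), where non-emptiness of $\mathcal{D}_{i,\infty}$ excludes the degenerate configuration --- are routine by comparison.
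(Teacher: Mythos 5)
Your proof is correct, and the overall skeleton — upper bound from monotonicity in Lemma~\ref{lemma_increasing_lambda}, lower bound by approximating a continuous-observation stopping rule with the first subsequent Poisson epoch, Fatou, Dini for uniformity, and in Part~(2) pinning the interval endpoints from Part~(1) via $\bar V_{i,\lambda} = \max(G_i, V_{i,\lambda})$ — is the paper's. The differences are in Part~(1): the paper couples the clocks by \emph{scaling}, $\mathcal{T}^\lambda = (T_1/\lambda, T_2/\lambda, \ldots)$ for a single rate-one process, and then needs a Borel--Cantelli argument (since these sets are not nested) to get $\tau^\lambda \to \tau_{[L_{i,\infty}^*, U_{i,\infty}^*]}$ a.s.\ along integers, applied only to the \emph{optimal} classical stopping time; you instead couple by \emph{superposition}, $\mathcal{T}^1 \subseteq \mathcal{T}^2 \subseteq \cdots$, so $\sigma_n \downarrow \sigma$ follows directly from nestedness and density without Borel--Cantelli, and you approximate an \emph{arbitrary} $X$-stopping time $\sigma$ before taking a supremum. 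Your route is somewhat cleaner on the probabilistic side and does not require knowing that the continuous-time value is attained; the paper's is shorter since it only tracks one stopping time. Part~(2) is the same argument in direct rather than contradiction form, and your explicit note that $V_{i,\lambda}$ itself inherits convexity (hence continuity) from $\bar V_{i,\lambda}$ via the integral representation is a small but genuine gap-fill relative to the paper, which cites Proposition~\ref{proposition_convexity} (about $\bar V_i$) for the continuity of $V_{i,\lambda}$ without comment.
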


\begin{proof}
(1) In this proof, we assume, for each $\lambda > 0$, $\mathcal{T}^\lambda= (\frac {T_1} \lambda, \frac {T_2} \lambda, ...)$ 
with $T_1, T_2, \ldots$ the arrival times of a common Poisson process with parameter $1$ independent of $X$. Note that this does not cause any issue because $\mathcal{T}^\lambda$ is the set of jump times of a Poisson process $N^\lambda$ with parameter $\lambda$ independent of $X$.




Fix $i = p,c$. By \cite{DPT} and Assumption \ref{assump_D_nonempty}, we have $V_{i, \infty}(s)=\eee_s  [e^{-r \tau_{[L_{i,\infty}^*, U_{i,\infty}^*]}} G_i (S_{\tau_{[L_{i,\infty}^*, U_{i,\infty}^*]}})  1_{\{ \tau_{[L_{i,\infty}^*, U_{i,\infty}^*]} < \infty \}} ]$ for
$\tau_{[L_{i,\infty}^*, U_{i,\infty}^*]}:=\inf\{t\geq 0: S_t\in \mathcal{D}_{i, \infty}  = [L_{i,\infty}^*, U_{i,\infty}^*]\}$. 
Denote the first Poisson arrival time after $\tau_{[L_{i,\infty}^*, U_{i,\infty}^*]}$ by $\tau^\lambda:= \inf \{T\in \mathcal{T}^\lambda: T\geq \tau_{[L_{i,\infty}^*, U_{i,\infty}^*]}\}$ (where it is understood that $\tau^\lambda = \infty$ if $\tau_{[L_{i,\infty}^*, U_{i,\infty}^*]} = \infty$)
and note that it is an $\mathbb{F}$-stopping time (i.e.\ belongs to the set $\mathcal{A}$). By Lemma \ref{lemma_increasing_lambda}(1),  for each $\lambda > 0$,
\begin{align} \label{V_i_bound}
V_{i, \infty}(s) \geq V_{i, \lambda}(s) \geq \eee_s  [e^{-r\tau^\lambda}G_i(S_{\tau^\lambda})  1_{\{ \tau^\lambda < \infty \}}].
\end{align}

By the memoryless property of the exponential random variable, we have $\tau^\lambda - \tau_{[L_{i,\infty}^*, U_{i,\infty}^*]} \sim {\rm Pois}(\lambda)$ conditionally on $\{\tau_{[L_{i,\infty}^*, U_{i,\infty}^*]} < \infty \}$ and hence
\[\sum_{\lambda=1}^\infty \pp (\tau^\lambda-\tau_{[L_{i,\infty}^*, U_{i,\infty}^*]}>\epsilon | \tau_{[L_{i,\infty}^*, U_{i,\infty}^*]} < \infty) = \sum_{\lambda=1}^\infty e^{-\lambda \epsilon}<\infty, \quad \epsilon > 0. 
\]
Hence, Borel-Cantelli lemma gives
$\tau^\lambda \xrightarrow{\lambda \uparrow \infty} \tau_{[L_{i,\infty}^*, U_{i,\infty}^*]}$
 a.s. on  $\{ \tau_{[L_{i,\infty}^*, U_{i,\infty}^*]} < \infty \}$. 


For the call case, notice that Assumption  \ref{assump_D_nonempty}  guarantees that $V_{c, \infty}$ is finite by \cite{DPT}.
In addition, because the \lev process $X$ has  right-continuous paths a.s., 
we have that $S_{\tau^\lambda} \xrightarrow{\lambda \uparrow \infty}S_{\tau_{[L_{i,\infty}^*, U_{i,\infty}^*]}}$ a.s.\ given $\{ \tau_{[L_{i,\infty}^*, U_{i,\infty}^*]} < \infty \}$.
Now Fatou's lemma and \eqref{V_i_bound} give 
\[
V_{c,\infty}(s) \geq \lim_{\lambda \to\infty}V_{c,\lambda}(s) \geq \liminf_{\lambda \to\infty}\eee_s [e^{-r\tau^\lambda}(S_{\tau^\lambda}-K)^+  1_{\{ \tau^\lambda < \infty \}}]= V_{c,\infty}(s).
\]		
The put case holds similarly.


To show the uniform convergence, by the continuity (implied by the convexity by Proposition \ref{proposition_convexity}), the monotonicity as in Lemma \ref{lemma_increasing_lambda} and Dini's theorem, the proof is complete.

(2) 
Fix $i = p,c$. 
By Lemma \ref{lemma_increasing_lambda}(2), we have
$L^\#_{i} := \lim_{\lambda \rightarrow \infty}L^*_{i,\lambda}  \leq L^*_{i,\infty}$. 
To derive a contradiction, let us assume $L^\#_{i} < L^*_{i,\infty}$ and choose $L^\#_{i} < \tilde{L} < L^*_{i,\infty}$. This implies that,   for all $\lambda >0$, $\tilde{L} \in \mathcal{D}_{i, \lambda}$ and hence $\bar{V}_{i,\lambda}(\tilde{L}) = G_i(\tilde{L})$. However,  this is a contradiction because (1) gives
$\lim_{\lambda \rightarrow \infty} \bar{V}_{i,\lambda}(\tilde{L}) = V_{i,\infty}(\tilde{L}) > G_i(\tilde{L})$, 
where the last strict inequality holds because $\tilde{L} \notin \mathcal{D}_{i, \infty}$.
 A similar argument shows that $\lim_{\lambda \rightarrow \infty}U^*_{i,\lambda}  = U^*_{i,\infty}$.


\end{proof}

%
%


		\section{Spectrally negative \lev processes and scale functions}
		\label{section_preliminaries}


Throughout this section, let us assume that the \lev process $X$ is spectrally negative, meaning  it has no positive jumps and that it is not the negative of a subordinator. For notational convenience, we define $\p_{x} := \pp_{\exp(x)}$, under which $X_0=x$ (equivalently $S_0 = e^x$).
In particular, we write  $\p$ in place of $\p_0$. Similarly, we write $\e_x := \eee_{\exp(x)}$ and $\e := \eee_{1}$.



We define the Laplace exponent of $X$ by
		\begin{align*}
			\psi(\theta):=\log \e\big[{\rm e}^{\theta X_1}\big] = \gamma\theta+\frac{\eta^2}{2}\theta^2+\int_{(-\infty,0)}\big({\rm e}^{\theta z}-1 -\theta z \mathbf{1}_{\{z > -1 \}}\big)\Pi(\ud z), \quad \theta \geq 0,
		\end{align*}
		%
		where $\gamma\in \R$, $\eta \ge 0$, and $\Pi$ is a \lev measure on $(-\infty,0)$ satisfying
		\[
		\int_{(-\infty,0)}(1\land z^2)\Pi(\ud z)<\infty.
		\]
		\subsection{Scale functions}
For
$q \geq 0$, we define the $q$-scale function as follows. Let \begin{align}
\Phi(q) := \sup \{ s > 0: \psi(s) = q\}. \label{Phi_big}
\end{align}
The scale function $W^{({q})}$ of $X$ is a mapping from $\R$ to $[0, \infty)$ that takes value zero on the negative half-line, while, on the positive half-line, it is a continuous and strictly increasing function defined by its Laplace transform:
		\begin{align} \label{scale_function_laplace}
			\begin{split}
				\int_0^\infty  \mathrm{e}^{-\theta z} W^{({q})}(z) \diff z &= \frac 1 {\psi(\theta)-q}, \quad \theta > \Phi({q}).
			\end{split}
		\end{align}

We define, for $\theta\in\R$ and $x \in \R$, 
\begin{align*}
	\overline{W}^{(q)} (x; \theta) := \int_0^{x}e^{- \theta z}W^{(q)}(z)\diff z
\end{align*}
and 
		\begin{align} \label{Z_theta}
			Z^{(q)}(x; \theta) &:=e^{\theta x} \left( 1 + (q- \psi(\theta ))\overline{W}^{(q)} (x; \theta)
\right),
		\end{align}
where for $\theta<0$ we only consider the case in which $\psi(\theta)$ can be defined by analytic extension.

				In particular, for $x \in \R$, we let $Z^{(q)}(x) =Z^{(q)}(x; 0)$ and, for $\lambda > 0$,
		\begin{align*} 
			Z^{(q+\lambda)}(x; \Phi(q)) =e^{\Phi(q) x} \left( 1 + \lambda
\overline{W}^{(q+\lambda)} (x; \Phi(q))
	\right).
			\end{align*}
\subsection{Related fluctuation identities} Here, we review several fluctuation identities that will be used later in this paper.

Fix $q \geq 0$. Let us denote, for $a \in \R$, the first passage times:
		\begin{align*}
	\tilde{T}_a^+ :=\inf\{t>0:X_t>a\}\quad \textrm{and} \quad \tilde{T}_a^- :=\inf\{t>0:X_t< a \}.
		\end{align*}
By  identity (8.11) of \cite{K} and Theorem 3.12 in \cite{K}, respectively, we have, for all $x \leq a$,
\begin{align}
\E_x \big[e^{-q \tilde{T}_a^+} 1_{\{\tilde{T}_0^- > \tilde{T}_a^+\}} \big] &= \frac {W^{(q)}(x)} {W^{(q)}(a)},  \label{upcrossing_identity_2} \\
\E_x \big[e^{-q \tilde{T}_a^+} 1_{\{\tilde{T}_a^+<\infty \}} \big] &= e^{-\Phi(q) (a-x)}. \label{upcrossing_identity}
\end{align}
By identity (5) in \cite{Albrecher} and as its limiting case, for $\theta \geq 0$, 
	\begin{align}\label{der_gamma_0_a}
\E_x\left[e^{-q \tilde{T}_0^-+\theta X_{\tilde{T}_0^-}}1_{\{\tilde{T}_0^-<\tilde{T}_a^+\}}\right]	&=Z^{(q)}(x;\theta)-\frac{W^{(q)}(x)}{W^{(q)}(a)}Z^{(q)}(a;\theta), \quad x \leq a, \\
\E_{x} \left[ e^{-q \tilde{T}_0^- + \theta X_{\tilde{T}_0^-}}1_{\{\tilde{T}_0^-<\infty\}}\right]
&=Z^{({q})}(x;\theta)-\frac{\psi(\theta)-q}{\theta-\Phi(q)}W^{({q})}(x), \quad x \in \R, \notag
\end{align}
where the case $\theta = \Phi(q)$ is understood to be the limiting case.

From Corollaries 8.7 and 8.8 of \cite{K}, for any Borel set $A\subseteq[0,a]$
and
$B\subseteq(-\infty,a]$, respectively,
\begin{align}
	\label{resolvent_density_0}
	\E_x \Big[ \int_0^{\tilde{T}_{0}^-\wedge \tilde{T}_a^+ } e^{-qt} 1_{\left\{ X_t \in A \right\}} \diff t\Big] &= \int_A u^{(q)}(x,y;a) \diff y, \quad 0\leq x \leq a, \\
	\E_x \Big[ \int_0^{\tilde{T}_{a}^+ } e^{-qt} 1_{\left\{ X_t \in B \right\}} \diff t\Big] &= \int_{B} r^{(q)}(x,y;a) \diff y, \quad x \leq a, \label{killed_resolvent}
\end{align}
where
\begin{align*}
u^{(q)}(x,y;a) &:= \frac{W^{(q)}(x)}{W^{(q)}(a)}{W^{(q)}} (a-y) -{W^{(q)}} (x-y), \qquad\text{$0\leq y\leq a$,} \\
r^{(q)}(x,y; a)&:=e^{-\Phi(q)(a-x)}W^{(q)}(a-y)-W^{(q)}(x-y),\qquad\text{$ y\leq a$.}
\end{align*}







\section{first entry time to an interval under Poisson observation} \label{section_hitting_time}

		
		

Throughout this section, we continue to assume that $X$ is a spectrally negative L\'evy process, $\lambda > 0$ and $q \geq 0$ (which will be extended to the case $q < 0$ in later sections). Recall $\mathcal{T}^\lambda := (T_n^\lambda; n \geq 1)$ is the set of jump times of an independent Poisson process $N^\lambda$.
	Recall \eqref{barrier_strategies} and consider
		\begin{align*}
			\tilde{\tau}_{[l,u]}  := \tau_{[e^l, e^u]} =  \inf \left\{ 
			T \in \mathcal{T}^\lambda: X_{T} \in[l,u] \right\}, \quad  l<u. 
		\end{align*}

Define for $x,a \in \mathbb{R}$ and $\theta \in \R$ for which $\psi(\theta)$ is well-defined,
		\begin{align}\label{mathcal_L}
			\mathscr{Z}^{(q, \lambda)}_a (x; \theta) &:= Z^{(q+\lambda)}(x;\theta)-\lambda\int_a^xW^{(q)}(x-y)Z^{(q+\lambda)}(y;\theta)\diff y \notag\\
			&=Z^{(q)}(x;\theta)+\lambda\int_0^aW^{(q)}(x-y)Z^{(q+\lambda)}(y;\theta)\diff y,
		\end{align}
where the second equality can be obtained similarly as identity (7) of \cite{LRZ}.
	In particular,
						\begin{align*}
			\mathscr{Z}^{(q, \lambda)}_a (x; \theta) = Z^{(q+\lambda)}(x;\theta), \quad  x \leq a.
		\end{align*}
			By Lemma 2.1 in \cite{LRZ}, we have that, for $\theta\geq0$, $a<b$, and $x \leq b$,
			\begin{align}\label{LRZ_Z}
				\E_x\left[e^{-q \tilde{T}_a^-}Z^{(q+\lambda)}(X_{\tilde{T}_a^-};\theta)1_{\{\tilde{T}_a^-<\tilde{T}_b^+\}}\right]&=\mathscr{Z}^{(q, \lambda)}_a (x; \theta)-\frac{W^{(q)}(x-a)}{W^{(q)}(b-a)}\mathscr{Z}^{(q, \lambda)}_a (b; \theta).
			\end{align}
We also define
\begin{align}\label{fun_L_new}
L^{(q,\lambda)}(x,a;\theta) &:=   e^{\theta x} \Big( \overline{W}^{(q)} (x-a; \theta) - \overline{W}^{(q+\lambda)} (x; \theta)+\lambda\int_0^{x-a}W^{(q)}(y)e^{-\theta y} \overline{W}^{(q+\lambda)} (x-y; \theta) \diff y \Big),
\end{align}
which can be simplified as follows;
its proof is deferred to Appendix \ref{appen_lemma_4.1_a}. 
\begin{lemma} \label{lemma_L_same}
For $\theta \in \R$ 
such that $\psi(\theta) \neq q+\lambda$ and $\psi(\theta)$ is well defined, we have for $a,x \in \R$
\begin{align}\label{fun_L}
L^{(q,\lambda)}(x,a;\theta) = \frac {e^{\theta a}Z^{(q)}(x-a;\theta)-\mathscr{Z}^{(q, \lambda)}_a (x; \theta)} {q+\lambda - \psi(\theta)}.
\end{align}
\end{lemma}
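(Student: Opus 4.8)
\textbf{Proof proposal for Lemma \ref{lemma_L_same}.}

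The plan is to verify the identity \eqref{fun_L} directly by expanding both sides in terms of scale functions and the integral transforms $\overline{W}^{(q)}(\cdot;\theta)$ and $\overline{W}^{(q+\lambda)}(\cdot;\theta)$. First I would substitute the definitions \eqref{Z_theta} of $Z^{(q)}$ and $Z^{(q+\lambda)}$ and the first-line definition of $\mathscr{Z}^{(q,\lambda)}_a$ from \eqref{mathcal_L} into the right-hand side of \eqref{fun_L}. Writing $Z^{(q)}(x-a;\theta)=e^{\theta(x-a)}(1+(q-\psi(\theta))\overline{W}^{(q)}(x-a;\theta))$ and $Z^{(q+\lambda)}(x;\theta)=e^{\theta x}(1+(q+\lambda-\psi(\theta))\overline{W}^{(q+\lambda)}(x;\theta))$, the constant terms $e^{\theta x}$ appearing in $e^{\theta a}Z^{(q)}(x-a;\theta)$ and in $Z^{(q+\lambda)}(x;\theta)$ cancel, so the numerator of the right-hand side becomes
\begin{align*}
e^{\theta x}\Big[(q-\psi(\theta))\overline{W}^{(q)}(x-a;\theta) - (q+\lambda-\psi(\theta))\overline{W}^{(q+\lambda)}(x;\theta)\Big] + \lambda\int_a^x W^{(q)}(x-y)Z^{(q+\lambda)}(y;\theta)\,\diff y.
\end{align*}
Dividing by $q+\lambda-\psi(\theta)$, I would then show this equals $L^{(q,\lambda)}(x,a;\theta)$ as defined in \eqref{fun_L_new}.

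The key step is to handle the last integral $\lambda\int_a^x W^{(q)}(x-y)Z^{(q+\lambda)}(y;\theta)\,\diff y$. Expanding $Z^{(q+\lambda)}(y;\theta)=e^{\theta y}+(q+\lambda-\psi(\theta))e^{\theta y}\overline{W}^{(q+\lambda)}(y;\theta)$ splits it into two pieces. For the first piece $\lambda\int_a^x W^{(q)}(x-y)e^{\theta y}\,\diff y$, the substitution $z=x-y$ gives $\lambda e^{\theta x}\int_0^{x-a}W^{(q)}(z)e^{-\theta z}\,\diff z = \lambda e^{\theta x}\overline{W}^{(q)}(\ast)$ — more precisely this is $\lambda e^{\theta x}$ times $\int_0^{x-a}e^{-\theta z}W^{(q)}(z)\,\diff z$, which after dividing by $q+\lambda-\psi(\theta)$ must combine with the $(q-\psi(\theta))/(q+\lambda-\psi(\theta))$-weighted $\overline{W}^{(q)}(x-a;\theta)$ term to produce exactly the $\overline{W}^{(q)}(x-a;\theta)$ term in \eqref{fun_L_new}; the algebraic check is that $\frac{q-\psi(\theta)}{q+\lambda-\psi(\theta)}+\frac{\lambda}{q+\lambda-\psi(\theta)}=1$. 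For the second piece, after the same change of variables one gets a double integral in $W^{(q)}$ and $\overline{W}^{(q+\lambda)}$ that, upon dividing by $q+\lambda-\psi(\theta)$, should match the convolution-type term $\lambda\int_0^{x-a}W^{(q)}(y)e^{-\theta y}\overline{W}^{(q+\lambda)}(x-y;\theta)\,\diff y$ in \eqref{fun_L_new} after using Fubini. Finally, the $-(q+\lambda-\psi(\theta))\overline{W}^{(q+\lambda)}(x;\theta)$ term divided by $q+\lambda-\psi(\theta)$ produces the $-\overline{W}^{(q+\lambda)}(x;\theta)$ term directly.

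I expect the main obstacle to be the careful bookkeeping of the double-integral term: one must interchange the order of integration correctly and recognize the resulting inner integral as $\overline{W}^{(q+\lambda)}(x-y;\theta)$ evaluated at the right argument, which requires a change of variables that keeps track of the exponential weights $e^{-\theta z}$. It may be cleaner to instead argue by Laplace transforms: since all quantities are, for fixed $a$, determined by their behavior as functions of $x$ (and vanish for $x<a$ appropriately), one could verify that both sides of \eqref{fun_L}, viewed as functions of $x-a$, have the same Laplace transform using \eqref{scale_function_laplace}; this turns the identity into a rational-function identity in the transform variable, which is routine. Since \eqref{mathcal_L} already references that its second equality follows "similarly as identity (7) of \cite{LRZ}", I would likely follow that same computational route. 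Either way the identity is an algebraic manipulation with no probabilistic content, valid under the stated non-degeneracy condition $\psi(\theta)\neq q+\lambda$ and wherever $\psi(\theta)$ is defined (by analytic extension for $\theta<0$).
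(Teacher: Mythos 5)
Your proposal is correct and follows essentially the same direct-substitution route as the paper's Appendix A.2 proof: expand $Z^{(q)}$, $Z^{(q+\lambda)}$ via \eqref{Z_theta} and $\mathscr{Z}^{(q,\lambda)}_a$ via the first line of \eqref{mathcal_L}, cancel the $e^{\theta x}$ constants, absorb the $e^{\theta y}$ piece of the inner $Z^{(q+\lambda)}$ into the $\overline{W}^{(q)}(x-a;\theta)$ term using $\int_a^x W^{(q)}(x-y)e^{\theta y}\,\diff y=e^{\theta x}\overline{W}^{(q)}(x-a;\theta)$ and $(q-\psi(\theta))+\lambda=q+\lambda-\psi(\theta)$, and divide. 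The only small imprecision is that no Fubini is actually needed for the remaining convolution term — the single change of variables $z=x-y$ already turns $\lambda\int_a^x W^{(q)}(x-y)e^{\theta y}\overline{W}^{(q+\lambda)}(y;\theta)\,\diff y$ into $\lambda e^{\theta x}\int_0^{x-a}W^{(q)}(z)e^{-\theta z}\overline{W}^{(q+\lambda)}(x-z;\theta)\,\diff z$, matching \eqref{fun_L_new} directly.
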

	Note that in view of \eqref{fun_L_new}, we have
\begin{align}\label{fun_L_new_below_a}
L^{(q,\lambda)}(x,a;\theta) &=  - e^{\theta x}  \overline{W}^{(q+\lambda)} (x; \theta), \quad x \leq a, \; \theta \in \R.
\end{align}

The following two theorems are the main results of this section.
\begin{theorem} \label{theorem_g}
	We have, for $0 < a < b$, $x \leq b$, and $\theta \in \R$, 
\begin{align}\label{fun_g}
\begin{split}
			\E_x\left[e^{-q \tilde{\tau}_{[0,a]}+\theta X_{\tilde{\tau}_{[0,a]}}}1_{\{\tilde{\tau}_{[0,a]}<\tilde{T}_b^+\}}\right] &=
			\lambda \Big[ L^{(q,\lambda)}(x,a;\theta)
			 - L^{(q,\lambda)}(b,a;\theta)
			\frac{\mathscr{Z}^{(q, \lambda)}_a (x; \Phi(q))}{\mathscr{Z}^{(q, \lambda)}_a (b; \Phi(q))} \Big].
			\end{split}
		\end{align}
\end{theorem}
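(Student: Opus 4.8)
The quantity we wish to compute is a joint transform of the first Poisson observation time at which $X$ falls in $[0,a]$, killed upon exiting at $b$. The natural strategy is a renewal/one-step argument over the Poisson epochs, exactly as in the derivations of \eqref{LRZ_Z}: condition on the first arrival time $T_1^\lambda \sim {\rm Exp}(\lambda)$ and decompose according to whether $X_{T_1^\lambda} \in [0,a]$ (we stop), $X_{T_1^\lambda} \in (a,b]$ (we continue from a new starting point $\leq b$, and by the Markov property the problem restarts), or $X_{T_1^\lambda}$ has crossed $b$ before $T_1^\lambda$ or jumped below $0$ (in the spectrally negative case there are no positive jumps, so crossing $b$ happens continuously, and jumping below $0$ lands in the stopping strip $[0,a]$ only if it lands in $[0,a]$ — but downward jumps can overshoot below $0$, in which case we are no longer in the strip and must continue). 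Rather than organizing this purely probabilistically, the cleaner route is to use the Poisson-observation first-passage identities already assembled: write $e^{-q\tilde\tau_{[0,a]}}$ as an occupation-type functional and feed it through \eqref{LRZ_Z}, \eqref{resolvent_density_0} and the ingredients $\mathscr{Z}^{(q,\lambda)}_a$ and $L^{(q,\lambda)}$.

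\textbf{Key steps.} First I would reduce to the already-known building blocks. The event $\{\tilde\tau_{[0,a]} < \tilde T_b^+\}$ decomposes according to the value of $X$ at the last Poisson epoch before crossing into $[0,a]$: at each epoch $X$ is observed in $(a,b]$, survives, and then between epochs either drifts/jumps into $[0,a]$ at the next epoch, or exits above $b$, or stays in $(a,b]$. Summing the geometric-type series over the number of ``useless'' observations in $(a,b]$ gives a ratio of the form $\bigl(\text{direct hitting term}\bigr)/\bigl(1 - \text{return-to-}(a,b]\text{ term}\bigr)$; the denominator is precisely what produces the $\mathscr{Z}^{(q,\lambda)}_a$ functions via \eqref{LRZ_Z}, since $\mathscr{Z}^{(q,\lambda)}_a(x;\Phi(q))$ is the relevant $\Phi(q)$-exponential-tilted harmonic-type function for the Poisson-observed process killed at $b$. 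The numerator — the transform of landing in $[0,a]$ at a Poisson time, with weight $e^{\theta X}$, before exiting $b$ — is what $L^{(q,\lambda)}(\cdot,a;\theta)$ encodes, as one sees by comparing \eqref{fun_L_new} with the resolvent densities $u^{(q)}$ and $r^{(q)}$: integrating $\lambda e^{\theta y} dy$ against the killed resolvent of $X$ over $y\in[0,a]$, and accounting for excursions, yields exactly \eqref{fun_L_new}, hence by Lemma \ref{lemma_L_same} the form \eqref{fun_L}. Assembling: the transform equals $\lambda L^{(q,\lambda)}(x,a;\theta)$ (the contribution if $b=\infty$, i.e.\ hit $[0,a]$ before escaping to $+\infty$, which for spectrally negative $X$ is automatic) minus a correction $\lambda L^{(q,\lambda)}(b,a;\theta)$ weighted by the probability-like ratio $\mathscr{Z}^{(q,\lambda)}_a(x;\Phi(q))/\mathscr{Z}^{(q,\lambda)}_a(b;\Phi(q))$ that accounts for paths that exit through $b$ first. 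A clean way to pin down the coefficients is to verify the right-hand side satisfies the correct one-step equation in $x$ (apply the strong Markov property at $T_1^\lambda$ and check both sides agree) together with the boundary behaviour: as $x\to b$ (or simply at $x=b$) the claimed expression must vanish (since $\tilde T_b^+ = 0$ when $X_0 = b$, wait—$x\le b$ and if $x=b$ then $\tilde T_b^+=0$ so the indicator is $0$), which forces the ratio to be $1$ at $x=b$, consistent with $\mathscr{Z}^{(q,\lambda)}_a(b;\Phi(q))/\mathscr{Z}^{(q,\lambda)}_a(b;\Phi(q)) = 1$; and for $x\le a$ the expression must reduce to $e^{\theta x}$ (immediate membership in $[0,a]$), which one checks using $\mathscr{Z}^{(q,\lambda)}_a(x;\Phi(q)) = Z^{(q+\lambda)}(x;\Phi(q))$ and \eqref{fun_L_new_below_a} for $x \le a$.

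\textbf{Main obstacle.} The hard part will be the bookkeeping of the excursion/jump contributions in the spectrally negative setting: between two Poisson epochs $X$ can jump downward, possibly overshooting $0$, so ``hitting $[0,a]$ at a Poisson time'' is genuinely a first-passage-over-a-moving-target event rather than a clean two-sided exit, and one must make sure the resolvent-density term $u^{(q)}$ versus $r^{(q)}$ (killed at $b$ only, vs.\ killed at $0$ and $b$) is used correctly — the integral $\int_0^{x-a} W^{(q)}(y) e^{-\theta y}\overline W^{(q+\lambda)}(x-y;\theta)\,dy$ in \eqref{fun_L_new} is exactly the term repairing this. I expect to carry this out by invoking the strong Markov property at $T_1^\lambda$ to obtain the identity
\begin{align*}
\E_x\Bigl[e^{-q\tilde\tau_{[0,a]}+\theta X_{\tilde\tau_{[0,a]}}}1_{\{\tilde\tau_{[0,a]}<\tilde T_b^+\}}\Bigr]
&= \lambda\,\E_x\Bigl[\int_0^{\tilde T_b^+} e^{-(q+\lambda)t}\bigl(e^{\theta X_t}1_{\{X_t\le a\}} + h(X_t)1_{\{a<X_t\le b\}}\bigr)\,dt\Bigr],
\end{align*}
where $h$ is the same transform started from $X_t$, then solving this fixed-point equation using \eqref{LRZ_Z}, \eqref{resolvent_density_0}, \eqref{killed_resolvent} and Lemma \ref{lemma_L_same}, and finally simplifying via the second representation of $\mathscr{Z}^{(q,\lambda)}_a$ in \eqref{mathcal_L}. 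The algebra is routine once the decomposition is set up correctly, but getting the killed-resolvent terms and the $\Phi(q)$-tilting to match the stated closed form is where the care is needed.
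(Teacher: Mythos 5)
Your overall strategy (decompose at the first Poisson epoch, feed the pieces through the killed resolvent densities and the scale-function machinery, and assemble into the $L^{(q,\lambda)}$/$\mathscr{Z}^{(q,\lambda)}_a$ building blocks) is broadly the same route the paper takes, and the guess-and-verify variant you gesture at is in principle viable. But there are two concrete errors that would derail the argument as written, plus a structural gap.

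First, the boundary check for $x\le a$ is wrong. You claim that for $x\le a$ the expression ``must reduce to $e^{\theta x}$ (immediate membership in $[0,a]$)''. It must not: $\tilde\tau_{[0,a]}=\inf\{T\in\mathcal{T}^\lambda: X_T\in[0,a]\}$ and $0\notin\mathcal{T}^\lambda$ (the paper's \eqref{T_lambda} starts at $T_1^\lambda$), so starting inside the strip one still has to wait for the first Poisson arrival, by which time $X$ may have left $[0,a]$. Indeed, combining \eqref{fun_L_new_below_a} with $\mathscr{Z}^{(q,\lambda)}_a(x;\Phi(q))=Z^{(q+\lambda)}(x;\Phi(q))$ for $x\le a$, the right-hand side of \eqref{fun_g} is $-\lambda e^{\theta x}\overline{W}^{(q+\lambda)}(x;\theta)-\lambda L^{(q,\lambda)}(b,a;\theta)Z^{(q+\lambda)}(x;\Phi(q))/\mathscr{Z}^{(q,\lambda)}_a(b;\Phi(q))$, which is not $e^{\theta x}$. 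If you try to calibrate your verification argument against the boundary condition you state, you will obtain a contradiction. (The correct and useful boundary behaviour is at $x\le 0$, where $L^{(q,\lambda)}(x,a;\theta)=0$ and $\mathscr{Z}^{(q,\lambda)}_a(x;\Phi(q))=e^{\Phi(q)x}$, giving the relation $g(x)=g(0)e^{\Phi(q)x}$, which is exactly the paper's \eqref{g_less_0}.)

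Second, the fixed-point equation you propose is missing a continuation term. You write $e^{\theta X_t}1_{\{X_t\le a\}}+h(X_t)1_{\{a<X_t\le b\}}$ inside the killed resolvent. But the target set is $[0,a]$, not $(-\infty,a]$: when the first Poisson observation catches $X$ strictly below $0$ (which a spectrally negative process can reach between epochs by a downward jump overshooting $0$, or continuously), the game continues and should carry a term $g(X_t)1_{\{X_t<0\}}$, not a payoff. You acknowledge the overshoot issue in the plan paragraph, but the equation you actually write (and the decomposition in your ``key steps'' paragraph, where you say ``at each epoch $X$ is observed in $(a,b]$'') drops it. The paper handles precisely this point through the $g_3$ and $h_3$ terms and Lemma \ref{lemma_resolvent}, and the correction does not vanish. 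Finally, the verification approach you outline requires a uniqueness argument for the renewal/fixed-point equation on $(0,b]$ (or else some other closure step), which you do not supply; without it, matching the equation and one boundary condition does not pin down the solution. The paper sidesteps this entirely by solving constructively for $g(a,a,b;\theta)$ and $g(0,a,b;\theta)$ and substituting back.
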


\begin{theorem} \label{theorem_limiting_case}
We have, for $a>0$, $\theta \in \R$, and $x\in\R$,
\begin{align}\label{flu_int}
\begin{split}
\E_x\Big[&e^{-q \tilde{\tau}_{[0,a]}+\theta X_{\tilde{\tau}_{[0,a]}}}1_{\{\tilde{\tau}_{[0,a]}<\infty\}}\Big]=
\lambda L^{(q,\lambda)}(x,a; \theta) -M^{(q,\lambda)}(a; \theta)
\frac{ \mathscr{Z}^{(q, \lambda)}_a (x; \Phi(q)) }
{N^{(q,\lambda)}(a)},
\end{split}
\end{align}
where, for $y\in\R$, 
\begin{align}\label{fun_M}
M^{(q,\lambda)}(y; \theta) &:= \left\{ \begin{array}{ll}
\frac{\lambda e^{-\Phi(q)y}}{\Phi(q)-\theta}\Big[e^{\theta y}+\lambda e^{\theta y}\overline{W}^{(q+\lambda)} (y; \theta) -Z^{(q+\lambda)}(y;\Phi(q)) \Big], & \theta\neq \Phi(q),  \\  -\lambda \Big[ y +\lambda \int_0^{y} (y-z) e^{-\Phi(q) z}W^{(q+\lambda)}(z) \diff z \Big], & \theta = \Phi(q), \end{array} \right. \\
N^{(q,\lambda)}(y) &:= \psi'(\Phi(q))+ \lambda \int_0^{y}e^{-\Phi(q)z}Z^{(q+\lambda)}(z;\Phi(q))\diff z. \nonumber
\end{align}
\end{theorem}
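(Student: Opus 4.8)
The plan is to obtain Theorem \ref{theorem_limiting_case} as the $b \uparrow \infty$ limit of Theorem \ref{theorem_g}. Since $\{\tilde{\tau}_{[0,a]} < \tilde{T}_b^+\} \uparrow \{\tilde{\tau}_{[0,a]} < \infty\}$ as $b \uparrow \infty$ (on $\{\tilde{T}_b^+ = \infty\}$ the event is automatically included, and $\tilde{T}_b^+ \uparrow \infty$ a.s. since $X$ drifts to $-\infty$ or oscillates only in the relevant parameter range; more carefully, one should argue via monotone/dominated convergence bounding $e^{\theta X_{\tilde{\tau}_{[0,a]}}} \le e^{\theta a} \vee e^{\theta \cdot(-\infty)}$-type estimates since $X_{\tilde{\tau}_{[0,a]}} \in [0,a]$ when $a > 0$ and hence $e^{\theta X_{\tilde{\tau}_{[0,a]}}}$ is bounded), one takes the limit of the right-hand side of \eqref{fun_g}. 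The term $\lambda L^{(q,\lambda)}(x,a;\theta)$ does not depend on $b$ and stays. So the whole content is to show
\[
\lambda L^{(q,\lambda)}(b,a;\theta) \frac{\mathscr{Z}^{(q,\lambda)}_a(x;\Phi(q))}{\mathscr{Z}^{(q,\lambda)}_a(b;\Phi(q))} \xrightarrow{b \uparrow \infty} M^{(q,\lambda)}(a;\theta) \frac{\mathscr{Z}^{(q,\lambda)}_a(x;\Phi(q))}{N^{(q,\lambda)}(a)},
\]
i.e., that $\lambda L^{(q,\lambda)}(b,a;\theta) / \mathscr{Z}^{(q,\lambda)}_a(b;\Phi(q)) \to M^{(q,\lambda)}(a;\theta)/N^{(q,\lambda)}(a)$ as $b \to \infty$.

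For this asymptotic ratio, the key input is the known exponential growth rate $W^{(q)}(z) \sim e^{\Phi(q) z}/\psi'(\Phi(q))$ as $z \to \infty$ (and analogously $W^{(q+\lambda)}(z) \sim e^{\Phi(q+\lambda) z}/\psi'(\Phi(q+\lambda))$), together with the representations \eqref{mathcal_L} and \eqref{fun_L_new} (or the simplified \eqref{fun_L}). First I would handle the denominator: using the second line of \eqref{mathcal_L}, $\mathscr{Z}^{(q,\lambda)}_a(b;\Phi(q)) = Z^{(q)}(b;\Phi(q)) + \lambda \int_0^a W^{(q)}(b-y) Z^{(q+\lambda)}(y;\Phi(q)) \diff y$; factoring $e^{\Phi(q) b}$ and applying the asymptotics of $W^{(q)}$ together with $Z^{(q)}(b;\Phi(q)) = e^{\Phi(q) b}$ (since $\psi(\Phi(q)) = q$ makes the correction term vanish), one gets $e^{-\Phi(q) b}\mathscr{Z}^{(q,\lambda)}_a(b;\Phi(q)) \to 1 + \frac{\lambda}{\psi'(\Phi(q))}\int_0^a e^{-\Phi(q) y} Z^{(q+\lambda)}(y;\Phi(q)) \diff y = N^{(q,\lambda)}(a)/\psi'(\Phi(q))$. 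Wait — I should double-check the normalization: $N^{(q,\lambda)}(y) = \psi'(\Phi(q)) + \lambda \int_0^y e^{-\Phi(q) z} Z^{(q+\lambda)}(z;\Phi(q))\diff z$, so $e^{-\Phi(q) b}\mathscr{Z}^{(q,\lambda)}_a(b;\Phi(q)) \to N^{(q,\lambda)}(a)/\psi'(\Phi(q))$ indeed. Next, for the numerator $\lambda L^{(q,\lambda)}(b,a;\theta)$, I would use \eqref{fun_L_new} directly (or \eqref{fun_L} when $\psi(\theta) \ne q+\lambda$), factor out $e^{\Phi(q) b}$, and use the asymptotics of $\overline{W}^{(q)}(\cdot;\theta)$ and $\overline{W}^{(q+\lambda)}(\cdot;\theta)$ (which behave like $e^{(\Phi(q)-\theta) z}/((\Phi(q)-\theta)\psi'(\Phi(q)))$ etc.), showing $e^{-\Phi(q) b}\lambda L^{(q,\lambda)}(b,a;\theta) \to M^{(q,\lambda)}(a;\theta)/\psi'(\Phi(q))$. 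Dividing, the $\psi'(\Phi(q))$ and $e^{\Phi(q) b}$ factors cancel and we obtain the claimed limit. The case $\theta = \Phi(q)$ should be treated either directly (where logarithmic/polynomial corrections appear because $\Phi(q) - \theta = 0$, matching the second branch of \eqref{fun_M}) or by a continuity-in-$\theta$ argument.

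The main obstacle, I expect, is the careful justification of the limit interchange and the precise asymptotics of the integral terms in $L^{(q,\lambda)}(b,a;\theta)$ and $\mathscr{Z}^{(q,\lambda)}_a(b;\Phi(q))$ — in particular verifying that the convolution-type integral $\lambda \int_0^{b-a} W^{(q)}(y) e^{-\theta y}\overline{W}^{(q+\lambda)}(b-y;\theta)\diff y$ in \eqref{fun_L_new} has the asserted leading exponential behavior, which requires splitting the integral and using dominated convergence with the known two-sided bounds on scale functions. The degenerate case $\theta = \Phi(q)$ (and the adjacent case $\psi(\theta) = q + \lambda$, i.e. $\theta = \Phi(q+\lambda)$, where \eqref{fun_L} is singular but \eqref{fun_L_new} is not) will need separate bookkeeping, most cleanly via taking limits in $\theta$ and invoking L'Hôpital, which is exactly why the statement carries the two-branch definition of $M^{(q,\lambda)}$. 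Everything else is a routine substitution of the scale-function asymptotics, using that $\psi'(\Phi(q)) > 0$ (finite and positive since $q \ge 0$ with the convention at $q=0$, $\Phi(0) > 0$ in the case $\psi'(0+) < 0$ handled as a limit).
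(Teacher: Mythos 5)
Your proposal is correct and follows essentially the same route as the paper's own proof: take $b\uparrow\infty$ in Theorem \ref{theorem_g}, use $W^{(q)}(z)\sim e^{\Phi(q)z}/\psi'(\Phi(q))$ (the paper normalizes by $W^{(q)}(b)$ rather than $e^{\Phi(q)b}$, which is the same thing up to the constant $\psi'(\Phi(q))$ that cancels in the ratio), deduce $\mathscr{Z}^{(q,\lambda)}_a(b;\Phi(q))/W^{(q)}(b)\to N^{(q,\lambda)}(a)$ and $\lambda L^{(q,\lambda)}(b,a;\theta)/W^{(q)}(b)\to M^{(q,\lambda)}(a;\theta)$ via \eqref{fun_L}, and handle $\theta=\Phi(q)$ by L'H\^opital and $\psi(\theta)=q+\lambda$, $\theta<0$ by analytic continuation. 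The left-hand side passes to the limit by monotone convergence since $X_{\tilde{\tau}_{[0,a]}}\in[0,a]$ bounds the integrand, exactly as you observe.
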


By taking $a\downarrow 0$ in Theorem \ref{theorem_limiting_case}, we have the following. 
\begin{corollary} \label{cor_S_at_zero_inf}  We have $\inf \{\tau \in \mathcal{T}^{\lambda}: X_{\tau} = 0 \} = \infty$ $\p_x$-a.s. for all $x \in \R$.
\end{corollary}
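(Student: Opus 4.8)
The plan is to deduce Corollary \ref{cor_S_at_zero_inf} from Theorem \ref{theorem_limiting_case} by taking the limit $a \downarrow 0$ and showing the right-hand side of \eqref{flu_int} vanishes. Observe first that for each fixed $x \in \R$ we have $\tilde\tau_{[0,a]} = \inf\{T \in \mathcal{T}^\lambda : X_T \in [-\infty, a]\}$ (recalling the conventions $\tilde\tau_{[0,a]} = \tau_{[e^0,e^a]}$ with the special reading $\tau_{[0,U]} := \inf\{\tau\in\mathcal{T}^\lambda: X_\tau \le \log U\}$ as in Section \ref{section_optimality_barrier}; here the lower endpoint is $0$, i.e.\ $S_\tau \le e^a$), so as $a \downarrow 0$ the event $\{\tilde\tau_{[0,a]}<\infty\}$ decreases to $\{\inf\{\tau\in\mathcal{T}^\lambda: X_\tau \le 0\} = \inf\{\tau \in \mathcal{T}^\lambda: X_\tau = 0\} < \infty\}$ — but since $X$ has no positive jumps and is not the negative of a subordinator, it is regular downward, so the only way to have $X_T = 0$ at a Poisson epoch is the null event that a Poisson arrival coincides exactly with the (continuous, or first-passage) crossing; more simply, $\bigcap_{a>0}\{X_T \le a \text{ for some } T \in \mathcal{T}^\lambda\}$ forces $X_T = 0$ for some $T$, which has probability zero because $X_T$ has a density (it has an absolutely continuous law for each fixed $T$, being a nondegenerate spectrally negative L\'evy process evaluated at an independent exponential-sum time). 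Thus the left-hand side with $\theta = 0$ tends to $\p_x(\inf\{\tau\in\mathcal{T}^\lambda: X_\tau = 0\}<\infty)$, and it suffices to show this is zero; equivalently, show the right side of \eqref{flu_int} with $\theta = 0$ tends to $0$ as $a \downarrow 0$.

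Concretely, I would take $\theta = 0$ in \eqref{flu_int} and compute the $a\downarrow 0$ limit of each term. For $L^{(q,\lambda)}(x,a;0)$: from the defining formula \eqref{fun_L_new}, as $a \downarrow 0$ the term $\overline{W}^{(q)}(x-a;\theta)$ tends to $\overline{W}^{(q)}(x;0)$ and the integral $\int_0^{x-a} W^{(q)}(y)\overline{W}^{(q+\lambda)}(x-y;0)\,\diff y$ tends to $\int_0^{x} W^{(q)}(y)\overline{W}^{(q+\lambda)}(x-y;0)\,\diff y$; alternatively, from \eqref{fun_L} one sees $L^{(q,\lambda)}(x,0;0) = (Z^{(q)}(x) - Z^{(q+\lambda)}(x))/(q+\lambda)$ (using $\mathscr{Z}^{(q,\lambda)}_0(x;\theta) = Z^{(q+\lambda)}(x;\theta)$, which holds because the integral in \eqref{mathcal_L} over $[0,0]$ is empty). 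For $M^{(q,\lambda)}(a;0)$: plugging $y = a \downarrow 0$ and $\theta = 0 \neq \Phi(q)$ into the first branch of \eqref{fun_M}, both $e^{\theta a}$ and $Z^{(q+\lambda)}(a;\Phi(q))$ tend to $1$, and $\lambda e^{\theta a}\overline{W}^{(q+\lambda)}(a;\theta) \to 0$, so $M^{(q,\lambda)}(a;0) \to 0$. Meanwhile $N^{(q,\lambda)}(a) \to \psi'(\Phi(q)) > 0$ (this is positive: $\Phi(q) \ge \Phi(0) \ge 0$ and $\psi$ is strictly convex and smooth past $\Phi(0)$, so $\psi'(\Phi(q)) > 0$ for $q > 0$, and $\psi'(\Phi(0)) \ge 0$ with the degenerate $q=0$ drift-zero case handled separately or by the convention). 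And $\mathscr{Z}^{(q,\lambda)}_a(x;\Phi(q)) \to \mathscr{Z}^{(q,\lambda)}_0(x;\Phi(q)) = Z^{(q+\lambda)}(x;\Phi(q))$, which is finite. Hence the second term of \eqref{flu_int} tends to $0\cdot(\text{finite})/(\text{positive finite}) = 0$.

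It then remains to show the surviving piece $\lambda L^{(q,\lambda)}(x,0;0) = \tfrac{\lambda}{q+\lambda}\big(Z^{(q)}(x) - Z^{(q+\lambda)}(x)\big)$ equals zero, which is where I must be careful: this expression is \emph{not} identically zero as a function of $x$. The resolution is that the full limiting formula I wrote above must in fact telescope — more precisely, one should \emph{not} drop the second term's dependence on $a$ prematurely but instead keep the combination $\lambda L^{(q,\lambda)}(x,a;\theta) - M^{(q,\lambda)}(a;\theta)\mathscr{Z}^{(q,\lambda)}_a(x;\Phi(q))/N^{(q,\lambda)}(a)$ together; the correct reading is that as $a \downarrow 0$ the entire right side of \eqref{flu_int} converges to $0$, consistent with the probability on the left being $0$. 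I would therefore verify directly that $\lim_{a\downarrow 0}\big[\lambda L^{(q,\lambda)}(x,a;0) - M^{(q,\lambda)}(a;0)\mathscr{Z}^{(q,\lambda)}_a(x;\Phi(q))/N^{(q,\lambda)}(a)\big] = 0$. Since I have just argued the second term $\to 0$ and the first term $\to \tfrac{\lambda}{q+\lambda}(Z^{(q)}(x)-Z^{(q+\lambda)}(x))$, the apparent non-vanishing signals that the cleaner route is to bound the left-hand side probabilistically: $\p_x(\tilde\tau_{[0,a]}<\infty)$ and the associated expectation are themselves continuous and the event nesting $\bigcap_{a>0}\{\tilde\tau_{[0,a]}<\infty\} = \{\exists\, T\in\mathcal{T}^\lambda: X_T = 0\}$ has probability zero by absolute continuity of $X_T$. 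Thus the genuinely rigorous argument is: (i) monotone limit of events as $a\downarrow 0$; (ii) the limiting event is $\{\exists T\in\mathcal{T}^\lambda: X_T=0\}$; (iii) $\p_x(\exists T\in\mathcal{T}^\lambda: X_T=0) \le \sum_{n\ge1}\p_x(X_{T_n^\lambda}=0) = 0$ since each $X_{T_n^\lambda}$ has a density (as a nondegenerate spectrally negative L\'evy process run for an independent Gamma time); and the Theorem \ref{theorem_limiting_case} computation then serves as a consistency check that the analytic expression also vanishes in the limit. The main obstacle is precisely reconciling the apparent non-zero analytic limit $\tfrac{\lambda}{q+\lambda}(Z^{(q)}(x)-Z^{(q+\lambda)}(x))$ of the first term with the probabilistically obvious answer $0$, i.e.\ recognizing that one must pass to the limit in the combined expression (or invoke the direct density argument) rather than term by term.
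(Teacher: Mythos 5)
There are two concrete errors in your attempt to carry out the analytic limit, and they are what led you to believe there was a residual non-zero term that needed to be reconciled.

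First, you misread the stopping time: by definition $\tilde{\tau}_{[l,u]} := \tau_{[e^l,e^u]} = \inf\{T\in\mathcal{T}^\lambda : X_T\in[l,u]\}$, so $\tilde{\tau}_{[0,a]}$ is the first Poisson arrival at which $X$ lies in the \emph{bounded} interval $[0,a]$, not the half-line $(-\infty,a]$. The abuse-of-notation convention $\tau_{[0,U]}:=\inf\{\tau\in\mathcal{T}^\lambda:S_\tau\leq U\}$ applies only when the left endpoint of the $S$-interval is literally $0$; here that endpoint is $e^0=1>0$. As $a\downarrow 0$ the stopping time $\tilde{\tau}_{[0,a]}$ therefore increases to $\tau_0:=\inf\{T\in\mathcal{T}^\lambda:X_T=0\}$, with no extra null-set discrepancy to worry about.

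Second, and more consequentially, you evaluated $\mathscr{Z}^{(q,\lambda)}_0(x;\theta)$ incorrectly. In \eqref{mathcal_L} the two expressions contain different integrals: $\int_a^x$ in the first and $\int_0^a$ in the second. It is the second one, $\int_0^a$, that is empty at $a=0$, so $\mathscr{Z}^{(q,\lambda)}_0(x;\theta)=Z^{(q)}(x;\theta)$, \emph{not} $Z^{(q+\lambda)}(x;\theta)$. Plugging this into \eqref{fun_L} gives
\[
L^{(q,\lambda)}(x,0;\theta)=\frac{Z^{(q)}(x;\theta)-\mathscr{Z}^{(q,\lambda)}_0(x;\theta)}{q+\lambda-\psi(\theta)}=\frac{Z^{(q)}(x;\theta)-Z^{(q)}(x;\theta)}{q+\lambda-\psi(\theta)}=0,
\]
not $\tfrac{1}{q+\lambda}\bigl(Z^{(q)}(x)-Z^{(q+\lambda)}(x)\bigr)$. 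So the first term of \eqref{flu_int} really does vanish as $a\downarrow 0$; there is no "apparent non-zero analytic limit" to resolve. With this fixed, the analytic route you set out on (and then abandoned) is exactly the paper's argument, and it goes through term by term: $\lambda L^{(q,\lambda)}(x,a;0)\to 0$, $M^{(q,\lambda)}(a;0)\to 0$ (your computation of this piece is fine), while $\mathscr{Z}^{(q,\lambda)}_a(x;\Phi(q))\to Z^{(q)}(x;\Phi(q))$ and $N^{(q,\lambda)}(a)\to\psi'(\Phi(q))$ are finite, the latter nonzero. Hence the right side of \eqref{flu_int} with $\theta=0$ tends to $0$, and by monotone convergence of $e^{-q\tilde{\tau}_{[0,a]}}1_{\{\tilde{\tau}_{[0,a]}<\infty\}}$ down to $e^{-q\tau_0}1_{\{\tau_0<\infty\}}$, the left side tends to $\E_x[e^{-q\tau_0}1_{\{\tau_0<\infty\}}]$, forcing $\p_x(\tau_0<\infty)=0$.

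Your fallback argument via absolute continuity of $X_{T_n^\lambda}$ does reach the right conclusion and is a legitimate alternative, but it is not the paper's route, and the claim that $X_{T_n^\lambda}$ "has a density" needs a citation (it follows for instance from the known absolute continuity of the $q$-resolvent measure of a spectrally negative L\'evy process, e.g.\ \eqref{killed_resolvent} above). The cleaner point to make in a review of your attempt is that the obstacle you perceived was created by the two misreadings above; once corrected, the intended term-by-term limit works.
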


The next subsection is devoted to the proof of Theorem \ref{theorem_g}.
Theorem \ref{theorem_limiting_case} can be proved by taking $b \rightarrow \infty$ and its proof is
deferred to Appendix \ref{appen_Theorem_4.2}.

\subsection{Proof of Theorem \ref{theorem_g}}

Throughout this proof, we denote
		\begin{align*}
		g(x,a,b; \theta) :=\E_x\left[e^{-q \tilde{\tau}_{[0,a]}+\theta X_{\tilde{\tau}_{[0,a]}}}1_{\{\tilde{\tau}_{[0,a]}<\tilde{T}_b^+\}}\right], \quad 0 < a < b, \; x \in \R, \;  \textrm{and } \theta \geq  0.
\end{align*}
Also  let the random variable $\mathbf{e}_\lambda$ be an exponential random variable with parameter $\lambda$
independent of $X$.

(1) Suppose that $\theta\geq0$ and $\psi(\theta) \neq q + \lambda$. 


By \eqref{upcrossing_identity} and
because $X$ does not have positive jumps, we have
	\begin{align}\label{g_less_0}
		g(x,a,b; \theta)=g(0,a,b; \theta)\E_x\left[e^{-q \tilde{T}_0^+}1_{\{\tilde{T}_0^+<\infty\}}\right]=g(0,a,b; \theta)e^{\Phi(q)x}, \quad x  \leq  0.
	\end{align}
Below, we compute $g(0,a,b; \theta)$ to obtain an explicit expression of \eqref{g_less_0}, and then $g(x,a,b; \theta)$ for $x > 0$.

	(i) First we will write $g(0, a,b; \theta)$ in terms of $g(a, a,b; \theta)$. To this end we decompose:
	\begin{align}\label{g_0_ub}
	g(0,a,b; \theta)&= g_1 +g_2 + g_3,
	\end{align}
	where, with $\mathbf{e}_\lambda$ modeling the first jump time of $N^\lambda$, by the strong Markov property and the fact that $X_{\tilde{T}_a^+} = a$ on $\{ \tilde{T}_a^+ < \infty \}$ when $X_0 = 0 <  a,$  
	\begin{align*}
	g_1 &:= \E\left[e^{-q\mathbf{e}_\lambda+\theta X_{\mathbf{e}_\lambda}}1_{\{0 \leq X_{\mathbf{e}_\lambda} \leq a ,\mathbf{e}_\lambda<\tilde{T}_a^+\}}\right], \\
	g_2 &:= \E \left[e^{-q\tilde{T}_a^+}1_{\{\tilde{T}_a^+<\mathbf{e}_\lambda\}}\right] g(a, a,b; \theta), \\
	g_3 &:= \E\left[e^{-q\mathbf{e}_\lambda} g( X_{\mathbf{e}_\lambda},a,b; \theta)1_{\{X_{\mathbf{e}_\lambda}<0,\mathbf{e}_\lambda<\tilde{T}_a^+\}}\right].
	\end{align*}
	Here, using  \eqref{killed_resolvent} and \eqref{upcrossing_identity}, respectively,
	we have that 
	\begin{align*}
	g_1&= \lambda \E\left[ \int_0^{\tilde{T}_a^+} e^{-\lambda t} e^{-q t+\theta X_{t}}1_{\{0 \leq X_{t} \leq a \}}\diff t \right]  = \lambda \int_0^ae^{\theta y}
r^{(q+\lambda)}(0,y;a)
\diff y =e^{-(\Phi(q+\lambda) - \theta) a} \lambda \overline{W}^{(q+\lambda)} (a; \theta), \\
	g_2 &= \E \left[e^{-(q+ \lambda) \tilde{T}_a^+} \right]  g(a,a,b; \theta) =g(a,a,b; \theta)e^{-\Phi(q+\lambda)a}.
	\end{align*}
	From \eqref{g_less_0} and using \eqref{killed_resolvent} together with the spatial homogeneity of $X$, we have 
	\begin{multline} \label{g_3_step}
	g_3
	=g(0,a,b; \theta)  \E\left[e^{-q\mathbf{e}_\lambda+\Phi(q)X_{\mathbf{e}_\lambda}}1_{\{X_{\mathbf{e}_\lambda}<0,\mathbf{e}_\lambda< \tilde{T}_a^+\}}\right]\\
	=g(0,a,b; \theta) \lambda \E\Big[ \int_0^{\tilde{T}_a^+} e^{-\lambda t} e^{-q t+\Phi(q)X_{t}}1_{\{X_{t}<0 \}}\diff t \Big]
	=g(0,a,b; \theta) \lambda \int_{-\infty}^0e^{\Phi(q) y}
r^{(q+\lambda)}(0,y;a)
\diff y.
	\end{multline}
	We will simplify \eqref{g_3_step} using the lemma below. Its proof is deferred to Appendix \ref{appen_lemma_4.1}.
	\begin{lemma} \label{lemma_resolvent}
		For $\beta \geq \alpha>0$, we have 
		\begin{align}\label{g_0_ub_2}
			\lambda \int_{-\infty}^0e^{\Phi(q) y} r^{(q+\lambda)}(\alpha,y;\beta)
\diff y=Z^{(q+\lambda)}(\alpha;\Phi(q))-e^{-\Phi(q+\lambda)(\beta-\alpha)}Z^{(q+\lambda)}(\beta;\Phi(q)).
		\end{align}
	\end{lemma}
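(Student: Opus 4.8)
The plan is to prove Lemma \ref{lemma_resolvent} by a direct computation of the left-hand integral using the explicit form of the killed resolvent density $r^{(q+\lambda)}(\alpha,y;\beta)$, and then recognizing the result as the claimed combination of $Z^{(q+\lambda)}$ functions evaluated at $\Phi(q)$. First I would substitute $r^{(q+\lambda)}(\alpha,y;\beta)=e^{-\Phi(q+\lambda)(\beta-\alpha)}W^{(q+\lambda)}(\beta-y)-W^{(q+\lambda)}(\alpha-y)$ into the integral $\lambda\int_{-\infty}^0 e^{\Phi(q)y} r^{(q+\lambda)}(\alpha,y;\beta)\diff y$, splitting it into two pieces. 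After the change of variables $z = \alpha - y$ (respectively $z=\beta-y$) each piece becomes an integral of the form $\lambda\int_\alpha^\infty e^{-\Phi(q)(z-\alpha)}W^{(q+\lambda)}(z)\diff z$ (resp.\ with $\beta$), so the key analytic input is the evaluation of $\lambda\int_c^\infty e^{-\Phi(q)z}W^{(q+\lambda)}(z)\diff z$ for $c>0$.

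The central identity I would use is that $\int_0^\infty e^{-\Phi(q)z}W^{(q+\lambda)}(z)\diff z = 1/(\psi(\Phi(q))-(q+\lambda)) = 1/(q - (q+\lambda)) = -1/\lambda$, valid since $\Phi(q) < \Phi(q+\lambda)$ is in the domain of the Laplace transform \eqref{scale_function_laplace}. Hence $\lambda\int_c^\infty e^{-\Phi(q)z}W^{(q+\lambda)}(z)\diff z = -1 - \lambda\int_0^c e^{-\Phi(q)z}W^{(q+\lambda)}(z)\diff z = -1 - \lambda\, \overline{W}^{(q+\lambda)}(c;\Phi(q))$. Feeding this into the two pieces of the split integral and collecting the prefactors $e^{\Phi(q)\alpha}$ and $e^{\Phi(q)\beta}e^{-\Phi(q+\lambda)(\beta-\alpha)}$, one obtains
\begin{align*}
e^{\Phi(q)\alpha}\big(1+\lambda\overline{W}^{(q+\lambda)}(\alpha;\Phi(q))\big) - e^{-\Phi(q+\lambda)(\beta-\alpha)}e^{\Phi(q)\beta}\big(1+\lambda\overline{W}^{(q+\lambda)}(\beta;\Phi(q))\big),
\end{align*}
which is exactly $Z^{(q+\lambda)}(\alpha;\Phi(q)) - e^{-\Phi(q+\lambda)(\beta-\alpha)}Z^{(q+\lambda)}(\beta;\Phi(q))$ by the definition of $Z^{(q+\lambda)}(\cdot;\Phi(q))$ recorded right after \eqref{Z_theta}. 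I should also double-check the bookkeeping: the $-W^{(q+\lambda)}(\alpha-y)$ term contributes with a plus sign after accounting for the minus sign coming from $\psi(\Phi(q))-(q+\lambda) = -\lambda$, and the lower limits of the truncated integrals are $\alpha$ and $\beta$ respectively because $W^{(q+\lambda)}$ vanishes on the negative half-line, so $y<0$ forces $z=\alpha-y>\alpha$ and $z=\beta-y>\beta$.

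The main obstacle I anticipate is purely a matter of sign and normalization care rather than any conceptual difficulty: one must track the factor $\psi(\Phi(q))-(q+\lambda)=-\lambda$ correctly so that the stray $\lambda$'s cancel and the two $1+\lambda\overline{W}^{(q+\lambda)}$ blocks emerge cleanly, and one must be sure the Laplace-transform evaluation is legitimate, i.e.\ that $\Phi(q)>\Phi(q+\lambda)$ fails but $\Phi(q)$ still lies to the right of the abscissa of convergence $\Phi(q+\lambda)$ of \eqref{scale_function_laplace} for $W^{(q+\lambda)}$ — which indeed holds since $\Phi$ is increasing so $\Phi(q)<\Phi(q+\lambda)$, wait, that is the wrong direction; in fact the Laplace transform of $W^{(q+\lambda)}$ converges for $\theta>\Phi(q+\lambda)$, and $\Phi(q)<\Phi(q+\lambda)$, so one cannot plug in $\theta=\Phi(q)$ directly. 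The resolution is that $\int_0^\infty e^{-\Phi(q)z}W^{(q+\lambda)}(z)\diff z$ nonetheless converges and equals $1/(\psi(\Phi(q))-(q+\lambda))=-1/\lambda$ by analytic continuation of \eqref{scale_function_laplace} (the integral converges because $W^{(q+\lambda)}(z)\sim e^{\Phi(q+\lambda)z}/\psi'(\Phi(q+\lambda))$ and $\Phi(q)<\Phi(q+\lambda)$ makes the integrand blow up — so actually the integral diverges). I would therefore instead avoid the improper integral and work directly with the finite integrals: rewrite $\lambda\int_{-\infty}^0 e^{\Phi(q)y}W^{(q+\lambda)}(\beta-y)\diff y$ via $z=\beta-y\in(\beta,\infty)$ and use the known closed form for $Z^{(q+\lambda)}(\cdot;\Phi(q))$ together with its Laplace-transform characterization, or appeal to the standard creeping/exit identity — which is precisely identity (5) of \cite{Albrecher} giving \eqref{der_gamma_0_a} with $\theta=\Phi(q)$ — to identify the combination. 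This is the step I would be most careful about when writing the details.
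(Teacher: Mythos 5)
Your first plan --- substituting the explicit form of $r^{(q+\lambda)}(\alpha,y;\beta)$ and evaluating each Laplace transform of $W^{(q+\lambda)}$ at $\theta=\Phi(q)$ --- cannot work, and you correctly caught this mid-argument: the Laplace transform \eqref{scale_function_laplace} converges only for $\theta>\Phi(q+\lambda)$, whereas $\Phi(q)<\Phi(q+\lambda)$, so $\int_0^\infty e^{-\Phi(q)z}W^{(q+\lambda)}(z)\,\diff z$ diverges (indeed $W^{(q+\lambda)}(z)\sim e^{\Phi(q+\lambda)z}/\psi'(\Phi(q+\lambda))$). Only the \emph{combination} $r^{(q+\lambda)}(\alpha,y;\beta)=e^{-\Phi(q+\lambda)(\beta-\alpha)}W^{(q+\lambda)}(\beta-y)-W^{(q+\lambda)}(\alpha-y)$ decays fast enough as $y\to-\infty$ for the integral to converge. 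But having spotted this, you do not actually get around it. Your proposed repair --- ``rewrite $\lambda\int_{-\infty}^0 e^{\Phi(q)y}W^{(q+\lambda)}(\beta-y)\diff y$ via $z=\beta-y$'' --- is precisely the divergent piece you just diagnosed, and the alternative of appealing to \eqref{der_gamma_0_a} with $\theta=\Phi(q)$ concerns $\tilde{T}_0^-$, not this resolvent integral, and you never explain how it would be applied. If one truly wanted a direct computation, one would have to integrate the \emph{combined} kernel over $[-M,0]$ and show the two diverging tails cancel as $M\to\infty$, which requires finer asymptotics of $W^{(q+\lambda)}$ than the leading exponential term.

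The paper's proof sidesteps the divergence entirely via a martingale argument, and it is worth internalizing why this is cleaner. Since $\mathcal{E}_t:=\exp\{-qt+\Phi(q)X_t\}$ is a $\mathbb{P}_\alpha$-martingale, optional sampling at $\tilde{T}_\beta^+\wedge\mathbf{e}_\lambda$ gives
\begin{align*}
e^{\Phi(q)\alpha}
=\E_\alpha\!\left[\mathcal{E}_{\mathbf{e}_\lambda}1_{\{\mathbf{e}_\lambda<\tilde{T}_\beta^+\}}\right]
+e^{\Phi(q)\beta}\,\E_\alpha\!\left[e^{-q\tilde{T}_\beta^+}1_{\{\tilde{T}_\beta^+<\mathbf{e}_\lambda\}}\right],
\end{align*}
where the second term equals $e^{\Phi(q)\beta}e^{-\Phi(q+\lambda)(\beta-\alpha)}$ by \eqref{upcrossing_identity} and the first equals $\lambda\int_{-\infty}^\beta e^{\Phi(q)y}r^{(q+\lambda)}(\alpha,y;\beta)\,\diff y$ by \eqref{killed_resolvent}. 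This identifies the integral over $(-\infty,\beta]$ in closed form with no delicate limit at $-\infty$. Subtracting the compactly supported piece $\lambda\int_0^\beta e^{\Phi(q)y}r^{(q+\lambda)}(\alpha,y;\beta)\,\diff y$, which can be split and evaluated termwise because all arguments of $W^{(q+\lambda)}$ there are bounded, produces exactly $e^{\Phi(q)\alpha}\bigl(1+\lambda\overline{W}^{(q+\lambda)}(\alpha;\Phi(q))\bigr)-e^{-\Phi(q+\lambda)(\beta-\alpha)}e^{\Phi(q)\beta}\bigl(1+\lambda\overline{W}^{(q+\lambda)}(\beta;\Phi(q))\bigr)$, i.e.\ the claimed difference of $Z^{(q+\lambda)}(\cdot;\Phi(q))$ values. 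In short: your instinct that the naive Laplace-transform evaluation fails was right, but the proposal as written stops at the obstacle rather than surmounting it, so it is not yet a proof.
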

Now by \eqref{g_3_step} and Lemma \ref{lemma_resolvent}, we have
	$g_3 =g(0,a,b; \theta) [ 1-e^{-\Phi(q+\lambda)a}Z^{(q+\lambda)}(a;\Phi(q))]$. Substituting these values of $g_i$, $1 \leq i \leq 3$ in \eqref{g_0_ub}, we get, after simplification,
%
%
%
	\begin{align}\label{g_0_0}
	g(0,a,b; \theta)=\frac{ \lambda e^{\theta a} \overline{W}^{(q+\lambda)} (a; \theta)+g(a,a,b; \theta)}{Z^{(q+\lambda)}(a;\Phi(q))}.
	\end{align}
	(ii) 
	For $x\leq a$, by an application of the strong Markov property and \eqref{g_less_0}, 
	\begin{align}
\label{g_deecomposition}
\begin{split}
		g(x,a,b; \theta)
		&=\E_x\left[e^{-q \tilde{T}_a^+}1_{\{\tilde{T}_a^+<\mathbf{e}_\lambda\wedge \tilde{T}_0^-\}}\right] g(a,a,b; \theta) \\
		&+\E_x\left[e^{-q \tilde{T}_0^-+\Phi(q)X_{\tilde{T}_0^-}}1_{\{\tilde{T}_0^-<\mathbf{e}_\lambda\wedge \tilde{T}_a^+\}}\right] g(0,a,b; \theta) + \E_x\left[e^{-q\mathbf{e}_\lambda+\theta X_{\mathbf{e}_\lambda}}1_{\{\mathbf{e}_\lambda< \tilde{T}_0^-\wedge \tilde{T}_a^+\}}\right].
\end{split}
	\end{align}
	Here, we have, by \eqref{upcrossing_identity_2},
	\begin{align*}
		\E_x\left[e^{-q \tilde{T}_a^+}1_{\{\tilde{T}_a^+<\mathbf{e}_\lambda\wedge \tilde{T}_0^-\}}\right]=\E_x\left[e^{-(q+\lambda) \tilde{T}_a^+}1_{\{\tilde{T}_a^+< \tilde{T}_0^-\}}\right]=\frac{W^{(q+\lambda)}(x)}{W^{(q+\lambda)}(a)}, \quad x \leq a.
	\end{align*}
	By \eqref{der_gamma_0_a}, we have
	\begin{align*}
		\E_x\left[e^{-q \tilde{T}_0^-+\Phi(q)X_{ \tilde{T}_0^-}}1_{\{ \tilde{T}_0^-<\mathbf{e}_\lambda\wedge \tilde{T}_a^+\}}\right]&=\E_x\left[e^{-(q+\lambda) \tilde{T}_0^- + \Phi(q)X_{ \tilde{T}_0^-}}1_{\{\tilde{T}_0^-< \tilde{T}_a^+\}}\right]\\
		&=Z^{(q+\lambda)}(x;\Phi(q))-\frac{W^{(q+\lambda)}(x)}{W^{(q+\lambda)}(a)}Z^{(q+\lambda)}(a;\Phi(q)), \quad x \leq a.
	\end{align*}
	On the other hand, 
	by \eqref{resolvent_density_0}, 
	\begin{multline*}
		\E_x\left[e^{-q\mathbf{e}_\lambda+\theta X_{\mathbf{e}_\lambda}}1_{\{\mathbf{e}_\lambda< \tilde{T}_0^-\wedge \tilde{T}_a^+\}}\right]= \lambda \E_x\left[\int_0^{\tilde{T}_0^-\wedge \tilde{T}_a^+}e^{-(q+\lambda)s+\theta X_s} \diff s \right]\\
		= \lambda \int_0^ae^{\theta y}
u^{(q+\lambda)}(x,y;a)
\diff y =\lambda \left(\frac{W^{(q+\lambda)}(x)}{W^{(q+\lambda)}(a)}e^{\theta a}\overline{W}^{(q+\lambda)} (a; \theta)-e^{\theta x}\overline{W}^{(q+\lambda)} (x; \theta)\right), \quad x \leq a.
	\end{multline*}
Substituting these expressions and \eqref{g_0_0} in \eqref{g_deecomposition}, we get, for 
$x\leq a$,
	\begin{align} \label{g_0_a_2}
	\begin{split}
		g(x,a,b; \theta)&=g(a,a,b; \theta)\frac{W^{(q+\lambda)}(x)}{W^{(q+\lambda)}(a)} +g(0,a,b; \theta)\Big(Z^{(q+\lambda)}(x;\Phi(q))-\frac{W^{(q+\lambda)}(x)}{W^{(q+\lambda)}(a)}Z^{(q+\lambda)}(a;\Phi(q))\Big) \\
&+ \lambda \Big(\frac{W^{(q+\lambda)}(x)}{W^{(q+\lambda)}(a)}e^{\theta a} \overline{W}^{(q+\lambda)} (a; \theta)-e^{\theta x}\overline{W}^{(q+\lambda)} (x; \theta)\Big)\\
		&=\frac{Z^{(q+\lambda)}(x;\Phi(q))}{Z^{(q+\lambda)}(a;\Phi(q))}\left(\lambda e^{\theta a}\overline{W}^{(q+\lambda)} (a; \theta)+g(a,a,b; \theta)\right)-\lambda e^{\theta x} \overline{W}^{(q+\lambda)} (x; \theta) \\
		&=\frac{Z^{(q+\lambda)}(x;\Phi(q))}{Z^{(q+\lambda)}(a;\Phi(q))}\left(\lambda e^{\theta a} \overline{W}^{(q+\lambda)} (a; \theta)+g(a,a,b; \theta)\right)-\frac{\lambda \left(Z^{(q+\lambda)}(x;\theta)-e^{\theta x}\right)}{q+\lambda-\psi(\theta)}.
		\end{split}
	\end{align}

	On the other hand, for $x \in \R$,
the strong Markov property gives
	\begin{align}\label{g_x>a}
		g(x,a,b; \theta)=\E_x\left[e^{-q \tilde{T}_a^-}g(X_{\tilde{T}_a^-}, a,b;\theta)1_{\{\tilde{T}_a^-<\tilde{T}_b^+\}}\right].
	\end{align}
	Using \eqref{g_0_a_2} in \eqref{g_x>a} together with \eqref{der_gamma_0_a} and \eqref{LRZ_Z}, we obtain, for $x\leq b$,
	\begin{align}\label{g_0_a_3}
	\begin{split}
		g(x,a,b; \theta)
		&=\frac{\lambda e^{\theta a}\overline{W}^{(q+\lambda)} (a; \theta)+g(a,a,b; \theta)}{Z^{(q+\lambda)}(a;\Phi(q))} \Big[\mathscr{Z}^{(q, \lambda)}_a (x; \Phi(q))-\frac{W^{(q)}(x-a)}{W^{(q)}(b-a)}\mathscr{Z}^{(q, \lambda)}_a (b; \Phi(q))\Big] \\
		&-\frac{\lambda}{q+\lambda-\psi(\theta)}\Big[\mathscr{Z}^{(q, \lambda)}_a (x; \theta)-\frac{W^{(q)}(x-a)}{W^{(q)}(b-a)}\mathscr{Z}^{(q, \lambda)}_a (b; \theta)\Big] \\
		&+\frac{\lambda}{q+\lambda-\psi(\theta)}\Big[Z^{(q)}(x-a;\theta)-\frac{W^{(q)}(x-a)}{W^{(q)}(b-a)}Z^{(q)}(b-a;\theta)\Big]e^{\theta a}.
		\end{split}
	\end{align}

	(iii) Now let us compute $g(a,a,b; \theta)$.
To this end, we note that the strong Markov property gives
	\begin{align}\label{g(a)_00}
	g(a,a,b; \theta)&= h_1 + h_2 + h_3,
	\end{align}
	where 
	\begin{align*}
	h_1 &:= \E_a\left[e^{-q\mathbf{e}_\lambda+\theta X_{\mathbf{e}_\lambda}} 1_{\{0 \leq X_{\mathbf{e}_\lambda} < a, \mathbf{e}_\lambda<\tilde{T}_b^+\}}\right], \\
	h_2 &:= \E_a\left[e^{-q\mathbf{e}_\lambda}g(X_{\mathbf{e}_\lambda},a,b; \theta)1_{\{X_{\mathbf{e}_\lambda}\geq a, \mathbf{e}_\lambda< \tilde{T}_b^+\}}\right], \\
	h_3 &:= \E_a\left[e^{-q\mathbf{e}_\lambda}g(X_{\mathbf{e}_\lambda},a,b; \theta)1_{\{X_{\mathbf{e}_\lambda}<0, \mathbf{e}_\lambda< \tilde{T}_b^+\}}\right].
	\end{align*}
	
First, using \eqref{killed_resolvent},
	\begin{align}\label{g(a)_6}
	h_1
	&=\lambda \E_a\left[\int_0^{\tilde{T}_b^+}e^{-(q+\lambda)s}e^{\theta X_s}1_{\{0 \leq X_s < a \}}\diff s\right] = \lambda \int_0^ae^{\theta y}
r^{(q+\lambda)}(a,y;b)
\diff y\notag\\
	&=\lambda \frac{e^{-\Phi(q+\lambda)(b-a)}}{q+ \lambda-\psi(\theta)}\left(Z^{(q+\lambda)}(b;\theta)-e^{\theta a}Z^{(q+\lambda)}(b-a;\theta)\right) - \lambda e^{\theta a} \overline{W}^{(q+\lambda)} (a; \theta).
	\end{align}
	
	Second, by \eqref{killed_resolvent},
	\begin{align}\label{g(a)_0}
	h_2&=\lambda \E_a \left[\int_0^{\tilde{T}_b^+}e^{-(q+\lambda)s}g(X_s,a,b; \theta)1_{\{X_s\geq a\}}\diff s\right] =\lambda e^{-\Phi(q+\lambda)(b-a)}\int_a^bg(y,a,b; \theta)W^{(q+\lambda)}(b-y)\diff y.
	\end{align}
To compute the previous expression, we will use the following result. Its proof is deferred to Appendix \ref{aux_lemma}.
	\begin{lemma}\label{lemma_aux}
		For $a\leq b$, $\theta\in\R$ for which $\psi(\theta)$ is well defined, we have
	\begin{align}\label{g(a)_2}
			\begin{split}
				\lambda \int_a^bW^{(q+\lambda)}(b-y)\Bigg[\mathscr{Z}^{(q, \lambda)}_a (y; \theta) -&\frac{W^{(q)}(y-a)}{W^{(q)}(b-a)}\mathscr{Z}^{(q, \lambda)}_a (b; \theta)\Bigg]\diff y \\&=Z^{(q+\lambda)}(b;\theta)-\frac{W^{(q+\lambda)}(b-a)}{W^{(q)}(b-a)} \mathscr{Z}^{(q, \lambda)}_a (b; \theta),
			\end{split} \\
	\label{g(a)_3}
\begin{split}
			\lambda \int_a^bW^{(q+\lambda)}(b-y)\Bigg[Z^{(q)}(y-a;\theta)&-\frac{W^{(q)}(y-a)}{W^{(q)}(b-a)}Z^{(q)}(b-a;\theta)\Bigg]\diff y \\
			&=Z^{(q+\lambda)}(b-a;\theta)-\frac{W^{(q+\lambda)}(b-a)}{W^{(q)}(b-a)}Z^{(q)}(b-a;\theta).
\end{split}
		\end{align}
	\end{lemma}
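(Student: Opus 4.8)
\textbf{Proof plan for Lemma \ref{lemma_aux}.}

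The plan is to prove the two identities \eqref{g(a)_2} and \eqref{g(a)_3} by reducing both to a single workhorse convolution identity and then exploiting the definitions of $\mathscr{Z}^{(q,\lambda)}_a$, $Z^{(q+\lambda)}$ and $Z^{(q)}$. The key observation is that $\mathscr{Z}^{(q,\lambda)}_a(y;\theta) = Z^{(q+\lambda)}(y;\theta)$ for $y \le a$, so both integrands are supported on $y \in [a,b]$ where one can use the first line of \eqref{mathcal_L}, $\mathscr{Z}^{(q,\lambda)}_a(y;\theta) = Z^{(q+\lambda)}(y;\theta) - \lambda \int_a^y W^{(q)}(y-z) Z^{(q+\lambda)}(z;\theta)\diff z$. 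Substituting this into the left-hand side of \eqref{g(a)_2} produces a double integral $\lambda^2 \int_a^b W^{(q+\lambda)}(b-y)\int_a^y W^{(q)}(y-z)Z^{(q+\lambda)}(z;\theta)\diff z\diff y$ plus the single integral $\lambda\int_a^b W^{(q+\lambda)}(b-y) Z^{(q+\lambda)}(y;\theta)\diff y$ and the term involving $W^{(q)}(y-a)$.

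First I would establish the convolution/resolvent identity $W^{(q+\lambda)}(x) = W^{(q)}(x) + \lambda \int_0^x W^{(q)}(x-z) W^{(q+\lambda)}(z)\diff z$ (the standard relation obtained by comparing the Laplace transforms $\tfrac{1}{\psi(\theta)-q-\lambda}$ and $\tfrac{1}{\psi(\theta)-q}$), together with its ``$Z$-version'' which one obtains by integrating against $Z^{(q+\lambda)}(\cdot\,;\theta)$; concretely, Fubini on the double integral above and this convolution identity collapse the order of iterated integration so that $\lambda\int_a^b W^{(q)}(b-y)\,(\text{something})$ combines with $\lambda\int_a^b W^{(q+\lambda)}(b-y)Z^{(q+\lambda)}(y;\theta)\diff y$. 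Then I would use the defining ODE-type relation for $Z^{(q+\lambda)}$, namely that $\lambda \int_0^{b} W^{(q+\lambda)}(b-y) Z^{(q+\lambda)}(y;\theta)\diff y$ telescopes via \eqref{scale_function_laplace} and \eqref{Z_theta} to $Z^{(q+\lambda)}(b;\theta)$ minus boundary terms — more precisely I would differentiate in $\theta$ or directly verify by Laplace transform in $b$ that $\lambda\int_a^b W^{(q+\lambda)}(b-y)Z^{(q+\lambda)}(y;\theta)\diff y = Z^{(q+\lambda)}(b;\theta) - Z^{(q+\lambda)}(b-a;\theta)\cdot(\text{correction})$, and match it against the $W^{(q)}(y-a)$ term. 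Identity \eqref{g(a)_3} is the ``$q$-only'' analogue and follows from the same scheme with $Z^{(q)}$ in place of $Z^{(q+\lambda)}$, using again the convolution relation between $W^{(q)}$ and $W^{(q+\lambda)}$; in fact \eqref{g(a)_3} is essentially the special case $\lambda = 0$ of the machinery applied on $[a,b]$ shifted to $[0,b-a]$, so I would prove it first as a warm-up.

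The cleanest route, which I would actually follow, is to take Laplace transforms in the variable $b$ (for $b > a$ fixed-$a$, extending the integrands by $W^{(\cdot)}(b-y)=0$ for $b<y$) and verify that both sides have the same transform; the left-hand sides become products of $\widehat{W^{(q+\lambda)}}(\theta') = 1/(\psi(\theta')-q-\lambda)$ with the transform of the bracketed expression, while the right-hand sides are handled by the known transforms of $Z^{(q+\lambda)}$ and $\overline{W}^{(q)}$. The main obstacle will be bookkeeping: keeping track of the $\theta$-dependence (and the degenerate case $\psi(\theta)=q+\lambda$, which should be handled by analytic continuation / a limiting argument), the shift by $a$ in the scale-function arguments, and ensuring the boundary terms at $y=a$ and $y=b$ are assembled with the correct coefficients $W^{(q+\lambda)}(b-a)/W^{(q)}(b-a)$. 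No single step is deep — each uses only \eqref{scale_function_laplace}, \eqref{Z_theta}, \eqref{mathcal_L} and the elementary convolution identity for scale functions of different killing rates — but the verification is delicate enough that I would organize it as: (a) prove the $W$-convolution identity and its $Z$-weighted version; (b) prove \eqref{g(a)_3}; (c) substitute the first line of \eqref{mathcal_L} into \eqref{g(a)_2}, apply Fubini, and reduce to (a), (b); (d) dispose of $\psi(\theta)=q+\lambda$ by continuity.
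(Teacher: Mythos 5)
Your structured plan (items (a)--(d)) is essentially the paper's proof: the paper establishes the scale-function convolution relation $\lambda \int_a^b W^{(q+\lambda)}(b-y) W^{(q)}(y-a)\,\diff y = W^{(q+\lambda)}(b-a) - W^{(q)}(b-a)$ (cited from the first identity of (6) in \cite{LRZ} rather than rederived), uses Fubini to collapse the double integral obtained from the first line of \eqref{mathcal_L}, and invokes the $Z$-weighted analogue $\lambda \int_a^b Z^{(q)}(y-a;\theta) W^{(q+\lambda)}(b-y)\,\diff y = Z^{(q+\lambda)}(b-a;\theta) - Z^{(q)}(b-a;\theta)$ (verified by Laplace transform, as you suggest) to finish. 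So your key ingredients and order of operations match.

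One caveat worth flagging: the ``cleanest route'' you say you would actually follow --- Laplace-transforming the full identities \eqref{g(a)_2}, \eqref{g(a)_3} in $b$ --- does not quite work as stated, because the bracketed integrands contain the factors $\mathscr{Z}^{(q,\lambda)}_a(b;\theta)/W^{(q)}(b-a)$ and $Z^{(q)}(b-a;\theta)/W^{(q)}(b-a)$ which also depend on $b$; the left-hand sides are therefore \emph{not} convolutions in $b$, and their Laplace transforms do not factor as $\widehat{W^{(q+\lambda)}}\cdot\widehat{[\cdots]}$. The paper sidesteps this by using the $W$-convolution identity to evaluate $\lambda\int_a^b W^{(q+\lambda)}(b-y)W^{(q)}(y-a)\,\diff y$ in closed form first, after which the troublesome ratio $W^{(q+\lambda)}(b-a)/W^{(q)}(b-a)$ cancels against the right-hand side, and Laplace transforms are used only for the genuine convolution $\lambda \int_0^x W^{(q)}(x-z) Z^{(q+\lambda)}(z;\theta)\,\diff z = Z^{(q+\lambda)}(x;\theta) - Z^{(q)}(x;\theta)$ (which is already encoded in the two expressions in \eqref{mathcal_L}). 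Also, a small inaccuracy: \eqref{g(a)_3} is not the ``$\lambda=0$ special case'' (that would give $0=0$); it is rather the $Z^{(q)}$-analogue obtained directly from the two convolution identities, exactly as in your items (a) and (b).
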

	

Now applying \eqref{g_0_a_3}, \eqref{g(a)_2}, and \eqref{g(a)_3} in \eqref{g(a)_0} we obtain that 
	\begin{align}\label{g(a)_4}
	h_2
	&=\frac{e^{-\Phi(q+\lambda)(b-a)}}{Z^{(q+\lambda)}(a;\Phi(q))}\left(\lambda e^{\theta a} \overline{W}^{(q+\lambda)} (a; \theta)+g(a,a,b; \theta)\right) \\ & \times \Bigg[Z^{(q+\lambda)}(b;\Phi(q)) -\frac{W^{(q+\lambda)}(b-a)}{W^{(q)}(b-a)}\mathscr{Z}^{(q, \lambda)}_a (b; \Phi(q))\Bigg]\notag\\
	&- \lambda \frac{e^{-\Phi(q+\lambda)(b-a)}}{q+\lambda-\psi(\theta)}\Bigg[Z^{(q+\lambda)}(b;\theta)-\frac{W^{(q+\lambda)}(b-a)}{W^{(q)}(b-a)}\mathscr{Z}^{(q, \lambda)}_a (b; \theta) \Bigg]\notag\\
	&+ \lambda \frac{e^{-\Phi(q+\lambda)(b-a)}}{q+\lambda-\psi(\theta)}\Bigg[Z^{(q+\lambda)}(b-a;\theta)-\frac{W^{(q+\lambda)}(b-a)}{W^{(q)}(b-a)}Z^{(q)}(b-a;\theta)\Bigg]e^{\theta a}. \notag
	\end{align}
	
Third, using \eqref{g_less_0} together with \eqref{killed_resolvent}, and then using \eqref{g_0_0} and Lemma \ref{lemma_resolvent},
	\begin{align} \label{g(a)_5}
	\begin{split}
	h_3
	&=g(0,a,b; \theta) \lambda \int_{-\infty}^0e^{\Phi(q)y}
r^{(q+\lambda)}(a,y;b)
\diff y \\
	&=\frac{\lambda e^{\theta a}\overline{W}^{(q+\lambda)} (a; \theta)+g(a,a,b; \theta)}{Z^{(q+\lambda)}(a;\Phi(q))} \left(Z^{(q+\lambda)}(a;\Phi(q))-e^{-\Phi(q+\lambda)(b-a)}Z^{(q+\lambda)}(b;\Phi(q))\right). 
	\end{split}
	\end{align}

	Hence applying  \eqref{g(a)_6}, \eqref{g(a)_4}, and \eqref{g(a)_5} in \eqref{g(a)_00} and then solving for $g(a,a,b; \theta)$,
	\begin{align*}
	g(a,a,b; \theta)&\frac{e^{-\Phi(q+\lambda)(b-a)}}{Z^{(q+\lambda)}(a;\Phi(q))}\frac{W^{(q+\lambda)}(b-a)}{W^{(q)}(b-a)}\mathscr{Z}^{(q, \lambda)}_a (b; \Phi(q)) \notag\\
	&=-\frac{e^{-\Phi(q+\lambda)(b-a)}}{Z^{(q+\lambda)}(a;\Phi(q))} \lambda e^{\theta a} \overline{W}^{(q+\lambda)} (a; \theta)\frac{W^{(q+\lambda)}(b-a)}{W^{(q)}(b-a)} \mathscr{Z}^{(q, \lambda)}_a (b; \Phi(q)) \\
	&+ \lambda \frac{e^{-\Phi(q+\lambda)(b-a)}}{q+ \lambda -\psi(\theta)}\frac{W^{(q+\lambda)}(b-a)}{W^{(q)}(b-a)} \mathscr{Z}^{(q, \lambda)}_a (b; \theta) - \lambda \frac{e^{-\Phi(q+\lambda)(b-a)}}{q+\lambda-\psi(\theta)}\frac{W^{(q+\lambda)}(b-a)}{W^{(q)}(b-a)}e^{\theta a}Z^{(q)}(b-a;\theta),
	\end{align*}
yielding
	\begin{align}\label{g_a_expl}
		g(a,a,b; \theta)&=- \lambda e^{\theta a} \overline{W}^{(q+\lambda)} (a; \theta)\notag\\
		&+ \lambda \frac{Z^{(q+\lambda)}(a;\Phi(q))}{q+\lambda -\psi(\theta)}\frac{\mathscr{Z}^{(q, \lambda)}_a (b; \theta)}{\mathscr{Z}^{(q, \lambda)}_a (b; \Phi(q))}-\frac{ \lambda e^{\theta a}}{q+\lambda-\psi(\theta)}\frac{Z^{(q+\lambda)}(a;\Phi(q))}{\mathscr{Z}^{(q, \lambda)}_a (b; \Phi(q))}Z^{(q)}(b-a;\theta).
	\end{align}
		Now, using \eqref{g_a_expl} in \eqref{g_0_a_3}, we obtain \eqref{fun_g}, with $L^{(q,\lambda)}$ as in \eqref{fun_L}.
By Lemma  \ref{lemma_L_same}, the claim holds for $\theta\geq0$ and $\psi(\theta) \neq q + \lambda$. 



(2)  By the fact that $X_{\tilde{\tau}_{[0,a]}}$ is bounded and $L^{(q,\lambda)}$ is well defined for all $\theta \in \R$, we can use analytic continuation and the identity \eqref{fun_g} still holds for $\theta<0$ as well. In addition, the identity holds when $\psi(\theta) = q + \lambda$ because $L^{(q,\lambda)}$ is continuous in $\theta$ in view of the expression \eqref{fun_L_new}. 

\exit

\section{American put options} \label{section_put_SN}

This section considers the American put options for both the spectrally negative and positive cases.
Recall as in Theorem \ref{theorem_barrier_optimal} that, under Assumptions \ref{assump_lambda_r} and \ref{assump_tail_value_function}, 
there exist $0 < L_{p}^* \leq L_{p,\infty}^* \leq U_{p,\infty}^* \leq U_{p}^* < K$
such that $\tau_{[L_{p}^*, U_{p}^*]} = \tilde{\tau}_{[\log L_{p}^*, \log U_{p}^*]}$ is the optimal strategy.  In this section, we pursue more explicit values of
\begin{align}
l_p^* := \log L_p^* \quad \textrm{and} \quad u_p^* := \log U_p^* \label{opt_barrier_log}
\end{align}
 by considering the first-order conditions. Throughout this section, we continue to assume \green{\eqref{assump_negative_r} and} Assumption \ref{assump_lambda_r} and make sufficient conditions for Assumption \ref{assump_tail_value_function} that are easy to check. 
 
\begin{remark} By Corollary \ref{cor_S_at_zero_inf}, we must have $u_p^* \neq l_p^*$.
\end{remark}


\subsection{Preliminaries} \label{subsection_preliminary}
Before solving the spectrally negative and positive
 cases, we consider  the case of a more general payoff function
\begin{align} \label{v_p_theta}
v_p (x;l,u,\theta) &:=\E_x\Big[e^{-r\tilde{\tau}_{[l,u]}}(K-e^{\theta X_{\tilde{\tau}_{[l,u]}}})1_{\{\tilde{\tau}_{[l,u]}<\infty\}}\Big], \quad  l < u \; \textrm{and} \; \theta, x \in \mathbb{R},
\end{align}
for a spectrally negative \lev process $X$ with Laplace exponent $\psi$ as defined in Section \ref{section_preliminaries}. With the flexibility of choosing $\theta$, the results can be used for both the spectrally negative and positive cases in Sections  \ref{subsection_SN} and  \ref{subsection_SP}, respectively. In particular, we obtain the first-order conditions of $v_p (x;l,u,\theta)$ with respect to $l$ and $u$.


Because we consider the discount $r < 0$,  
we extend the domain of the Laplace exponent $\psi$ from $[0,\infty)$ to $(A, \infty)$ where 
$A:= \inf \{\theta \in \mathbb{R}:\int_{\{y < -1\}}e^{\theta y}\Pi( \diff y)<\infty\}$, and that of $\Phi$ as in  \eqref{Phi_big} and
let
\[	
\Phi(r)=\sup\{s>A:\psi(s)=r\},
\]
and assume immediately below that it is well-defined. 
Note that $\Phi(r) \neq 0$ because $\psi(0) = 0 \neq r$.

\begin{remark} \label{sf_extension}
	By Lemma 8.3 in \cite{KKR}, the scale function $W^{(r)}(x)$ can be extended to  $r \in  \mathbb{C}$. As a particular case we can consider $r<0$.
\end{remark}



As in \cite{DPT}, we assume the following.
\begin{assump}\label{assum_phi} 
	We assume that $\Phi(r)$ is well-defined throughout this section.
\end{assump}

This assumption is necessary to make sure that the obtained expression in Theorem \ref{theorem_limiting_case} makes sense and is also used to guarantee that Assumption \ref{assump_tail_value_function}  holds (see Lemmas \ref{lemma_tail_SN} and \ref{lemma_tail_SN} for the spectrally negative and positive cases, respectively). See Figures \ref{figure_psi_SN} and \ref{figure_psi_SP} for sample plots of $\psi$ for the illustration of when Assumption \ref{assum_phi} is satisfied.





By Remark \ref{sf_extension} and Assumption \ref{assum_phi}, Theorem \ref{theorem_limiting_case} holds for the considered $r < 0$ and we have the following.

\begin{proposition} \label{proposition_v_p_SN}
For $l<u$, $\theta \in \mathbb{R}$, and $x\in\R$,
\begin{align}\label{v_p}
\begin{split}
v_p (x;l,u,\theta) 
&= \lambda \Big(K  L^{(r,\lambda)}(x-l,u-l; 0) -  e^{\theta l}L^{(r,\lambda)}(x-l,u-l; \theta) \Big) + \tilde{v}_p(l,u;\theta)   e^{\Phi(r) l} \mathscr{Z}^{(r, \lambda)}_{u-l} (x-l; \Phi(r)),
\end{split}
\end{align}
where
\begin{align}\label{v<l}
\tilde{v}_p(l,u;\theta)
:=  e^{-\Phi(r)l}v_p(l;l,u, \theta) = \frac{[e^{\theta l} M^{(r,\lambda)}(u-l;\theta) - K M^{(r,\lambda)}(u-l; 0)] e^{-\Phi(r)l}}{N^{(r,\lambda)}(u-l) }, \quad l < u.
\end{align}
In particular, for $x<l$, we have that 
\begin{align}\label{v_x<l}
\begin{split}
v_p(x;l,u,\theta)
&= e^{\Phi(r)x} \tilde{v}_p(l,u;\theta).
\end{split}
\end{align}
\end{proposition}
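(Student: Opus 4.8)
\textbf{Proof proposal for Proposition \ref{proposition_v_p_SN}.}

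The plan is to express $v_p(x;l,u,\theta)$ directly in terms of the quantities already computed in Theorem \ref{theorem_limiting_case}. First I would observe that $(K - e^{\theta X})$ splits the payoff linearly, so by linearity of expectation,
\begin{align*}
v_p(x;l,u,\theta) = K\,\E_x\big[e^{-r\tilde\tau_{[l,u]}} 1_{\{\tilde\tau_{[l,u]}<\infty\}}\big] - \E_x\big[e^{-r\tilde\tau_{[l,u]} + \theta X_{\tilde\tau_{[l,u]}}} 1_{\{\tilde\tau_{[l,u]}<\infty\}}\big],
\end{align*}
where the first expectation is the $\theta=0$ specialization of the second. Next I would reduce the interval $[l,u]$ to one of the form $[0,u-l]$ by spatial homogeneity: writing $\tilde\tau_{[l,u]}$ under $\p_x$ as $\tilde\tau_{[0,u-l]}$ under $\p_{x-l}$ (with the process shifted by $l$), one gets $X_{\tilde\tau_{[l,u]}} = l + \hat X_{\tilde\tau_{[0,u-l]}}$ for the shifted process $\hat X$, so that $e^{\theta X_{\tilde\tau_{[l,u]}}} = e^{\theta l} e^{\theta \hat X_{\tilde\tau_{[0,u-l]}}}$. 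Thus both expectations become instances of the quantity $\E_{x-l}[e^{-r\tilde\tau_{[0,u-l]} + \theta X_{\tilde\tau_{[0,u-l]}}} 1_{\{\tilde\tau_{[0,u-l]}<\infty\}}]$ evaluated at $q = r$, which is exactly the left-hand side of \eqref{flu_int} in Theorem \ref{theorem_limiting_case}, valid for $r<0$ by Remark \ref{sf_extension} and Assumption \ref{assum_phi}.

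Then I would substitute the formula \eqref{flu_int} for each term. This produces a $\lambda L^{(r,\lambda)}(x-l, u-l; \cdot)$ contribution and an $M^{(r,\lambda)}/N^{(r,\lambda)}$ contribution multiplied by $\mathscr{Z}^{(r,\lambda)}_{u-l}(x-l;\Phi(r))$; collecting the two terms (the $K$-weighted $\theta=0$ one and the $e^{\theta l}$-weighted $\theta$ one) gives precisely the first summand $\lambda\big(K L^{(r,\lambda)}(x-l,u-l;0) - e^{\theta l} L^{(r,\lambda)}(x-l,u-l;\theta)\big)$ and identifies the coefficient of $\mathscr{Z}^{(r,\lambda)}_{u-l}(x-l;\Phi(r))$ as $\frac{e^{\theta l}M^{(r,\lambda)}(u-l;\theta) - K M^{(r,\lambda)}(u-l;0)}{N^{(r,\lambda)}(u-l)}$. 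To match the stated form \eqref{v_p}, I would factor out $e^{\Phi(r)l}$ from this coefficient and define $\tilde v_p(l,u;\theta)$ by \eqref{v<l}; the identification $\tilde v_p(l,u;\theta) = e^{-\Phi(r)l} v_p(l;l,u,\theta)$ follows by setting $x = l$ in \eqref{v_p} and using $\mathscr{Z}^{(r,\lambda)}_{u-l}(0;\Phi(r)) = Z^{(r+\lambda)}(0;\Phi(r)) = 1$ together with $L^{(r,\lambda)}(0,u-l;0) = L^{(r,\lambda)}(0,u-l;\theta) = 0$ (the latter from \eqref{fun_L_new_below_a} since $\overline W^{(r+\lambda)}(0;\cdot)=0$). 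Finally, for $x<l$ the claim \eqref{v_x<l} follows immediately because $\mathscr{Z}^{(r,\lambda)}_{u-l}(x-l;\Phi(r)) = Z^{(r+\lambda)}(x-l;\Phi(r)) = e^{\Phi(r)(x-l)}$ for $x-l\le 0$ (as $\overline W^{(r+\lambda)}$ vanishes on the negative half-line) and, likewise, $L^{(r,\lambda)}(x-l,u-l;\cdot) = -e^{\theta(x-l)}\overline W^{(r+\lambda)}(x-l;\cdot) = 0$ there by \eqref{fun_L_new_below_a}, so only the $\mathscr{Z}$-term survives and contributes $e^{\Phi(r)l}\tilde v_p(l,u;\theta)\cdot e^{\Phi(r)(x-l)} = e^{\Phi(r)x}\tilde v_p(l,u;\theta)$.

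The main obstacle is purely bookkeeping: carefully tracking the shift by $l$ through the arguments of $L^{(r,\lambda)}$, $M^{(r,\lambda)}$, $N^{(r,\lambda)}$, and $\mathscr{Z}^{(r,\lambda)}$, and confirming that the extra $e^{\theta l}$ and $e^{\Phi(r)l}$ prefactors combine exactly as in \eqref{v_p}–\eqref{v<l}. A secondary point requiring care is the justification that Theorem \ref{theorem_limiting_case}, originally proved for $q\ge 0$, applies with $q = r < 0$; this is where Remark \ref{sf_extension} (analytic/complex extension of the scale function) and Assumption \ref{assum_phi} (so that $\Phi(r)$ and hence all the $Z$, $\overline W$, $M$, $N$ expressions are well-defined) are invoked. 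No delicate analytic estimates are needed beyond these, since all identities are algebraic consequences of the already-established fluctuation formula.
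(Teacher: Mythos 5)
Your proposal is correct and follows essentially the same route as the paper's proof: spatial homogeneity to reduce $[l,u]$ to $[0,u-l]$, substitution of the identity \eqref{flu_int} from Theorem \ref{theorem_limiting_case} for both the $\theta$-term and the $\theta=0$ (i.e.\ $K$-weighted) term, identification of the $\mathscr{Z}$-coefficient with $\tilde v_p$, the vanishing of $L^{(r,\lambda)}(y,u-l;\cdot)$ for $y\le 0$ via \eqref{fun_L_new_below_a} to handle $x<l$, and analytic continuation in $r$ under Assumption \ref{assum_phi}. The paper states this more tersely but the logic is the same; your added bookkeeping (e.g.\ checking $\mathscr{Z}^{(r,\lambda)}_{u-l}(0;\Phi(r))=1$ to justify $\tilde v_p(l,u;\theta)=e^{-\Phi(r)l}v_p(l;l,u,\theta)$) is sound.
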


\begin{proof}
Using Theorem \ref{theorem_limiting_case} and the spatial homogeneity of L\'evy processes, for $l< u$,
\begin{align*}
\E_x\Big[e^{-r\tilde{\tau}_{[l,u]}+\theta X_{\tilde{\tau}_{[l,u]}}}1_{\{\tilde{\tau}_{[l,u]}<\infty\}}\Big]&= e^{l \theta}\E_{x-l}\Big[e^{-r\tilde{\tau}_{[0,u-l]}+\theta X_{\tilde{\tau}_{[0,u-l]}}} 1_{\{\tilde{\tau}_{[0,u-l]}<\infty\}}\Big] \\
&=\lambda e^{l\theta}L^{(r,\lambda)}(x-l,u-l; \theta)-e^{l\theta}M^{(r,\lambda)}(u-l; \theta)
\frac{ \mathscr{Z}^{(r, \lambda)}_{u-l} (x-l; \Phi(r)) }{N^{(r,\lambda)}(u-l)}.
\end{align*}
In addition, we have $L^{(r,\lambda)}(y,u-l; \theta) = 0$ for $y \leq 0$ by \eqref{fun_L_new_below_a}.
Combining this and the case with $\theta = 0$, we have the result when $r \geq 0$.
By analytical extension, the result holds for $r < 0$ satisfying Assumption \ref{assum_phi}.
\end{proof}

\subsubsection{First-order condition}


In view of \eqref{v_x<l} and our discussion in Section \ref{section_optimality_barrier} (that the optimal barriers are invariant of the starting value of $X$),
here we pursue the maximizer of the mapping $(l,u) \mapsto \tilde{v}_p(l,u; \theta)$. Hence, we compute
 the first-order conditions with respect to both $l$ and $u$.

We first consider the first-order condition with respect to $u$. To this end, we  obtain the following result whose proof is deferred to Appendix \ref{appen_M_derivative}.
\begin{lemma} \label{lemma_M_derivative}
For $a,\theta \in \mathbb{R}$, we have
$M^{(q,\lambda)\prime}(a; \theta)
 = -  \lambda e^{-(\Phi(q)-\theta) a}  (1+\lambda \overline{W}^{(q+\lambda)} (a; \theta) )$.
\end{lemma}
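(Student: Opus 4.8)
The plan is to differentiate the explicit formula for $M^{(q,\lambda)}(a;\theta)$ in \eqref{fun_M} with respect to $a$ and simplify, treating the two cases $\theta\neq\Phi(q)$ and $\theta=\Phi(q)$ separately (with the latter following from the former by continuity, or by direct differentiation). First I would recall the basic derivative identities we will need: $\tfrac{\ud}{\ud a}\overline{W}^{(q+\lambda)}(a;\theta)=e^{-\theta a}W^{(q+\lambda)}(a)$ directly from the definition $\overline{W}^{(q)}(x;\theta)=\int_0^x e^{-\theta z}W^{(q)}(z)\,\ud z$, and $\tfrac{\ud}{\ud a}Z^{(q+\lambda)}(a;\Phi(q))$, which from \eqref{Z_theta} with $\theta$ replaced by $\Phi(q)$ equals $e^{\Phi(q)a}\bigl((q+\lambda-\psi(\Phi(q)))e^{-\Phi(q)a}W^{(q+\lambda)}(a)\bigr)+\Phi(q)Z^{(q+\lambda)}(a;\Phi(q))$; since $\psi(\Phi(q))=q$ this is $\lambda W^{(q+\lambda)}(a)+\Phi(q)Z^{(q+\lambda)}(a;\Phi(q))$. (Alternatively one can use $Z^{(q+\lambda)}(a;\Phi(q))=e^{\Phi(q)a}(1+\lambda\overline{W}^{(q+\lambda)}(a;\Phi(q)))$ and differentiate that form.)

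The main computation is the case $\theta\neq\Phi(q)$. Write $M^{(q,\lambda)}(a;\theta)=\tfrac{\lambda}{\Phi(q)-\theta}e^{-\Phi(q)a}F(a)$ where $F(a):=e^{\theta a}+\lambda e^{\theta a}\overline{W}^{(q+\lambda)}(a;\theta)-Z^{(q+\lambda)}(a;\Phi(q))$. Then
\[
M^{(q,\lambda)\prime}(a;\theta)=\frac{\lambda}{\Phi(q)-\theta}e^{-\Phi(q)a}\bigl(F'(a)-\Phi(q)F(a)\bigr).
\]
Now $F'(a)=\theta e^{\theta a}+\lambda\theta e^{\theta a}\overline{W}^{(q+\lambda)}(a;\theta)+\lambda e^{\theta a}e^{-\theta a}W^{(q+\lambda)}(a)-\bigl(\lambda W^{(q+\lambda)}(a)+\Phi(q)Z^{(q+\lambda)}(a;\Phi(q))\bigr)$; the two $\lambda W^{(q+\lambda)}(a)$ terms cancel, leaving $F'(a)=\theta\bigl(e^{\theta a}+\lambda e^{\theta a}\overline{W}^{(q+\lambda)}(a;\theta)\bigr)-\Phi(q)Z^{(q+\lambda)}(a;\Phi(q))$. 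Subtracting $\Phi(q)F(a)=\Phi(q)\bigl(e^{\theta a}+\lambda e^{\theta a}\overline{W}^{(q+\lambda)}(a;\theta)\bigr)-\Phi(q)Z^{(q+\lambda)}(a;\Phi(q))$ gives $F'(a)-\Phi(q)F(a)=(\theta-\Phi(q))\bigl(e^{\theta a}+\lambda e^{\theta a}\overline{W}^{(q+\lambda)}(a;\theta)\bigr)$. Hence $M^{(q,\lambda)\prime}(a;\theta)=\tfrac{\lambda}{\Phi(q)-\theta}e^{-\Phi(q)a}(\theta-\Phi(q))e^{\theta a}\bigl(1+\lambda\overline{W}^{(q+\lambda)}(a;\theta)\bigr)=-\lambda e^{-(\Phi(q)-\theta)a}\bigl(1+\lambda\overline{W}^{(q+\lambda)}(a;\theta)\bigr)$, as claimed.

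For the case $\theta=\Phi(q)$, one can either pass to the limit $\theta\to\Phi(q)$ in the formula just derived (noting $\overline{W}^{(q+\lambda)}(a;\theta)$ is continuous in $\theta$), obtaining $M^{(q,\lambda)\prime}(a;\Phi(q))=-\lambda\bigl(1+\lambda\overline{W}^{(q+\lambda)}(a;\Phi(q))\bigr)$, which is the claimed expression at $\theta=\Phi(q)$; or differentiate the second branch of \eqref{fun_M} directly, using $\tfrac{\ud}{\ud a}\int_0^a(a-z)e^{-\Phi(q)z}W^{(q+\lambda)}(z)\,\ud z=\int_0^a e^{-\Phi(q)z}W^{(q+\lambda)}(z)\,\ud z=\overline{W}^{(q+\lambda)}(a;\Phi(q))$, which again yields $-\lambda\bigl(1+\lambda\overline{W}^{(q+\lambda)}(a;\Phi(q))\bigr)$. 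I do not anticipate a genuine obstacle here; the only mildly delicate point is keeping track of the cancellation of the $\lambda W^{(q+\lambda)}(a)$ terms and correctly using $\psi(\Phi(q))=q$ when differentiating $Z^{(q+\lambda)}(\cdot;\Phi(q))$, so I would state that derivative identity explicitly before plugging in.
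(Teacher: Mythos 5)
Your computation is correct and follows essentially the same route as the paper: direct differentiation via the product rule, using $\tfrac{\ud}{\ud a}\overline{W}^{(q+\lambda)}(a;\theta)=e^{-\theta a}W^{(q+\lambda)}(a)$ and $\tfrac{\ud}{\ud a}Z^{(q+\lambda)}(a;\Phi(q))=\Phi(q)Z^{(q+\lambda)}(a;\Phi(q))+\lambda W^{(q+\lambda)}(a)$, with the same cancellation of the $\lambda W^{(q+\lambda)}(a)$ terms. Your factoring through $F(a)$ and the explicit treatment of $\theta=\Phi(q)$ are minor presentational differences only.
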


Applying Lemma  \ref{lemma_M_derivative} in \eqref{v<l}  gives
\begin{align*}
\begin{split}
\frac{\partial}{\partial u} \tilde{v}_p(l,u;\theta)
&=  \frac \partial {\partial u} \Big( e^{\theta l} M^{(r,\lambda)}(u-l; \theta) - K M^{(r,\lambda)}(u-l; 0) \Big) \frac{\displaystyle e^{-\Phi(r)l}}{N^{(r,\lambda)}(u-l)} \\
&- \tilde{v}_p(l,u;\theta) \frac { \lambda e^{-\Phi(r) (u-l) }Z^{(r+\lambda)}(u-l; \Phi(r))} {N^{(r,\lambda)}(u-l)} \\
&=- \lambda e^{\Phi(r)l}Z^{(r+\lambda)}(u-l;\Phi(r))j(l,u; \theta)
\frac{\displaystyle e^{-\Phi(r)u}}{N^{(r,\lambda)}(u-l)},
\end{split}
\end{align*}
where, for $l < u$ and $\theta \in \R$,
\begin{align}\label{aux_5_c_1}
j(l,u; \theta) :=\tilde{v}_p(l,u;\theta) - \frac{K\big[1+ \lambda \overline{W}^{(r+\lambda)} (u-l;0) \big]- e^{\theta u}\big[1+ \lambda \overline{W}^{(r+\lambda)} (u-l;\theta)\big]} {e^{\Phi(r)l}Z^{(r+\lambda)}(u-l;\Phi(r))}.
\end{align}
Hence,  the first-order condition $\frac{\partial}{\partial u} \tilde{v}_p(l,u;\theta)=0$ is equivalent to the condition
\begin{align} \label{cond_A}
\mathfrak{C}^{\theta}_1: j(l,u; \theta) = 0.
\end{align}

Regarding the first-order condition with respect to $l$, by differentiating \eqref{v<l} with respect to $l$, we obtain
\begin{align*}
\frac \partial {\partial l} \tilde{v}_p(l,u;\theta)
&=-\frac \partial {\partial u} \tilde{v}_p(l,u;\theta)+\frac{\left[ K M^{(r,\lambda)}(u-l; 0) \Phi(r) -  e^{\theta l} M^{(r,\lambda)}(u-l; \theta)  (\Phi(r)-\theta) \right]e^{-\Phi(r)l}}{N^{(r,\lambda)}(u-l) }.
\end{align*}
Therefore, under $\mathfrak{C}^{\theta}_1$, we have $\frac \partial {\partial l}\tilde{v}_p(l,u;\theta) = 0$ if and only if the following condition holds:
\begin{align}
	\mathfrak{C}^{\theta}_2: h(l,u;\theta)= 0, \label{cond_c_l_M}
\end{align}
where
\begin{align*}
	h(l,u;\theta) := \frac {e^{\Phi(r)(u-l)}} \lambda \Big[ K M^{(r,\lambda)}(u-l; 0) \Phi(r) -  e^{\theta l} M^{(r,\lambda)}(u-l; \theta)  (\Phi(r)-\theta) \Big], \quad l < u, \quad \theta \in \R.
\end{align*}

Under $\mathfrak{C}^{\theta}_1$ and $\mathfrak{C}^{\theta}_2$, the form of $v_p$ as in \eqref{v_p_theta} is simplified as follows.
\begin{proposition}\label{vf_p_sn}
Suppose $(\tilde{l},\tilde{u})$ satisfy $\mathfrak{C}^{\theta}_1$ and $\mathfrak{C}^{\theta}_2$.
 Then, we have
\begin{align*}
v_p(x;\tilde{l},\tilde{u},\theta)&= \lambda \Big(K  L^{(r,\lambda)}(x-\tilde{l},\tilde{u}-\tilde{l}; 0) -  e^{\theta \tilde{l}}L^{(r,\lambda)}(x-\tilde{l},\tilde{u}-\tilde{l}; \theta) \Big) + (K - e^{\theta \tilde{l}})  \mathscr{Z}^{(r, \lambda)}_{\tilde{u}-\tilde{l}} (x-\tilde{l}; \Phi(r)).
\end{align*}
In particular, for $x < \tilde{l}$, we have
\begin{align*}
v_p(x;\tilde{l},\tilde{u},\theta) = e^{-\Phi(r)(\tilde{l}-x)} (K - e^{\theta \tilde{l}}). 
\end{align*}
\end{proposition}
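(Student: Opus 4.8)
The plan is to start from the general expression for $v_p(x;l,u,\theta)$ given in \eqref{v_p} of Proposition \ref{proposition_v_p_SN} and to show that, when $(l,u)=(\tilde l,\tilde u)$ satisfies both first-order conditions $\mathfrak{C}^{\theta}_1$ and $\mathfrak{C}^{\theta}_2$, the coefficient $\tilde v_p(\tilde l,\tilde u;\theta)$ in the last term collapses to the simple value $(K-e^{\theta\tilde l})e^{-\Phi(r)\tilde l}$. Recall that
\[
\tilde v_p(l,u;\theta) = \frac{[e^{\theta l} M^{(r,\lambda)}(u-l;\theta) - K M^{(r,\lambda)}(u-l; 0)] e^{-\Phi(r)l}}{N^{(r,\lambda)}(u-l) }.
\]
The key arithmetic fact I expect to use is that $\mathfrak{C}^\theta_1$ together with $\mathfrak{C}^\theta_2$ gives two linear relations among the three quantities $e^{\theta l}M^{(r,\lambda)}(u-l;\theta)$, $KM^{(r,\lambda)}(u-l;0)$, and the ``denominator-type'' quantities $N^{(r,\lambda)}(u-l)$ and $e^{\Phi(r)l}Z^{(r+\lambda)}(u-l;\Phi(r))$. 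Concretely, $\mathfrak{C}^\theta_2$ is $h(\tilde l,\tilde u;\theta)=0$, which after multiplying through is exactly the statement that
\[
K M^{(r,\lambda)}(\tilde u-\tilde l;0)\Phi(r) = e^{\theta\tilde l}M^{(r,\lambda)}(\tilde u-\tilde l;\theta)(\Phi(r)-\theta);
\]
substituting this into the numerator of $\tilde v_p$ should produce a single multiple of $M^{(r,\lambda)}(\tilde u-\tilde l;\theta)$ (or of $M^{(r,\lambda)}(\tilde u-\tilde l;0)$), and then $\mathfrak{C}^\theta_1$ — which relates $\tilde v_p$ itself to the bracketed expression in \eqref{aux_5_c_1} involving $\overline{W}^{(r+\lambda)}$ and $Z^{(r+\lambda)}(\tilde u-\tilde l;\Phi(r))$ — should pin down the value. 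I would carry this out by combining the two conditions to eliminate $\tilde v_p$ on one side and solve algebraically, using the explicit formula for $M^{(r,\lambda)}$ in \eqref{fun_M} and the definition of $N^{(r,\lambda)}$; the target identity to verify is
\[
\frac{[e^{\theta\tilde l}M^{(r,\lambda)}(\tilde u-\tilde l;\theta)-KM^{(r,\lambda)}(\tilde u-\tilde l;0)]e^{-\Phi(r)\tilde l}}{N^{(r,\lambda)}(\tilde u-\tilde l)} = (K-e^{\theta\tilde l})e^{-\Phi(r)\tilde l}.
\]

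Once $\tilde v_p(\tilde l,\tilde u;\theta)=(K-e^{\theta\tilde l})e^{-\Phi(r)\tilde l}$ is established, the first displayed formula of the proposition follows immediately by plugging this into \eqref{v_p} and using $e^{\Phi(r)\tilde l}\cdot(K-e^{\theta\tilde l})e^{-\Phi(r)\tilde l}=(K-e^{\theta\tilde l})$ as the coefficient of $\mathscr{Z}^{(r,\lambda)}_{\tilde u-\tilde l}(x-\tilde l;\Phi(r))$. The second displayed formula, the case $x<\tilde l$, then follows from \eqref{v_x<l}, which already gives $v_p(x;l,u,\theta)=e^{\Phi(r)x}\tilde v_p(l,u;\theta)$; substituting the computed value of $\tilde v_p$ yields $e^{\Phi(r)x}(K-e^{\theta\tilde l})e^{-\Phi(r)\tilde l}=e^{-\Phi(r)(\tilde l-x)}(K-e^{\theta\tilde l})$, which is exactly the claim. (As a consistency check one can also note that $L^{(r,\lambda)}(x-\tilde l,\tilde u-\tilde l;\cdot)=0$ for $x\le\tilde l$ by \eqref{fun_L_new_below_a} and that $\mathscr{Z}^{(r,\lambda)}_{\tilde u-\tilde l}(x-\tilde l;\Phi(r))=Z^{(r+\lambda)}(x-\tilde l;\Phi(r))=e^{\Phi(r)(x-\tilde l)}$ for $x\le\tilde l$, which reconciles the first formula with the second.)

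The main obstacle I anticipate is the bookkeeping in the algebraic elimination step: the conditions $\mathfrak{C}^\theta_1$ and $\mathfrak{C}^\theta_2$ involve $\tilde v_p$, $M^{(r,\lambda)}$ at two values of the spatial parameter, $\overline{W}^{(r+\lambda)}$, $Z^{(r+\lambda)}(\cdot;\Phi(r))$, and $N^{(r,\lambda)}$, and one must carefully use the explicit forms in \eqref{fun_M} — in particular the relation $Z^{(r+\lambda)}(y;\Phi(r))=e^{\Phi(r)y}(1+\lambda\overline W^{(r+\lambda)}(y;\Phi(r)))$ and the derivative identity $M^{(q,\lambda)\prime}(a;\theta)=-\lambda e^{-(\Phi(q)-\theta)a}(1+\lambda\overline W^{(q+\lambda)}(a;\theta))$ from Lemma \ref{lemma_M_derivative} — to see the cancellations. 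I would organize the computation so that $\mathfrak{C}^\theta_2$ is used first to rewrite the numerator of $\tilde v_p$ as a pure multiple of $M^{(r,\lambda)}(\tilde u-\tilde l;\theta)$, and then $\mathfrak{C}^\theta_1$ (rewritten using \eqref{aux_5_c_1}) is used to identify that multiple with $N^{(r,\lambda)}(\tilde u-\tilde l)(K-e^{\theta\tilde l})$; the rest is routine simplification, and I would relegate it to a short displayed calculation rather than spelling out every term.
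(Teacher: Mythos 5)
Your proposal is correct and follows essentially the same route as the paper: establish $\tilde v_p(\tilde l,\tilde u;\theta)=(K-e^{\theta\tilde l})e^{-\Phi(r)\tilde l}$ by combining the two first-order conditions, then substitute into \eqref{v_p} and \eqref{v_x<l}. The only cosmetic difference is that the paper uses $\mathfrak{C}^\theta_2$ once, in the $\overline W$-form of Remark \ref{remark_h} (yielding \eqref{aux_3_c_1}), and plugs directly into the $j=0$ expression from \eqref{aux_5_c_1} — which avoids any division by $\Phi(r)-\theta$ — whereas your route via the $M$-form \eqref{aux_2_c_1} and $\tilde{\mathfrak{C}}^\theta_1$ implicitly uses Lemma \ref{remark_simplified_cond} and thus quietly assumes $\Phi(r)\neq\theta$ (harmless in all cases where the proposition is applied, but worth noting).
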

\begin{proof}
Under $\mathfrak{C}^{\theta}_2$, we have
\begin{align}\label{aux_3_c_1}
&K\big(1+ \lambda \overline{W}^{(r+\lambda)} (\tilde{u}-\tilde{l};0) \big)- e^{\theta \tilde{u}}\big(1+ \lambda \overline{W}^{(r+\lambda)} (\tilde{u}-\tilde{l};\theta)\big) = (K - e^{\theta \tilde{l}})  Z^{(r+\lambda)}(\tilde{u}-\tilde{l};\Phi(r)).
\end{align}
Hence, under both $\mathfrak{C}^{\theta}_1$ and $\mathfrak{C}^{\theta}_2$, by  \eqref{cond_A},
\begin{align}\label{v_p_C_u_C_l}
\tilde{v}_p(\tilde{l},\tilde{u};\theta)=\frac{(K - e^{\theta \tilde{l}})  Z^{(r+\lambda)}(\tilde{u}-\tilde{l};\Phi(r))} {e^{\Phi(r) \tilde{l}}Z^{(r+\lambda)}(\tilde{u}-\tilde{l};\Phi(r))} = (K - e^{\theta \tilde{l}}) e^{-\Phi(r) \tilde{l}}.
\end{align}
Substituting this in \eqref{v_p}, the proof is complete.
\end{proof}


\begin{lemma} \label{remark_simplified_cond}
Suppose $\Phi(r) \neq \theta$.
A pair of barriers $(\tilde{l},\tilde{u})$ satisfy $\mathfrak{C}_1^\theta$ and $\mathfrak{C}_2^\theta$ if and only if they satisfy   $\tilde{\mathfrak{C}}^{\theta}_1$ and $\mathfrak{C}_2^\theta$ where
\begin{align} \label{cond_c_l_simplified}
\tilde{\mathfrak{C}}^{\theta}_1:  \tilde{j}(l,u; \theta) = 0,
\end{align}
with
\begin{align}\label{aux_1_c_1}
\tilde{j}(l,u;\theta):=  \frac {\theta K M^{(r,\lambda)}(u-l; 0)} { \Phi(r)-\theta} - (K - e^{\theta l}) N^{(r,\lambda)}(u-l), \quad l < u, \quad \theta \in \R.
\end{align}
\end{lemma}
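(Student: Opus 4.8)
The plan is to fix barriers $l<u$, assume throughout that the condition $\mathfrak{C}_2^\theta$ of \eqref{cond_c_l_M} holds, and show that under this hypothesis $\mathfrak{C}_1^\theta$ holds if and only if $\tilde{\mathfrak{C}}_1^\theta$ holds. Since in both directions of the claimed equivalence the condition $\mathfrak{C}_2^\theta$ is simply carried along unchanged, this is all that is required.

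First I would rewrite $\mathfrak{C}_1^\theta$. By \eqref{aux_5_c_1}, the equation $j(l,u;\theta)=0$ reads
\[
\tilde{v}_p(l,u;\theta)=\frac{K\big[1+ \lambda \overline{W}^{(r+\lambda)} (u-l;0) \big]- e^{\theta u}\big[1+ \lambda \overline{W}^{(r+\lambda)} (u-l;\theta)\big]}{e^{\Phi(r)l}Z^{(r+\lambda)}(u-l;\Phi(r))}.
\]
Now $\mathfrak{C}_2^\theta$, once the explicit forms \eqref{fun_M} of $M^{(r,\lambda)}(u-l;0)$ and $M^{(r,\lambda)}(u-l;\theta)$ are substituted (which is legitimate since $\Phi(r)\neq\theta$) and the common factor $\lambda e^{-\Phi(r)(u-l)}$ is cancelled, is exactly the identity \eqref{aux_3_c_1} derived in the proof of Proposition \ref{vf_p_sn}; it states that the numerator in the display above equals $(K-e^{\theta l})Z^{(r+\lambda)}(u-l;\Phi(r))$. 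Since $Z^{(r+\lambda)}(u-l;\Phi(r))=e^{\Phi(r)(u-l)}\big(1+\lambda\overline{W}^{(r+\lambda)}(u-l;\Phi(r))\big)>0$ (using $r+\lambda>0$ and $u>l$), it can be cancelled, and we conclude that under $\mathfrak{C}_2^\theta$ the condition $\mathfrak{C}_1^\theta$ is equivalent to the identity $\tilde{v}_p(l,u;\theta)=(K-e^{\theta l})e^{-\Phi(r)l}$ of \eqref{v_p_C_u_C_l}.

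Second I would unwind this identity using the explicit expression \eqref{v<l} for $\tilde{v}_p$. Multiplying through by $e^{\Phi(r)l}N^{(r,\lambda)}(u-l)$ (nonzero, hence a reversible step) turns it into
\[
e^{\theta l}M^{(r,\lambda)}(u-l;\theta)-KM^{(r,\lambda)}(u-l;0)=(K-e^{\theta l})N^{(r,\lambda)}(u-l).
\]
Invoking $\mathfrak{C}_2^\theta$ once more, now in the form $e^{\theta l}M^{(r,\lambda)}(u-l;\theta)=\frac{\Phi(r)}{\Phi(r)-\theta}KM^{(r,\lambda)}(u-l;0)$ (again using $\Phi(r)\neq\theta$), the left-hand side collapses to $\frac{\theta}{\Phi(r)-\theta}KM^{(r,\lambda)}(u-l;0)$, so the displayed equation becomes precisely $\tilde{j}(l,u;\theta)=0$ in the notation of \eqref{aux_1_c_1}, i.e.\ $\tilde{\mathfrak{C}}_1^\theta$. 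Since every step above is an equivalence, this proves that, under $\mathfrak{C}_2^\theta$, $\mathfrak{C}_1^\theta\Leftrightarrow\tilde{\mathfrak{C}}_1^\theta$, which is the assertion.

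The only delicate points are bookkeeping. One must import correctly the identity \eqref{aux_3_c_1}, which was obtained ``in passing'' inside the proof of Proposition \ref{vf_p_sn}, and one must track precisely where the hypothesis $\Phi(r)\neq\theta$ enters — namely in the well-definedness of $\tilde j$ and in solving $\mathfrak{C}_2^\theta$ for $e^{\theta l}M^{(r,\lambda)}(u-l;\theta)$ — as well as confirm that $Z^{(r+\lambda)}(u-l;\Phi(r))$ and $N^{(r,\lambda)}(u-l)$ are nonzero, so that the cancellations are harmless. There is no analytic difficulty; once \eqref{aux_3_c_1} is available the content is a short chain of algebraic rearrangements.
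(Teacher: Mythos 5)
Your proof is correct and follows essentially the same route as the paper's: both rely on the equivalences $\mathfrak{C}_2^\theta\Leftrightarrow\eqref{aux_3_c_1}\Leftrightarrow\eqref{aux_2_c_1}$, use \eqref{v<l} to unwind $\tilde v_p$, and pass through the intermediate identity $\tilde v_p(l,u;\theta)=(K-e^{\theta l})e^{-\Phi(r)l}$. The only difference is presentational — you organize the argument as a single chain of reversible equivalences under the standing hypothesis $\mathfrak{C}_2^\theta$, whereas the paper verifies the two implications separately; the content is identical.
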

\begin{proof}
(i)	Suppose $(\tilde{l},\tilde{u})$ satisfy $\mathfrak{C}^{\theta}_1$ and $\mathfrak{C}^{\theta}_2$.
By combining 
\eqref{v<l} and  \eqref{v_p_C_u_C_l}, we must have
\begin{align*}
  \frac{[e^{\theta \tilde{l}} M^{(r,\lambda)}(\tilde{u}-\tilde{l}; \theta) - K M^{(r,\lambda)}(\tilde{u}-\tilde{l}; 0)] e^{-\Phi(r) \tilde{l}}}{N^{(r,\lambda)}(\tilde{u}-\tilde{l})} = (K - e^{\theta \tilde{l}}) e^{-\Phi(r) \tilde{l}}.
\end{align*}
On the other hand, condition $\mathfrak{C}^{\theta}_2$ is equivalent to
\begin{align}\label{aux_2_c_1}
K  M^{(r,\lambda)}(\tilde{u}-\tilde{l}; 0) \frac {\Phi(r)} { \Phi(r)-\theta} =  e^{\theta \tilde{l}} M^{(r,\lambda)}(\tilde{u}-\tilde{l}; \theta) .
\end{align}
Combining these,  $(\tilde{l},\tilde{u})$ satisfy $\tilde{\mathfrak{C}}^{\theta}_1$.

(ii) 
Conversely, assume that $(\hat{l},\hat{u})$ satisfy $\tilde{\mathfrak{C}}^{\theta}_1$ and $\mathfrak{C}^{\theta}_2$. Then combining \eqref{aux_1_c_1} and \eqref{aux_2_c_1}, we obtain
	\begin{align*}
		e^{\theta \hat{l}} M^{(r,\lambda)}(\hat{u}-\hat{l};\theta) - K M^{(r,\lambda)}(\hat{u}-\hat{l}; 0)=\frac{\theta K M^{(r,\lambda)}(\hat{u}-\hat{l}; 0)}{\Phi(r)-\theta} =(K - e^{\theta \hat{l}}) N^{(r,\lambda)}(\hat{u}-\hat{l}).
	\end{align*}
	Hence using the above identity in \eqref{v<l} we obtain
	\begin{align}\label{aux_4_c_1}
		\tilde{v}_p(\hat{l},\hat{u};\theta)=(K - e^{\theta \hat{l}})e^{-\Phi(r) \hat{l}}.
	\end{align}
	Therefore using \eqref{aux_3_c_1} together with \eqref{aux_4_c_1} in \eqref{aux_5_c_1} we conclude that $j(\hat{l},\hat{u}; \theta) = 0$ and hence $(\hat{l},\hat{u})$ satisfy $\mathfrak{C}^{\theta}_1$. 
\end{proof}



\begin{remark} \label{remark_h}\rm
We have, for $u > l$,
\begin{align*}
\begin{split}
h(l,u; \theta)
&=  K \Big[1+\lambda \overline{W}^{(r+\lambda)} (u-l;0)-Z^{(r+\lambda)}(u-l;\Phi(r)) \Big] \\ &-   \Big[e^{\theta u}+\lambda e^{\theta u}\overline{W}^{(r+\lambda)} (u-l;\theta)- e^{\theta l} Z^{(r+\lambda)}(u-l;\Phi(r)) \Big] \\
&= (K - e^{\theta u}) - (K - e^{\theta l}) Z^{(r+\lambda)}(u-l;\Phi(r))  + \lambda \int_0^{u-l} (K- e^{\theta (u-y)})W^{(r+\lambda)}(y) \diff y.
\end{split}
\end{align*}
Differentiating this,
\begin{align}\label{h_derivative<u}
\frac \partial {\partial l} h(l,u; \theta)
&= \theta e^{\theta l} Z^{(r+\lambda)}(u-l;\Phi(r)) + (K - e^{\theta l}) \Big( \Phi(r) Z^{(r+\lambda)}(u-l;\Phi(r))  + \lambda W^{(r+\lambda)}(u-l) \Big)\notag\\ &- \lambda  (K- e^{\theta l})W^{(r+\lambda)}(u-l) \notag\\
&= \big[\theta e^{\theta l} + (K - e^{\theta l})  \Phi(r) \big] Z^{(r+\lambda)}(u-l;\Phi(r)).
\end{align}
In particular,
\begin{align}
\frac \partial {\partial l} h(l,u; \theta) \Big|_{l = u-}
&= \theta e^{\theta u} + (K - e^{\theta u})  \Phi(r), \label{h_derivative_at_u} \\
h(u,u; \theta) &= 0. \label{h_u_u}
\end{align}
\end{remark}

\subsection{Spectrally negative \lev case}  \label{subsection_SN}

The spectrally negative \lev case corresponds to the maximization of 
\begin{align*}
	(l,u) \mapsto v_p^{SN}(x;l,u) := v_p(x;l,u,1).
\end{align*}
Recall as in Theorem  \ref{theorem_barrier_optimal}  that the optimal stopping region is given by $\mathcal{D}_p:= [L_p^*, U_p^*] = [e^{l_p^*}, e^{u_p^*}]$ satisfying \eqref{L_range_put}, invariant of the starting value $x$. The optimal barriers $(l_p^*, u_p^*)$ must be such that $\tilde{v}_p(l,u; 1)$ as in \eqref{v<l}  is maximized, and hence we can use directly the results in the previous subsection by setting $\theta = 1$.


\begin{figure}[htbp]
	\begin{center}
		\begin{minipage}{1.0\textwidth}
			\centering
			\begin{tabular}{c}
				\includegraphics[scale=0.4]{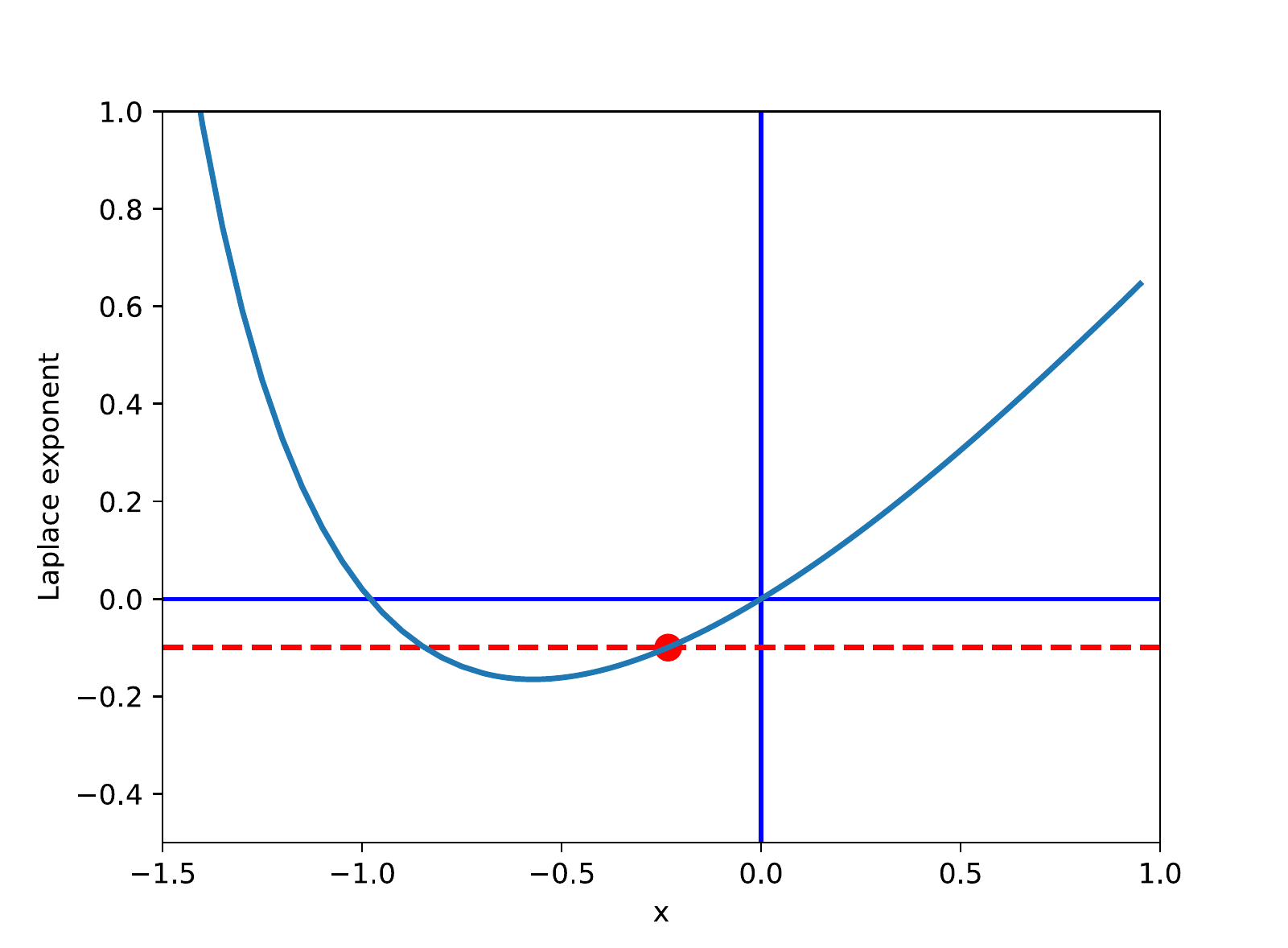} 
			\end{tabular}
		\end{minipage}
		\caption{\footnotesize The plot of $\psi$ and $\Phi(r)$ when $r < 0$ and $\psi'(0) > 0$. 
		In order for $\Phi(r)$ to be well-defined, $y = \psi(x)$ (solid curve) and $y = r$ (dashed line) need to cross. 
		} \label{figure_psi_SN}
	\end{center}
\end{figure}

 Additionally to Assumption \ref{assum_phi} we make the following assumption on the L\'evy process $X$.
\begin{assump} \label{assump_X_SN}
We assume that $X$ drifts to infinity (i.e.\ $\E X_1 = \psi'(0) > 0$) throughout this subsection.
\end{assump}

\begin{remark} \label{remark_Phi_sign}  
Under Assumption \ref{assum_phi},  Assumption \ref{assump_X_SN} holds if and only if
	\begin{equation}\label{phineg}
	\Phi(r) < 0
	\end{equation}
 for $r<0$ (because $\psi'(0) > 0$ and $\psi$ is convex on $[0,\infty)$).  In Figure \ref{figure_psi_SN}, we plot the Laplace exponent $\psi$ along with $\Phi(r)$ for the case $\psi'(0) > 0$ and $\Phi(r)$ is well-defined. 
 \end{remark}

 Assumptions \ref{assum_phi}  and \ref{assump_X_SN} guarantee the following.
 \begin{lemma} \label{lemma_tail_SN}
 Assumption \ref{assump_tail_value_function} for the put case ($i = p$) is satisfied.
 \end{lemma}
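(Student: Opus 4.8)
The goal is to verify Assumption \ref{assump_tail_value_function} for the put case, and by Lemma \ref{cond_last_time} it suffices to check that $\eee_s[e^{-r T_{\text{last}}(K)}] < \infty$ for all $s > 0$, where $T_{\text{last}}(K) = \sup\{t \geq 0 : S_t \leq K\}$. Translating to the \lev process $X$ with $X_0 = x = \log s$ and $\kappa := \log K$, this is the last time $X$ lies below $\kappa$; since $X$ drifts to $+\infty$ by Assumption \ref{assump_X_SN}, this last passage time is a.s.\ finite. The plan is to compute (or bound) the Laplace transform $\e_x[e^{-r T_{\text{last}}(K)}]$ explicitly using the fluctuation theory of spectrally negative \lev processes, and to show it is finite precisely because $\Phi(r)$ is well-defined and negative (Assumption \ref{assum_phi} together with Remark \ref{remark_Phi_sign}, i.e.\ \eqref{phineg}).

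\textbf{Key steps.} First I would reduce to the case $X_0 = 0$ by spatial homogeneity: $T_{\text{last}}(K)$ under $\p_x$ equals, in law, the last time the process (started from $0$) is below $\kappa - x$. Since $X$ has no positive jumps, the last passage time below a level $c$ can be decomposed via the first passage time $\tilde T_c^+$ above $c$ (which is a.s.\ finite and in fact creeps upward, so $X_{\tilde T_c^+} = c$) followed by the requirement that the post-$\tilde T_c^+$ process, started afresh from $c$, never returns below $c$ — except that to capture excursions below $c$ after $\tilde T_c^+$ one must be a little careful. The cleaner route: write $T_{\text{last}}(\kappa) = \tilde T_\kappa^+ + \widehat{T}$ where, by the strong Markov property at $\tilde T_\kappa^+$ and the fact that $X_{\tilde T_\kappa^+} = \kappa$, the remainder $\widehat T$ is the last passage time below $\kappa$ for an independent copy of $X$ started at $\kappa$; equivalently the last passage time below $0$ of $X$ started at $0$. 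For a spectrally negative \lev process drifting to $+\infty$, there is a classical expression for $\e_0[e^{q \cdot (\text{last passage below }0)}]$ in terms of the scale function and $\psi'(0^+)$; the relevant quantity converges (is finite) for $q$ in a left-neighbourhood of $\Phi$-type exponents, and the condition that makes it work out is exactly that $-r$ lies below the relevant exponential growth rate, which is encoded by $\Phi(r)$ being well-defined. So I would: (i) invoke \eqref{upcrossing_identity} to handle $\e_x[e^{-r\tilde T_\kappa^+} 1_{\{\tilde T_\kappa^+ < \infty\}}] = e^{-\Phi(r)(\kappa - x)} < \infty$ (finite and positive since $\kappa - x$ is fixed); (ii) bound the contribution of $\widehat T$ by relating $e^{-r \widehat T}$ to an exponential functional controlled by the overall minimum of $X$ after time $\tilde T_\kappa^+$, using that $\{\widehat T > t\}$ forces $X$ to dip below $\kappa$ after $t$; (iii) combine the two via independence to get a finite product.

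\textbf{Alternative, more robust route.} Rather than seeking a closed form for the last passage time transform, I would likely bound it. On $\{T_{\text{last}}(K) > t\}$ the process must satisfy $\inf_{s \geq t} X_s \leq \kappa$; combining this with the fact that $X$ drifts to $+\infty$ and has Laplace exponent $\psi$ finite (by analytic extension, Remark \ref{sf_extension}) on an interval containing a point where $\psi$ equals $r < 0$, one gets exponential tail control: $\p_x(T_{\text{last}}(K) > t) \leq C e^{\Phi(r) t}$ for large $t$, because $\Phi(r) < 0$ by \eqref{phineg} and $\e_x[e^{\Phi(r) X_t}] = e^{x\Phi(r)} e^{\psi(\Phi(r)) t} = e^{x \Phi(r)} e^{r t}$ gives, by an optional-stopping / strong-Markov argument on the last passage, that the probability of a late last-passage decays like $e^{r t} \cdot e^{-\Phi(r) \kappa}$-type factors. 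Then $\e_x[e^{-r T_{\text{last}}(K)}] = \int_0^\infty (-r) e^{-rt} \p_x(T_{\text{last}}(K) > t)\, \diff t + \p_x(T_{\text{last}}(K) = 0)$ converges because $-r + \Phi(r) < 0$? — no, we need $-r < -\Phi(r)$, i.e.\ $r > \Phi(r)$; this holds since $\psi(\Phi(r)) = r$ with $\Phi(r) < 0$ and $\psi$ below its value at $0$ means... this inequality is where care is needed. \textbf{The main obstacle} is precisely pinning down this exponential decay rate and checking the integrability threshold: one must show the decay exponent of $\p_x(T_{\text{last}}(K)>t)$ strictly dominates $-r$, which amounts to a statement about the relative position of $r$, $\Phi(r)$, and $\psi'(0^+)>0$; I expect this to follow from convexity of $\psi$ on $[0,\infty)$ and $\psi(0)=0$, $\psi'(0^+)>0$, $\psi(\Phi(r)) = r < 0$ with $\Phi(r)<0$ — but making the excursion-theoretic decomposition of the last passage time rigorous (handling post-$\tilde T_\kappa^+$ excursions that reach below $\kappa$) is the step requiring the most attention, and I would lean on the spectrally-negative structure (creeping upward, known form of the resolvent / last-passage laws in Kyprianou \cite{K}) to push it through.
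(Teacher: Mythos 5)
Your overall framing is right — reduce via Lemma \ref{cond_last_time}, recognize that the crux is finiteness of $\e_x[e^{-r\tau_{\text{last}}(\log K)}]$ for the spectrally negative process $X$, and that the hypotheses doing the work are Assumption \ref{assum_phi} (well-definedness of $\Phi(r)$) and $\psi'(0)>0$. That much matches the paper. But the step you yourself flag as ``the main obstacle'' is a genuine gap, and the concrete tail bound you propose is wrong, not merely in need of care.

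Specifically, you posit $\p_x(T_{\text{last}}(K)>t)\lesssim e^{\Phi(r)t}$ and then observe you would need $\Phi(r)<r$ (equivalently $-r+\Phi(r)<0$) to integrate against $e^{-rt}$. That inequality simply fails in general: take Brownian motion with drift $\mu>1$ and volatility $\eta$, so $\psi(\theta)=\tfrac{\eta^2}{2}\theta^2+\mu\theta$ with $\psi'(0)=\mu$. For $r=-\varepsilon$ small, $\Phi(r)\approx -\varepsilon/\mu>-\varepsilon=r$, so $\Phi(r)>r$, yet the Lemma is still true. The correct decay rate of $\p_x(T_{\text{last}}(K)>t)$ is governed not by $\Phi(r)$ but by $-\inf_\theta\psi(\theta)$; well-definedness of $\Phi(r)$ is exactly the statement $r\geq\inf_\theta\psi(\theta)$, i.e.\ $-r\leq -\inf_\theta\psi(\theta)$, which is the integrability threshold. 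So the exponent you wrote down is off, and the resulting inequality you need is not the right one. Your ``key steps'' route via strong Markov at $\tilde T_\kappa^+$ has the analogous problem deferred into step (ii): ``relating $e^{-r\widehat T}$ to an exponential functional controlled by the overall minimum'' is not a bound you establish.

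The paper sidesteps all of this by citation: it invokes Theorem 2 of Baurdoux \cite{Eriklast}, which gives the Laplace transform of the last exit time of a spectrally negative L\'evy process below a level, explicitly in terms of $\Phi$ and the scale function, and whose finiteness after analytic continuation is exactly what Assumption \ref{assum_phi} and $\psi'(0)>0$ guarantee; Lemma 1 of \cite{DPT} packages this for the present setting. If you want a self-contained proof rather than a citation, you would need to reproduce that computation (or the equivalent Esscher-transform argument) to get the exact critical exponent, because the crude tail estimate you sketched does not reach the true threshold.
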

 \begin{proof}
 It suffices to show that the condition in Lemma \ref{cond_last_time} is satisfied. Indeed, it is equivalent to the condition that $\E_x [e^{-{r}\tau_{\text{last}}(\log K)}
]<\infty$ where
$\tau_{\text{last}}(\log K):=\sup\{t\geq0:X_t\leq \log K  \}$. This is indeed satisfied by the proof of Lemma 1 in \cite{DPT} together with Theorem 2 in \cite{Eriklast}, because $\psi'(0) > 0$ and $\Phi(r)$ is well-defined. 

	\end{proof}

\subsubsection{First-order conditions} \label{subsub_first_SN}
As $\tilde{v}_p(l,u; 1)$ has been shown to be  smooth in $l$ and $u$ in Section \ref{subsection_preliminary}, the optimal barriers $(l_p^*, u_p^*)$ as in \eqref{opt_barrier_log} must satisfy the first-order conditions with respect to both $l$ and $u$.  Using \eqref{cond_c_l_M}, we have that $\frac \partial {\partial u}\tilde{v}_p(l,u;1) = 0$ if and only if 
\begin{align*} 
	\mathfrak{C}^{SN}_u: j(l,u; 1) = 0,
\end{align*}
and, under $\mathfrak{C}_u^{SN}$, the condition $\frac \partial {\partial l}\tilde{v}_p(l,u;1) = 0$ is equivalent to
\begin{align*}
	\mathfrak{C}^{SN}_l: h(l,u; 1)= 0. 
\end{align*}
By Lemma \ref{remark_simplified_cond}, a pair of barriers $(l,u)$ satisfy $\mathfrak{C}_l^{SN}$ and $\mathfrak{C}_u^{SN}$ if and only if they satisfy  $\mathfrak{C}_l^{SN}$ and $\tilde{\mathfrak{C}}_u^{SN }$ where \begin{align*}
\tilde{\mathfrak{C}}_u^{SN }:   \tilde{j}(l,u;1) = 0.
\end{align*}

\begin{remark}
Given \eqref{phineg},  note that the existence of a pair satisfying these is guaranteed, because at least the optimal barriers $(l_p^*, u_p^*)$ satisfy them. Unfortunately, however, the uniqueness of the solution to these may not hold.
%
\end{remark}


The following is a direct corollary of Proposition \ref{vf_p_sn}.
\begin{theorem}\label{cor_vf_SN}
Suppose $(\tilde{l}_p^{SN}, \tilde{u}_p^{SN})$ be such that $\mathfrak{C}_l^{SN}$ and $\mathfrak{C}_u^{SN}$ (or equivalently $\mathfrak{C}_l^{SN}$ and $\tilde{\mathfrak{C}}_u^{SN }$) are satisfied. Then, for $x \in \R$,
\begin{align*}
v_p^{SN}(x;\tilde{l}_p^{SN},\tilde{u}_p^{SN})&= \lambda \Big(K  L^{(r,\lambda)}(x-\tilde{l}_p^{SN},\tilde{u}_p^{SN}-\tilde{l}_p^{SN}; 0) -  e^{\tilde{l}_p^{SN}}L^{(r,\lambda)}(x-\tilde{l}_p^{SN},\tilde{u}_p^{SN}-\tilde{l}_p^{SN}; 1) \Big) \\&+ (K - e^{ \tilde{l}_p^{SN}})  \mathscr{Z}^{(r, \lambda)}_{\tilde{u}_p^{SN}-\tilde{l}_p^{SN}} (x-\tilde{l}_p^{SN}; \Phi(r)).
\end{align*}
In particular, for $x < \tilde{l}_p^{SN}$, we have
$v_p^{SN}(x;\tilde{l}_p^{SN},\tilde{u}_p^{SN}) = e^{-\Phi(r)(\tilde{l}_p^{SN}-x)} (K - e^{\tilde{l}_p^{SN}})$.
\end{theorem}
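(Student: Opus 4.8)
The plan is to read Theorem~\ref{cor_vf_SN} off Proposition~\ref{vf_p_sn} by specializing $\theta=1$; no genuinely new computation is required. First I recall that, by the definition at the start of Section~\ref{subsection_SN}, $v_p^{SN}(x;l,u)=v_p(x;l,u,1)$, so the identity to be proved is exactly the conclusion of Proposition~\ref{vf_p_sn} evaluated at $\theta=1$.

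Next I match the first-order conditions. By the definitions introduced in Section~\ref{subsub_first_SN}, the condition $\mathfrak{C}^{SN}_u$ reads $j(l,u;1)=0$, which is precisely $\mathfrak{C}^{\theta}_1$ from \eqref{cond_A} with $\theta=1$; and $\mathfrak{C}^{SN}_l$ reads $h(l,u;1)=0$, which is $\mathfrak{C}^{\theta}_2$ from \eqref{cond_c_l_M} with $\theta=1$. Hence any pair $(\tilde{l}_p^{SN},\tilde{u}_p^{SN})$ satisfying $\mathfrak{C}^{SN}_l$ and $\mathfrak{C}^{SN}_u$ is a pair satisfying $\mathfrak{C}^{1}_1$ and $\mathfrak{C}^{1}_2$, so Proposition~\ref{vf_p_sn} applies verbatim with $\theta=1$ and yields the displayed formula for $v_p^{SN}(x;\tilde{l}_p^{SN},\tilde{u}_p^{SN})$; restricting to $x<\tilde{l}_p^{SN}$ and invoking the ``in particular'' part of that proposition (which in turn rests on \eqref{v_x<l} together with \eqref{v_p_C_u_C_l}) gives $v_p^{SN}(x;\tilde{l}_p^{SN},\tilde{u}_p^{SN})=e^{-\Phi(r)(\tilde{l}_p^{SN}-x)}(K-e^{\tilde{l}_p^{SN}})$.

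It remains to justify the ``equivalently'' clause, i.e.\ that $(\tilde{l}_p^{SN},\tilde{u}_p^{SN})$ may instead be required to satisfy $\mathfrak{C}^{SN}_l$ and $\tilde{\mathfrak{C}}_u^{SN}$. For this I apply Lemma~\ref{remark_simplified_cond} with $\theta=1$. Its hypothesis $\Phi(r)\neq\theta$ holds automatically in the present setting: by Assumption~\ref{assump_X_SN} and Remark~\ref{remark_Phi_sign} we have $\Phi(r)<0<1$. The lemma then states that a pair satisfies $\mathfrak{C}^{1}_1$ and $\mathfrak{C}^{1}_2$ if and only if it satisfies $\tilde{\mathfrak{C}}^{1}_1$ and $\mathfrak{C}^{1}_2$, which in the notation of Section~\ref{subsub_first_SN} is precisely $\tilde{\mathfrak{C}}_u^{SN}$ and $\mathfrak{C}^{SN}_l$; this finishes the argument.

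The only point requiring minor care is the bookkeeping of the notational dictionary $\mathfrak{C}^{SN}_u\leftrightarrow\mathfrak{C}^{1}_1$, $\mathfrak{C}^{SN}_l\leftrightarrow\mathfrak{C}^{1}_2$, $v_p^{SN}(\cdot;\cdot,\cdot)\leftrightarrow v_p(\cdot;\cdot,\cdot,1)$, together with the verification $\Phi(r)\neq1$ needed to legitimately invoke Lemma~\ref{remark_simplified_cond}; beyond that there is no substantive obstacle, since all the analytic work has already been carried out in Proposition~\ref{vf_p_sn} and Lemma~\ref{remark_simplified_cond}.
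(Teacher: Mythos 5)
Your proof is correct and follows essentially the same route as the paper, which states Theorem~\ref{cor_vf_SN} is ``a direct corollary of Proposition~\ref{vf_p_sn}'' and, just before the theorem, already records the equivalence $\mathfrak{C}_u^{SN}\leftrightarrow\tilde{\mathfrak{C}}_u^{SN}$ via Lemma~\ref{remark_simplified_cond}. Your added bookkeeping (matching $\mathfrak{C}_u^{SN}=\mathfrak{C}_1^1$, $\mathfrak{C}_l^{SN}=\mathfrak{C}_2^1$, and verifying $\Phi(r)<0<1$ so the lemma's hypothesis $\Phi(r)\neq\theta$ holds) simply makes explicit what the paper leaves implicit.
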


\subsubsection{Computation of 
$(\tilde{l}_p^{SN}, \tilde{u}_p^{SN})$
} 

 \begin{figure}[htbp]
\begin{center}
\begin{minipage}{1.0\textwidth}
\centering
\begin{tabular}{cc}
 \includegraphics[scale=0.4]{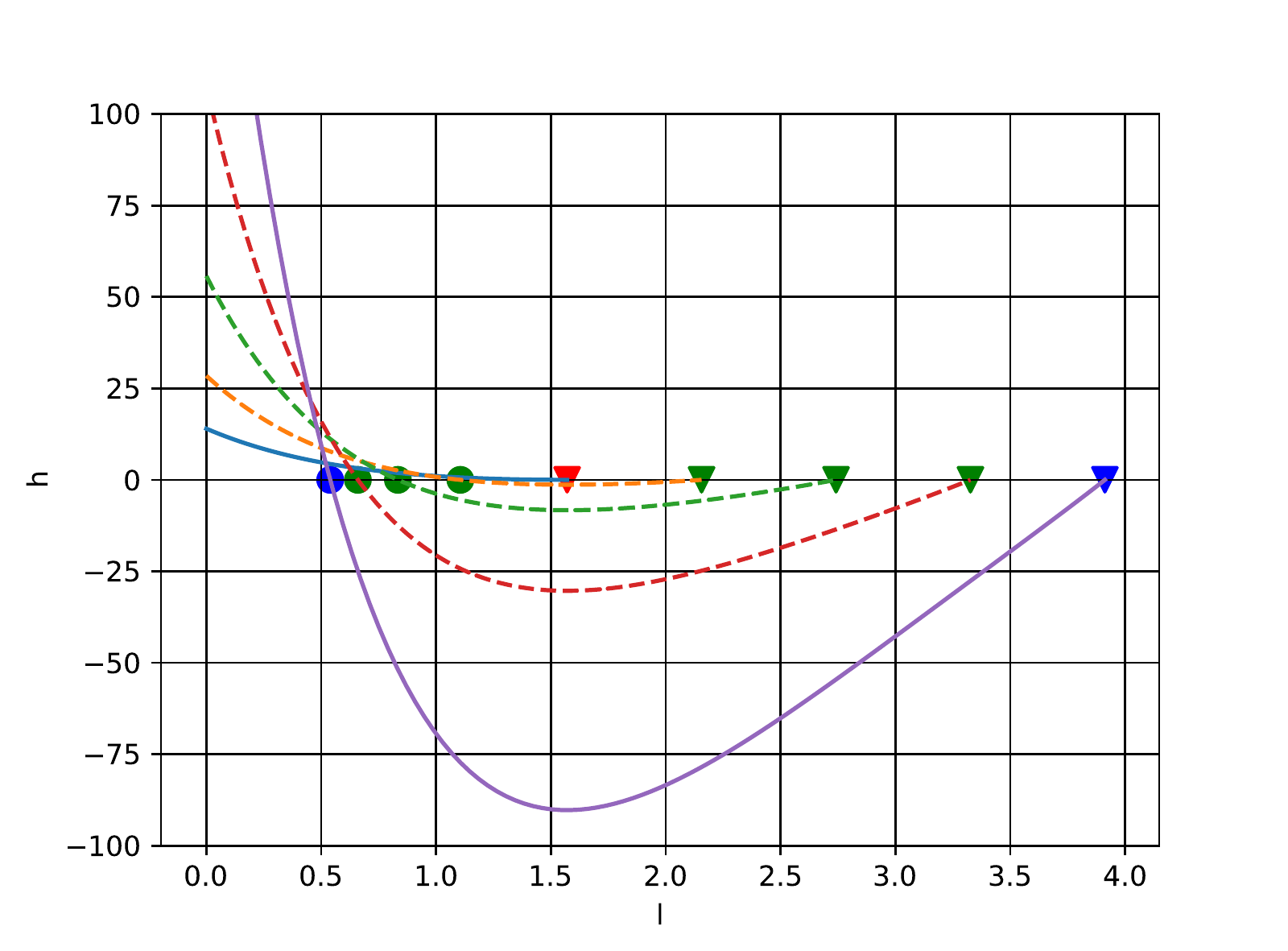} & \includegraphics[scale=0.4]{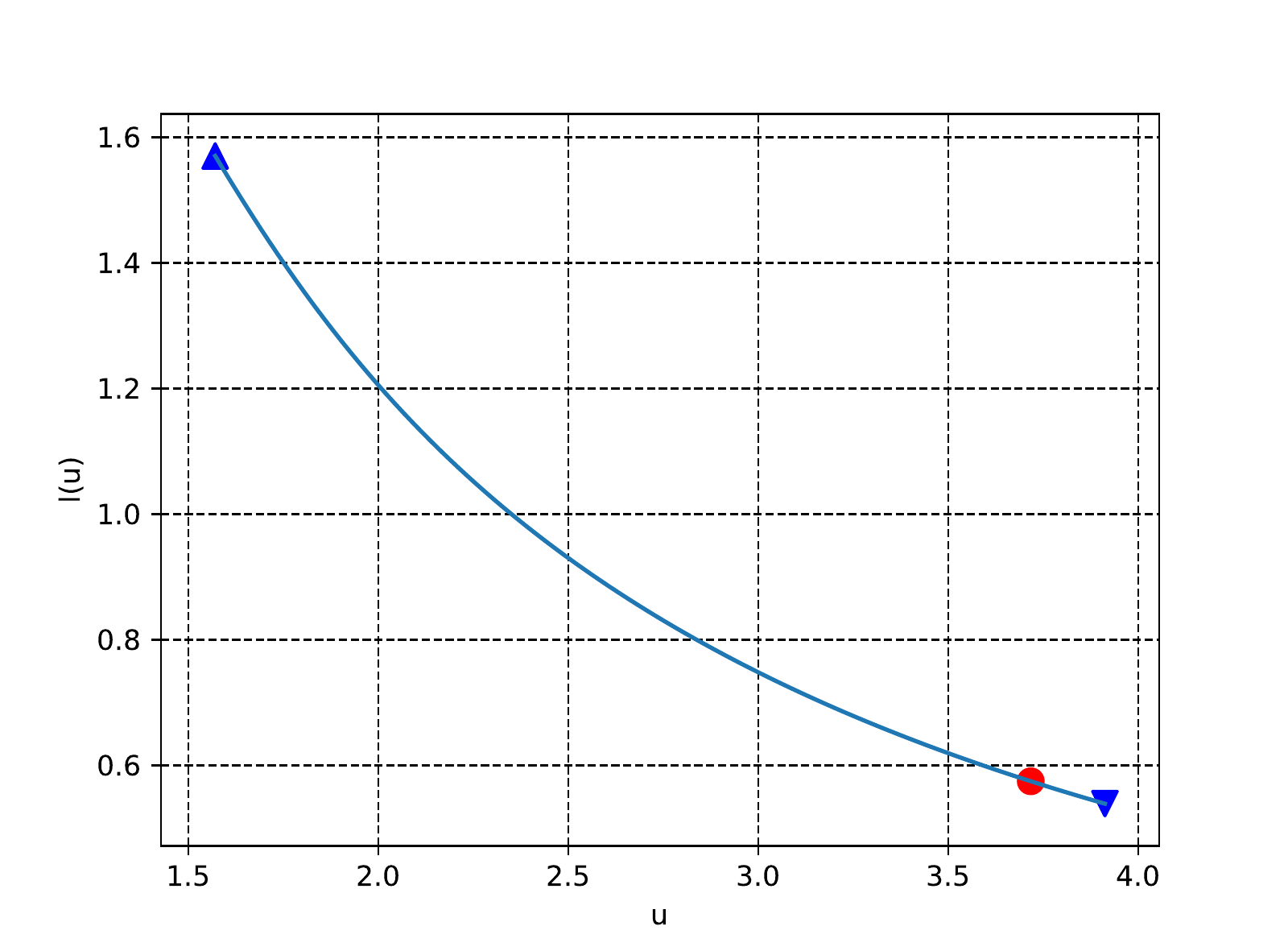} \\
 (1) $l \mapsto h(l,u;1)$ &  (2) $u \mapsto l(u)$ \\
  \multicolumn{2}{c}{ \includegraphics[scale=0.4]{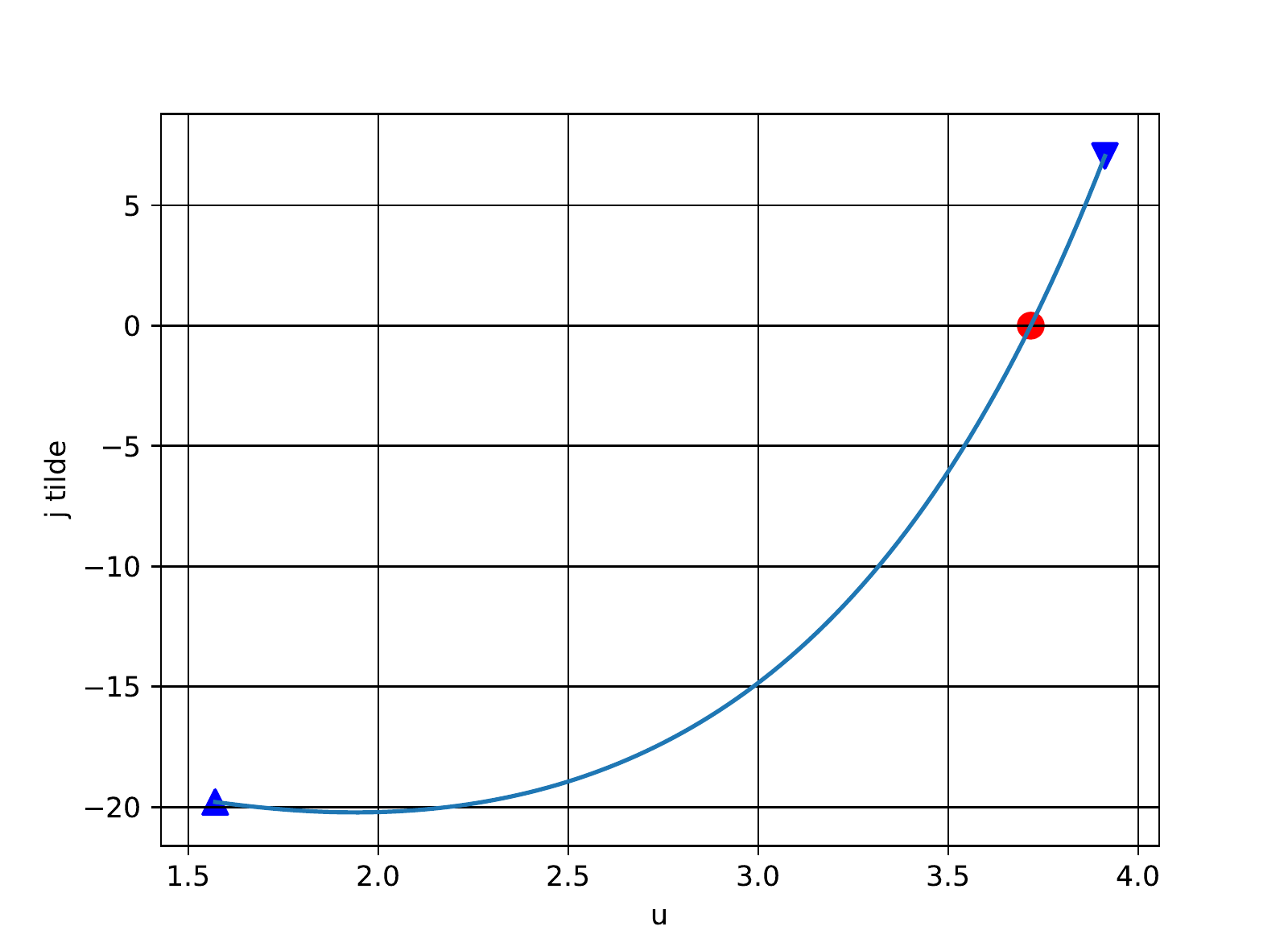} }  \\
  \multicolumn{2}{c}{ (3) $u \mapsto \tilde{j}(l(u), u; 1)$ }
 \end{tabular}
\end{minipage}
\caption{\footnotesize 
	(1) The function  $l \mapsto h(l,u; 1)$  for $u = \underline{u}(=1.570)$, $2.156$, $2.741$, $3.326$, $\log K (= 3.912)$ (solid lines for $u = \underline{u}, \log K$ and dashed lines for others). The points at $l=u$ are indicated by down-pointing triangles. The roots $l(u)$ are indicated by circles.  (2) The mapping $u \mapsto l(u)$ for $u \in [\underline{u}, \log K]$. (3) The function $u \mapsto \tilde{j}(l(u), u; 1)$ for $u \in [\underline{u}, \log K]$.  For (2) and (3), the points at $u=\underline{u}+$ are indicated by up-pointing triangles whereas those at $u=\log K$ are indicated by down-pointing triangles. The points at $u=\tilde{u}^{SN}_p$ are indicated by circles.
} \label{figure_h_SN}
\end{center}
\end{figure}

Recall \eqref{phineg}. By \eqref{h_derivative_at_u}, we have
\begin{align}
\frac \partial {\partial l} h(l,u;1) \Big|_{l = u-}
 \leq 0 \;  \textrm{ if and only if } \; u  \leq  \log ( -K \Phi(r)) - \log (1- \Phi(r)) =:\underline{u}. \label{u_underline_cond}
 \end{align}

\begin{lemma} \label{lemma_l_u}
For $u \in (\underline{u}, \log K)$, there exists a unique root $l(u) < u$ such that $h(l(u), u; 1) = 0$ (which satisfies $\mathfrak{C}^{SN}_l$). For $u \in (-\infty, \underline{u}]$, there does not exist $l < u$ such that  $h(l, u; 1) = 0$.
\end{lemma}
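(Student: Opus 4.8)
The plan is to study, for a fixed $u$, the single-variable function $l\mapsto h(l,u;1)$ on $(-\infty,u)$, using the derivative formula \eqref{h_derivative<u}, the boundary value \eqref{h_u_u}, and the threshold identity \eqref{u_underline_cond}. First I would note that, since $r+\lambda>0$ by Assumption \ref{assump_lambda_r}, the function $W^{(r+\lambda)}$ is a genuine scale function, and since $\Phi(r)<0$ by \eqref{phineg} we have $Z^{(r+\lambda)}(x;\Phi(r))=e^{\Phi(r)x}\bigl(1+\lambda\overline{W}^{(r+\lambda)}(x;\Phi(r))\bigr)>0$ for every $x\ge 0$. Therefore, by \eqref{h_derivative<u} with $\theta=1$, the sign of $\tfrac{\partial}{\partial l}h(l,u;1)$ is that of $e^{l}(1-\Phi(r))+K\Phi(r)$, which is strictly increasing in $l$ (as $1-\Phi(r)>0$) and vanishes exactly at $l=\underline{u}$. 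Hence $l\mapsto h(l,u;1)$ is continuous, strictly decreasing on $(-\infty,\underline{u}]$ and strictly increasing on $[\underline{u},\infty)$, and this shape does not depend on $u$.

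The second assertion is then immediate: if $u\le\underline{u}$ and $l<u$, then $l<\underline{u}$, so $h(\cdot,u;1)$ is strictly decreasing on $[l,u]$; combined with $h(u,u;1)=0$ from \eqref{h_u_u} this gives $h(l,u;1)>0$ for every $l<u$, so there is no root $l<u$.

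For the first assertion, fix $u\in(\underline{u},\log K)$. On $[\underline{u},u]$ the map $h(\cdot,u;1)$ is strictly increasing with $h(u,u;1)=0$, so $h(l,u;1)<0$ for all $l\in[\underline{u},u)$; in particular $h(\underline{u},u;1)<0$ and there is no root in $[\underline{u},u)$. On $(-\infty,\underline{u}]$ the map is strictly decreasing, so it remains to show that $h(l,u;1)>0$ for all sufficiently negative $l$; the intermediate value theorem together with strict monotonicity then yields a unique root $l(u)\in(-\infty,\underline{u})$, which is thus the unique root in $(-\infty,u)$ and satisfies $\mathfrak{C}^{SN}_l$ by definition. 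To obtain $h(l,u;1)\to+\infty$ as $l\to-\infty$, I would start from the expression in Remark \ref{remark_h}, $h(l,u;1)=(K-e^{u})-(K-e^{l})Z^{(r+\lambda)}(u-l;\Phi(r))+\lambda\int_0^{u-l}(K-e^{u-y})W^{(r+\lambda)}(y)\,\diff y$, and use the standard large-argument asymptotics $W^{(r+\lambda)}(x)\sim e^{\Phi(r+\lambda)x}/\psi'(\Phi(r+\lambda))$ and, since $\Phi(r)<0<\Phi(r+\lambda)$, $Z^{(r+\lambda)}(x;\Phi(r))\sim \lambda e^{\Phi(r+\lambda)x}/\bigl((\Phi(r+\lambda)-\Phi(r))\psi'(\Phi(r+\lambda))\bigr)$ as $x\to\infty$ (the latter from \eqref{Z_theta} and \eqref{scale_function_laplace}). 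As $l\to-\infty$, both the second and third terms grow like $e^{\Phi(r+\lambda)(u-l)}$, with combined leading coefficient $\tfrac{\lambda K}{\psi'(\Phi(r+\lambda))}\bigl(\tfrac{1}{\Phi(r+\lambda)}-\tfrac{1}{\Phi(r+\lambda)-\Phi(r)}\bigr)$, which is strictly positive because $\Phi(r)<0$ forces $\Phi(r+\lambda)-\Phi(r)>\Phi(r+\lambda)>0$; hence $h(l,u;1)\to+\infty$, and the remaining $(K-e^{u})$ term stays bounded.

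The main obstacle I anticipate is this last step: identifying $\lim_{l\to-\infty}h(l,u;1)$ needs the asymptotics of $W^{(r+\lambda)}$ and of $Z^{(r+\lambda)}(\cdot;\Phi(r))$ and a careful comparison of the two competing divergent contributions, one from $(K-e^{l})Z^{(r+\lambda)}(u-l;\Phi(r))$ and the other from $\lambda\int_0^{u-l}K\,W^{(r+\lambda)}(y)\,\diff y$. Everything else is elementary monotonicity bookkeeping built on \eqref{h_derivative<u}, \eqref{h_u_u} and \eqref{u_underline_cond}.
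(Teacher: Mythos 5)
Your proof is correct and shares the paper's monotonicity skeleton (sign analysis of $\tfrac{\partial}{\partial l}h(l,u;1)$ via \eqref{h_derivative<u}, boundary value \eqref{h_u_u}, threshold at $\underline{u}$), but the key divergence step is handled by a genuinely different route. The paper observes that the scalar factor $e^{l}(1-\Phi(r))+K\Phi(r)$ tends to the negative constant $K\Phi(r)$ while $Z^{(r+\lambda)}(u-l;\Phi(r))\to\infty$, so that $\tfrac{\partial}{\partial l}h(l,u;1)\to-\infty$, and then concludes (implicitly, by integrating a derivative that is eventually below any $-C<0$) that $h(l,u;1)\to+\infty$. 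You instead plug the closed form from Remark \ref{remark_h} together with the scale-function asymptotics $W^{(r+\lambda)}(x)\sim e^{\Phi(r+\lambda)x}/\psi'(\Phi(r+\lambda))$ and $Z^{(r+\lambda)}(x;\Phi(r))\sim\lambda e^{\Phi(r+\lambda)x}/\bigl((\Phi(r+\lambda)-\Phi(r))\psi'(\Phi(r+\lambda))\bigr)$, and compute the coefficient of $e^{\Phi(r+\lambda)(u-l)}$ directly, concluding it is strictly positive because $\Phi(r)<0$. This is more computational but avoids the implicit passage from diverging derivative to diverging function, and it exhibits the exact divergence rate. Two small points: (i) you silently discard the contribution $\lambda\int_0^{u-l}e^{u-y}W^{(r+\lambda)}(y)\,\diff y$ in the third summand; you should add a line noting that $e^{-y}W^{(r+\lambda)}(y)\sim e^{(\Phi(r+\lambda)-1)y}/\psi'(\Phi(r+\lambda))$ makes this integral $o\bigl(e^{\Phi(r+\lambda)(u-l)}\bigr)$, so it does not affect the leading order. (ii) You correctly observe that the critical point of $l\mapsto h(l,u;1)$ is always exactly $\underline{u}$, independent of $u$, which is a cleaner statement than the paper's notation $\bar{l}(u)$ suggests.
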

\begin{proof}
(i) Consider $u \leq \underline{u}$ such that \eqref{u_underline_cond} holds.
Because $l \mapsto e^l + (K - e^l)  \Phi(r)$ is increasing, by \eqref{h_derivative<u} and \eqref{u_underline_cond}, $\frac \partial {\partial l} h(l,u;1)$ is uniformly negative for $l < u$ and hence \eqref{h_u_u} gives that $h(l,u;1)$ is uniformly positive. Hence, there does not exist $l < u$ such that $h(l,u;1) = 0$.
\\
(ii) Consider $\underline{u} < u < \log K$ such that \eqref{u_underline_cond} does not hold.
Then,  again by \eqref{h_derivative<u} and \eqref{u_underline_cond}, there exists $\bar{l}(u) < u$ such that this $\frac \partial {\partial l} h(l,u;1)$ is negative on $(-\infty, \bar{l}(u))$ and positive on $(\bar{l}(u), u)$.
By noting that $e^l + (K - e^l)  \Phi(r) \xrightarrow{l \downarrow -\infty} K \Phi(r) < 0$, we have  $\frac \partial {\partial l} h(l,u;1) \xrightarrow{l \downarrow -\infty} - \infty$.
These and \eqref{h_u_u} imply that there exists a unique $-\infty < l(u) \leq \bar{l}(u) < u$ such that  \eqref{cond_c_l_M} holds for $l = l(u)$.
\end{proof}

By these observations, in order to compute $(\tilde{l}_p^{SN}, \tilde{u}_p^{SN})$ satisfying both  \eqref{cond_c_l_M} and  \eqref{cond_c_l_simplified},  we can focus on $\underline{u} < u < \log K$ and choose $(l(u), u)$ that satisfies \eqref{cond_c_l_simplified}. Notice that we already know that the optimal barriers $(l^*_p, u^*_p)$ exist and they satisfy the first-order conditions $\mathfrak{C}^{SN}_l$ and $\tilde{\mathfrak{C}}_u^{SN }$. Hence, by Lemma \ref{lemma_l_u}, $u^*_p$ must lie in $(\underline{u}, \log K)$. Now, $u^*_p$ must be one of the solutions to $\tilde{\mathfrak{C}}_u^{SN }: \tilde{j}(l(u), u; 1)=0$, whose existence is guaranteed because we already know that $u^*_p$ satisfies it. If $\tilde{j}(l(u), u; 1) = 0$ has a unique solution, the solution must be the optimal barrier $u^*_p$ and $l^*_p = l(u^*_p)$.

To illustrate this, in Figure \ref{figure_h_SN}, we plot the functions (1)  $l \mapsto h(l,u;1)$, (2) $u \mapsto l(u)$, and (3) $u \mapsto \tilde{j}(l(u), u; 1)$ in the example provided in Section \ref{section_numerics}. 
As discussed above, there exists a unique $l(u)$ such that $h(l(u), u; 1) = 0$ for each $u \in (\underline{u}, \log K)$. In this example, $u \mapsto l(u)$ appears to be monotone.  The root of $\tilde{j}(l(u), u; 1) = 0$ becomes $\tilde{u}_p^{SN}$ and $\tilde{l}_p^{SN} = l(\tilde{u}_p^{SN})$. Here, as there is only one solution to $\tilde{j}(l(u), u; 1) = 0$, the pair $(\tilde{l}_p^{SN}, \tilde{u}_p^{SN})$ satisfying the first-order conditions is unique and therefore it must be the optimal upper and lower boundaries $(\tilde{l}_p^*, \tilde{u}_p^*)$.

\subsection{Spectrally positive case} \label{subsection_SP}

We now consider the case $X$ is a spectrally positive \lev process. 
Here we assume that the dual  (spectrally negative) \lev process $X^d = -X$ 
has its Laplace exponent $\psi$, the $q$-scale function $W^{(q)}$  and right inverse $\Phi(q)$ for each $q$.
As in the spectrally negative case, we make the following assumption. 
\begin{assump}\label{assum_phi_SP} 
We assume that $X$ drifts to infinity  (i.e.\ $-\E X_1 = \E X_1^d  = \psi'(0) < 0$) throughout this subsection.
\end{assump}
\begin{remark} \label{remark_Phi_sign_SP} 
For the spectrally positive case under Assumption \ref{assum_phi},  Assumption \ref{assum_phi_SP} holds if and only if
\begin{equation}
\Phi(r) > 0 \label{Phi_cond_SP}
\end{equation}
 for $r<0$ (because $\psi'(0) < 0$ and $\psi$ is convex on $[0, \infty)$).   In Figure \ref{figure_psi_SP}, we plot the Laplace exponent $\psi$ along with $\Phi(r)$ for the case $\psi'(0) < 0$ and $\Phi(r)$ is well-defined.
\end{remark}

\begin{figure}[htbp]
	\begin{center}
		\begin{minipage}{1.0\textwidth}
			\centering
			\begin{tabular}{c}
\includegraphics[scale=0.4]{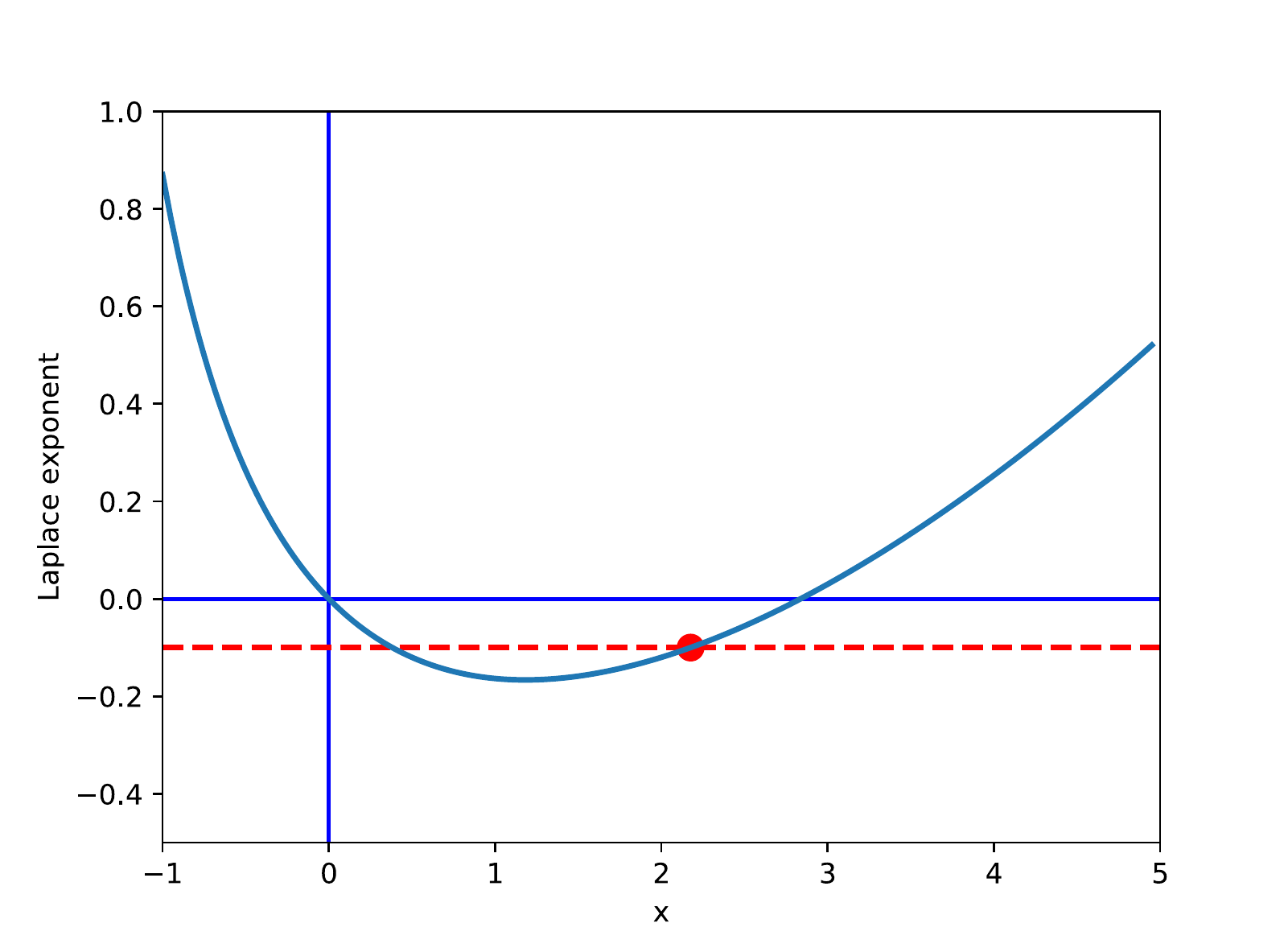} 
			\end{tabular}
		\end{minipage}
		\caption{\footnotesize The plot of $\psi$ and $\Phi(r)$ when $r < 0$ and $\psi'(0) < 0$. 
		In order for $\Phi(r)$ to be well-defined, $y = \psi(x)$ (solid curve) and $y = r$ (dashed line) need to cross. 
		} \label{figure_psi_SP}
	\end{center}
\end{figure}

%

 Analogously to the spectrally negative case, Assumptions \ref{assum_phi}  and \ref{assum_phi_SP}  guarantee the following.
 \begin{lemma} \label{lemma_tail_SP} Assumption \ref{assump_tail_value_function} for the put case ($i = p$) is satisfied.
 \end{lemma}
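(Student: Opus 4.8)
The plan is to mirror the argument used for the spectrally negative case in Lemma \ref{lemma_tail_SN}. By Lemma \ref{cond_last_time}, it suffices to verify that $\E_x[e^{-r T_{\text{last}}(K)}] < \infty$ for all $s > 0$, or, passing to the \lev process, that $\E_x[e^{-r \tau_{\text{last}}(\log K)}] < \infty$ where $\tau_{\text{last}}(\log K) := \sup\{t \geq 0 : X_t \leq \log K\}$. Since $X$ is spectrally positive and $X^d = -X$ is the dual spectrally negative process, the event $\{X_t \leq \log K\}$ becomes $\{X^d_t \geq -\log K\}$, so $\tau_{\text{last}}(\log K)$ for $X$ equals the last passage time of $X^d$ strictly below $-\log K$; equivalently, by spatial homogeneity, it is the last time $X^d$ (started from $-x$) is above a fixed level. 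The key structural point is that Assumption \ref{assum_phi_SP} ($\psi'(0) < 0$, i.e. $X$ drifts to $+\infty$) translates, for the dual, into $\E X^d_1 = \psi'(0) < 0$, so $X^d$ drifts to $-\infty$; hence this last passage time above a level is a.s. finite, and we are in exactly the regime covered by the exponential-moment results for last passage times of spectrally negative \lev processes.

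First I would restate $\tau_{\text{last}}(\log K)$ explicitly in terms of $X^d$ and invoke the spatial homogeneity to reduce to a last-exit time of $X^d$ over a fixed barrier. Next I would apply the proof of Lemma 1 in \cite{DPT} — which is itself based on Theorem 2 in \cite{Eriklast} giving the Laplace transform (hence finiteness of exponential moments) of the last passage time of a spectrally negative \lev process that drifts to $-\infty$ — to conclude that $\E_x[e^{-r\tau_{\text{last}}(\log K)}] < \infty$. The two ingredients that make this work are precisely (i) $X^d$ drifts to $-\infty$, which is Assumption \ref{assum_phi_SP} for the dual, and (ii) $\Phi(r)$ is well-defined, which is Assumption \ref{assum_phi}; note that by Remark \ref{remark_Phi_sign_SP} these give $\Phi(r) > 0$, and one checks that the relevant exponential moment is finite precisely on the range of exponents for which $\psi$ stays below $r$, i.e. up to $\Phi(r)$. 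Finally, Lemma \ref{cond_last_time} then delivers Assumption \ref{assump_tail_value_function} for the put case.

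The main obstacle is bookkeeping in the dualization: one must be careful that the last passage time $T_{\text{last}}(K)$ of the price process $S = e^X$ below $K$ corresponds, under $X \mapsto X^d = -X$, to a last passage time \emph{above} a level for the spectrally negative process $X^d$, and that the direction of drift is correctly inherited ($X$ drifting to $+\infty$ $\iff$ $X^d$ drifting to $-\infty$), so that the last-exit time is genuinely finite and the cited last-passage-time identity applies verbatim. Once the translation is set up correctly, the finiteness of the exponential moment is immediate from \cite{Eriklast} exactly as in \cite{DPT}, and no new computation is needed.
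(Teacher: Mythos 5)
Your plan---reduce via Lemma~\ref{cond_last_time}, dualize to $X^d = -X$, and appeal to \cite{DPT} and \cite{Eriklast}---is the natural adaptation of the proof of Lemma~\ref{lemma_tail_SN}, which the paper itself leaves implicit behind the word ``analogously.'' Two things need attention. First, a sign slip: you write that $\tau_{\text{last}}(\log K)$ ``equals the last passage time of $X^d$ strictly below $-\log K$,'' but since $\{X_t \leq \log K\} = \{X^d_t \geq -\log K\}$ it is the last time $X^d$ is \emph{above} $-\log K$; you correct this in the next clause, but the sentence contradicts itself as written.

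Second, and more substantially: you state that Theorem~2 of \cite{Eriklast} gives the Laplace transform of the last passage time ``of a spectrally negative L\'evy process that drifts to $-\infty$.'' In the proof of Lemma~\ref{lemma_tail_SN}, however, that same theorem is invoked ``because $\psi'(0)>0$''---that is, for a spectrally negative process drifting to $+\infty$ and the last exit \emph{below} a level. After your dualization what you need is the last exit \emph{above} a level for a spectrally negative process drifting to $-\infty$, which is the reflected configuration, not the one handled in Lemma~\ref{lemma_tail_SN}. Your claim that the cited identity ``applies verbatim'' is therefore not justified as written. To close the gap you should either check directly that the proof of Lemma~1 in \cite{DPT} (which is formulated for spectrally one-sided processes of either sign) delivers the required exponential moment in the spectrally positive case, or else reduce the last-exit-above problem to the case actually covered by \cite{Eriklast}, e.g.\ via time-reversal of L\'evy paths. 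Until this is done, the proposal rests on a citation whose applicability to the dualized problem you have not established.
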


The next proposition holds by Proposition \ref{proposition_v_p_SN}.
\begin{proposition}
	For  $l<u$ and $x\in\R$, 
	\begin{align*}
			v_p^{SP}(x;l,u) &:=\E_x\Big[e^{-r\tilde{\tau}_{[l,u]}}(K-e^{X_{\tilde{\tau}_{[l,u]}}})1_{\{\tilde{\tau}_{[l,u]}<\infty\}}\Big]= v_p(-x;-u,-l,-1).
	\end{align*}
	In particular, for $x>u$, we have that
	\begin{align*}
		\begin{split}
			v_p^{SP}(x;l,u)
			= e^{-\Phi(r)x} \tilde{v}_p(-u,-l;-1).
		\end{split}
	\end{align*}

\end{proposition}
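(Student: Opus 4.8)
The plan is to deduce both assertions from the spectrally negative result Proposition~\ref{proposition_v_p_SN} by a pathwise duality, exploiting that $X^d = -X$ is spectrally negative with the Laplace exponent $\psi$, scale function $W^{(q)}$ and right inverse $\Phi(q)$ introduced at the start of this subsection, with $\Phi(r)$ well defined by Assumption~\ref{assum_phi}. The starting observation is elementary: for every time $T$ the event $\{X_T\in[l,u]\}$ coincides with $\{X^d_T\in[-u,-l]\}$, and since the Poisson clock $\mathcal{T}^\lambda$ is independent of $X$ and $\sigma(X)=\sigma(X^d)$, the filtration $\mathbb{F}$ and hence the class $\mathcal{A}$ of admissible strategies are unchanged if one reads $X$ against $X^d$. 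Consequently the first Poisson observation epoch at which $X$ enters $[l,u]$ is, as a random time, the same as the first Poisson observation epoch $\tilde{\tau}^d_{[-u,-l]}:=\inf\{T\in\mathcal{T}^\lambda:X^d_T\in[-u,-l]\}$ at which $X^d$ enters $[-u,-l]$; moreover under $\p_x$ the process $X^d$ is a spectrally negative L\'evy process with Laplace exponent $\psi$ started from $-x$.

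First I would record the identities $\tilde{\tau}_{[l,u]}=\tilde{\tau}^d_{[-u,-l]}$ $\p_x$-a.s.\ and, on $\{\tilde{\tau}_{[l,u]}<\infty\}$, $e^{X_{\tilde{\tau}_{[l,u]}}}=e^{-X^d_{\tilde{\tau}^d_{[-u,-l]}}}$. Plugging these into the definition of $v_p^{SP}(x;l,u)$ and recognising the resulting quantity as \eqref{v_p_theta} applied to the spectrally negative process $X^d$ with $\theta=-1$, lower barrier $-u$, upper barrier $-l$ and starting point $-x$, one gets $v_p^{SP}(x;l,u)=v_p(-x;-u,-l,-1)$. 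Since the hypotheses of Proposition~\ref{proposition_v_p_SN} are in force here by Assumption~\ref{assum_phi}, this immediately delivers the closed form of the right-hand side in terms of $L^{(r,\lambda)}$, $\mathscr{Z}^{(r,\lambda)}$ and $\tilde{v}_p$.

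For the ``in particular'' clause, note that $x>u$ is exactly $-x<-u$, so the dual starting point $-x$ lies strictly below the dual interval $[-u,-l]$; applying \eqref{v_x<l} with $(x,l,u,\theta)$ replaced by $(-x,-u,-l,-1)$ then yields $v_p(-x;-u,-l,-1)=e^{\Phi(r)(-x)}\tilde{v}_p(-u,-l;-1)=e^{-\Phi(r)x}\tilde{v}_p(-u,-l;-1)$, as claimed. The whole argument is essentially bookkeeping, and the only place I would be careful is the consistent relabelling $l\leftrightarrow-u$, $x\leftrightarrow-x$ together with the orientation reversal it forces (``$x$ above the interval for $X$'' becomes ``the starting point below the interval for $X^d$''), along with checking that the Poisson observation structure, and with it the set $\mathcal{A}$, is genuinely invariant under $X\mapsto-X$.
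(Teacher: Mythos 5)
Your argument is correct and is exactly the duality the paper has in mind: the paper gives no written proof, merely noting that the result "holds by Proposition \ref{proposition_v_p_SN}," and what you have spelled out — replacing $X$ by the spectrally negative dual $X^d=-X$ started at $-x$, identifying $\tilde{\tau}_{[l,u]}$ with the first Poisson entry of $X^d$ into $[-u,-l]$, rewriting the payoff as $K-e^{-X^d}$ so that \eqref{v_p_theta} applies with $\theta=-1$, and then invoking \eqref{v_x<l} for $-x<-u$ — is the intended bookkeeping. No gaps.
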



\subsubsection{First-order condition} \label{subsubsection_first_order_SP}
Similarly to the spectrally negative case, the optimal barriers $(l_p^*, u_p^*)$ as in \eqref{opt_barrier_log}  must maximize $(l,u) \mapsto \tilde{v}_p(-u,-l;-1)$. 

Proceeding as in \eqref{cond_A}, the first-order condition with respect to $l$ (i.e.\ $\frac{\partial}{\partial l}\tilde{v}_p(-u,-l;-1)=0$) is equivalent to 
\begin{align*} 
\mathfrak{C}_l^{SP}:  j(-u, -l; -1) = 0.
\end{align*}
Under $\mathfrak{C}_l^{SP}$, 
by similar arguments to those used in \eqref{cond_c_l_M}, we obtain that $\frac{\partial}{\partial u}\tilde{v}_p(-u,-l;-1)=0$ is equivalent to
\begin{align}
\mathfrak{C}_u^{SP}: h(-u,-l;-1) = 0.
 \label{cond_c_l_M_SP}
\end{align}


Proceeding as in Lemma \ref{remark_simplified_cond}, a pair of barriers $(l,u)$ satisfy $\mathfrak{C}_l^{SP}$ and $\mathfrak{C}_u^{SP}$ if and only if they satisfy  $\tilde{\mathfrak{C}}_l^{SP }$ and $\mathfrak{C}_u^{SP}$ where
\begin{align} \label{cond_c_l_simplified_SP}
\tilde{\mathfrak{C}}_l^{SP }: \tilde{j}(-u,-l;-1) = 0.
\end{align}

As a corollary of Proposition \ref{vf_p_sn}, we have the following.
\begin{theorem}\label{vf_p_sp}
	Suppose $(\tilde{l}_p^{SP}, \tilde{u}_p^{SP})$ be such that $\mathfrak{C}_l^{SP}$ and $\mathfrak{C}_u^{SP}$ (equivalently $\tilde{\mathfrak{C}}_l^{SP}$ and $\mathfrak{C}_u^{SP}$) are satisfied. Then, 
	\begin{align*}
		v_p^{SP}(x;\tilde{l}_p^{SP},\tilde{u}_p^{SP})=
		&\lambda \Big(K  L^{(r,\lambda)}(\tilde{u}_p^{SP}-x,\tilde{u}_p^{SP}-\tilde{l}_p^{SP}; 0) -  e^{\tilde{u}_p^{SP}} L^{(r,\lambda)}(\tilde{u}_p^{SP}-x,\tilde{u}_p^{SP}-\tilde{l}_p^{SP}; -1) \Big)\\&+ (K - e^{\tilde{u}_p^{SP}})  \mathscr{Z}^{(r, \lambda)}_{\tilde{u}_p^{SP}-\tilde{l}_p^{SP}} (\tilde{u}_p^{SP}-x; \Phi(r)).
	\end{align*}
	In particular, for $x > \tilde{u}_p^{SP}$, we have
	\begin{align*}
		v_p^{SP}(x;\tilde{l}_p^{SP},\tilde{u}_p^{SP})
		= e^{\Phi(r)(\tilde{u}_p^{SP}-x)} (K - e^{\tilde{u}_p^{SP}}). 
	\end{align*}
\end{theorem}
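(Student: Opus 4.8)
The plan is to deduce Theorem \ref{vf_p_sp} from the spectrally negative analysis already in place, exploiting the identity $v_p^{SP}(x;l,u)=v_p(-x;-u,-l,-1)$ of the preceding proposition together with Proposition \ref{vf_p_sn} applied, with $\theta=-1$, to the dual spectrally negative process $X^d=-X$ (whose Laplace exponent, scale function and right inverse are exactly the $\psi$, $W^{(q)}$, $\Phi(q)$ of this subsection).

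First I would put $(\ell,u):=(-\tilde{u}_p^{SP},-\tilde{l}_p^{SP})$, so that $\ell<u$, and observe that by the very definitions of $\mathfrak{C}_l^{SP}$ and $\mathfrak{C}_u^{SP}$ in Section \ref{subsubsection_first_order_SP}, the pair $(\tilde{l}_p^{SP},\tilde{u}_p^{SP})$ satisfies $\mathfrak{C}_l^{SP}$ and $\mathfrak{C}_u^{SP}$ precisely when $(\ell,u)$ satisfies $\mathfrak{C}_1^{-1}$ and $\mathfrak{C}_2^{-1}$, i.e.\ $j(\ell,u;-1)=0$ and $h(\ell,u;-1)=0$. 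Hence Proposition \ref{vf_p_sn} applies to $(\ell,u)$ with $\theta=-1$; evaluating it at the point $-x$ gives
\begin{align*}
v_p(-x;\ell,u,-1)=\lambda\Big(K\,L^{(r,\lambda)}(-x-\ell,u-\ell;0)-e^{-\ell}L^{(r,\lambda)}(-x-\ell,u-\ell;-1)\Big)+(K-e^{-\ell})\,\mathscr{Z}^{(r,\lambda)}_{u-\ell}(-x-\ell;\Phi(r)).
\end{align*}
Substituting $-x-\ell=\tilde{u}_p^{SP}-x$, $u-\ell=\tilde{u}_p^{SP}-\tilde{l}_p^{SP}$, $e^{-\ell}=e^{\tilde{u}_p^{SP}}$ and rewriting the left-hand side as $v_p^{SP}(x;\tilde{l}_p^{SP},\tilde{u}_p^{SP})$ via the duality identity yields the first displayed formula. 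For $x>\tilde{u}_p^{SP}$ one has $-x<\ell$, so I would instead use the ``in particular'' clause of Proposition \ref{vf_p_sn} (the one for $x<\tilde{l}$), obtaining $v_p(-x;\ell,u,-1)=e^{-\Phi(r)(\ell+x)}(K-e^{-\ell})=e^{\Phi(r)(\tilde{u}_p^{SP}-x)}(K-e^{\tilde{u}_p^{SP}})$, as claimed; the stated reformulation with $\tilde{\mathfrak{C}}_l^{SP}$ in place of $\mathfrak{C}_l^{SP}$ is already the spectrally positive transcription of Lemma \ref{remark_simplified_cond} (valid here since $\Phi(r)>0\neq-1$).

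I do not expect a genuine obstacle: the argument is a change of variables, and the only delicate points are bookkeeping. One must (i) keep track that the results of Section \ref{subsection_preliminary} are being invoked for $X^d$, so Proposition \ref{vf_p_sn} transfers verbatim; (ii) confirm its standing hypotheses hold, namely that $\Phi(r)$ is well-defined (Assumption \ref{assum_phi}) and that Assumption \ref{assump_tail_value_function} is in force, the latter being exactly Lemma \ref{lemma_tail_SP} (which follows from Assumptions \ref{assum_phi} and \ref{assum_phi_SP}); and (iii) carry the reflection $x\mapsto-x$, $l\mapsto-u$, $u\mapsto-l$ consistently through every argument of $L^{(r,\lambda)}$ and $\mathscr{Z}^{(r,\lambda)}$ so that the interval length comes out as $\tilde{u}_p^{SP}-\tilde{l}_p^{SP}$ and the spatial argument as $\tilde{u}_p^{SP}-x$. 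No new identity or estimate is needed.
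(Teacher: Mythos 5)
Your proposal is correct and follows essentially the same route as the paper: Theorem \ref{vf_p_sp} is stated there as a direct corollary of Proposition \ref{vf_p_sn}, obtained by applying it with $\theta=-1$ to the reflected pair $(-\tilde{u}_p^{SP},-\tilde{l}_p^{SP})$ through the duality identity $v_p^{SP}(x;l,u)=v_p(-x;-u,-l,-1)$, which is exactly the change of variables you carry out. Your substitutions, the handling of the $x>\tilde{u}_p^{SP}$ case via the $x<\tilde{l}$ clause, and the invocation of Lemma \ref{remark_simplified_cond} (valid since $\Phi(r)>0\neq -1$) all match.
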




\subsubsection{Computation of $(\tilde{l}_p^{SP}, \tilde{u}_p^{SP})$.}\label{comp_sp}

 \begin{figure}[htbp]
\begin{center}
\begin{minipage}{1.0\textwidth}
\centering
\begin{tabular}{ccc}
 \includegraphics[scale=0.4]{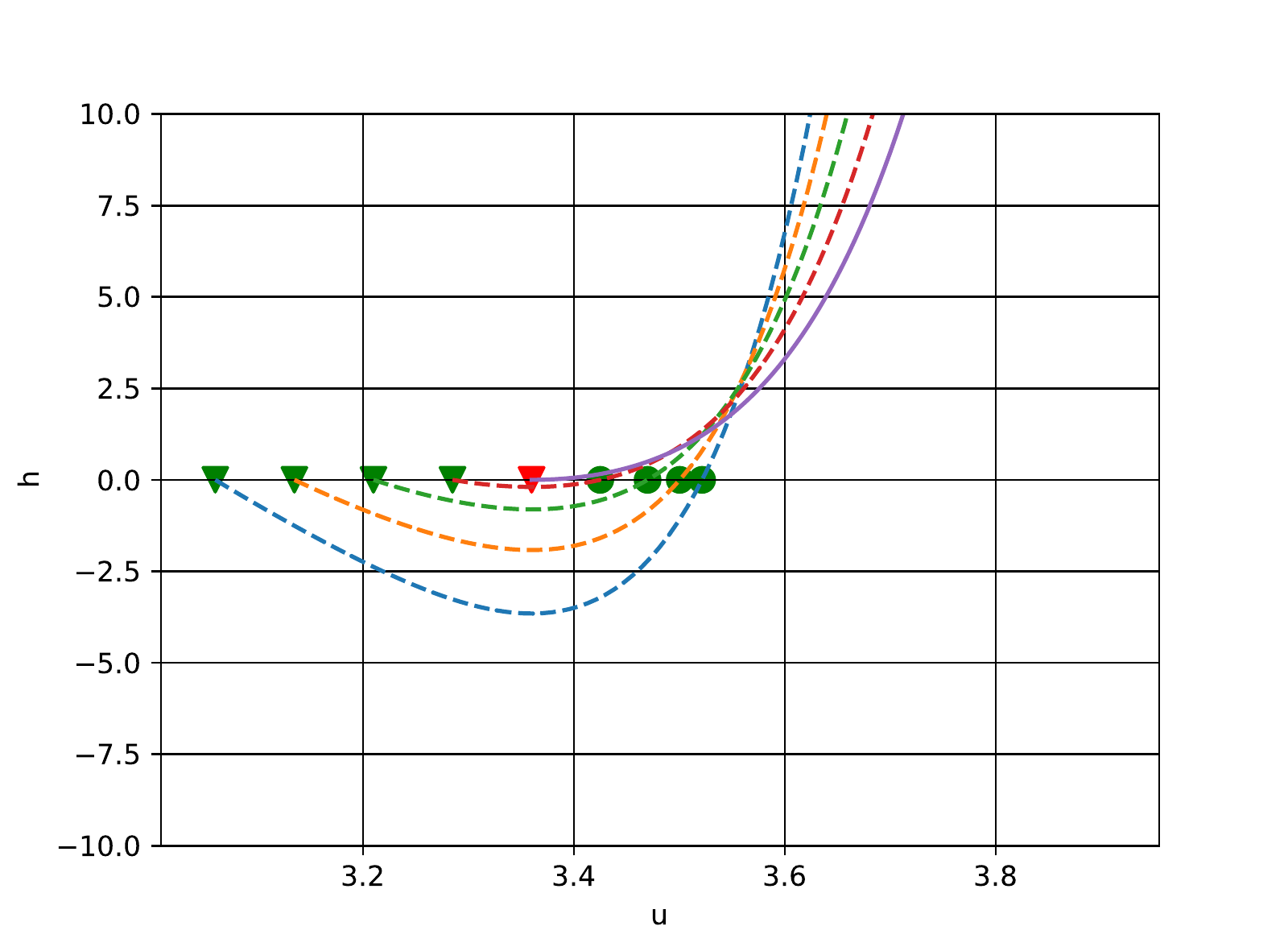} & \includegraphics[scale=0.4]{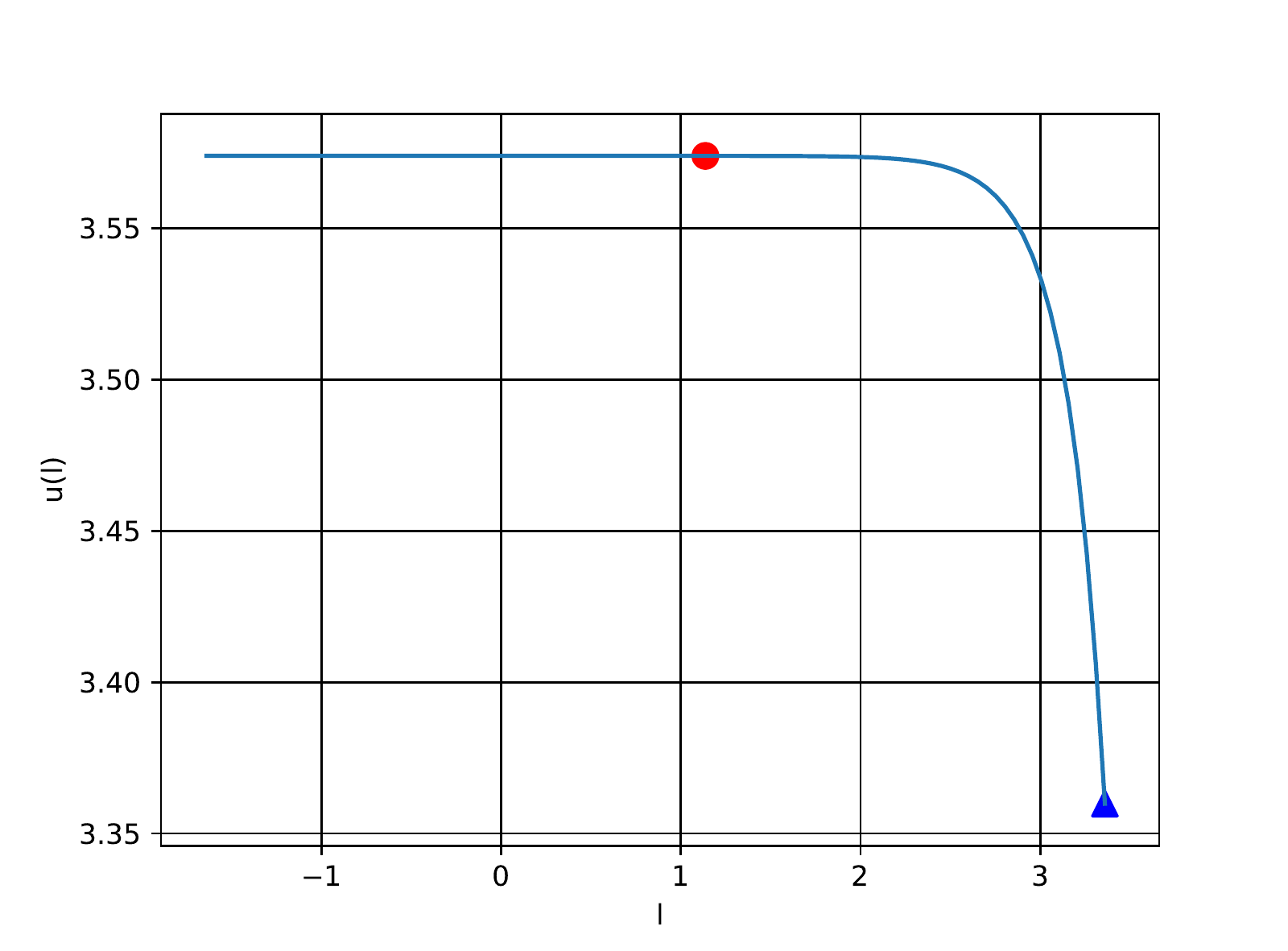} \\
 (1) $u \mapsto h(-u,-l; -1)$ &  (2) $l \mapsto u(l)$ \\
 \multicolumn{2}{c}{ \includegraphics[scale=0.4]{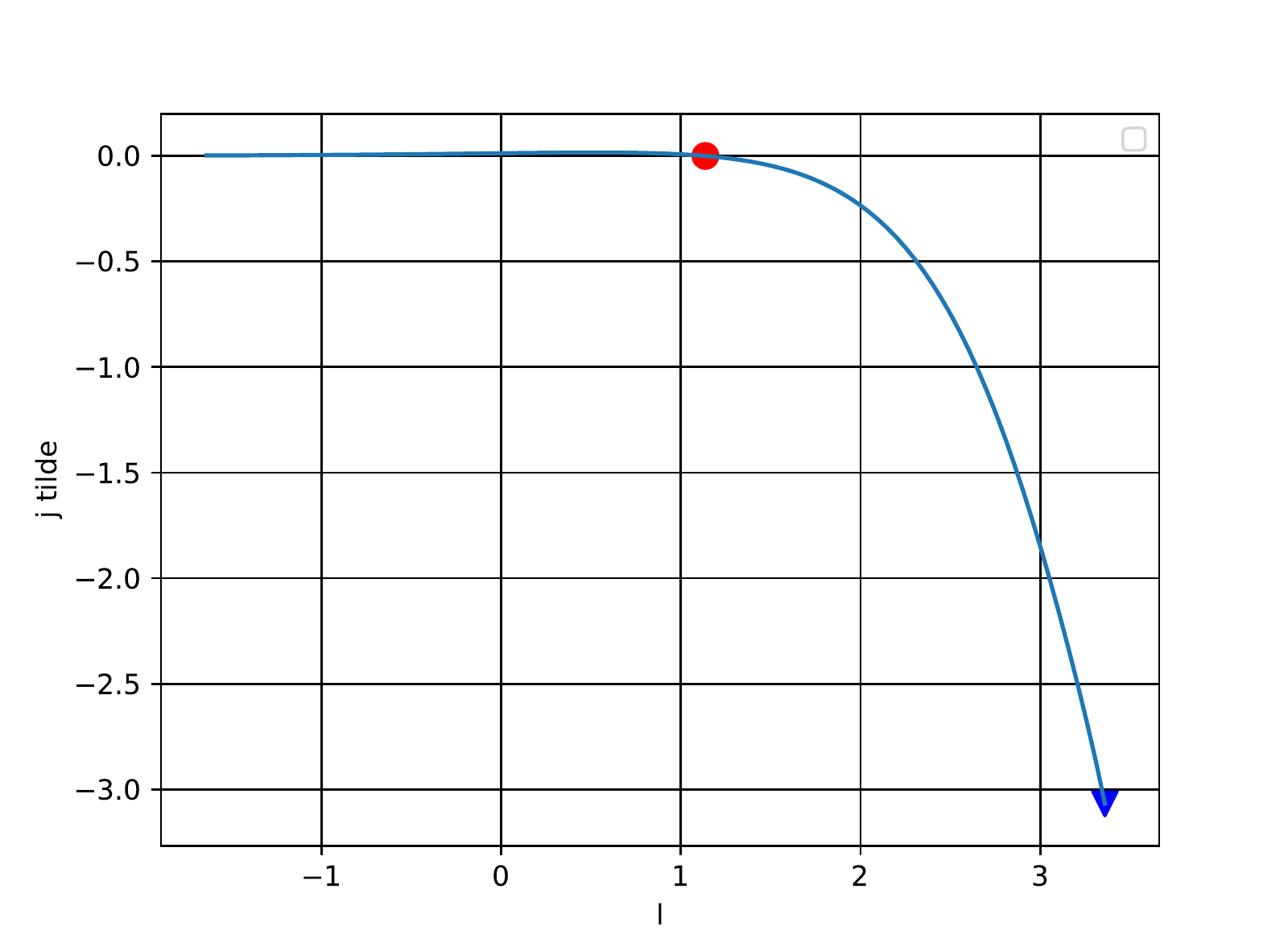} }  \\
  \multicolumn{2}{c}{ (3) $l \mapsto \tilde{j}(-u(l), -l; -1)$ }
 \end{tabular}
\end{minipage}
\caption{\footnotesize 
	(1) The function  $u \mapsto h(-u, -l;-1)$  for $l = 3.059$,
$3.134$, $3.209$,
$3.284$,
$\overline{l} (=3.359)$ (solid  line for $l = \overline{l}$ and dashed lines for others). The points at $u=l$ are indicated by triangles. The roots $u(l)$ are indicated by circles.  (2) The mapping $l \mapsto u(l)$ for $l \leq \overline{l}$. (3) The function $l \mapsto \tilde{j}(-u(l), -l; -1)$ for $l \leq \overline{l}$.  For (2) and (3), the points at $l=\overline{l}$ are indicated by triangles. The points at $l = \tilde{l}_p^{SP}$ are indicated by circles.
} \label{figure_h_SP}
\end{center}
\end{figure}

By Remark \ref{remark_h}, we have
	\begin{align} \label{h_tilde_wrt_u}
\begin{split}
		\frac \partial {\partial u} h(-u,-l;-1)
		&= (e^u - (K - e^u)  \Phi(r)) Z^{(r+\lambda)}(u-l;\Phi(r)).
\end{split}
	\end{align}
	In particular, $\frac \partial {\partial u} h(-u,-l;-1) \Big|_{u = l+}= e^l - (K - e^l)  \Phi(r)$ and
	$h(-l,-l;-1) = 0$.
	
	For $l < \log K$, we have
	\begin{align}
		h(-\log K,-l;-1)
		&= (K-e^l )  + \lambda \int_0^{\log K -l} (K-e^{l+y})W^{(r+\lambda)}(y) \diff y > 0. \label{aux}
	\end{align}

Define
\[
\overline{l} := \log \frac {K  \Phi(r)} { 1 + \Phi(r) }.
\]

\begin{lemma} \label{lemma_l_u_SP}
For $l \in (-\infty, \overline{l})$, there exists a unique root $u(l) \in (l, \log K)$ such that $h(-u(l), -l; -1) = 0$. For $l \in [\underline{l}, \log K)$, there does not exist $l < u < \log K$ such that  $h(-u, -l; -1) = 0$.
\end{lemma}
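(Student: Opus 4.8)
The plan is to mirror the proof of Lemma \ref{lemma_l_u} from the spectrally negative case, working with the function $u \mapsto h(-u,-l;-1)$ on the interval $(l,\log K)$ for a fixed $l$ and exploiting the sign analysis of its derivative \eqref{h_tilde_wrt_u}. First I would record the two boundary facts already assembled in the text: $h(-l,-l;-1) = 0$ (the endpoint value, from \eqref{h_u_u}) and the strict positivity $h(-\log K,-l;-1) > 0$ from \eqref{aux}, valid for $l < \log K$. The derivative at the left endpoint is $\frac{\partial}{\partial u} h(-u,-l;-1)\big|_{u=l+} = e^l - (K - e^l)\Phi(r)$, and since $Z^{(r+\lambda)}(u-l;\Phi(r)) > 0$ for all $u > l$, the sign of $\frac{\partial}{\partial u} h(-u,-l;-1)$ coincides with the sign of $e^u - (K - e^u)\Phi(r)$, i.e. of the increasing function $u \mapsto e^u(1+\Phi(r)) - K\Phi(r)$ (here $\Phi(r) > 0$ by \eqref{Phi_cond_SP}, so $1 + \Phi(r) > 0$). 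This function is negative precisely when $e^u < \frac{K\Phi(r)}{1+\Phi(r)}$, i.e. when $u < \overline{l}$.

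Then the argument splits exactly as in Lemma \ref{lemma_l_u}. Case (i): $l \geq \overline{l}$ (so that $e^l \geq \frac{K\Phi(r)}{1+\Phi(r)}$, equivalently $e^l - (K-e^l)\Phi(r) \geq 0$). Since $u \mapsto e^u - (K-e^u)\Phi(r)$ is increasing and already nonnegative at $u = l$, it stays positive for all $u > l$; hence $\frac{\partial}{\partial u} h(-u,-l;-1) \geq 0$ on $(l,\infty)$ with strict positivity for $u>l$, so $h(-u,-l;-1)$ is strictly increasing on $(l,\infty)$ and, together with $h(-l,-l;-1)=0$, is strictly positive for every $u > l$; in particular there is no root $u \in (l,\log K)$. (One should also note $\overline l < \log K$, which holds since $\frac{\Phi(r)}{1+\Phi(r)} < 1$, so the stated range $l \in [\overline l, \log K)$ is non-empty and consistent with requiring $l < \log K$.) Case (ii): $l < \overline{l}$. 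Then $e^l - (K-e^l)\Phi(r) < 0$, so $\frac{\partial}{\partial u} h(-u,-l;-1)$ is negative just to the right of $l$; because $u \mapsto e^u - (K-e^u)\Phi(r)$ is strictly increasing and tends to $+\infty$, there is a unique $\bar u(l) > l$ at which this quantity changes sign, so $h(-u,-l;-1)$ is strictly decreasing on $(l,\bar u(l))$ and strictly increasing on $(\bar u(l),\infty)$. Combined with $h(-l,-l;-1) = 0$, the function first dips below $0$ and then climbs; since $h(-\log K,-l;-1) > 0$ by \eqref{aux}, the unimodal shape forces a unique root $u(l) \in (\bar u(l), \log K)$, and in particular $u(l) \in (l,\log K)$.

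I would then close by observing that the root $u(l)$ produced in Case (ii) is precisely the point satisfying $\mathfrak{C}_u^{SP}$, matching the role of $l(u)$ in the spectrally negative analysis, and that these two cases are exhaustive given $l < \log K$. The main obstacle — though it is a mild one — is verifying the boundary positivity $h(-\log K,-l;-1) > 0$ cleanly; but this is already handled by \eqref{aux} (each integrand $K - e^{l+y}$ is positive for $y < \log K - l$, and $K - e^l > 0$), so the proof reduces to the monotonicity bookkeeping above. A secondary point to keep straight is that the lemma statement as written reads ``$l \in [\underline{l},\log K)$'' in the second sentence, which should be $[\overline{l},\log K)$; I would state and prove the claim with $\overline{l}$ and flag the typo, since the whole sign dichotomy is governed by the threshold $\overline{l} = \log\frac{K\Phi(r)}{1+\Phi(r)}$ and not by any separate quantity $\underline l$.
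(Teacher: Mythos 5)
Your proof is correct and follows essentially the same route as the paper's: both split on the sign of $\frac{\partial}{\partial u} h(-u,-l;-1)\big|_{u=l+} = e^l - (K-e^l)\Phi(r)$ relative to the threshold $\overline{l}$, use the monotonicity of $u \mapsto e^u - (K-e^u)\Phi(r)$ together with $Z^{(r+\lambda)}(u-l;\Phi(r)) > 0$ to control the sign pattern of the derivative, and invoke $h(-l,-l;-1)=0$ with $h(-\log K,-l;-1)>0$ from \eqref{aux} to conclude. You also correctly flag the typo $\underline{l}$ for $\overline{l}$ in the lemma statement, which is indeed an error in the paper.
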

\begin{proof}
(i) Suppose
\begin{align*}
	\frac \partial {\partial u} h(-u,-l;-1) \Big|_{u = l+}
	&= e^l - (K - e^l)  \Phi(r) < 0 \Longleftrightarrow l  < \overline{l}. \end{align*}
Then because 
 $u \mapsto  e^u - (K - e^u)  \Phi(r)$ is increasing and $Z^{(r+\lambda)}(u-l;\Phi(r))$ is uniformly positive, in view of  \eqref{aux}, there exists, for each $l < \log  K$, $\bar{u}(l)$ such that  $\frac \partial {\partial u} h(-u,-l;-1)$ is negative on $(l, \bar{u}(l))$ and positive on $(\bar{u}(l), \infty)$. Because $h(-l,-l;-1) = 0$ and $h(-\log K,-l;-1) > 0$ by \eqref{aux}, there exists a unique $\bar{u}(l)\leq u(l)<\log K$ such that  \eqref{cond_c_l_M_SP} holds. 

(ii) Suppose 
\begin{align*}
\frac \partial {\partial u} h(-u,-l;-1) \Big|_{u = l+}
&= e^l - (K - e^l)  \Phi(r) \geq 0  \Longleftrightarrow l  \geq \overline{l}. \end{align*}
Then because $u \rightarrow e^u - (K - e^u)  \Phi(r)$ is increasing and $Z^{(r+\lambda)}(u-l;\Phi(r))$ is uniformly positive, $\frac \partial {\partial u} h(-u,-l;-1)$ is uniformly positive by \eqref{h_tilde_wrt_u} and hence  $h(-u,-l;-1)$ is uniformly positive. Hence, there does not exist $l < u$ such that $h(-u,-l;-1) = 0$.
\end{proof}


In view of these observations,  similarly to the spectrally negative case, in order to compute $(\tilde{l}_p^{SP}, \tilde{u}_p^{SP})$ satisfying  \eqref{cond_c_l_M_SP} and  \eqref{cond_c_l_simplified_SP},  we first focus on $l < \overline{l}$ and choose $(l, u(l))$  that satisfies \eqref{cond_c_l_simplified_SP}. Notice that we already know that the optimal barriers $(l^*_p, u^*_p)$ exist and they satisfy the first-order conditions $\mathfrak{C}^{SP}_l$ and $\tilde{\mathfrak{C}}_u^{SP}$. Hence, by Lemma \ref{lemma_l_u_SP}, $l^*_p$ must lie in $(-\infty, \bar{l})$. Now, $l^*_p$ must be one of the solutions to $\tilde{\mathfrak{C}}_l^{SP}: \tilde{j}(-u(l), -l; -1)=0$, whose existence is guaranteed because we already know that $l^*_p$ satisfies it. If $ \tilde{j}(-u(l), -l; -1) = 0$ has a unique solution, the solution must be the optimal barrier $l^*_p$ and $u^*_p = u(l^*_p)$.


In Figure \ref{figure_h_SP}, we plot the functions (1)  $u \mapsto h(-u,-l;-1)$ for various values of $l$ less than or equal to $\overline{l}$, (2) $l \mapsto u(l)$, and (3) $l \mapsto \tilde{j}(-u(l), -l; -1)$ in the example provided in Section \ref{section_numerics}. As discussed above, there exists a unique $u(l)$ such that $h(-u(l), -l; -1)=0$ for each $l \in (-\infty, \overline{l})$. The root of $\tilde{j}(-u(l), -l; -1) = 0$ becomes $\tilde{l}_p^{SP}$ and $\tilde{u}_p^{SP} = u(\tilde{l}_p^{SP})$. In this example, as there is only one $l$ such that $ \tilde{j}(-u(l), -l; -1) = 0$, $(\tilde{l}_p^{SP}, \tilde{u}_p^{SP})$ is unique and therefore it must be the optimal barrier $(l^*_p, u^*_p)$.

\section{The put-call symmetry and American call options} \label{section_call}




In this section, we consider the call option case. To this end, we first derive the put-call symmetry formula so that the results for the put option case in Section \ref{section_put_SN} can be directly used. Because this technique involves a change of measure, throughout this section
we will denote by $\p^{\Psi}$ 
the law of the L\'evy process $X$ with its Laplace exponent 
\[ \Psi(z) := \log \E [e^{z X_1}]
\textrm{ defined for $z \in \mathbb{R}_\Psi := \{z \in\mathbb{R}: \Psi(z) < \infty\}$}.
\]
	We let $\pp^\Psi_s$  be the law of $S$ when $S_0 = s$ (and let $\pp^{\Psi} = \pp^{\Psi}_1$) so that $\p^{\Psi}_x = \pp^\Psi_{\exp(x)}$ (and let $\p^{\Psi} = \p^{\Psi}_0$). Their expectations are defined accordingly.
Throughout this section, we assume that Assumption \ref{assump_lambda_r_alpha} holds under the measure $\pp^\Psi$ and hence we have $\eee^\Psi S_1 < \infty$ and equivalently $1 \in \mathbb{R}_\Psi$.
	
We also define
	\begin{align}
	\Psi_1(z)&:=\Psi(1+z)-\Psi(1)  \quad \textrm{for $z$ satisfying $1+z\in \mathbb{R}_\Psi$,}  \label{Psi_1} \\
	 \Psi_1^d(z)&:= \Psi_1(-z)= \Psi(1-z)- \Psi(1) \quad \textrm{for $z$ satisfying $1-z\in \mathbb{R}_\Psi$.} \label{Psi_1_d} 
	 \end{align} 
	 Note that with $(\mathcal{F}_t^X: t \geq 0)$ the filtration generated by $X$, as in page 82 of \cite{K}, the change of measure gives
\begin{align} \label{change_measure}
\frac{\diff \mathbb{P}^{\Psi_1}}{\diff \mathbb{P}^{\Psi}}\Big|_{\mathcal{F}_t^X}=e^{X_t-\Psi(1)t}, \quad t \geq 0. 
	\end{align}
	 

\subsection{The put-call symmetry}


We will show the very well-known relation between the values of the American put and call options referred to as \textit{the put-call symmetry}. For the rest of this section, we will use the following notation
\begin{align*}
	v_i^{\Psi}(x,K,r;l,u):=\E_x^{\Psi}\Big[e^{-r \tilde{\tau}_{[l,u]}} G_i(e^{ X_{\tilde{\tau}_{[l,u]}}})1_{\{\tilde{\tau}_{[l,u]}<\infty\}}\Big] =\eee_{\exp(x)}^{\Psi}\Big[e^{-r \tau_{[e^l,e^u]}} G_i(S_{\tau_{[e^l,e^u]}})1_{\{\tau_{[e^l,e^u]}<\infty\}}\Big],
\end{align*}
for $l\leq u, \; x \in \R,  \; i = p,c$.




\begin{theorem}[Put-call symmetry]  \label{prop_symmetry}
We have
	\begin{align*}
		v_c^{\Psi}(x,K,r;l,u)&=v_p^{\Psi_1^d}(\log K,e^x,r- \Psi (1);\log K+x-u,\log K+x-l),\quad x \in \R, \; \text{$\log K<l < u$}.
	\end{align*} 


\end{theorem}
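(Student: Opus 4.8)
The plan is to prove the put-call symmetry by the standard change-of-measure argument, adapted to the Poissonian setting. First I would write out the left-hand side explicitly: since $G_c(e^{X_{\tilde\tau}}) = (e^{X_{\tilde\tau}}-K)^+$ and, on $\{\log K < l \le X_{\tilde\tau} \le u\}$, this equals $e^{X_{\tilde\tau}} - K$, we have
\begin{align*}
v_c^{\Psi}(x,K,r;l,u) = \E_x^{\Psi}\Big[e^{-r\tilde\tau_{[l,u]}}\big(e^{X_{\tilde\tau_{[l,u]}}} - K\big) 1_{\{\tilde\tau_{[l,u]}<\infty\}}\Big].
\end{align*}
The key idea is to absorb the factor $e^{X_{\tilde\tau_{[l,u]}}}$ into a Girsanov change of measure. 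Using the Esscher-type transform \eqref{change_measure}, namely $\diff\mathbb{P}^{\Psi_1}/\diff\mathbb{P}^{\Psi}|_{\mathcal{F}_t^X} = e^{X_t - \Psi(1)t}$, and noting that $\tilde\tau_{[l,u]}$ is a stopping time with respect to a filtration independent of the Poisson clock (or more carefully, that the optional stopping / Doob argument applies to the martingale $e^{X_t - \Psi(1)t}$ at the stopping time $\tilde\tau_{[l,u]}$, with the Poisson arrivals being independent of $X$), I would obtain
\begin{align*}
\E_x^{\Psi}\Big[e^{-r\tilde\tau_{[l,u]}} e^{X_{\tilde\tau_{[l,u]}}} f(X_{\tilde\tau_{[l,u]}},\tilde\tau_{[l,u]}) 1_{\{\tilde\tau_{[l,u]}<\infty\}}\Big] = e^{x}\,\E_x^{\Psi_1}\Big[e^{-(r-\Psi(1))\tilde\tau_{[l,u]}} f(X_{\tilde\tau_{[l,u]}},\tilde\tau_{[l,u]}) 1_{\{\tilde\tau_{[l,u]}<\infty\}}\Big]
\end{align*}
for suitable $f$; applying this with $f \equiv 1$ and with $f = e^{-X_{\tilde\tau}}$ handles the two terms $e^{X_{\tilde\tau}}$ and $K$ respectively, yielding $v_c^{\Psi}(x,K,r;l,u) = e^x\,\E_x^{\Psi_1}[e^{-(r-\Psi(1))\tilde\tau_{[l,u]}}(1 - K e^{-X_{\tilde\tau_{[l,u]}}})1_{\{\tilde\tau_{[l,u]}<\infty\}}]$.

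Next I would perform the spatial reflection. Under $\mathbb{P}^{\Psi_1}$ the process $X$ has Laplace exponent $\Psi_1$, so $-X$ has Laplace exponent $\Psi_1^d(z) = \Psi_1(-z)$ as defined in \eqref{Psi_1_d}. Setting $Y := -X$ and translating, the event $\{X_{\tilde\tau}\in[l,u]\}$ becomes $\{Y_{\tilde\tau}\in[-u,-l]\}$, and I would recenter the starting point from $x$ to $\log K$ by the spatial homogeneity of L\'evy processes (as used repeatedly in Section~\ref{section_put_SN}, e.g. in the proof of Proposition~\ref{proposition_v_p_SN}): under $\p_x^{\Psi_1}$, the law of $(-X_t + \log K - x)_{t\ge 0}$ equals that of $(Y_t)$ under $\p_{\log K}^{\Psi_1^d}$, with the interval $[-u,-l]$ shifting to $[\log K + x - u, \log K + x - l]$. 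Carrying the factor $e^x$ through and rewriting $e^x(1 - K e^{-X_{\tilde\tau}}) = e^x - K e^{x - X_{\tilde\tau}} = e^x(1 - K e^{Y_{\tilde\tau} - \log K}\cdot e^{-x}\cdot\ldots)$ — more cleanly, after the reflection and translation the integrand $e^x(1 - Ke^{-X_{\tilde\tau_{[l,u]}}})$ becomes $K - e^{Y_{\tilde\tau}}$ evaluated at the translated interval (matching $v_p$ with strike $e^x$), which is exactly $v_p^{\Psi_1^d}(\log K, e^x, r - \Psi(1); \log K + x - u, \log K + x - l)$.

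I expect the main obstacle to be the careful justification of the change of measure \emph{at the stopping time} $\tilde\tau_{[l,u]}$, because $\tilde\tau_{[l,u]}$ depends on both $X$ and the Poisson process $N^\lambda$, whereas \eqref{change_measure} is a change of measure on $\mathcal{F}_t^X$ only. The resolution is that $N^\lambda$ is independent of $X$, so one can condition on the Poisson arrival times $\mathcal{T}^\lambda$, apply optional stopping to the $\mathbb{P}^\Psi$-martingale $(e^{X_t - \Psi(1)t})$ at each fixed Poisson epoch and then at the (now $X$-measurable, conditionally on $\mathcal{T}^\lambda$) first epoch landing in $[l,u]$, using the boundedness of $e^{X_{\tilde\tau_{[l,u]}}}$ on $\{\tilde\tau_{[l,u]}<\infty\}$ (since $X_{\tilde\tau_{[l,u]}}\le u$) to control integrability and permit the optional-stopping / dominated-convergence step even when $\tilde\tau_{[l,u]} = \infty$ with positive probability. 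The remaining steps — the spatial reflection and translation, and bookkeeping of the shifted interval endpoints and the discount rate $r - \Psi(1)$ — are routine applications of spatial homogeneity and duality for spectrally one-sided L\'evy processes, identical in spirit to manipulations already used in Section~\ref{subsection_SP}.
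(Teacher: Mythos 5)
Your proposal is correct and follows essentially the same route as the paper's proof: absorb $e^{X_{\tilde\tau}}$ into the Esscher change of measure from $\mathbb{P}^{\Psi}$ to $\mathbb{P}^{\Psi_1}$ (justified, as you correctly identify, by conditioning on the Poisson epochs $\mathcal{T}^\lambda$ so that the change of measure on $\mathcal{F}_t^X$ can be applied), then reflect $X\mapsto -X$ and translate to recenter at $\log K$ with new strike $e^x$ and new discount $r-\Psi(1)$. The only cosmetic difference is that the paper first shifts to the $\E^\Psi$-expectation (starting at $0$) by spatial homogeneity before changing measure, whereas you carry the $\E_x$ form through and do the reflection/translation in one step; the bookkeeping is the same.
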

\begin{proof}
By the spatial homogeneity of L\'evy processes, 
the change of measure \eqref{change_measure} and recalling \eqref{T_lambda},  
	\begin{align}
		%
		v_c^{\Psi}(x,K,r;l,u) 
		&=  e^x \E^{\Psi}\Big[e^{-r\tilde{\tau}_{[l-x,u-x]}+ X_{\tilde{\tau}_{[l-x,u-x]}}} (1-e^{- (X_{\tilde{\tau}_{[l-x,u-x]}}+x)} K)1_{\{\tilde{\tau}_{[l-x,u-x]}<\infty\}}\Big]\notag \\
				&=  e^x \E \Big[ \E^{\Psi}\Big[e^{-r\tilde{\tau}_{[l-x,u-x]}+ X_{\tilde{\tau}_{[l-x,u-x]}}} (1-e^{- (X_{\tilde{\tau}_{[l-x,u-x]}}+x)} K)1_{\{\tilde{\tau}_{[l-x,u-x]}<\infty\}} \Big| \mathcal{T}^\lambda \Big] \Big] \notag \\
		&=e^x\E^{\Psi_1}\Big[e^{-(r-\Psi(1))\tilde{\tau}_{[l-x,u-x]}}(1- e^{- (X_{\tilde{\tau}_{[l-x,u-x]}} +x - \log K )})1_{\{\tilde{\tau}_{[l-x,u-x]}<\infty\}}\Big]. \notag
	\end{align}
	Therefore, with $\hat{\tau} := \tilde{\tau}_{[l-x-\log K,u-x-\log K]}$,
	\begin{align*}
		v_c^{\Psi}(x,K,r;l,u) &=e^x\E_{-\log K}^{\Psi_1}\Big[e^{-(r-\Psi(1)) \hat{\tau}}(1-e^{- (X_{\hat{\tau}}+x)})1_{\{\hat{\tau}<\infty\}}\Big]\\
		&=\E_{-\log K}^{\Psi_1}\Big[e^{-(r-\Psi(1))\hat{\tau}}(e^x-e^{- X_{\hat{\tau}}})1_{\{\hat{\tau}<\infty\}}\Big] \\
				&= v_p^{\Psi_1^d}(\log K,e^x,r- \Psi (1);\log K+x-u,\log K+x-l).
	\end{align*}
\end{proof}


Similar argument via the change of measure  shows the following.
\begin{lemma} \label{lemma_assumption_finiteness_call}

For the call option, Assumption \ref{assump_tail_value_function} is satisfied on condition that one of the following holds:
\begin{enumerate}
\item 
$r < \Psi(1)$ with  
\begin{align} \label{finiteness_call}
\eee_{s^{-1}}^{\Psi_1^d} [e^{-(r-\Psi(1))T_{\text{last}}(K^{-1} )}
	]<\infty, \quad s > 0
\end{align}
 where
\begin{align}
T_{\text{last}}\left(K^{-1} \right) := \sup\{t\geq0: S_t\leq  K^{-1} \}. \label{tau_last_change_measure}
\end{align}
\item $r \geq \Psi(1)$.
\end{enumerate}
\end{lemma}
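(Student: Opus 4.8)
The plan is to transfer the call-side condition to the put-side condition of Lemma~\ref{cond_last_time} by the same change of measure used in the put-call symmetry, and then read off each case from the sign of the dual discount rate $r-\Psi(1)$. Fix $s>0$ and write $S_0=s$, $X_0=0$ under $\pp^\Psi_s$. Conditioning first on $\mathcal T^\lambda$ leaves the Poisson process $N^\lambda$ (hence the admissibility of any $\tau\in\mathcal A$) untouched, and given $\mathcal T^\lambda$ the strategy $\tau$ is an $(\mathcal F^X_t)_{t\ge0}$-stopping time with values in $\mathcal T^\lambda\cup\{\infty\}$, so \eqref{change_measure} applies exactly as in the proof of Theorem~\ref{prop_symmetry}.

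Writing $G_c(S_\tau)=(S_\tau-K)^+=S_\tau\bigl(1-(K/s)e^{-X_\tau}\bigr)^+$ and using the exponential-martingale density $e^{X_\tau-\Psi(1)\tau}$ at $\tau$, for each $\tau\in\mathcal A$
\[
\eee^\Psi_s\!\bigl[e^{-r\tau}G_c(S_\tau)1_{\{T_N^\lambda<\tau<\infty\}}\bigr]
= s\,\eee^{\Psi_1}\!\bigl[e^{-(r-\Psi(1))\tau}\bigl(1-(K/s)e^{-X_\tau}\bigr)^+1_{\{T_N^\lambda<\tau<\infty\}}\bigr].
\]
Passing to the dual process $-X$, which under $\pp^{\Psi_1}$ has Laplace exponent $\Psi_1^d$, and noting $(K/s)e^{-X_\tau}=K\,\bar S_\tau$ where $\bar S_t:=s^{-1}e^{-X_t}$ has $\bar S_0=s^{-1}$, this equals $sK$ times
\[
\eee_{s^{-1}}^{\Psi_1^d}\!\bigl[e^{-(r-\Psi(1))\tau}\bigl(K^{-1}-\bar S_\tau\bigr)^+1_{\{T_N^\lambda<\tau<\infty\}}\bigr],
\]
i.e.\ the quantity of Assumption~\ref{assump_tail_value_function} for the put option with strike $K^{-1}$, discount $r-\Psi(1)$, initial value $s^{-1}$ and driving measure $\pp^{\Psi_1^d}$, over the same admissible set $\mathcal A$. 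Taking suprema over $\tau$ (which transfers because the identity holds termwise), the call tail equals $sK$ times the corresponding put tail.

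For case (1), $r<\Psi(1)$ forces $r-\Psi(1)<0$, which is precisely \eqref{assump_negative_r} for the dual problem; condition \eqref{finiteness_call} is exactly the hypothesis of Lemma~\ref{cond_last_time} applied to that dual problem (with $K^{-1}$ in place of $K$, $T_{\text{last}}(K^{-1})$ as in \eqref{tau_last_change_measure}). Hence Lemma~\ref{cond_last_time} gives that the dual put tail vanishes as $N\uparrow\infty$, and multiplying by $sK$ yields the claim. For case (2), $r\ge\Psi(1)$, I would argue directly in the original model: $(e^{-rt}S_t)_{t\ge0}$ is a nonnegative $\pp^\Psi$-supermartingale (its conditional expectations contract by $e^{(\Psi(1)-r)(t-u)}\le1$), and since $\{\tau>T_N^\lambda\}=\{\tau\ge T_{N+1}^\lambda\}\in\mathcal F_{T_N^\lambda}$ for $\tau\in\mathcal A$, optional sampling together with $G_c(S_\tau)\le S_\tau$ gives
\[
\sup_{\tau\in\mathcal A}\eee^\Psi_s\!\bigl[e^{-r\tau}G_c(S_\tau)1_{\{T_N^\lambda<\tau<\infty\}}\bigr]\le\eee^\Psi_s\bigl[e^{-rT_N^\lambda}S_{T_N^\lambda}\bigr]=s\Bigl(\tfrac{\lambda}{\lambda+r-\Psi(1)}\Bigr)^{\!N},
\]
which decays geometrically to $0$ when $r>\Psi(1)$; the boundary $r=\Psi(1)$ I would instead route through the dual put of the previous paragraph, where now $e^{-(r-\Psi(1))t}\equiv1$, bound $(K^{-1}-\bar S_\tau)^+\le K^{-1}1_{\{\bar S_\tau\le K^{-1}\}}$, and use $\{\bar S_\tau\le K^{-1},\,T_N^\lambda<\tau<\infty\}\subseteq\{T_{\text{last}}(K^{-1})>T_N^\lambda\}$ together with $T_{\text{last}}(K^{-1})<\infty$ $\pp_{s^{-1}}^{\Psi_1^d}$-a.s.

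The main obstacle is the measure-change bookkeeping across the Poisson filtration: one must check that conditioning on $\mathcal T^\lambda$ genuinely reduces $\tau\in\mathcal A$ to an $(\mathcal F^X_t)$-stopping time so that \eqref{change_measure} applies, that admissibility is preserved under the change of measure (the law of $N^\lambda$ is unaffected), and that the supremum over $\tau$ survives each step. The degenerate case $r=\Psi(1)$ is also delicate, because there the supermartingale bound in case (2) is only $s$, not $o(1)$, so one genuinely needs the a.s.\ finiteness of $T_{\text{last}}(K^{-1})$ under $\pp_{s^{-1}}^{\Psi_1^d}$ (equivalently, the right drift behaviour of the dual underlying) rather than the geometric estimate.
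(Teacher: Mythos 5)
Your reduction of the call tail to a dual put tail via \eqref{change_measure} is exactly the paper's route, and your case (1) — recognizing \eqref{finiteness_call} as precisely the hypothesis of Lemma \ref{cond_last_time} for the put with strike $K^{-1}$, discount $r-\Psi(1)<0$ and law $\pp^{\Psi_1^d}_{s^{-1}}$ — matches the paper's proof. For $r>\Psi(1)$ your supermartingale/optional-sampling bound $s\left(\lambda/(\lambda+r-\Psi(1))\right)^N$ is actually \emph{more} careful than what the paper writes: the paper only notes that the dual-side integrand is dominated by $e^x$ and appeals to dominated convergence, which by itself does not control the supremum over $\tau$ (for a single fixed $\tau$ the indicator $1_{\{T_N^\lambda<\tau<\infty\}}$ vanishes, but the sup is attained along $\tau=T_{N+1}^\lambda$); your geometric estimate is uniform in $\tau$ and closes that case cleanly.

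The genuine gap is the boundary $r=\Psi(1)$. Your argument there needs $T_{\text{last}}(K^{-1})<\infty$ $\pp^{\Psi_1^d}_{s^{-1}}$-a.s., i.e.\ that $X$ drifts to $-\infty$ under $\pp^{\Psi_1}$, equivalently $\Psi'(1)<0$. This is not implied by the hypotheses of case (2): convexity of $\Psi$ only gives $\Psi'(1)\geq\Psi(1)=r$, so $\Psi'(1)>0$ is entirely possible (e.g.\ $\Psi(z)=z^2-\tfrac{3}{2}z$ with $r=\Psi(1)=-\tfrac12$). In that situation $T_{\text{last}}(K^{-1})=\infty$ a.s., and worse, taking $\tau=T_{N+1}^\lambda$ gives $\eee_s[e^{-rT_{N+1}^\lambda}(S_{T_{N+1}^\lambda}-K)^+]=s\,\E^{\Psi_1}\bigl[(1-(K/s)e^{-X_{T_{N+1}^\lambda}})^+\bigr]\to s>0$ since $X_t\to\infty$ under $\pp^{\Psi_1}$; so the tail does not vanish and no argument can close this sub-case without an additional drift restriction. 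You correctly identify this as the delicate point, but flagging it is not proving it — the step as written fails for $r=\Psi(1)$ with $\Psi'(1)\geq 0$. (In fairness, the paper's own one-line treatment of $r\geq\Psi(1)$ does not resolve this boundary case either.)
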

\begin{proof}
We have, for $N \geq 1$ and $x=\log s$, by the change of measure \eqref{change_measure},
		\begin{align} \label{call_to_put}
		\sup_{\tau \in \mathcal{A}}\eee_s\Big[e^{-r \tau}(S_\tau-K)^+1_{\{ T_N^\lambda < \tau < \infty  \}}\Big] 
		&=\sup_{\tau \in \mathcal{A}}  \E \Big[e^{-r\tau + X_\tau} ( e^x-e^{- X_{\tau}} K)^+ 1_{\{ T_N^\lambda < \tau < \infty  \} }\Big]\notag\\
& =
 \sup_{\tau \in \mathcal{A}}\E \Big[ \E^{\Psi_1}\Big[e^{-(r-\Psi(1))\tau}( e^x - e^{- X_{\tau}} K)^+1_{\{ T_N^\lambda < \tau < \infty \}}    \Big| \mathcal{T}^\lambda \Big]  \Big] \notag\\
&=\sup_{\tau \in \mathcal{A}} \E^{\Psi_1}\Big[e^{-(r-\Psi(1))\tau}(e^x - e^{- X_{\tau}} K)^+  1_{\{ T_N^\lambda < \tau < \infty \}}   \Big].
 \end{align}
This is dominated by $e^x$ for the case $r\geq \Psi(1)$ 
and by $e^x\E^{\Psi_1}_x\Big[e^{-(r-\Psi(1))\hat{T}_{\text{last}}(K^{-1})} 1_{\{ \hat{T}_{\text{last}}(K^{-1}) > T_N^\lambda   \}}  \Big]$ for the case $r < \Psi(1)$
where $\hat{T}_{\text{last}}\left(K^{-1} \right) := \sup\left\{t\geq0: e^{- X_{\tau}} \leq K^{-1} \right\}$,  
which is under $\p^{\Psi_1^d}_{-x}$ given by  \eqref{tau_last_change_measure}. 
Hence, by dominated convergence (as in the proof of Lemma \ref{assump_tail_value_function}), upon taking $N \rightarrow \infty$, we have the claim.
\end{proof}

If the conditions (1) or (2) in Lemma \ref{lemma_assumption_finiteness_call} are satisfied, then Theorem \ref{theorem_barrier_optimal}(2) applies.  Moreover, Theorem \ref{prop_symmetry} immediately suggests the following.
\begin{remark} 
\begin{enumerate}
\item If $r - \Psi(1) > 0$, then because the put option with a positive discount factor admits an optimal barrier strategy, we must have $\mathcal{D}_c = [L_c^*, \infty)$ for some $L_c^* > K$.
\item If $r - \Psi(1) < 0$ with \eqref{finiteness_call}, then by Lemma \ref{lemma_nonempty_call},  (i) of Theorem \ref{theorem_barrier_optimal}(2) holds, with $U_c^* < \infty$.
\end{enumerate}
\end{remark}


By this remark and Theorem \ref{prop_symmetry}, we obtain the following result.
\begin{corollary}
If $r - \Psi(1) < 0$ and \eqref{finiteness_call} holds, then, for any $x \in \R$, 
\begin{align*}
		(l_c^*, u_c^*) &\in \textrm{arg} \max_{\log K<l < u} v_p^{\Psi_1^d}(\log K,e^x,r- \Psi (1);\log K+x-u,\log K+x-l).
\end{align*}
\end{corollary}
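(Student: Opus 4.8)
The plan is to deduce the corollary from the put-call symmetry of Theorem~\ref{prop_symmetry} together with the fact that, under the stated hypotheses, the call admits an optimal strategy of interval type. The first task is to check that the relevant assumptions transfer to the measure $\pp^\Psi$: by the standing assumption of this section, Assumption~\ref{assump_lambda_r_alpha} holds under $\pp^\Psi$, so $\eee^\Psi S_1 < \infty$ and $\log \eee^\Psi S_1 = \Psi(1)$; hence $r - \Psi(1) < 0$ is precisely $r < \log \eee^\Psi S_1$, while \eqref{finiteness_call} together with Lemma~\ref{lemma_assumption_finiteness_call}(1) supplies Assumption~\ref{assump_tail_value_function} for the call under $\pp^\Psi$. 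Consequently Theorem~\ref{theorem_barrier_optimal}(2) applies, and Lemma~\ref{lemma_nonempty_call} both excludes alternative~(ii) and forces $U_c^* < \infty$: there exist $K < L_c^* \le U_c^* < \infty$ with $\tau_{[L_c^*,U_c^*]}$ solving \eqref{value_function} under $\pp^\Psi$. Put $l_c^* := \log L_c^*$ and $u_c^* := \log U_c^*$; then $\log K < l_c^*$, and $l_c^* < u_c^*$, since a degenerate interval $L_c^* = U_c^*$ would make $\tau_{[L_c^*,U_c^*]}$ the first Poisson-epoch visit to a single point, which by Corollary~\ref{cor_S_at_zero_inf} and spatial homogeneity is $+\infty$ a.s., forcing the value to vanish and contradicting the positivity of the value function established in the proof of Theorem~\ref{theorem_barrier_optimal}(2). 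Finally, as used in Section~\ref{section_optimality_barrier}, the optimal boundaries are independent of the starting point, so $(l_c^*,u_c^*)$ is optimal for every $S_0 = e^x$.

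Next I would note that $(l_c^*,u_c^*)$ maximizes $(l,u) \mapsto v_c^\Psi(x,K,r;l,u)$ over $\{\log K < l < u\}$: for any such $(l,u)$, the interval strategy $\tau_{[e^l,e^u]}$ belongs to $\mathcal{A}$, so $v_c^\Psi(x,K,r;l,u)$ is bounded above by the value of \eqref{value_function} under $\pp^\Psi$ with $S_0 = e^x$, and that value is attained by $\tau_{[L_c^*,U_c^*]}$, i.e.\ it equals $v_c^\Psi(x,K,r;l_c^*,u_c^*)$; hence $(l_c^*,u_c^*) \in \arg\max_{\log K < l < u} v_c^\Psi(x,K,r;l,u)$. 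Applying the put-call symmetry of Theorem~\ref{prop_symmetry}, which gives, for all $\log K < l < u$,
\[
v_c^\Psi(x,K,r;l,u) = v_p^{\Psi_1^d}(\log K,\, e^x,\, r-\Psi(1);\, \log K + x - u,\, \log K + x - l),
\]
and using that $(l,u) \mapsto (\log K + x - u,\, \log K + x - l)$ is a bijection of $\{\log K < l < u\}$ onto its image, the maximizer $(l_c^*,u_c^*)$ of the left-hand side is precisely the element of the $\arg\max$ claimed on the right-hand side.

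The only genuinely delicate step is the first one: confirming that \eqref{finiteness_call} and the standing hypotheses really do yield alternative~(i) of Theorem~\ref{theorem_barrier_optimal}(2) with a \emph{finite} upper boundary $U_c^*$ after the change of measure to $\pp^\Psi$, so that the supremum over interval strategies actually coincides with the supremum over all admissible strategies. Once the optimal strategy is known to have interval form, the remainder is routine bookkeeping with the symmetry identity.
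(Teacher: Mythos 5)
Your proof is correct and follows essentially the same route as the paper: the paper simply invokes the remark preceding the corollary (that $r - \Psi(1) < 0$ together with \eqref{finiteness_call} gives, via Lemma~\ref{lemma_nonempty_call} applied under $\pp^\Psi$, alternative~(i) of Theorem~\ref{theorem_barrier_optimal}(2) with $U_c^* < \infty$) and then the put-call symmetry of Theorem~\ref{prop_symmetry}. You fill in the same chain of reasoning with more care, and your extra observation that $l_c^* < u_c^*$ (needed for $(l_c^*, u_c^*)$ to lie in the open domain $\{\log K < l < u\}$, and justified via Corollary~\ref{cor_S_at_zero_inf}) is a legitimate detail that the paper leaves implicit.
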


Hence, the computation of the optimal barriers $(L_c^*, U_c^*)$ can be reduced to that of the corresponding put option problem, driven by the \lev process with Laplace exponent  $\Psi_1^d$, with the new discount $r - \Psi(1)$ and the new strike $e^x$ for any fixed $x > 0$ (note that  $(L_c^*, U_c^*)$ to be obtained are invariant of the selection of $x$). 

\subsection{Spectrally one-sided cases} 


Suppose $r - \Psi(1) < 0$ with \eqref{finiteness_call}. For the case $X$ is spectrally one-sided, we can use directly the results obtained for the put option case by following the same procedures as those in Sections \ref{subsection_SN} and \ref{subsection_SP}.

\begin{remark}[Sufficient condition for \eqref{finiteness_call}] \label{remark_Phi_call}
\begin{enumerate} 
\item Suppose $X$ is spectrally negative with $\Psi(z) = \psi(z)$
 (spectrally positive under $\Psi_1^d$). By Remark \ref{remark_Phi_sign_SP},  the condition \eqref{finiteness_call} is satisfied if  $\Phi_1(r-  \Psi (1)) :=\sup \{ z:  \Psi_1(z)= r-  \Psi (1)\} = \sup \{ z:  \psi(1+z) = r \} = \Phi(r) - 1$
  is well defined and positive. 
 \item Suppose $X$ is spectrally positive with $\Psi(z) = \psi(-z)$ (spectrally negative under $\Psi_1^d$). By Remark \ref{remark_Phi_sign},  the condition \eqref{finiteness_call} is satisfied if  $\Phi_1^d(r-\Psi (1)) := \sup \{ z: \Psi_1^d(z) = r-  \Psi (1)\}
  =  \sup \{ z:  \psi(z-1) - \psi(-1) = r - \Psi(1) \} = \sup \{ z:  \psi(z-1) = r  \}  = \Phi(r)+1$ 
  is well defined and negative. 
 \end{enumerate}
\end{remark}
If the solutions  $(\tilde{l}_p, \tilde{u}_p)$ to the first-order  conditions $\mathfrak{C}_l^{SP}$ and  $\mathfrak{C}_u^{SP}$ (resp. $\mathfrak{C}_l^{SN}$ and  $\mathfrak{C}_u^{SN}$) when the original process $X$ is spectrally negative (resp.\ spectrally positive)  are unique, then the optimal barriers to the original call option problem become  $L_c^* := e^{l_c^*}$ and $U_c^* := e^{u_c^*}$ with 
\begin{equation}\label{symmetry}
 u_c^*  =  -\tilde{l}_p  + x + \log K\qquad \text{and}\qquad  l_c^* =  -\tilde{u}_p + x + \log K.
\end{equation}
\section{Numerical results} \label{section_numerics}

In this section, we confirm the analytical results obtained in the previous sections and further analyze the sensitivity with respect to the rate of observation $\lambda$.  Here, we focus on the case driven by spectrally negative and positive \lev processes consisting of a Brownian motion and  i.i.d.\ exponential-size jumps, which are special cases of the double exponential jump diffusion of  Kou \cite{Kou}. They admit explicit forms of  scale functions  as in \cite{Egami_Yamazaki_2010_2, KKR}.

For the spectrally negative case, we assume
\begin{equation}
 X_t - X_0= c t+ \eta B_t - \sum_{n=1}^{N_t} Z_n, \quad 0\le t <\infty, \label{X_phase_type}
\end{equation}
where $B=( B_t: t\ge 0)$ is a standard Brownian motion, $N=(N_t: t\ge 0 )$ is a Poisson process with arrival rate $\alpha$, and  $Z = ( Z_n: n = 1,2,\ldots )$ is an i.i.d.\ sequence of exponential random variables with parameter $\beta$. The processes $B$, $N$, and $Z$ are assumed mutually independent.  For the spectrally positive case, we set $X$ to be the negative of the right hand side of  \eqref{X_phase_type}.  For the parameters describing the problem, we set $K=50$, ${r} = -0.05$, and $\lambda=1$ (so that Assumption \ref{assump_lambda_r} is fulfilled), unless stated otherwise. Other parameters are set so that the optimal strategy is of interval-type.

%
%

\subsection{Put option}
\subsubsection{Spectrally negative case}  \label{subsub_put_SN}

We first consider the put option when $X$ is spectrally negative  and obtain the optimal solutions using the procedure  described in Section \ref{subsection_SN}. Here, we set $c =1$, $\eta = 0.2$, $\alpha = 1$ and $\beta = 2$ for the parameters of $X$. We have $\Phi(r) \approx -0.1064 < 0$ and hence the condition \eqref{phineg} (equivalently Assumptions \ref{assum_phi}  and \ref{assump_X_SN}) and  Assumption \ref{assump_tail_value_function}
 as well are satisfied. 


The plots of the functions  $l \mapsto h(l,u;1)$, $u \mapsto l(u)$ and $u \mapsto \tilde{j}(l(u), u; 1)$ are given in Figure \ref{figure_h_SN} in Section \ref{subsub_first_SN}. Because the pair $(\tilde{l}_p^{SN}, \tilde{u}_p^{SN})$ satisfying simultaneously the conditions $\mathfrak{C}_l^{SN}$ and $\mathfrak{C}_u^{SN}$ is unique in this case, it is the unique maximizer of $(l,u) \mapsto \tilde{v}_p(l,u;1)$ and hence becomes the optimal solution $(l^*_p, u^*_p)$. This is confirmed in Figure \ref{figure_put_SN}(1) where we plot the mapping $(e^l,e^u) \mapsto \tilde{v}_p(l,u;1)$ along with the point at $(\tilde{l}_p^{SN}, \tilde{u}_p^{SN})$. The corresponding value function $V_p(s) = v_p^{SN}(\log s; \tilde{l}_p^{SN}, \tilde{u}_p^{SN})$ is plotted along with the payoff function $G_p$ in Figure \ref{figure_put_SN}(2). Notice that the value function of the auxiliary problem (allowing immediate stopping) becomes $\bar{V}_p(s) = V_p(s) \vee G_p(s)$, $s > 0$. In order to confirm the optimality and study the impact of the choice of the lower barrier $l$ and upper barrier $u$, we plot in Figure \ref{figure_put_SN}(3) and (4) the differences $s \mapsto V_p(s) - v_p(\log s; l, \tilde{u}_p^{SN})$ and $s \mapsto V_p(s)  - v_p(\log s; \tilde{l}_p^{SN}, u)$
for suboptimal choices of $l$ and $u$, including the case $l = -\infty$ computed using the results in \cite{Albrecher, PY_American}. We confirm that these differences are indeed uniformly positive.

\begin{figure}[htbp]
\begin{center}
\begin{minipage}{1.0\textwidth}
\centering
\begin{tabular}{cc}
 \includegraphics[scale=0.4]{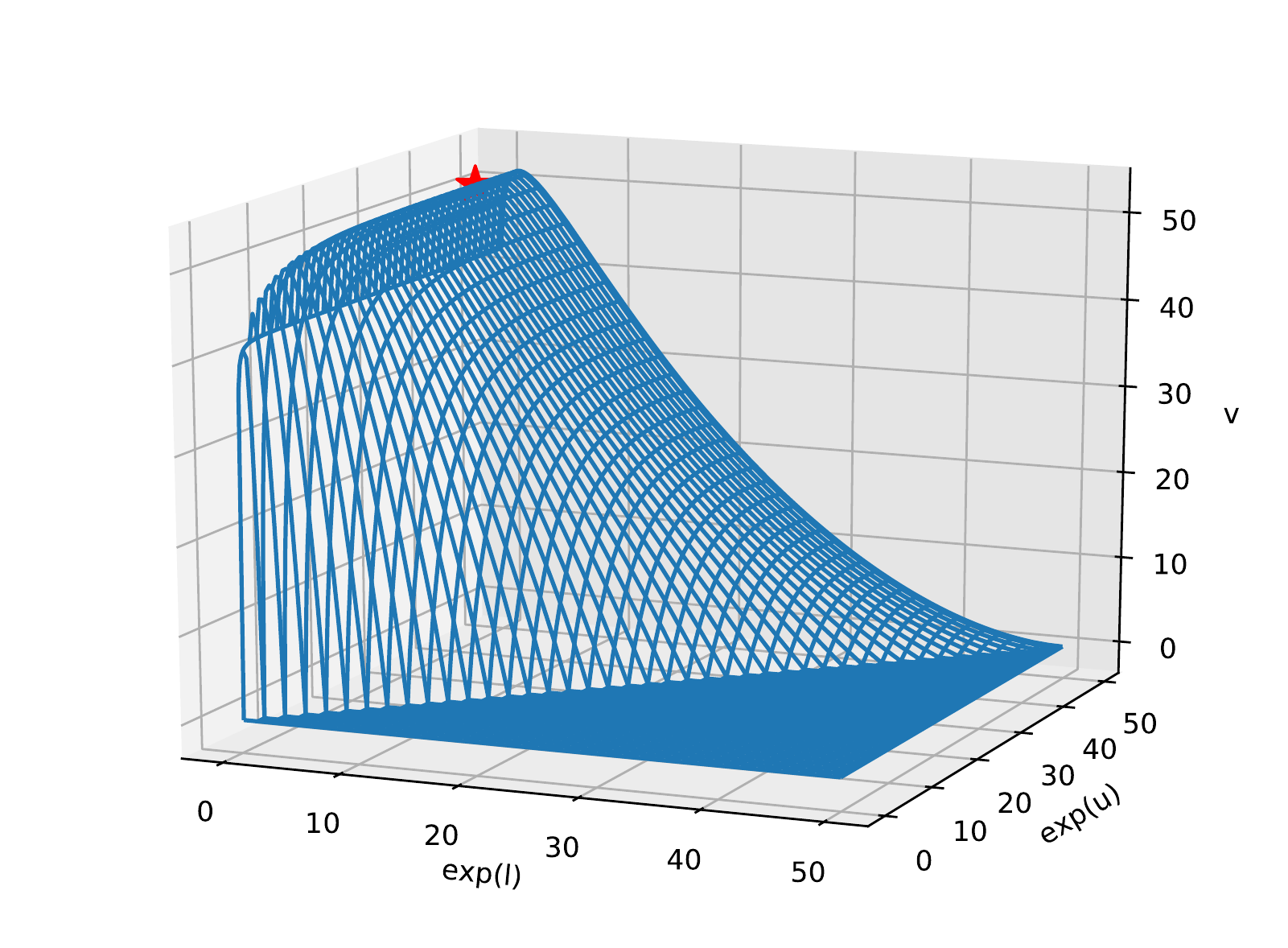} & \includegraphics[scale=0.4]{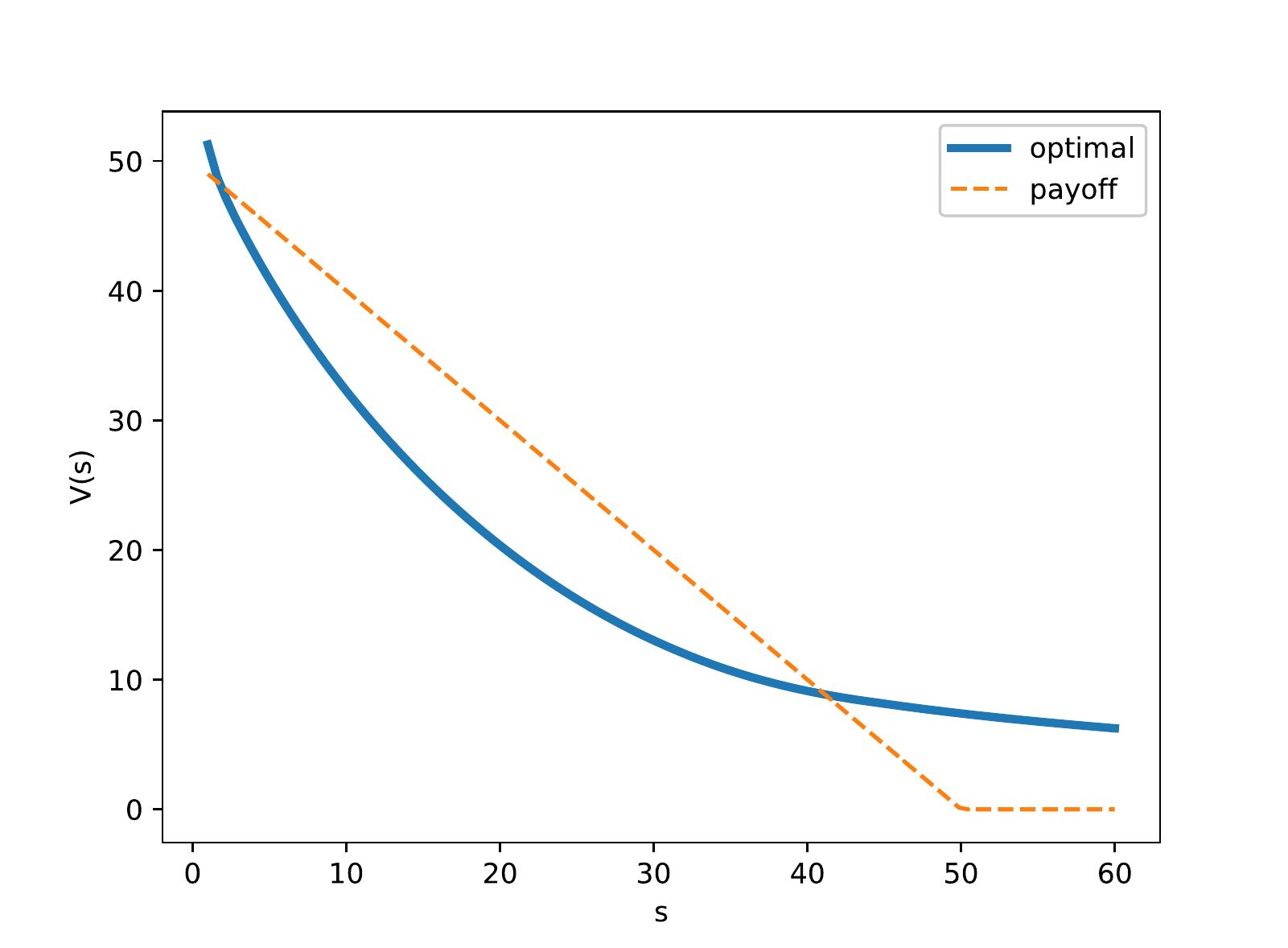} \\
(1) $(e^l,e^u) \mapsto \tilde{v}_p(l,u;1)$ & (2) $V_p(s)  = v_p^{SN}(\log s; \tilde{l}_p^{SN}, \tilde{u}_p^{SN})$ and $G_p(s)$ \\
  \includegraphics[scale=0.4]{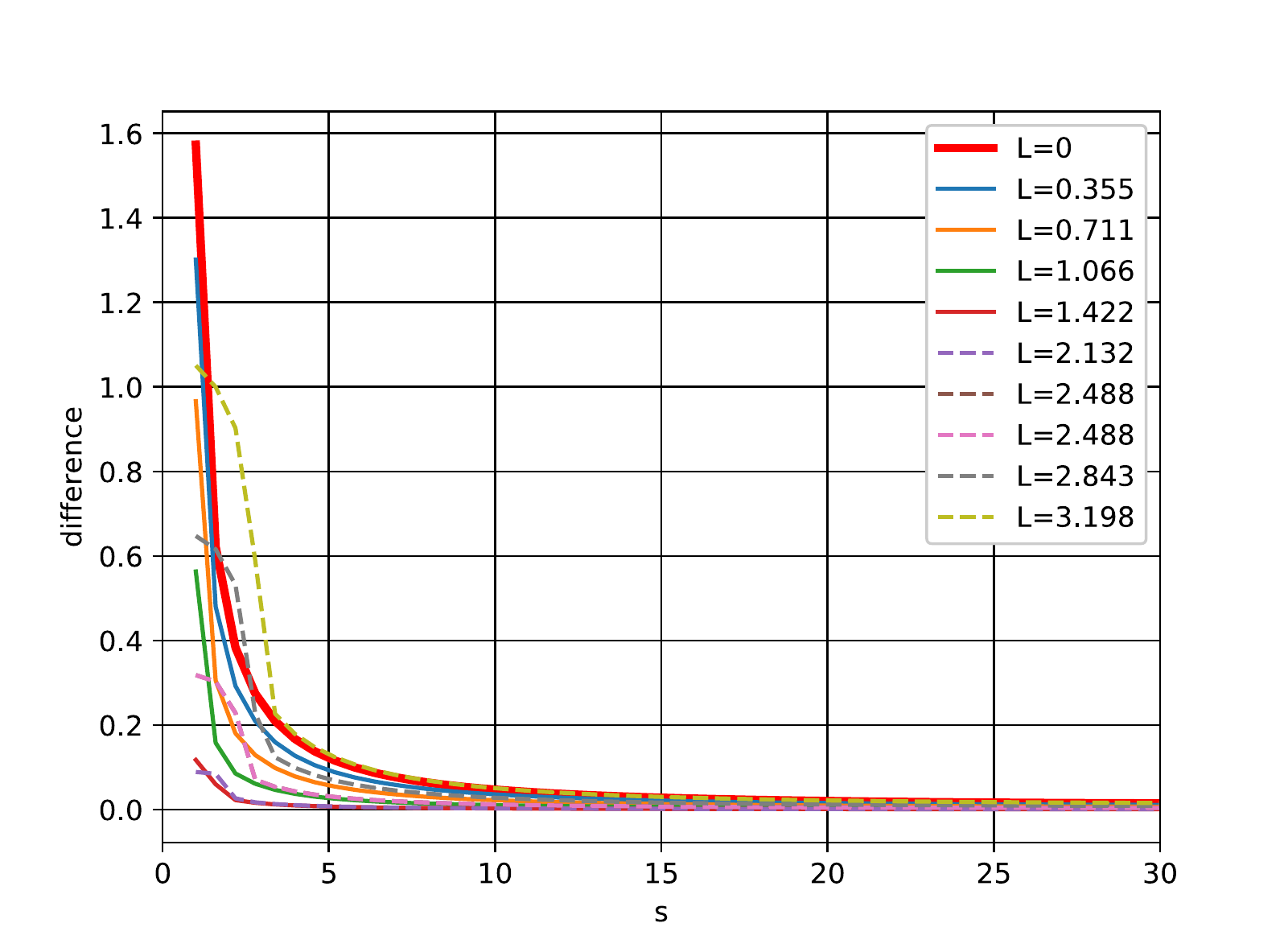} & \includegraphics[scale=0.4]{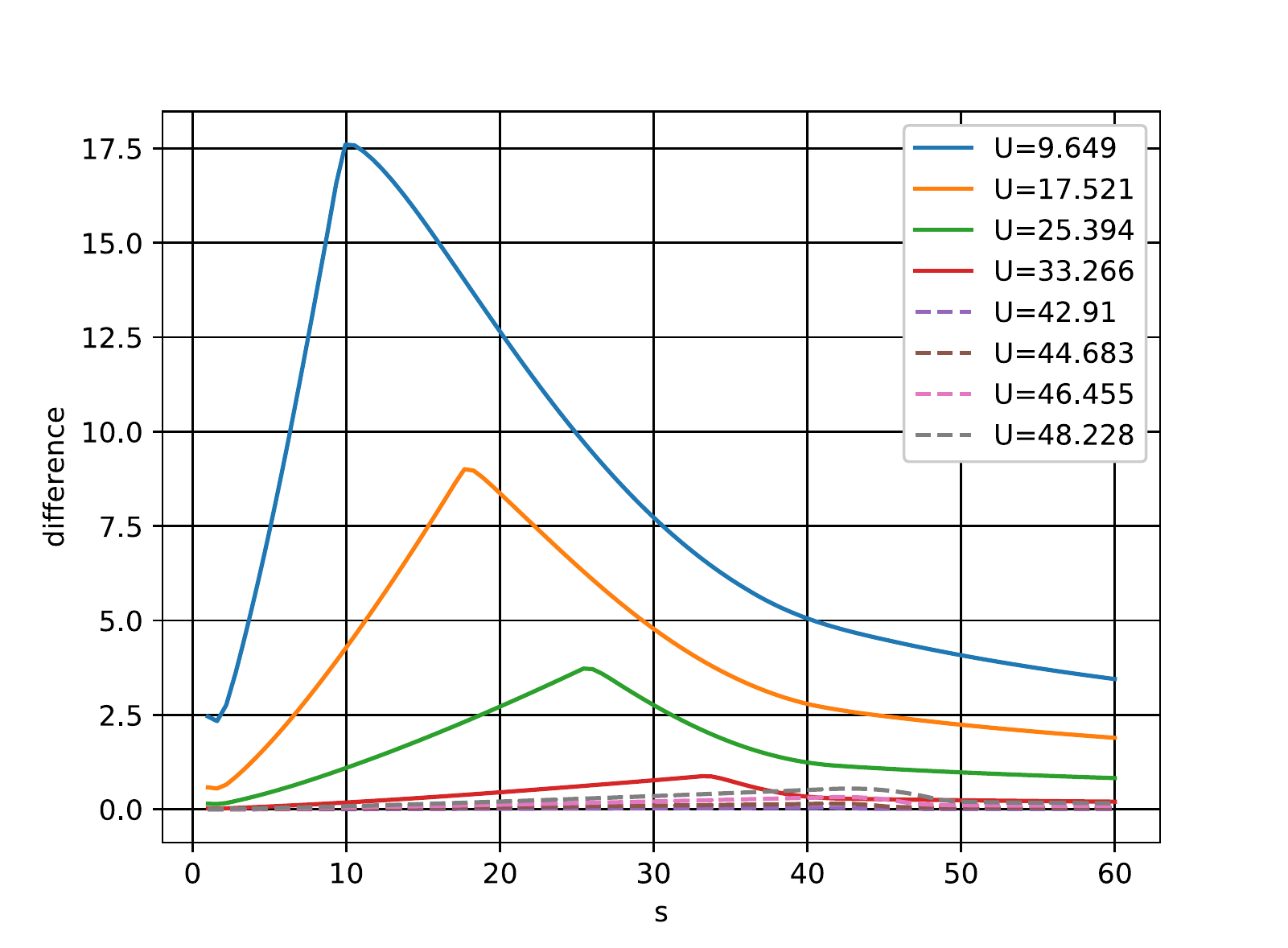} \\
 (3) $V_p(s)  - v_p^{SN}(\log s; l, \tilde{u}_p^{SN})$ & (4) $V_p(s)  - v_p^{SN}(\log s; \tilde{l}_p^{SN}, u)$
 \end{tabular}
\end{minipage}
\caption{\footnotesize Put option when $X$ is spectrally negative. (1) The mapping $(e^l,e^u) \mapsto \tilde{v}_p(l,u;1)$ for $0 < e^l < e^u < K$ (for $u < l$, we set the value to be zero). The point at $(L_p^*, U_p^*) = (e^{\tilde{l}_p^{SN}},e^{\tilde{u}_p^{SN}})$ is shown by the red star. (2) The value function $s \mapsto V_p(s)  = v_p^{SN}(\log s;\tilde{l}_p^{SN}, \tilde{u}_p^{SN})$ along with the payoff function $s \mapsto G_p(s)$. (3) The differences  $s \mapsto V_p(s)  - v_p^{SN}(\log s; l, \tilde{u}_p^{SN})$ for $l = \log L$ with $L = 0$ and $L= i L_p^*/5$ for $i = 1,2,3,4,6,7,8,9$ 
(solid lines when $L < L_p^*$ and dashed lines when $L > L_p^*$).
 (4) The differences $s \mapsto V_p(s)  - v_p^{SN}(\log s; \tilde{l}_p^{SN}, u)$ for $u = \log U$ with $U = \frac {5-i} 5L_p^* + \frac i 5 U_p^*$ and $\frac {5-i} 5U_p^* + \frac i 5 K$ for $i = 1,2,3,4$
 (solid lines when $U < U_p^*$ and dashed lines when $U > U_p^*$).
} \label{figure_put_SN}
\end{center}
\end{figure}

\subsubsection{Spectrally positive case} \label{subsub_put_SP}

We now move on to the put option when $X$ is  spectrally positive and confirm the results obtained in Section \ref{subsection_SP}. Here, the dual (spectrally negative) \lev process $X^d = -X$ is assumed to be given by the right-hand side of \eqref{X_phase_type} with  $c =0.2$, $\eta = 0.3$, $\alpha = 1$ and $\beta = 2$. We have $\Phi(r) \approx 1.3568 > 0$ and hence the condition \eqref{Phi_cond_SP} (equivalently Assumptions \ref{assum_phi}  and \ref{assum_phi_SP}) and  Assumption \ref{assump_tail_value_function}
 as well are satisfied. 


The plots of the corresponding functions  $u \mapsto h(-u,-l; -1)$, $l \mapsto u(l)$ and $l \mapsto \tilde{j}(-u(l), -l; -1)$  are given in Figure \ref{figure_h_SP} in Section \ref{subsubsection_first_order_SP}. We again attain a unique pair $(\tilde{l}_p^{SP}, \tilde{u}_p^{SP})$ satisfying simultaneously the conditions $\mathfrak{C}_l^{SP}$ and $\mathfrak{C}_u^{SP}$ and hence it becomes $(l^*_p, u^*_p)$.   In Figure \ref{figure_put_SP}, we plot the results analogous to those given in Figure \ref{figure_put_SN}. The optimality is confirmed similarly.

\begin{figure}[htbp]
\begin{center}
\begin{minipage}{1.0\textwidth}
\centering
\begin{tabular}{cc}
 \includegraphics[scale=0.4]{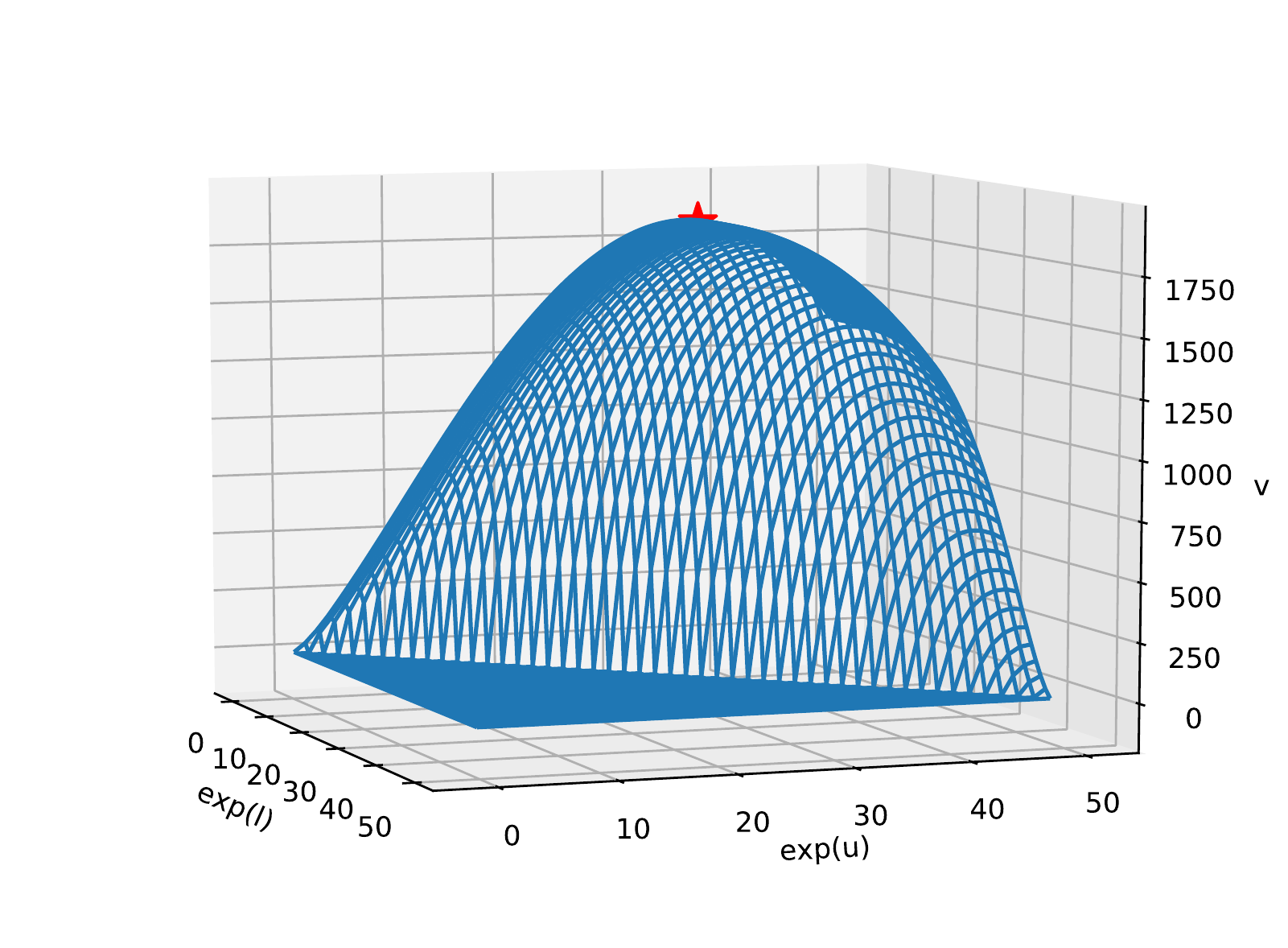} & \includegraphics[scale=0.4]{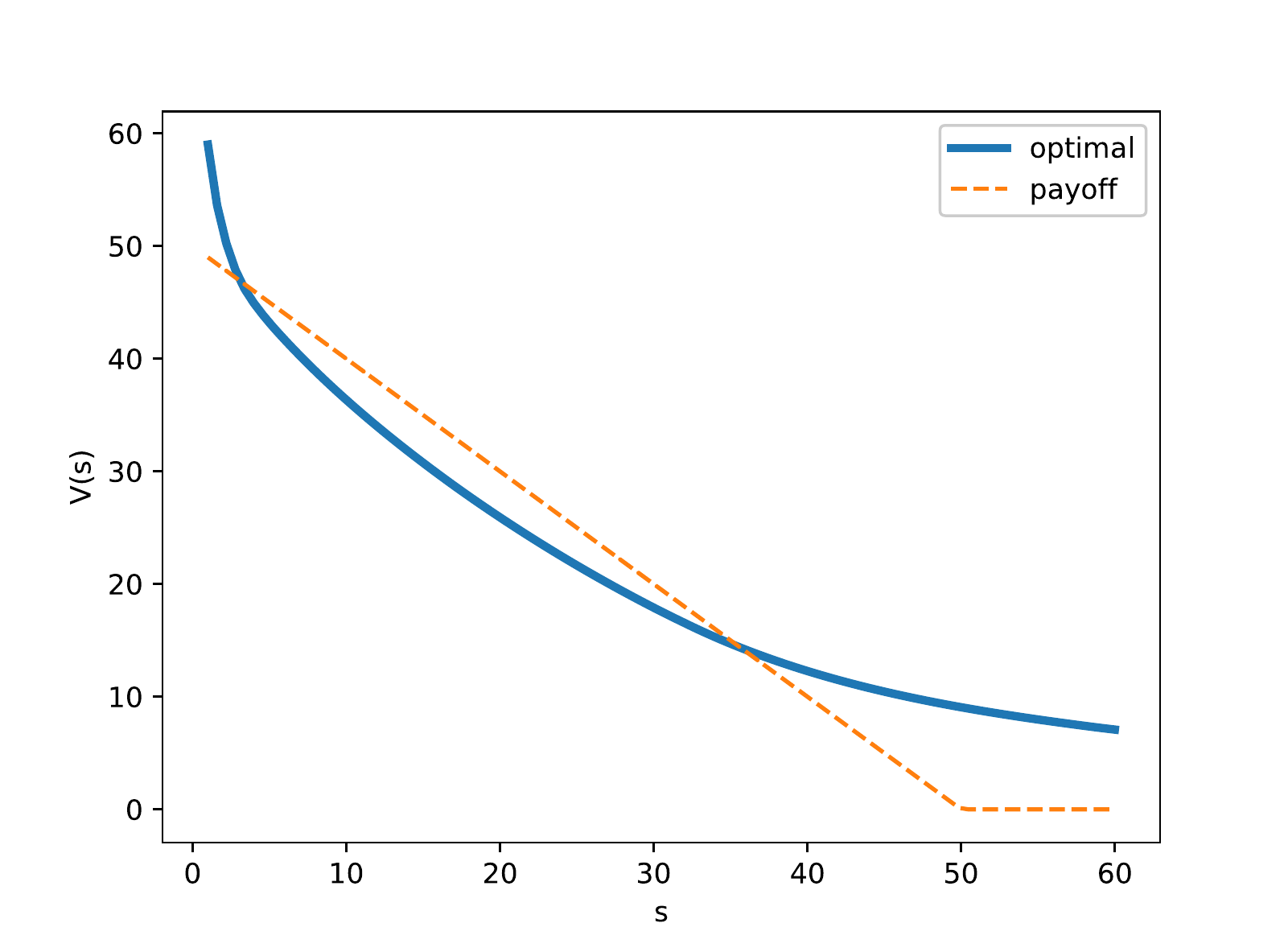} \\
 (1) $(e^l,e^u) \mapsto \tilde{v}_p(-u,-l;-1)$ & (2) $V_p(s) = v_p^{SP}(\log s; l_p^*, u_p^*)$ and $G_p(s)$ \\
  \includegraphics[scale=0.4]{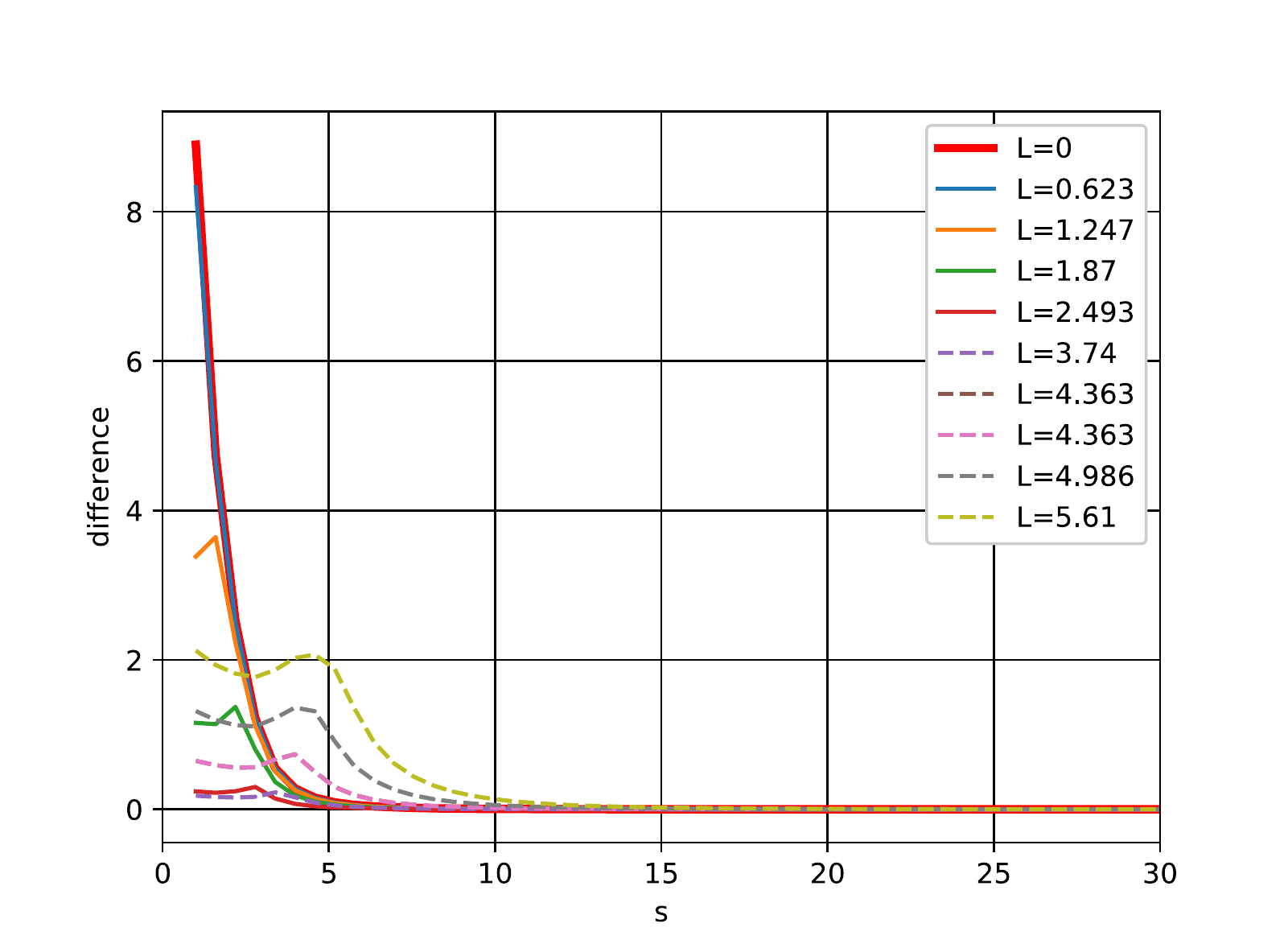} & \includegraphics[scale=0.4]{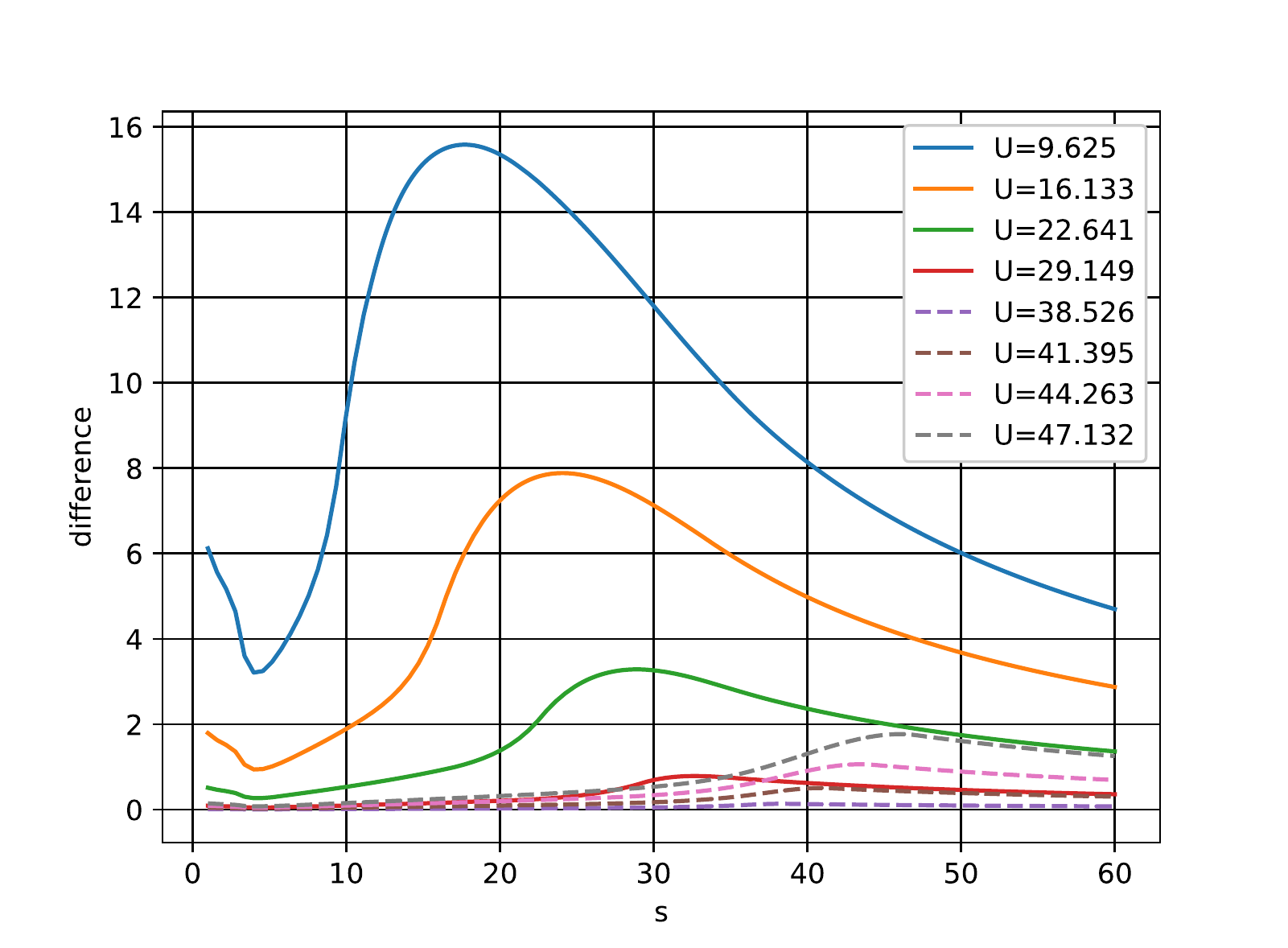} \\
   (3) $V_p(s) - v_p^{SP}(\log s; l, u^*_p)$ & (4) $V_p(s)  - v_p^{SP}(\log s; l^*_p, u)$
 \end{tabular}
\end{minipage}
\caption{\footnotesize  Put option when $X$ is spectrally positive. (1) The mapping $(e^l,e^u) \mapsto \tilde{v}_p(-u,-l;-1)$ for $0 < e^l < e^u < K$ (for $u < l$, we set the value to be zero). The point at $(L_p^*, U_p^*) = (e^{\tilde{l}_p^{SP}},e^{\tilde{u}_p^{SP}})$ is shown by the red star. (2) The value function $s \mapsto V_p(s) = v_p^{SP}(\log s; \tilde{l}_p^{SP}, \tilde{u}_p^{SP})$ along with the payoff function $s \mapsto G_p(s)$. (3) The difference  $s \mapsto V_p(s)  - v_p^{SP}(\log s; l, \tilde{u}_p^{SP})$  for $l = \log L$ with $L = 0$ and $L= i L_p^*/5$ for $i = 1,2,3,4,6,7,8,9$ 
(solid lines when $L < L_p^*$ and dashed lines when $L > L_p^*$).
 (4) The difference $s \mapsto V_p(s)  - v_p^{SP}(\log s; \tilde{l}_p^{SP}, u)$  for $u = \log U$ with $U = \frac {5-i} 5L_p^* + \frac i 5 U_p^*$ and $\frac {5-i} 5U_p^* + \frac i 5 K$ for $i = 1,2,3,4$
 (solid lines when $U < U_p^*$ and dashed lines when $U > U_p^*$).
}  
 \label{figure_put_SP}
\end{center}
\end{figure}

\subsection{Call options}

For call options,  the computation of the optimal solution boils down to that of a put option, thanks to the put-call symmetry as studied in Section \ref{section_call}.  Here, we first transform the problem to the corresponding put option problem and solve it using the same procedures used in Sections \ref{subsub_put_SN} and  \ref{subsub_put_SP}.

\subsubsection{Spectrally negative case}

We consider the call option when $X$ is a spectrally negative \lev process, whose Laplace exponent $\Psi = \psi$ is given by \eqref{X_phase_type} with $c = 0.140625$, $\eta = 0.15$, $\alpha =2.193125$ and $\beta = 10$. 

To solve this, we consider the put option driven by the spectrally positive \lev process with its Laplace exponent given by $\Psi_1^d(z) = \psi(1-z) - \psi(1)$ as in \eqref{Psi_1_d} and a new discount factor $\tilde{r} = r - \Psi(1) = -0.0025 < 0$. In order to use Theorem \ref{prop_symmetry}, we set $x = 5$ and a new strike $\tilde{K} = e^x = 148.41$.  After the optimal barriers for this auxiliary problem are computed, those for the original call option problem are  recovered by  \eqref{symmetry}. Notice that the recovered values of the barriers are invariant of the selection of $x$. 

Recall Remark \ref{remark_Phi_call}(1). In Figure \ref{u_CALL_SN}(1), we plot  $z \mapsto \Psi_1(z) := \Psi_1^d(-z)$ (see \eqref{Psi_1_d}), corresponding to the Laplace exponent of the dual (spectrally negative) \lev process  $\tilde{X}^d = \tilde{X}$. Here,  $\Phi_1(\tilde{r})  \approx 0.4760> 0$ and hence the condition \eqref{Phi_cond_SP} (equivalently Assumptions \ref{assum_phi}  and \ref{assum_phi_SP}) and  Assumption \ref{assump_tail_value_function} are satisfied. 
The solutions  $(\tilde{l}_p^{SP}, \tilde{u}_p^{SP})$ to the first-order conditions $\mathfrak{C}_l^{SP}$ and $\mathfrak{C}_u^{SP}$ are computed in the same way as in Section \ref{subsub_put_SP}, and here again we obtain a unique pair. In Figure \ref{u_CALL_SN}(2), we plot   $(l,u) \mapsto \tilde{v}_p(-u,-l;-1)$, confirming that $(\tilde{l}_p^{SP}, \tilde{u}_p^{SP})$ indeed maximizes it.  The optimal barriers for the original call option problem become $\tilde{u}_c^{SN} =x+ \log K - \tilde{l}_p^{SP}$ and $\tilde{l}_c^{SN} = x+\log K-\tilde{u}_p^{SP}$ by \eqref{symmetry}.




The value function $V_c(s) = v_c^{SN}(\log s; \tilde{l}_c^{SN}, \tilde{u}_c^{SN})$ of the original problem is plotted along with the payoff function $G_c$ in Figure \ref{figure_call_SN}(2). Notice that $\bar{V}_c(s) = V_c(s) \vee G_c(s)$, $s > 0$. We also plot in Figure \ref{figure_call_SN}(3) and (4) the differences $s \mapsto V_c(s)  - v_c(\log s; l, \tilde{u}_c^{SN})$ and $s \mapsto V_c(s) - v_c(\log s; \tilde{l}_c^{SN}, u)$
for suboptimal choices of $l$ and $u$, including the case $u = \infty$ computed using  the results in \cite{Albrecher, PY_American}. We confirm that these differences are indeed uniformly positive.

\begin{figure}[htbp]
\begin{center}
\begin{minipage}{1.0\textwidth}
\centering
\begin{tabular}{cc}
 \includegraphics[scale=0.4]{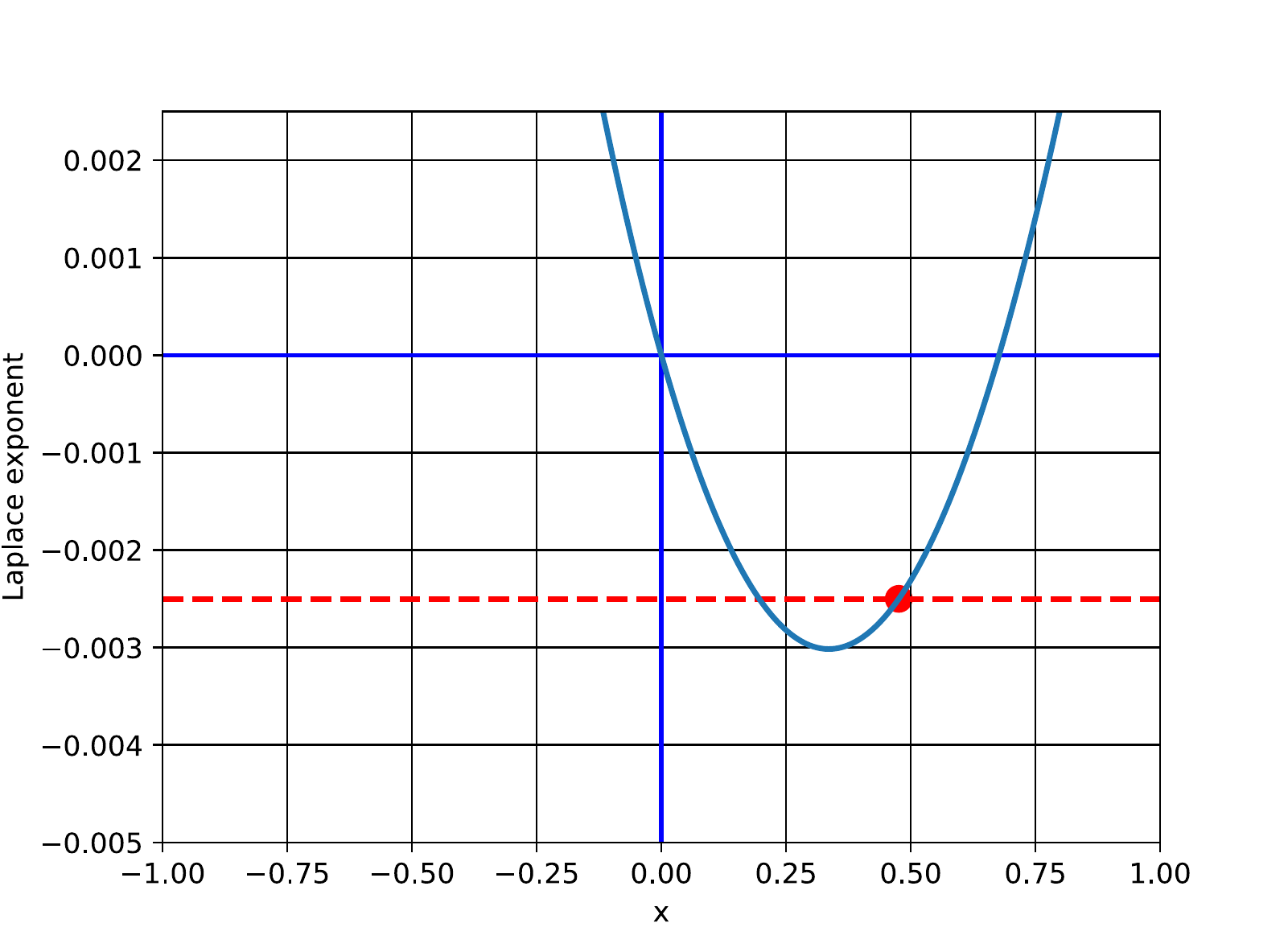} &  \includegraphics[scale=0.4]{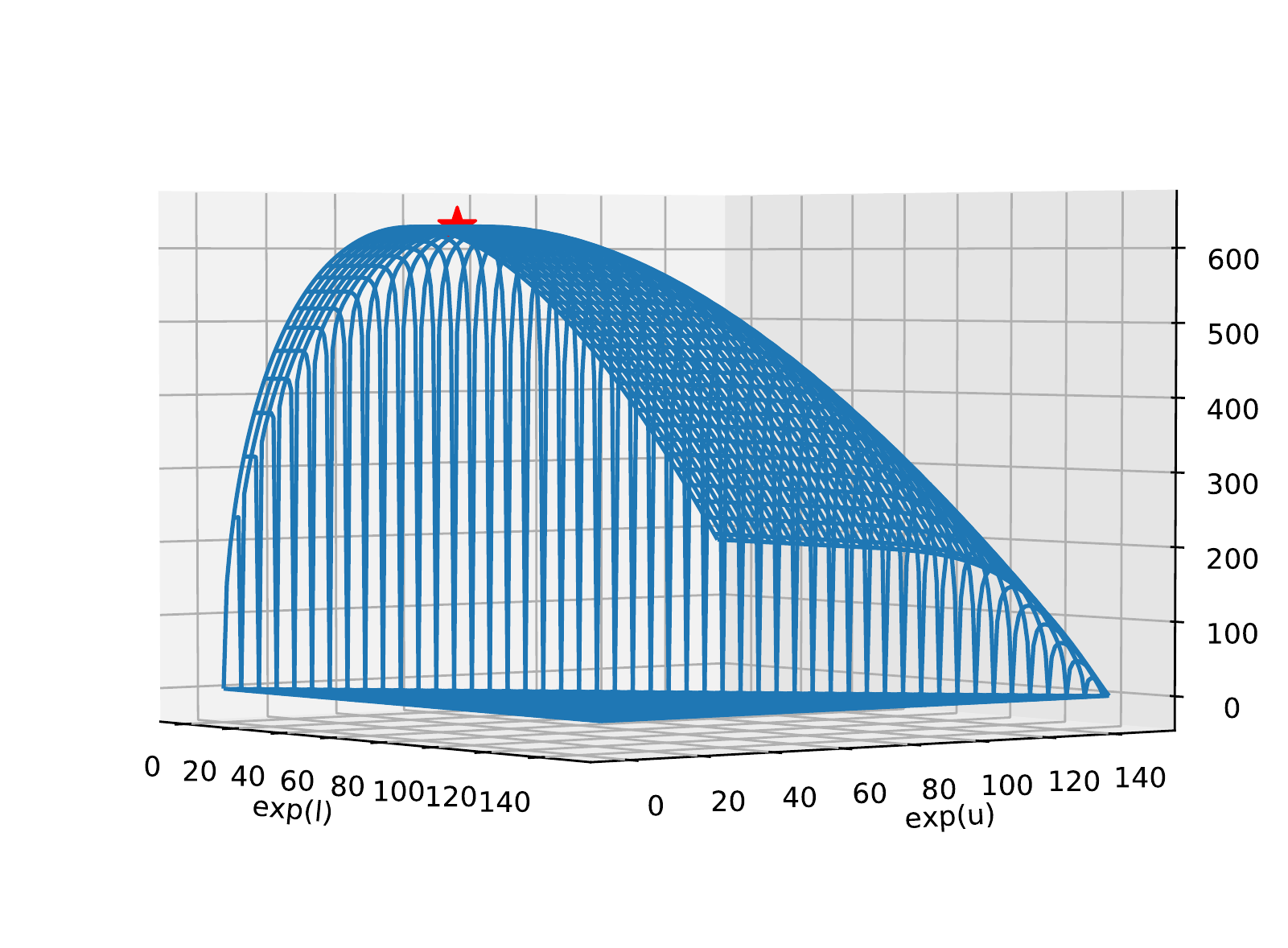}  \\
(1) $x \mapsto \Psi_1(x)$ & (2) $(e^l,e^u) \mapsto \tilde{v}_p(-u,-l;-1)$ 
 \end{tabular}
\end{minipage}
\caption{\footnotesize Auxiliary put option problem for the call option when $X$ is spectrally negative. (1) The Laplace exponent of the dual (spectrally negative) \lev process $\Psi_1$ (solid) along with the horizontal line $y = \tilde{r}$ (dashed). The point at 
$\Phi_1(\tilde{r})$
is given by the red circle. (2) The mapping $(e^l,e^u) \mapsto \tilde{v}_p(-u,-l;-1)$ for $0 < e^l < e^u < \tilde{K}$ (for $u < l$, we set the value to be zero). The point at $(e^{\tilde{l}_p^{SP}},e^{\tilde{u}_p^{SP}})$ is shown by the red star.
} \label{u_CALL_SN}
\end{center}
\end{figure}

\begin{figure}[htbp]
\begin{center}
\begin{minipage}{1.0\textwidth}
\centering
\begin{tabular}{cc}
\multicolumn{2}{c}{ \includegraphics[scale=0.4]{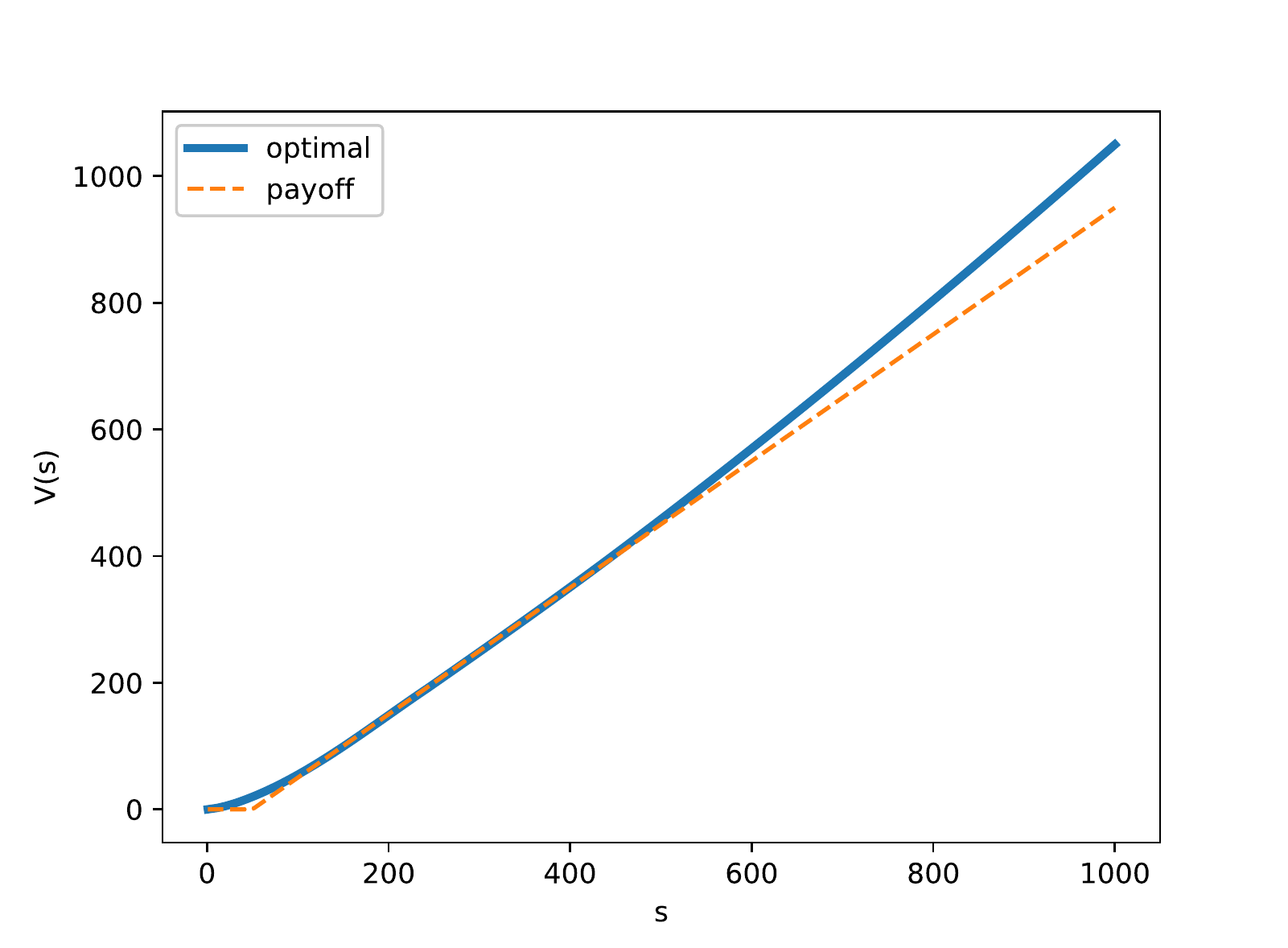} }  \\
\multicolumn{2}{c}{ (1) $V_c(s) = v_c^{SN}(\log s; l_c^*, u_c^*)$ and $G_c(s)$ }  \\
  \includegraphics[scale=0.4]{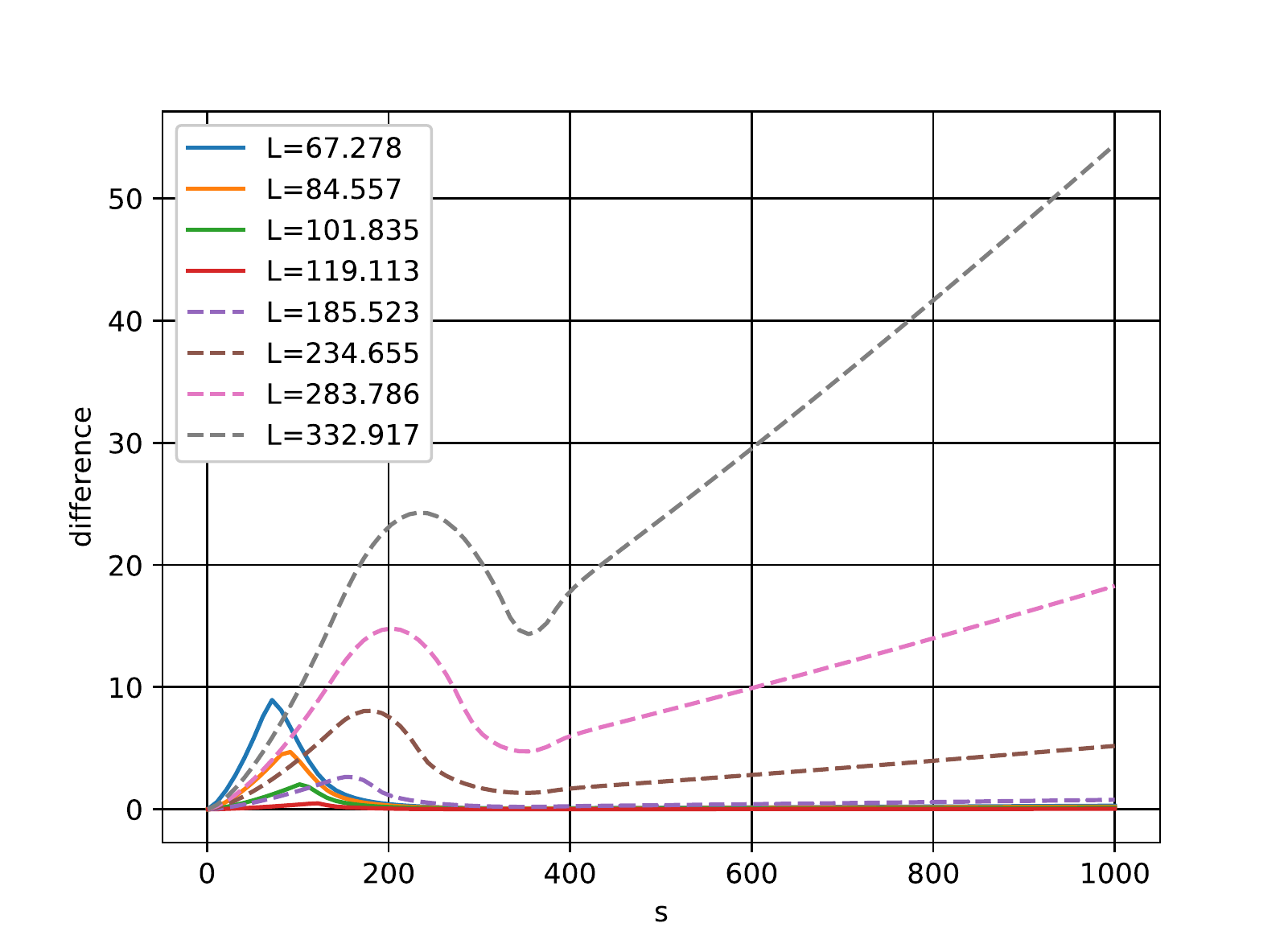} & \includegraphics[scale=0.4]{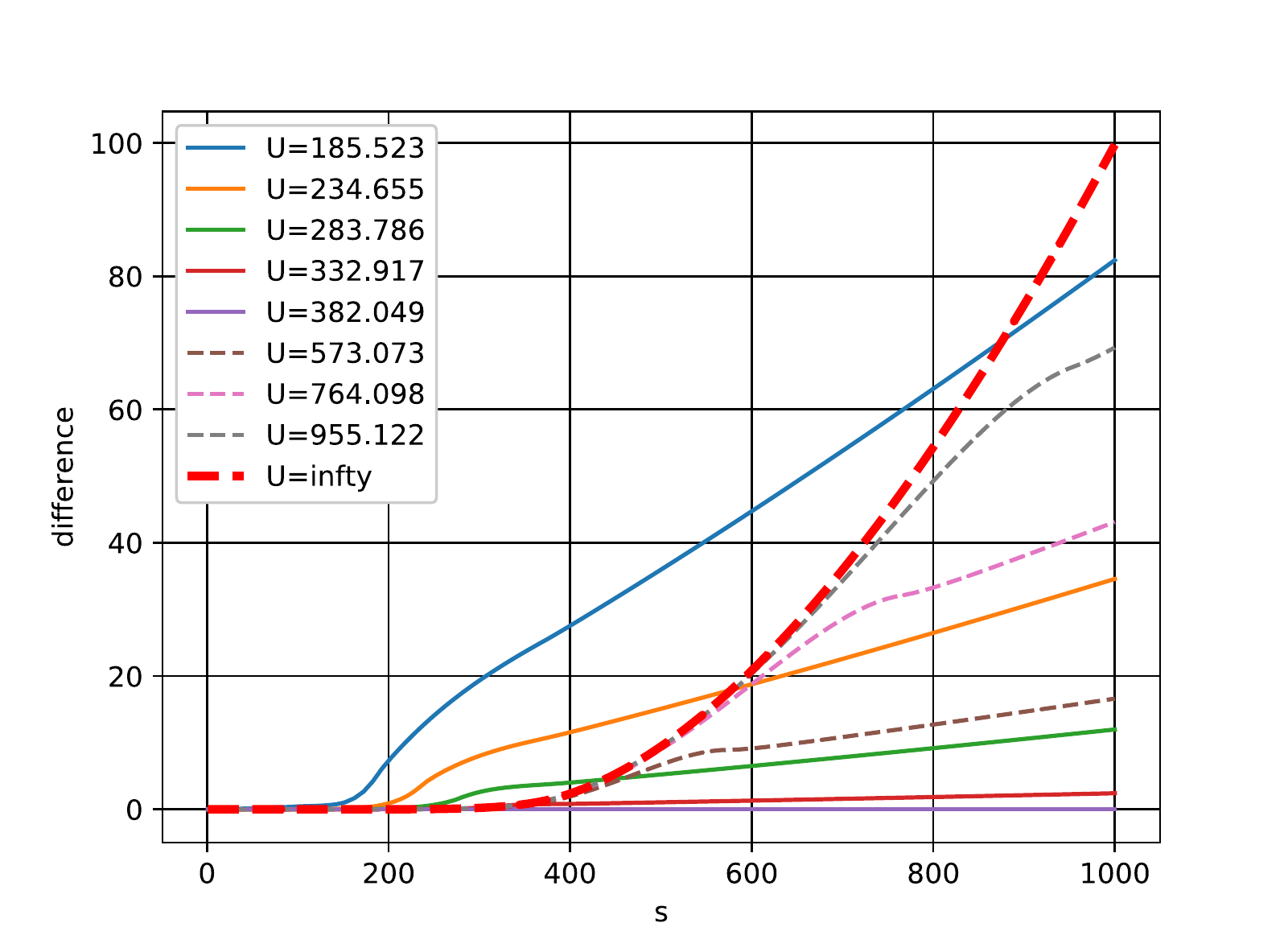} \\
   (2) $V_c(s)- v_p^{SN}(\log s; l, u^*_c)$ & (3) $V_c(s) - v_c^{SN}(\log s; l^*_c, u)$
 \end{tabular}
\end{minipage}
\caption{\footnotesize  
 Call option when $X$ is spectrally negative.  (1) The value function $s \mapsto V_c(s) = v_c^{SN}(\log s; l_c^*, u_c^*)$ along with the payoff function $s \mapsto G_c(s)$. (2) The difference  $s \mapsto V_c(s) - v_c^{SN}(\log s; l, u^*_c)$  for $l = \log L$ with $L= \frac {5-i} 5 K + \frac i 5 L_c^*$ and $L= \frac {5-i} 5 L_c^* + \frac i 5 U_c^*$ for $i = 1,2,3,4$    (solid when $L < L_c^*$ and dashed when $L > L_c^*$).
 (3) The difference $s \mapsto V_c(s) - v_c^{SN}(\log s; l^*_c, u)$  for $u = \log U$ with $U = \frac {5-i} 5L_c^* + \frac i 5 U_c^*$ for $i = 1,2,3,4$, $U=1.5U_c^*$, $2U_c^*$, $2.5U_c^*$, and $U=\infty$  (solid lines when $U < U_c^*$ and dashed lines when $U > U_c^*$).
}  \label{figure_call_SN}
\end{center}
\end{figure}

\subsubsection{Spectrally positive case}

Similarly, we solve the call option case when $X$ is a spectrally positive \lev process with its Laplace exponent $\Psi$. We let its dual $X^d = - X$ be given by the right-hand side of \eqref{X_phase_type} with $c = 0.221875$, $\eta = 0.15$, $\alpha =1.4681$ and $\beta = 10$ and its  Laplace exponent be $\psi(s) = \Psi(-s)$. 

Recall Remark \ref{remark_Phi_call}(2). We consider the put option driven by the spectrally negative \lev process with its Laplace exponent given by $\Psi_1^d(z) = \Psi(1-z) - \Psi(1) = \psi(z-1) - \psi(-1)$, which is plotted in Figure \ref{u_CALL_SP}(1). The new discount factor becomes $\tilde{r} := r - \Psi(1) = -0.0025 < 0$. We again set $x = 5$ and the new strike becomes $\tilde{K} = e^x = 148.41$.    Here,  $\Phi_1^d(\tilde{r})  \approx -0.23632 < 0$ and hence 
the condition \eqref{phineg} (equivalently Assumptions \ref{assum_phi}  and \ref{assump_X_SN}) and Assumption \ref{assump_tail_value_function}
 as well are satisfied. 
Figure \ref{u_CALL_SP}(2) plots $(l,u) \mapsto \tilde{v}_p(l,u;1)$ confirming that  the solution to the first-order conditions in the auxiliary problem $(\tilde{l}_p^{SN}, \tilde{u}_p^{SN})$ is unique. The optimal barriers of the original call option problem are  recovered by  \eqref{symmetry}. Analogously to Figure \ref{figure_call_SN}, as seen in Figure \ref{figure_call_SP}, the optimality of the selected strategy is confirmed.

\begin{figure}[htbp]
\begin{center}
\begin{minipage}{1.0\textwidth}
\centering
\begin{tabular}{cc}
 \includegraphics[scale=0.4]{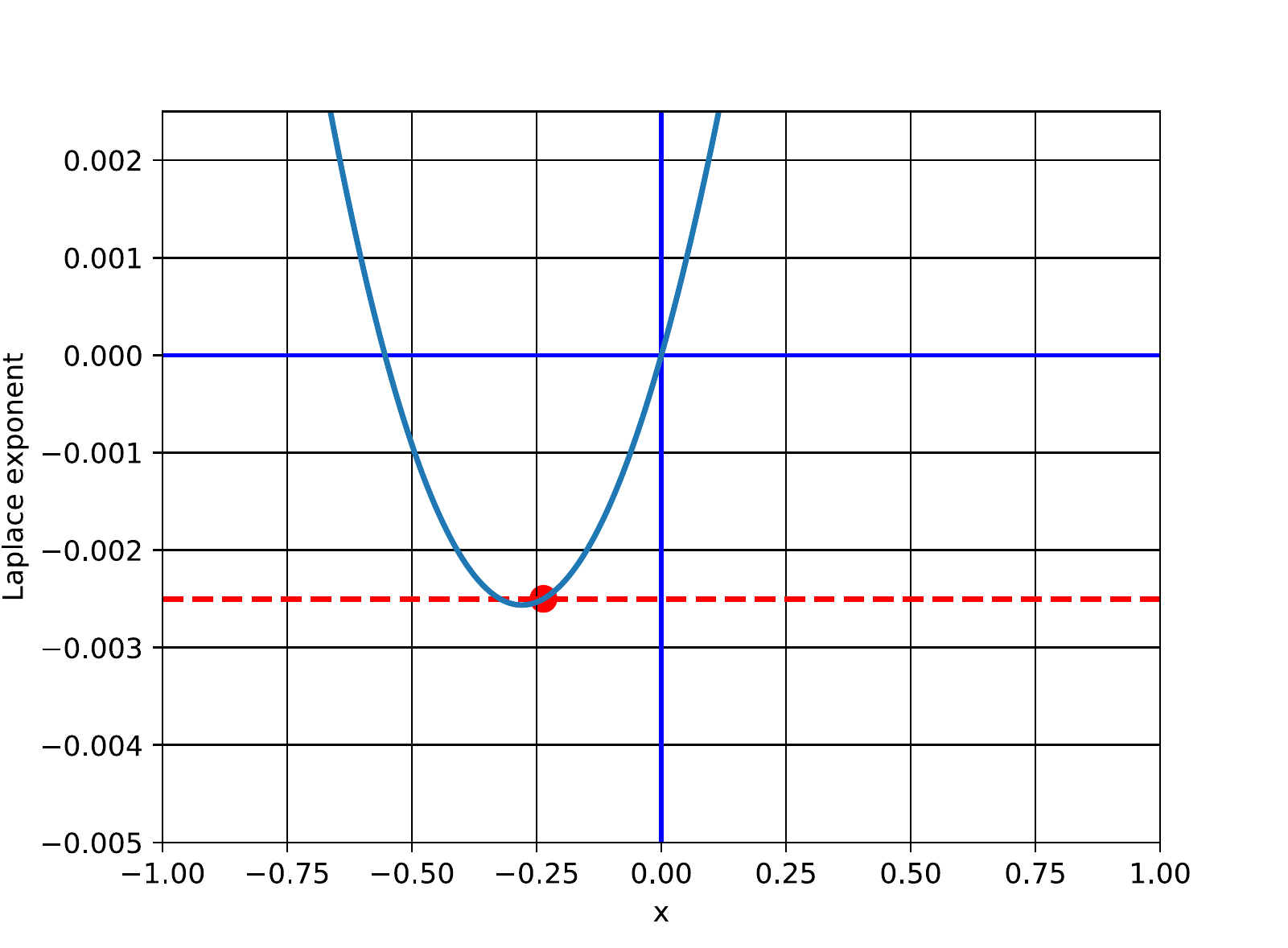} &  \includegraphics[scale=0.4]{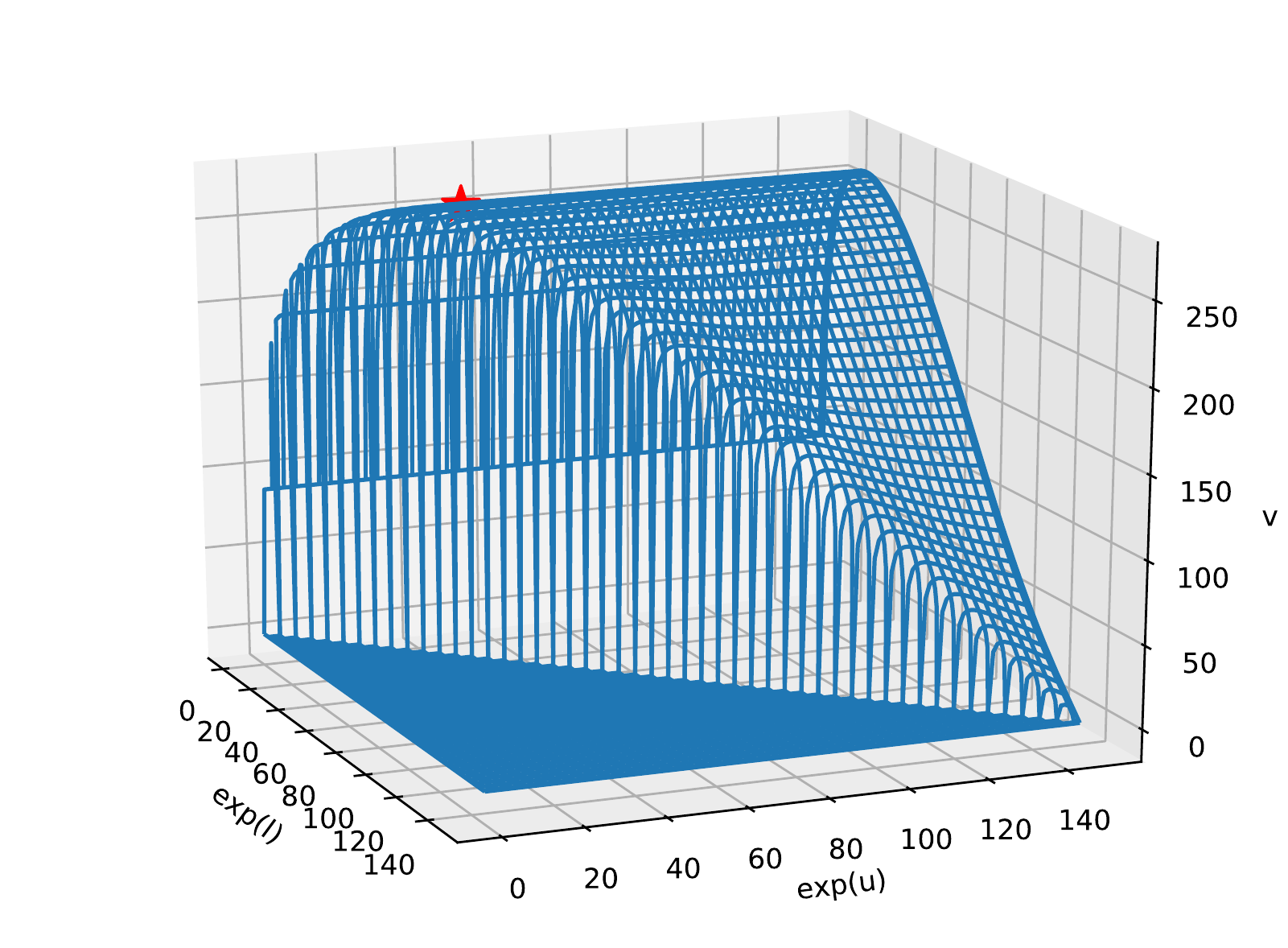}  \\
(1) $z \mapsto \Psi_1^d(z)$ & (2) $(e^l,e^u) \mapsto \tilde{v}_p(l,u;1)$ 
 \end{tabular}
\end{minipage}
\caption{\footnotesize  Auxiliary put option problem for the call option when $X$ is spectrally positive. (1) The Laplace exponent  $\Psi_1^d$ (solid) along with the horizontal line $y = \tilde{r}$ (dashed). The point at $\Phi_1^d(\tilde{r})$
  is given by the red circle. (2) The mapping $(e^l,e^u) \mapsto \tilde{v}_p(l,u,1)$ for $0 < e^l < e^u < \tilde{K}$ (for $u < l$, we set the value to be zero). The point at $(e^{\tilde{l}_p^{SN}},e^{\tilde{u}_p^{SN}})$ is shown by the red star.
} \label{u_CALL_SP}
\end{center}
\end{figure}


\begin{figure}[htbp]
\begin{center}
\begin{minipage}{1.0\textwidth}
\centering
\begin{tabular}{cc}
\multicolumn{2}{c}{ \includegraphics[scale=0.4]{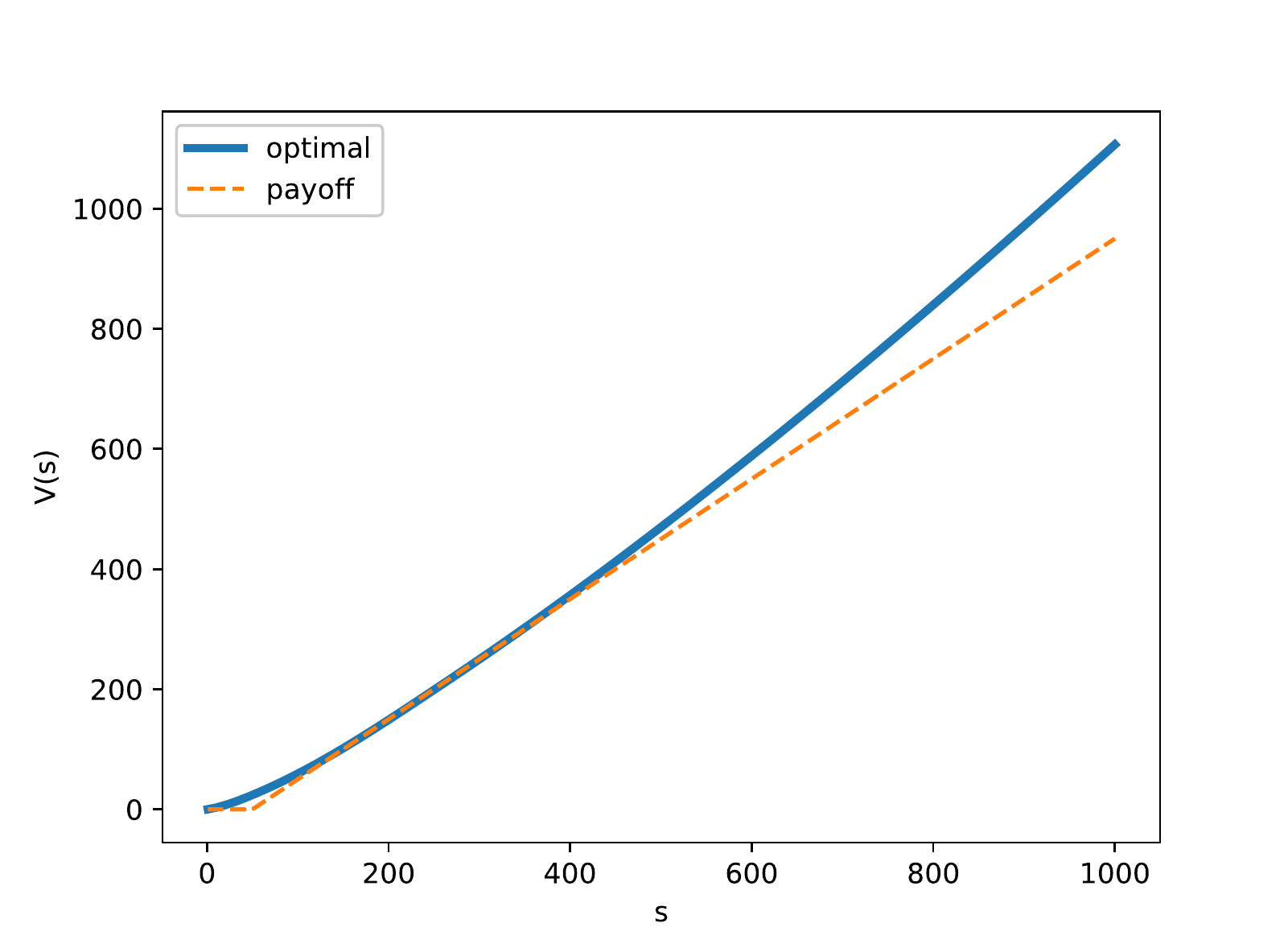} }  \\
\multicolumn{2}{c}{ (1) $V_c(s) = v_c^{SP}(\log s; l_c^*, u_c^*)$ and $G_c(s)$ }  \\
  \includegraphics[scale=0.4]{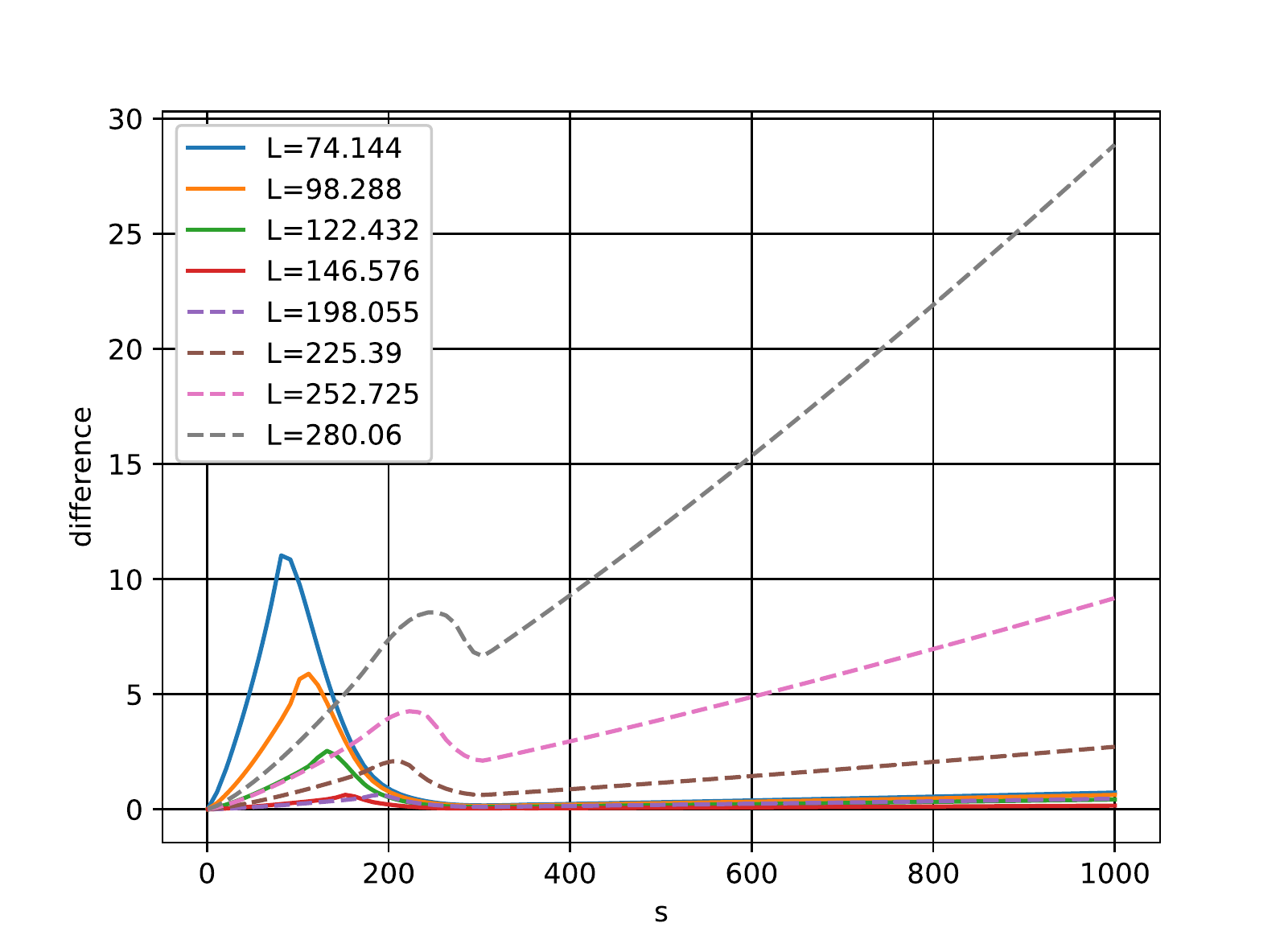} & \includegraphics[scale=0.4]{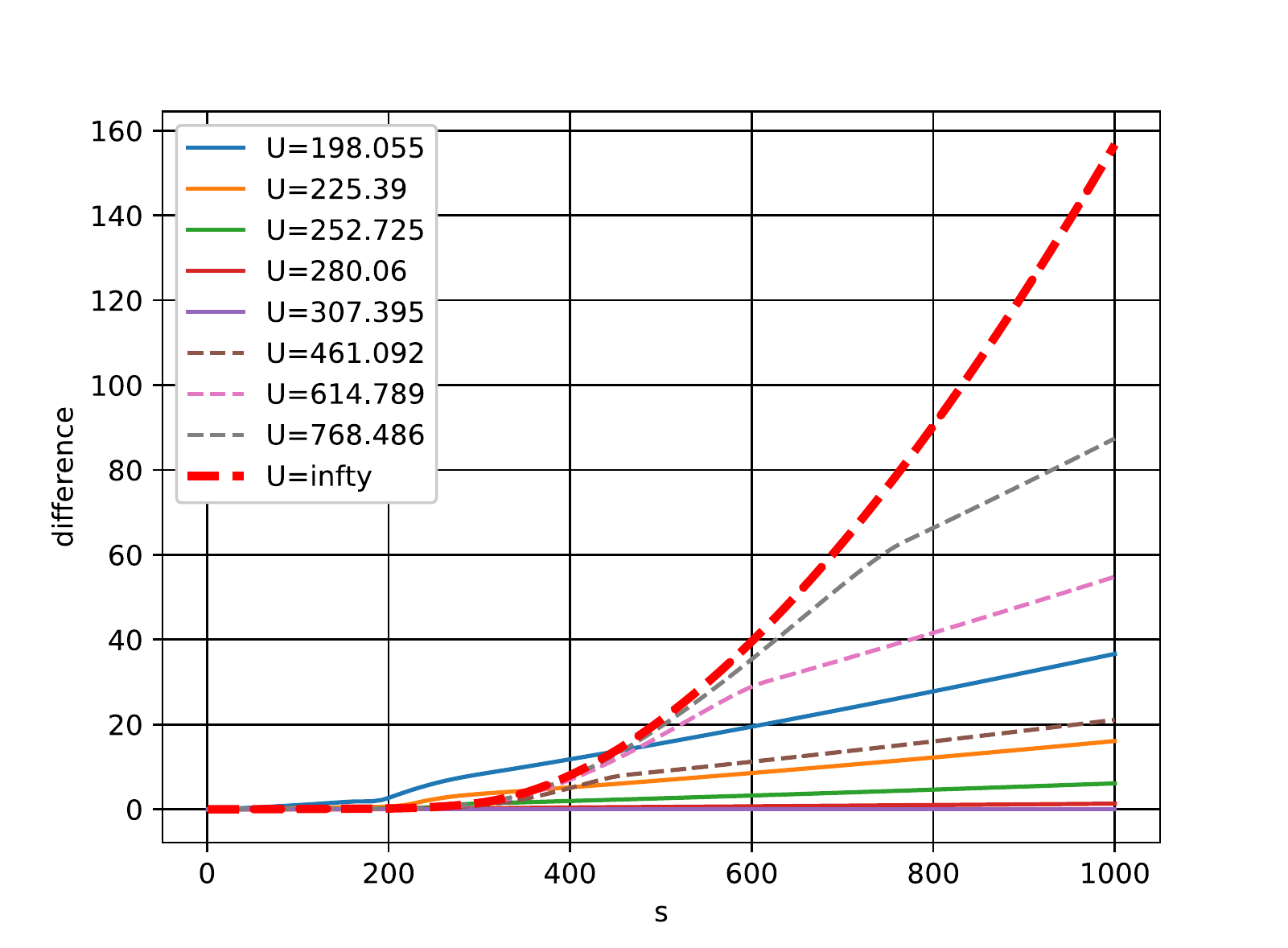} \\
   (2) $V_c(s)- v_p^{SP}(\log s; l, u^*_c)$ & (3) $V_c(s) - v_c^{SP}(\log s; l^*_c, u)$
 \end{tabular}
\end{minipage}
\caption{\footnotesize  Call option when $X$ is spectrally positive.  (1) The value function $s \mapsto V_c(s) = v_c^{SP}(\log s; l_c^*, u_c^*)$ along with the payoff function $s \mapsto G_c(s)$. (2) The difference  $s \mapsto V_c(s) - v_c^{SP}(\log s; l, u^*_c)$  for $l = \log L$ with $L= \frac {5-i} 5 K + \frac i 5 L_c^*$ and $L= \frac {5-i} 5 L_c^* + \frac i 5 U_c^*$ for $i = 1,2,3,4$    (solid when $L < L_c^*$ and dashed when $L > L_c^*$).
 (3) The difference $s \mapsto V_c(s) - v_c^{SP}(\log s; l^*_c, u)$  for $u = \log U$ with $U = \frac {5-i} 5L_c^* + \frac i 5 U_c^*$ for $i = 1,2,3,4$, $U=1.5U_c^*$, $2U_c^*$, $2.5U_c^*$, and $U=\infty$  (solid lines when $U < U_c^*$ and dashed lines when $U > U_c^*$).
}  \label{figure_call_SP}
\end{center}
\end{figure}


\subsection{Sensitivity with respect to $\lambda$} 

We now analyze the sensitivity of the optimal solutions with respect to the rate of observation $\lambda$. Here, we use the same parameters above for the four cases and solve them for different values of $\lambda$. Figures \ref{lambda_put} and \ref{lambda_call} show for the put and call cases, respectively, the value functions for various $\lambda$ as well as the optimal barriers as functions of $\lambda$.  As discussed in Lemma \ref{lemma_increasing_lambda},  as $\lambda$ increases, the value function increases and the stopping region becomes smaller. The convergence results in Theorem \ref{theorem_convergence} are also confirmed for all cases.


\begin{figure}[htbp]
\begin{center}
\begin{minipage}{1.0\textwidth}
\centering
\begin{tabular}{cc}
 \includegraphics[scale=0.4]{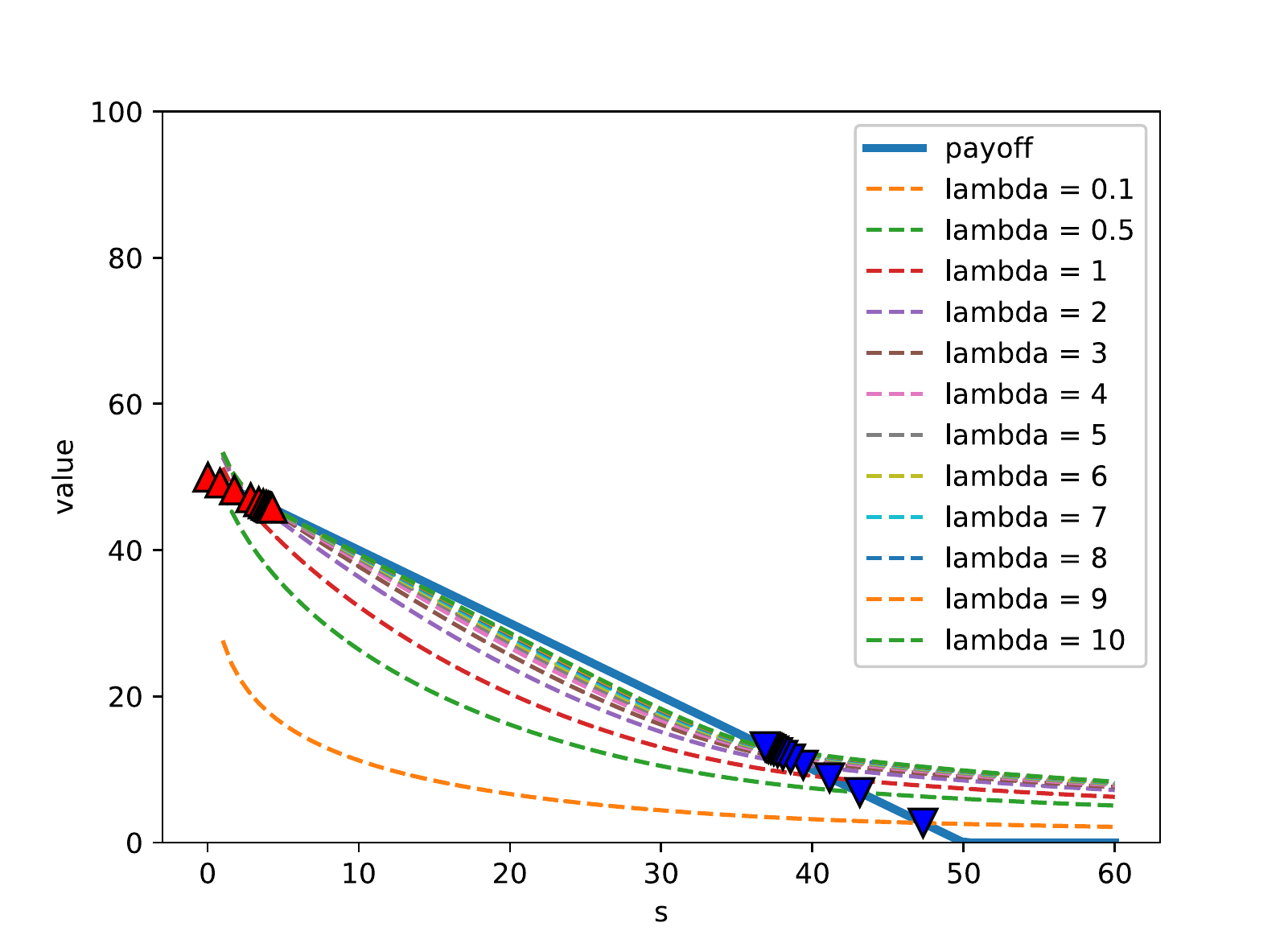} & \includegraphics[scale=0.4]{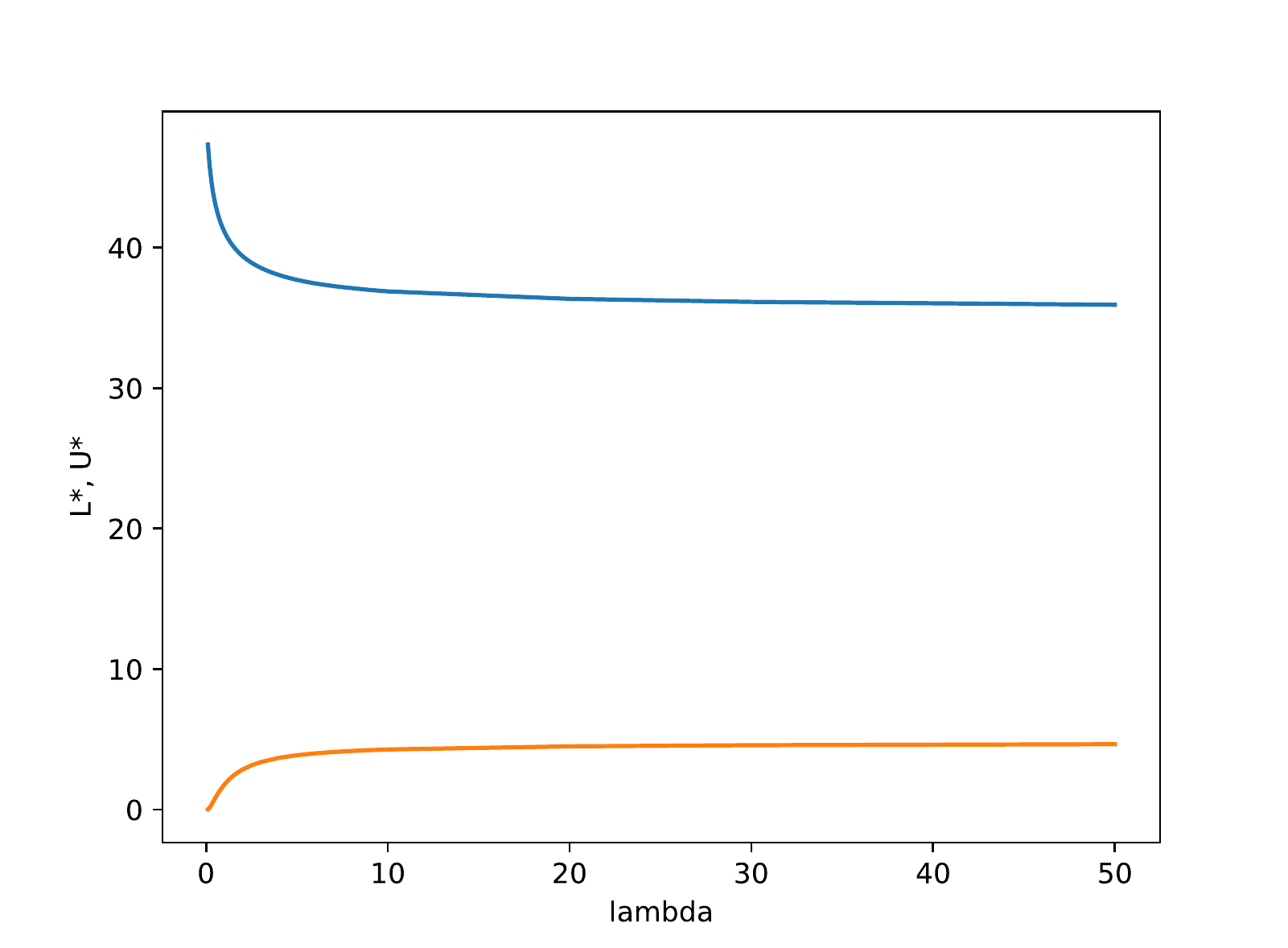}  \\
$s \mapsto V_{p,\lambda}(s)$ (spectrally negative) & $\lambda \mapsto L_{p,\lambda}^*, U_{p,\lambda}^*$ (spectrally negative)  \\
 \includegraphics[scale=0.4]{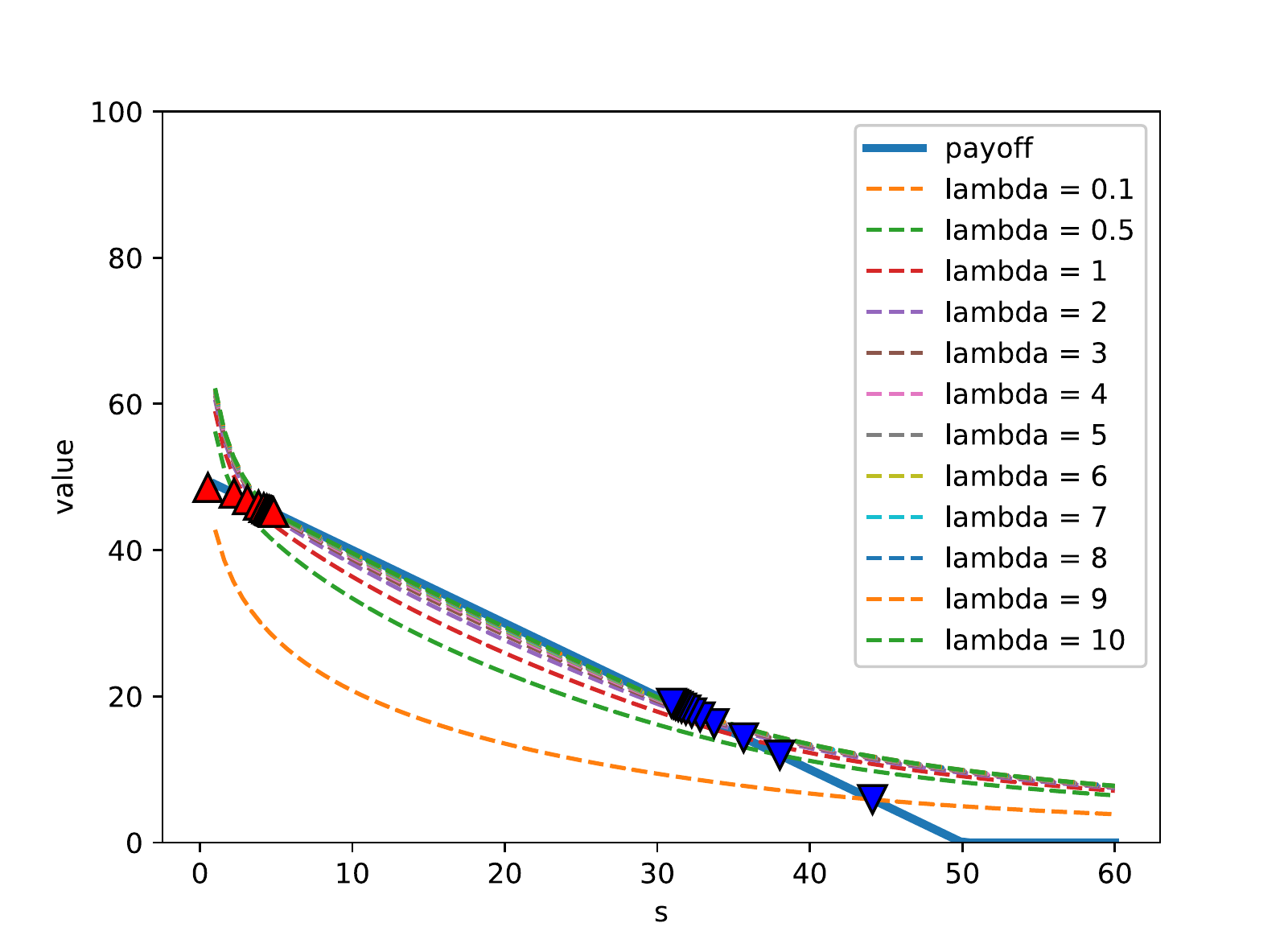} & \includegraphics[scale=0.4]{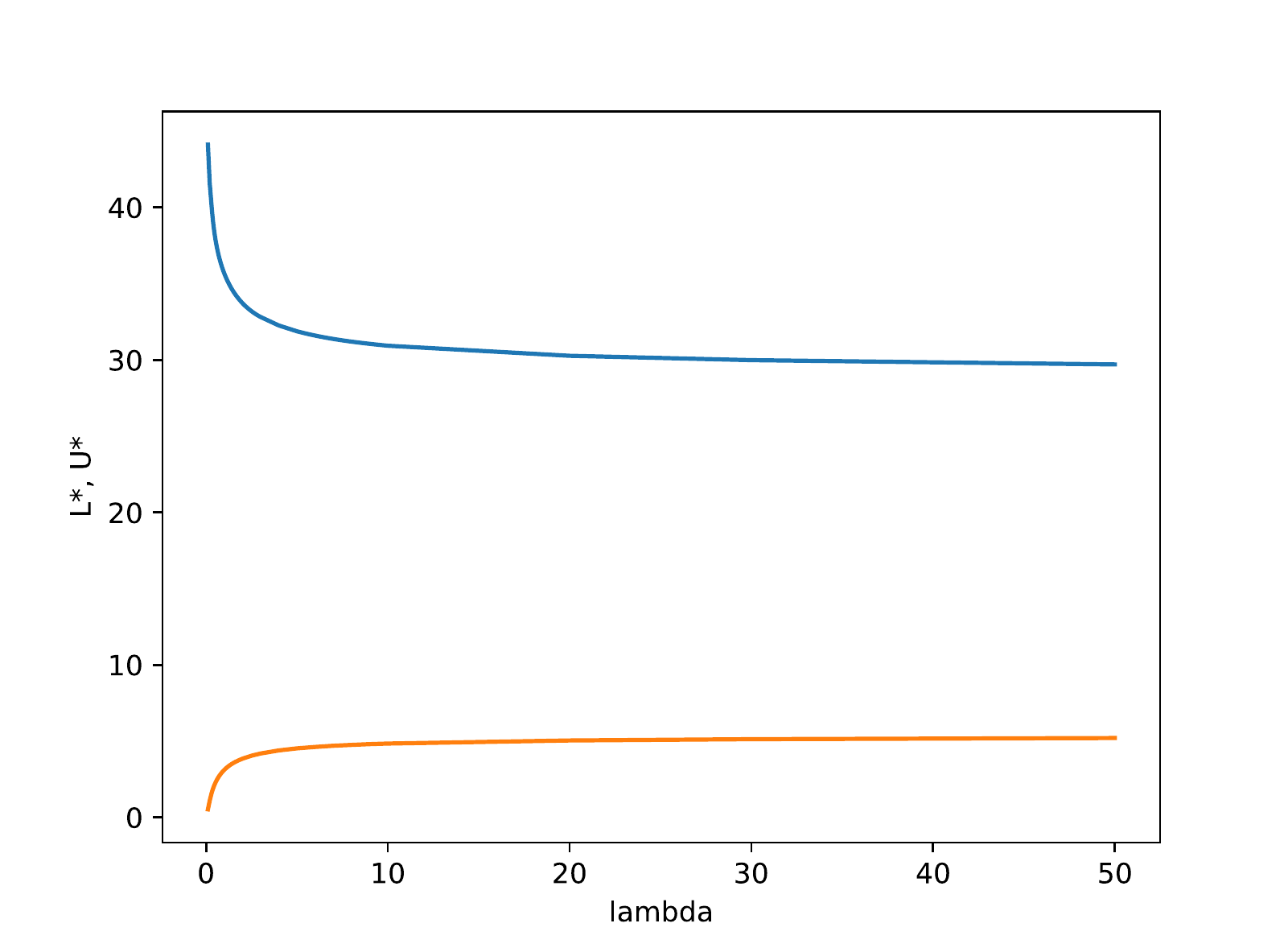}  \\
$s \mapsto V_{p,\lambda}(s)$ (spectrally positive)  &   $\lambda \mapsto L_{p,\lambda}^*, U_{p,\lambda}^*$ (spectrally positive) 
 \end{tabular}
\end{minipage}
\caption{\footnotesize Put option  for various $\lambda$ when $X$ is spectrally negative (top) and spectrally positive (bottom). (Left) The value function for $\lambda = 0.1,0.5,1,2,\ldots, 10$  (dashed lines) along with the payoff function $G_p$ (solid line). The points at $L_{p,\lambda}^*$ and $U_{p,\lambda}^*$ are indicated by up-pointing and down-pointing triangles, respectively. (Right) The optimal barriers $L_{p,\lambda}^*$ and $U_{p,\lambda}^*$ for $\lambda$ ranging from $0.1$ to $50$.
} \label{lambda_put}
\end{center}
\end{figure}

\begin{figure}[htbp]
\begin{center}
\begin{minipage}{1.0\textwidth}
\centering
\begin{tabular}{cc}
\includegraphics[scale=0.4]{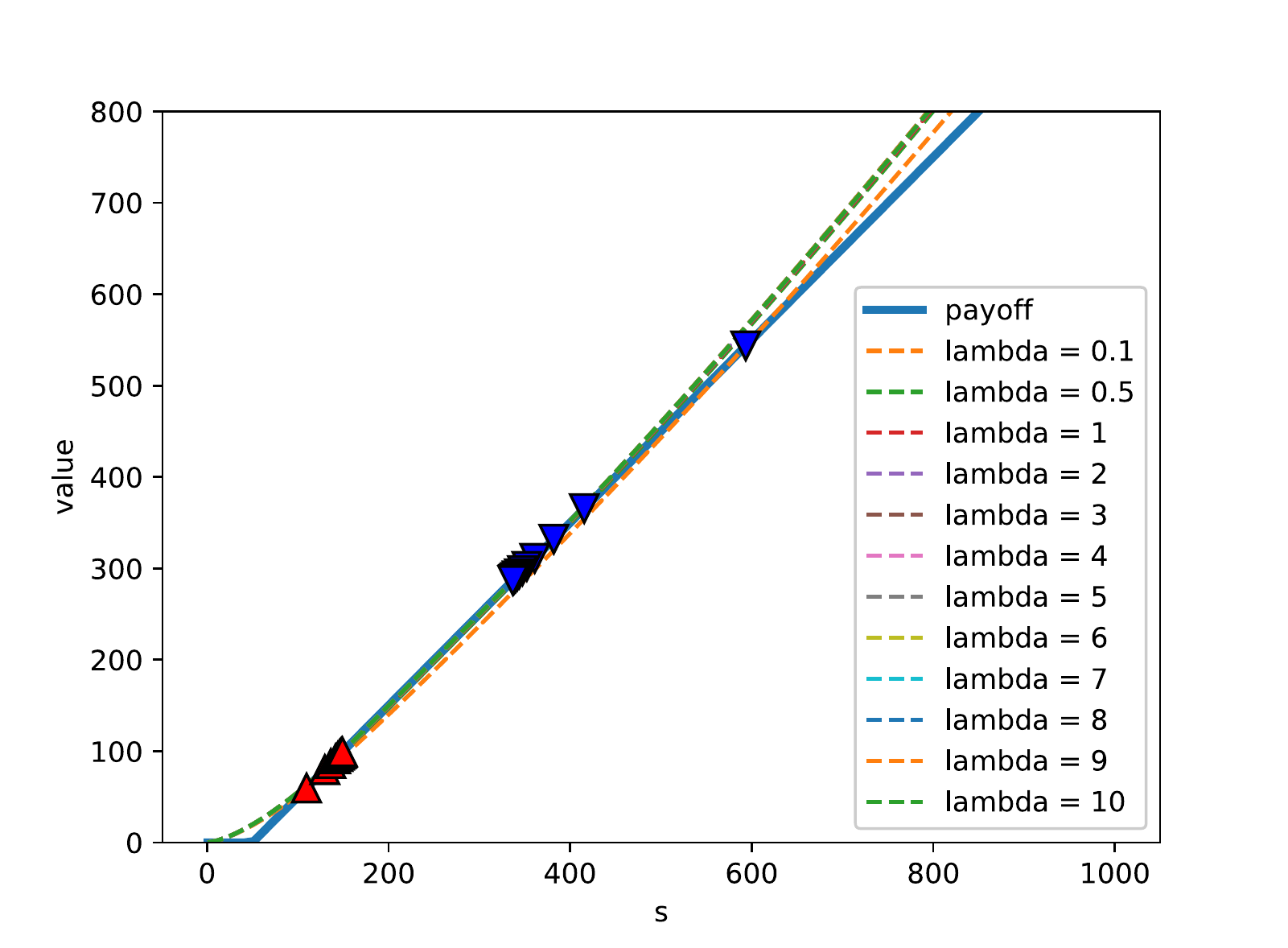} & \includegraphics[scale=0.4]{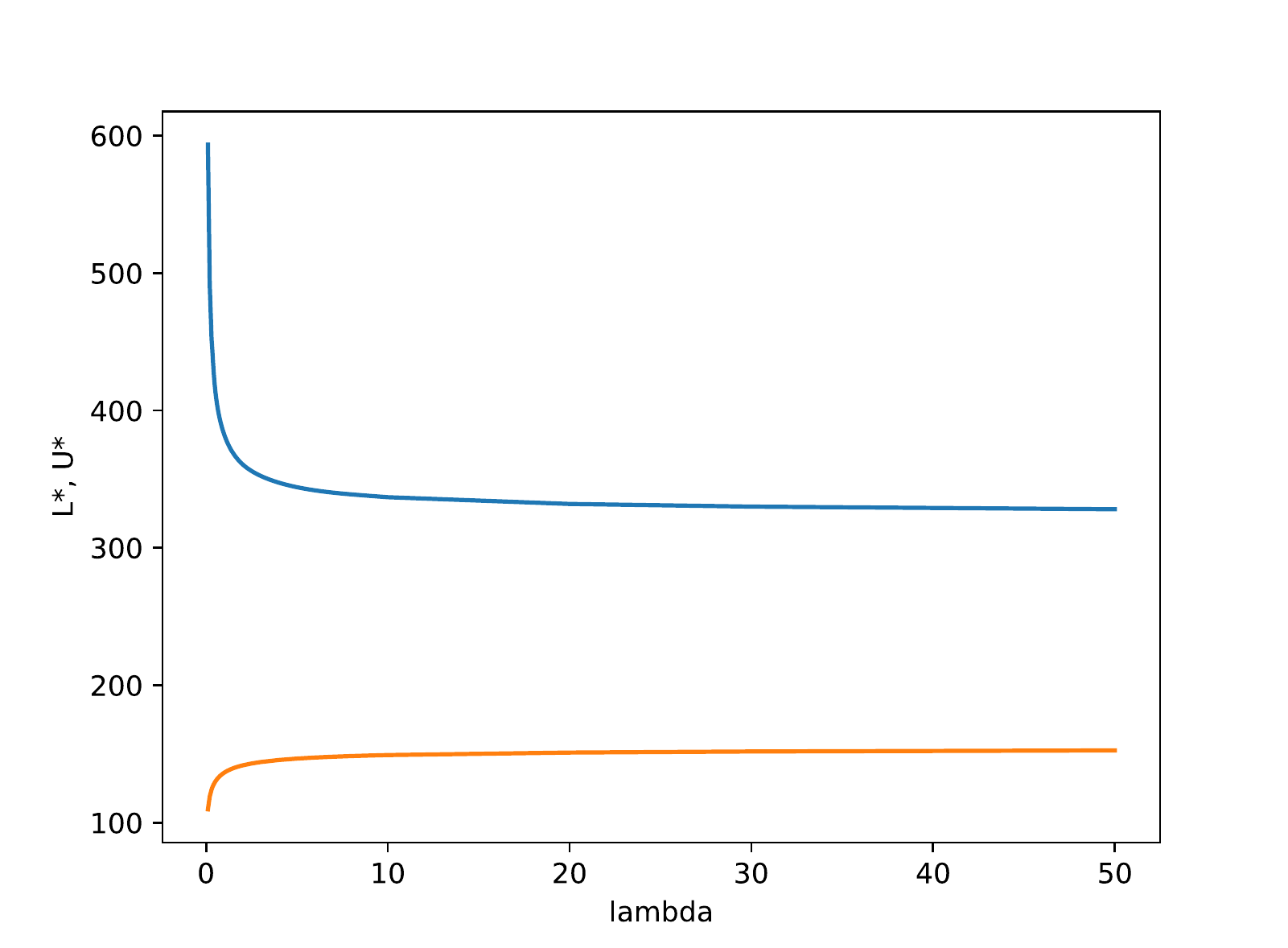}  \\
$s \mapsto V_{c,\lambda}(s)$ (spectrally negative) & $\lambda \mapsto L_{c,\lambda}^*, U_{c,\lambda}^*$ (spectrally negative)  \\
  \includegraphics[scale=0.4]{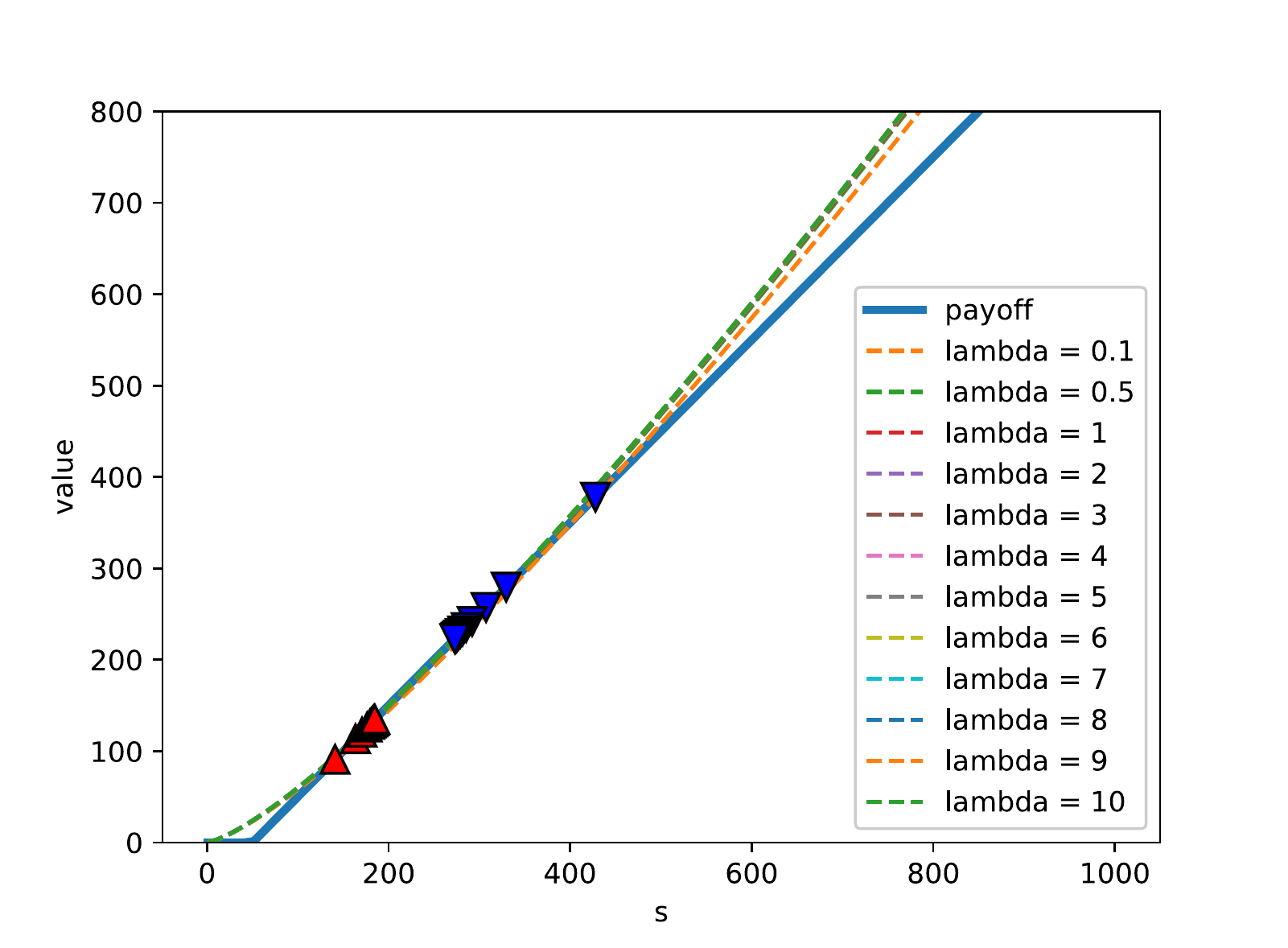} & \includegraphics[scale=0.4]{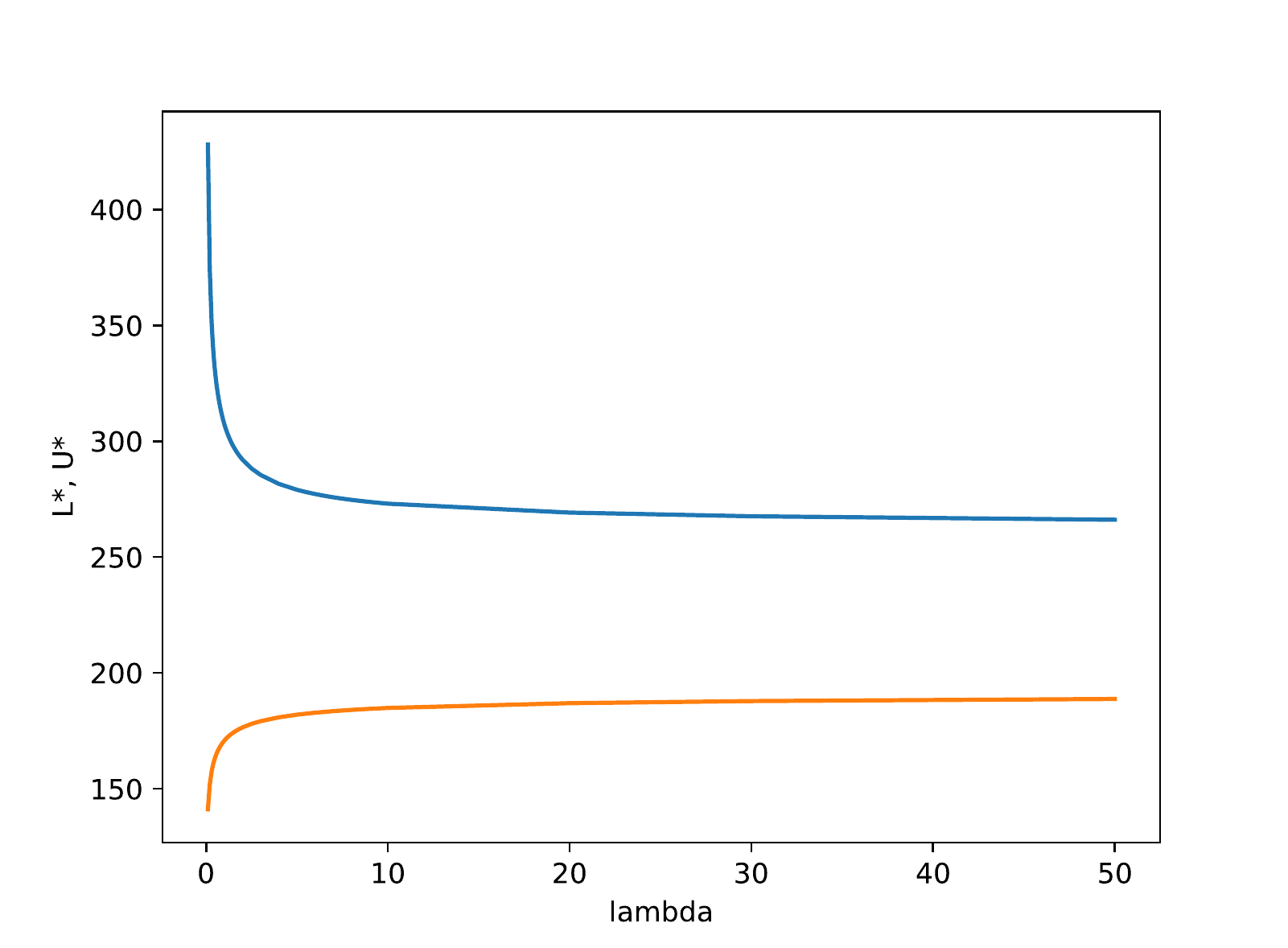}  \\
$s \mapsto V_{c,\lambda}(s)$ (spectrally positive)  &   $\lambda \mapsto L_{c,\lambda}^*, U_{c,\lambda}^*$ (spectrally positive) 
 \end{tabular}
\end{minipage}
\caption{\footnotesize 
Call option  for various $\lambda$ when $X$ is spectrally negative (top) and spectrally positive (bottom). (Left) The value function for $\lambda = 0.1,0.5,1,2,\ldots, 10$ (dashed lines) along with the payoff function $G_c$ (solid line). The points at $L_{c,\lambda}^*$ and $U_{c,\lambda}^*$ are indicated by up-pointing and down-pointing triangles, respectively. (Right) The optimal barriers $L_{c,\lambda}^*$ and $U_{c,\lambda}^*$ for $\lambda$ ranging from $0.1$ to $50$.
} \label{lambda_call}
\end{center}
\end{figure}

\appendix

\section{Proofs}
\subsection{Proof of Lemma \ref{lemma_varepsilon_stopping}}\label{appen_lemma_varepsilon_stopping}	 
We first consider the put case.
To derive a contradiction,  suppose $\inf_{0 < s' \leq K} (\bar{V}_p(s') - G_p(s')) = \varepsilon > 0$. 
Fix $s \in \R$.
 By the definition of the value function and because it is suboptimal to stop when $S\geq K$, we can choose a sequence of strategies $(\tau_n \in \mathcal{A}: n \geq 1)$ such that  $S_{\tau_n} < K$ a.s.\ on $\{ \tau_n < \infty \}$ for $n \geq 1$ and
\begin{align} \label{inf_diff}
\bar{V}_p(s) - \eee_s [e^{-r \tau_n} G_{p}(S_{\tau_n})1_{\{ \tau_n < \infty\}}] \xrightarrow{n \uparrow \infty} 0. 
\end{align}

By the dynamic programming principle (see, e.g.\ Theorem 1.11 of Peskir and Shiryaev \cite{PS}), we have 
\begin{align*}
\bar{V}_{p}(S_{T_k^\lambda}) = \max \big((K- S_{T_k^\lambda})^+, \eee [e^{-{r} (T_{k+1}^\lambda - T_k^\lambda )} \bar{V}_{p}(S_{T_{k+1}^\lambda}) |\mathcal{G}_k ] \big) \geq \eee [e^{-{r} (T_{k+1}^\lambda - T_k^\lambda )} \bar{V}_{p}(S_{T_{k+1}^\lambda})  | \mathcal{G}_k], \quad k \geq 0,
\end{align*}
where $\mathcal{G}_k := \mathcal{F}_{T_k^\lambda}$.
Therefore the process $(e^{-{r} T_k^\lambda} \bar{V}_{p}(S_{T_k^\lambda}):k \geq 0)$ is a supermartingale with respect to the filtration $\mathbb{G} := (\mathcal{G}_k: k \geq 0)$. Because,  for each $n\geq 1$, $\tau_n\in\bar{\mathcal{A}}$ can be written as $\tau_n=T^{\lambda}_{N(n)}$ for a $\mathbb{G}$-stopping time $N(n)$,  using optional sampling together with Fatou's lemma, 
\begin{align*}
	\bar{V}_{p} (s) \geq \liminf_{N \rightarrow \infty} &\eee_s [e^{-r (\tau_n \wedge T_N^\lambda)} \bar{V}_{p}(S_{\tau_n \wedge T_N^\lambda})] \geq \eee_s  [e^{-r \tau_n} \bar{V}_{p}(S_{\tau_n})1_{\{ \tau_n < \infty\}}] \\&=  \eee_s  [e^{-r \tau_n} G_{p}(S_{\tau_n})1_{\{ \tau_n < \infty\}}]  + \eee_s  [e^{-r \tau_n} (\bar{V}_{p}(S_{\tau_n}) - G_p(S_{\tau_n})) 1_{\{ \tau_n < \infty\}}]\\ &\geq  \eee_s  [e^{-r \tau_n} G_{p}(S_{\tau_n})1_{\{ \tau_n < \infty\}}] + \varepsilon   \eee_s  [e^{-r \tau_n} 1_{\{ \tau_n < \infty\}}].
\end{align*}
Now taking limits as $n \rightarrow \infty$ and by \eqref{inf_diff},
$\bar{V}_{p} (s) \geq  \bar{V}_{p} (s) + \varepsilon  \limsup_{n \rightarrow \infty} \eee_s  [e^{-r \tau_n} 1_{\{ \tau_n < \infty\}}],$ implying that $\eee_s  [e^{-r \tau_n} 1_{\{ \tau_n < \infty\}}]$ vanishes as $n \rightarrow \infty$. However, we have 
\begin{align*}
\eee_s  [e^{-r \tau_n} 1_{\{ \tau_n < \infty\}}] \geq K^{-1}  \eee_s  [e^{-r \tau_n} G_{p}(S_{\tau_n})1_{\{ \tau_n < \infty\}}] \xrightarrow{n \uparrow \infty} K^{-1} \bar{V}_{p} (s) > 0,
\end{align*}
which is a contradiction. 

For the call case, it can be shown by first transforming the problem to the equivalent put option problem as in \eqref{call_to_put} (recall our assumption that $\eee S_1$ is finite) and following the same arguments as above. 



\subsection{Proof of Lemma \ref{lemma_L_same}}\label{appen_lemma_4.1_a}	
	Because $\int_a^xW^{(q)}(x-y)e^{\theta y}  \diff y = \int_0^{x-a} W^{(q)}(z)e^{\theta (x-z)} \diff z = e^{\theta x}  \overline{W}^{(q)} (x-a; \theta)$, 
	\begin{align*}
		&e^{\theta a}Z^{(q)}(x-a;\theta)-\mathscr{Z}^{(q, \lambda)}_a (x; \theta) \\
		&= e^{\theta x} \left( 1 + (q- \psi(\theta ))\overline{W}^{(q)} (x-a; \theta) \right) \\
		&- \Big[ e^{\theta x} \left( 1 + (q + \lambda - \psi(\theta ))\overline{W}^{(q+\lambda)} (x; \theta) \right) -\lambda  \int_a^xW^{(q)}(x-y)e^{\theta y} \left( 1 + (q + \lambda - \psi(\theta ))\overline{W}^{(q + \lambda)} (y; \theta) \right)\diff y \Big] \\
		&= e^{\theta x}   (q + \lambda- \psi(\theta ))\overline{W}^{(q)} (x-a; \theta) \\
		&- \Big[ e^{\theta x}   (q + \lambda - \psi(\theta ))\overline{W}^{(q+\lambda)} (x; \theta)  -\lambda  \int_a^xW^{(q)}(x-y)e^{\theta y}  (q + \lambda - \psi(\theta ))\overline{W}^{(q + \lambda)} (y; \theta)\diff y \Big].
	\end{align*}
Dividing both sides by $q+\lambda - \psi(\theta)$, we have the claim.
\exit
\subsection{Proof of Lemma \ref{lemma_resolvent}}\label{appen_lemma_4.1}
Because $\{\mathcal{E}_t :=\exp\{-qt+\Phi(q)X_t\}:t\geq0\}$ is a martingale (see, e.g., page 82 of \cite{K}), we have that 
		 $\E_{\alpha}[ \mathcal{E}_{\tilde{T}_{\beta}^+ \wedge t}
		 ]=e^{\Phi(q)\alpha}$ for $t > 0$, and hence
		$\E_{\alpha}[
		 \mathcal{E}_{\tilde{T}_{\beta}^+ \wedge \mathbf{e}_\lambda}
		]= \lambda \int_0^\infty e^{-\lambda t}  \E_{\alpha}[
		\mathcal{E}_{\tilde{T}_{\beta}^+ \wedge t}
		] \diff t=e^{\Phi(q) \alpha}$.
		
		Because $X$ does not have positive jumps and by \eqref{upcrossing_identity} and \eqref{killed_resolvent},
		\begin{multline*}
			e^{\Phi(q) \alpha}=\E_{\alpha}\Big[
			\mathcal{E}_{\tilde{T}_{\beta}^+ \wedge \mathbf{e}_\lambda}
			\Big]=\E_{\alpha}\left[
			\mathcal{E}_{\mathbf{e}_\lambda}
			1_{\{\mathbf{e}_\lambda<\tilde{T}_{\beta}^+\}}\right]+e^{\Phi(q)\beta}\E_{\alpha}\left[e^{-q\tilde{T}_{\beta}^+}1_{\{\tilde{T}_{\beta}^+<\mathbf{e}_\lambda\}}\right]\\
			=\lambda\int_{-\infty}^{\beta}e^{\Phi(q) y}
r^{(q+\lambda)}(\alpha,y;\beta)
\diff y+
			e^{\Phi(q) \beta}e^{-\Phi(q+\lambda)(\beta-\alpha)}.
		\end{multline*}
		Hence
		\begin{align}
			\lambda \int_{-\infty}^0e^{\Phi(q) y}
r^{(q+\lambda)}(\alpha,y;\beta)
\diff y &= 
			e^{\Phi(q)\alpha}-e^{-\Phi(q+\lambda)(\beta-\alpha)}e^{\Phi(q) \beta}
			- \lambda \int_{0}^{\beta}e^{\Phi(q) y}
r^{(q+\lambda)}(\alpha,y;\beta)
\diff y\notag\\
			&= 
			e^{\Phi(q)\alpha}\left(1+\lambda \overline{W}^{(q+\lambda)}(\alpha;\Phi(q))\right)-e^{-\Phi(q+\lambda)(\beta-\alpha)}e^{\Phi(q) \beta}
			\left(1+ \lambda \overline{W}^{(q+\lambda)} (\beta; \Phi(q)) \right),\notag
		\end{align}
		which equals \eqref{g_0_ub_2}.
\exit
\subsection{Proof of Lemma \ref{lemma_aux}}\label{aux_lemma}
(i) 
By the first identity of (6) in \cite{LRZ},
\begin{align}\label{g(a)_1}
	\lambda \int_a^bW^{(q+\lambda)}(b-y)W^{(q)}(y-a)\diff y=W^{(q+\lambda)}(b-a)-W^{(q)}(b-a).
\end{align}
By Fubini's theorem together with \eqref{g(a)_1},
\begin{align*}
	\lambda \int_a^bW^{(q+\lambda)}(b-y)\int_a^yW^{(q)}(y-z)Z^{(q+\lambda)}(z;\theta)\diff z\diff y &= \lambda \int_a^bZ^{(q+\lambda)}(z;\theta)\int_z^bW^{(q+\lambda)}(b-y)W^{(q)}(y-z)\diff y\diff z\\
	&=\int_a^bZ^{(q+\lambda)}(z;\theta)\left(W^{(q+\lambda)}(b-z)-W^{(q)}(b-z)\right)\diff z.
\end{align*}
Hence
$\int_a^bW^{(q+\lambda)}(b-y)\mathscr{Z}^{(q, \lambda)}_a (y; \theta)\diff y=\int_a^bZ^{(q+\lambda)}(z;\theta) W^{(q)}(b-z) \diff z$.
By this and using \eqref{g(a)_1} again, we obtain \eqref{g(a)_2}.

(ii) 
By taking Laplace transforms on both sides (as in the second identity of (6) in \cite{LRZ}), it can be checked that
\begin{align}\label{LRZ_2}
	\lambda \int_a^bZ^{(q)}(y-a;\theta)W^{(q+\lambda)}(b-y)\diff y=
	Z^{(q+\lambda)}(b-a;\theta)-Z^{(q)}(b-a;\theta).
\end{align}
Hence, using \eqref{g(a)_1} together with \eqref{LRZ_2}, we obtain \eqref{g(a)_3}.
\exit
\subsection{Proof of Theorem \ref{theorem_limiting_case}}\label{appen_Theorem_4.2}
	


	(1) First suppose $\theta\geq0$ with $\psi(\theta) \neq q + \lambda$ and $\theta\neq \Phi(q)$. 
	
From \cite[Lem. 3.3]{KKR}, we have that $\lim_{x\rightarrow\infty}e^{-\Phi(q)x}W^{(q)}(x)=1/\psi^\prime(\Phi(q))$.
	For  $\theta> \Phi(q)$ (i.e.\ $\psi(\theta) > q$), by \eqref{scale_function_laplace} and \eqref{Z_theta}, we can write $Z^{(q)}(b;\theta) = (\psi(\theta)-q) \int_0^{\infty}e^{-\theta z}W^{(q)}(z+b)\diff z$ and hence
	\begin{align*}
		\lim_{b\to\infty}\frac{Z^{(q)}(b;\theta)}{W^{(q)}(b)}
		&=(\psi(\theta)-q)\lim_{b\to\infty}\frac{\int_0^{\infty}e^{-\theta z}W^{(q)}(z+b)\diff z}{W^{(q)}(b)}=(\psi(\theta)-q)\int_0^{\infty}e^{-\theta z}e^{\Phi(q)z}\diff z=\frac{\psi(\theta)-q}{\theta-\Phi(q)}.
	\end{align*}
On the other hand, if $\Phi(q)>\theta$ (where $e^{-\theta b} W^{(q)}(b) \xrightarrow{b \uparrow \infty} \infty$), by  L'Hospital rule,
		\begin{align*}  
\begin{split}
			\lim_{b\to\infty}\frac{Z^{(q)}(b;\theta)}{W^{(q)}(b)}
			&=\lim_{b\to\infty}\frac{1}{e^{-\theta b}W^{(q)}(b)}+(q-\psi(\theta))\lim_{b\to\infty}\frac{\int_0^{b}e^{-\theta z}W^{(q)}(z)\diff z}{e^{-\theta b}W^{(q)}(b)}\\
			&=(q-\psi(\theta))\lim_{b\to\infty}\frac{e^{-\theta b}W^{(q)}(b)}{-\theta e^{-\theta b}W^{(q)}(b)+e^{-\theta b}W^{(q)\prime}(b+)}=\frac{\psi(\theta)-q}{\theta-\Phi(q)},
\end{split}
		\end{align*}
where $W^{(q)\prime}(b+)$ is the right-hand derivative and we used that $W^{(q)\prime}(b+)/W^{(q)}(b) \xrightarrow{b \uparrow \infty} \Phi(q)$ (which can be derived, e.g., by taking limits on (8.24) of \cite{K}).
For the case $\theta=\Phi(q)$, we have, by Lemma 3.3 in \cite{KKR},
\begin{equation}\label{lim_phi_q}
\lim_{b\to\infty}\frac{Z^{(q)}(b;\Phi(q))}{W^{(q)}(b)}=\lim_{b\to\infty}\frac{e^{\Phi(q)b}}{W^{(q)}(b)}=\psi'(\Phi(q)).
\end{equation}

	By \eqref{mathcal_L}, we have
	\begin{align*}
		\lim_{b\to\infty}\frac{\mathscr{Z}^{(q, \lambda)}_a (b; \theta)}{W^{(q)}(b)}&=\lim_{b\to\infty}\frac{1}{W^{(q)}(b)}\left(Z^{(q)}(b;\theta)+ \lambda \int_0^aW^{(q)}(b-y)Z^{(q+\lambda)}(y;\theta)\diff y\right)\\
		&=\frac{\psi(\theta)-q}{\theta-\Phi(q)}+ \lambda \int_0^ae^{-\Phi(q)y}Z^{(q+\lambda)}(y;\theta)\diff y, \qquad \theta\neq\Phi(q)
	\end{align*}
and by \eqref{lim_phi_q}
	\begin{align*}
		\lim_{b\to\infty}\frac{\mathscr{Z}^{(q, \lambda)}_a (b; \Phi(q))}{W^{(q)}(b)}&=\psi'(\Phi(q))+ \lambda \int_0^ae^{-\Phi(q)y}Z^{(q+\lambda)}(y;\Phi(q))\diff y = N^{(q,\lambda)}(a).
	\end{align*}
	Hence, by taking limits as $b\to\infty$ in \eqref{fun_L} (noting ${W^{(q)}(b-a)} / {W^{(q)}(b)} \xrightarrow{b \uparrow \infty} e^{-\Phi(q) a}$), we get when $\theta \neq \Phi(q)$
	\begin{align}\label{limit_L_b}
		\lim_{b\to\infty}& \lambda\frac{L^{(q,\lambda)}(b,a;\theta)}{
W^{(q)}(b)}=\frac{\lambda}{q+\lambda-\psi(\theta)}\Bigg[\frac{\psi(\theta)-q}{\theta-\Phi(q)}(e^{(\theta-\Phi(q))a}-1)- \lambda \int_0^ae^{-\Phi(q)z}Z^{(q+\lambda)}(z;\theta)\diff z\Bigg].
	\end{align}
	Now we note that we can write
	\begin{align*}
		\int_0^a&e^{-\Phi(q)z}Z^{(q+\lambda)}(z;\theta)\diff z=\int_0^ae^{-\Phi(q)z}e^{\theta z}\left(1+(q+\lambda-\psi(\theta))\overline{W}^{(q+\lambda)} (z; \theta)\right) \diff z\\
		&=\frac{1}{\theta-\Phi(q)}(e^{(\theta-\Phi(q))a}-1)+(q+\lambda-\psi(\theta))\int_0^ae^{(\theta-\Phi(q)) z} \overline{W}^{(q+\lambda)} (z; \theta)\diff z \\
		&=\frac{1}{\theta-\Phi(q)}(e^{(\theta-\Phi(q))a}-1) +\frac{q+\lambda-\psi(\theta)}{\theta-\Phi(q)}\Big[e^{(\theta-\Phi(q))a}\overline{W}^{(q+\lambda)} (a; \theta)- \overline{W}^{(q+\lambda)} (a; \Phi(q))\Big],
	\end{align*}
where the last equality holds because $\int_0^ae^{(\theta-\Phi(q)) z} \overline{W}^{(q+\lambda)} (z; \theta)\diff z = \int_0^a  \int_w^{a} e^{(\theta-\Phi(q)) z} \diff z  e^{- \theta w}W^{(q+\lambda)}(w)\diff w$.
	Using the previous identity in \eqref{limit_L_b}, we obtain
	\begin{align*} 
		\begin{split}
			\lim_{b\to\infty}\lambda \frac{L^{(q,\lambda)}(b,a;\theta)}{
W^{(q)}(b)}
			&=\frac{\lambda}{{\Phi(q)}-\theta}\Big[(e^{(\theta-\Phi(q))a}-1)+ \lambda \Big(e^{(\theta-\Phi(q))a} \overline{W}^{(q+\lambda)} (a; \theta)-\overline{W}^{(q+\lambda)} (a; \Phi(q))\Big)\Big] \\
			&= M^{(q,\lambda)}(a; \theta).
		\end{split}
	\end{align*}
	Hence by taking limits as $b\to\infty$ in \eqref{fun_g} we obtain \eqref{flu_int}.
	
	(2) The case $\theta=\Phi(q)$ can be obtained by taking $\theta\to\Phi(q)$ in the result obtained in (1). Using L'Hospital rule,
	\begin{align*}
		\lim_{\theta \rightarrow \Phi(q)}M^{(q,\lambda)}(a; \theta)  &= \lim_{\theta \rightarrow \Phi(q)}\frac{\lambda e^{-\Phi(q)a}}{\Phi(q)-\theta}\Big[e^{\theta a}+\lambda e^{\theta a}\overline{W}^{(q+\lambda)} (a; \theta)-e^{\Phi(q) a}-\lambda e^{\Phi(q) a}\overline{W}^{(q+\lambda)} (a; \Phi(q)) \Big] \\
		&= -\lim_{\theta \rightarrow \Phi(q)}\lambda e^{-\Phi(q)a}\Big[ a e^{\theta a}+\lambda \int_0^a (a-y) e^{\theta (a-y)}W^{(q+\lambda)}(y) \diff y \Big] \\
		&= - \lambda \Big[ a +\lambda \int_0^a (a-y) e^{-\Phi(q) y}W^{(q+\lambda)}(y) \diff y \Big],
	\end{align*}
which coincides with $M^{(q,\lambda)}(a; \Phi(q))$ as defined in \eqref{fun_M}.
	
	(3) 
Finally the cases $\psi(\theta) = q + \lambda$, 
and $\theta<0$ hold by analytic continuation. 
\exit
\subsection{Proof of Lemma \ref{lemma_M_derivative}}\label{appen_M_derivative}
If $\theta \neq \Phi(q)$, we have
	\begin{align*}
		M^{(q,\lambda)\prime}(a; \theta) &= - \Phi(q)  M^{(q,\lambda)}(a; \theta)
		\\
		&+ \frac{\lambda e^{-\Phi(q)a}}{\Phi(q)-\theta}\Big[\theta e^{\theta a}+\lambda \Big( \theta e^{\theta a}\overline{W}^{(q+\lambda)} (a; \theta)  + W^{(q+\lambda)} (a) \Big)  - \Phi(q) Z^{(q+\lambda)}(a;\Phi(q)) - \lambda W^{(q+\lambda)}(a) \Big] \\
		&= - \Phi(q)  \frac{\lambda e^{-\Phi(q)a}}{\Phi(q)-\theta}e^{\theta a} (1+\lambda \overline{W}^{(q+\lambda)} (a; \theta) )  + \frac{\lambda e^{-\Phi(q)a}}{\Phi(q)-\theta} \theta e^{\theta a} ( 1 +\lambda \overline{W}^{(q+\lambda)} (a; \theta) ) \\
		&= -  \lambda e^{-(\Phi(q)-\theta) a}  (1+\lambda \overline{W}^{(q+\lambda)} (a; \theta) ).
	\end{align*}
For the case $\theta = \Phi(q)$, straightforward differentiation gives the result.
\exit

		\end{document}